\renewcommand\part{%
   \if@noskipsec \leavevmode \fi
   \par
   \addvspace{4ex}%
   \@afterindentfalse
   \secdef\@part\@spart}
\def\@part[#1]#2{%
    \ifnum \c@secnumdepth >\m@ne
      \refstepcounter{part}%
      \addcontentsline{toc}{part}{\thepart\hspace{1em}#1}%
    \else
      \addcontentsline{toc}{part}{#1}%
    \fi
    {\parindent \z@ \raggedright
     \interlinepenalty \@M
     \normalfont
     \ifnum \c@secnumdepth >\m@ne
       \Large\bfseries \partname\nobreakspace\thepart
       \par\nobreak
     \fi
     \Large \bfseries #2%
     \par}%
    \nobreak
    \vskip 3ex
    \@afterheading}
\def\@spart#1{%
    {\parindent \z@ \raggedright
     \interlinepenalty \@M
     \normalfont
     \normalsize \bfseries #1\par}%
     \nobreak
     \vskip 3ex
     \@afterheading}
\DeclareMathSymbol{\invques}{\mathord}{operators}{`>}
\DeclareRobustCommand{\tmquestiondown}{%
  \ifmmode\invques\else\textquestiondown\fi
}
\numberwithin{equation}{section}
\newcommand{\mylabel}[2]{#2\def\@currentlabel{#2}\label{#1}}
\newtheorem{theorem}{Theorem}[section]
\newtheorem{lemma}[theorem]{Lemma}
\newtheorem{proposition}[theorem]{Proposition}
\newtheorem{corollary}[theorem]{Corollary}
\newtheorem{defn}[theorem]{Definition}
\newtheorem{remark}[theorem]{Remark}
\newtheorem{assumption}[theorem]{Assumption}
\newcommand{\Gal}{\operatorname{Gal}}
\newcommand{\CC}{\mathbb{C}}
\newcommand{\cB}{\mathcal{B}}
\newcommand{\QQ}{\mathbb{Q}}
\newcommand{\Zp}{\mathbb{Z}_p}
\newcommand{\ZZ}{\mathbb{Z}}
\newcommand{\g}{\mathbf{g}}
\newcommand{\PP}{\mathfrak{P}}
\newcommand{\fp}{\mathfrak{p}}
\newcommand{\fq}{\mathfrak{q}}
\newcommand{\cH}{\mathcal{H}}
\newcommand{\cO}{\mathcal{O}}
\newcommand{\cyc}{\textup{cyc}}
\newcommand{\LL}{\Lambda}
\newcommand{\f}{\textup{\bf f}}
\newcommand{\lra}{\longrightarrow}
\newcommand{\ra}{\lra}
\newcommand{\cP}{\mathcal{P}}
\definecolor{Green}{rgb}{0.0, 0.5, 0.0}
\newcommand{\p}{\mathfrak{p}}
\newcommand{\q}{\mathfrak{q}}
\newcommand{\m}{\mathfrak{m}}
\newcommand{\cD}{\mathcal{D}}
\newcommand{\cW}{\mathcal{W}}
\newcommand{\cR}{\mathcal{R}}
\newcommand{\Cp}{\mathbb{C}_p}
\newcommand{\bbR}{\mathbb{R}}
\newcommand{\boldD}{\textup{\bf D}}
\newcommand{\boldR}{\textup{\bf R}}
\newcommand{\boldV}{\textup{\bf V}}
\newcommand{\boldG}{\textup{\bf G}}
\newcommand{\boldT}{\textup{\bf T}}
\newcommand{\bbLL}{\mathbb{L}}
\newcommand{\frakm}{\mathfrak{m}}
\newcommand{\bfchi}{\bbchi}
 \definecolor{pAlgae}{RGB}{87,115,135}
\definecolor{airforceblue}{rgb}{0.36, 0.54, 0.66}
	\definecolor{bondiblue}{rgb}{0.0, 0.58, 0.71}
\definecolor{britishracinggreen}{rgb}{0.0, 0.26, 0.15}
\definecolor{camouflagegreen}{rgb}{0.47, 0.53, 0.42}
\definecolor{darkcyan}{rgb}{0.0, 0.55, 0.55}
\begin{document}

\author{K\^az\i m B\"uy\"ukboduk}
\address{K\^az\i m B\"uy\"ukboduk\newline UCD School of Mathematics and Statistics\\ University College Dublin\\ Belfield\\Dublin 4\\Ireland}
\email{kazim.buyukboduk@ucd.ie}

\author{Manisha Ganguly}
\address{Manisha Ganguly\newline UCD School of Mathematics and Statistics\\ University College Dublin\\ Belfield\\Dublin 4\\Ireland}
\email{manisha.ganguly@ucdconnect.ie}

\title{Functional equations of algebraic Rankin--Selberg $p$-adic $L$-functions}

\subjclass[2020]{11R23}
\keywords{Iwasawa theory, Rankin--Selberg products, $p$-adic $L$-functions, functional equation}
\begin{abstract}
This article presents an approach to the algebraic functional equation for Selmer complexes, which in turn have applications in the Iwasawa theoretic study of Rankin–Selberg products of the Hida and Coleman families. Our treatment establishes the functional equation for algebraic $p$-adic $L$-functions (that are given in terms of characteristic ideals of Selmer groups, which arise as the cohomology of appropriately defined Selmer complexes in degree $2$). This is achieved by recovering the characteristic ideal as the determinant of the said Selmer complex, once we prove (under suitable but rather mild) hypotheses that the Selmer complex in question is perfect with amplitude $[1,2]$, and its cohomology is concentrated in degree-2. The perfectness of these Selmer complexes turns out to be a delicate problem, and the required properties require a study of Tamagawa factors in families, which may be of independent interest.
\end{abstract}

\maketitle
\tableofcontents

\section{Introduction}
Our primary goal in this paper is to develop a general approach to prove the functional equation for algebraic $p$-adic $L$-functions associated with families of Galois representations. Our starting point is Nekovář's philosophy that these should be recovered as the determinants of Selmer complexes. One of our key contributions is the proof that this is indeed valid in great level of generality, namely that the determinants of appropriately defined Selmer complexes coincide with their classical counterparts (that are often defined as the characteristic ideal of Selmer groups). This requires a careful analysis of the properties of Selmer complexes and occupies the first half of our paper. In the second half\footnote{The reader who is more interested in these arithmetic applications may as well treat the first half as a black box and skip directly to the second half.}, we apply our general results in specific contexts (including the families of symmetric squares of modular forms). 

Although our main results are very general (cf. Theorem~\ref{thm:char_ideals_for_selCom_and_dual_selCom_H_A}), our main motivation stemmed from the papers~\cite{Loeffler_2016,iwasa_sym_nonordinary}, and the forthcoming work of Kundu--Ray--Vigni~\cite{FuncEqSymHida}. In these papers, the results that the authors prove towards the Iwasawa main conjectures are valid only for branches of the cyclotomic Iwasawa algebra with prescribed parity. Thanks to the functional equation we establish in this article (cf. Theorem~\ref{thm:H^1_0_Hida_family_Sym2}), the required parity condition can be eliminated. In this regard, our results, when combined with the functional equations for $p$-adic analytic $L$-functions, yield a strengthening of the said results towards Iwasawa's main conjectures for symmetric squares of modular forms (and the families of such).

We highlight one novel aspect in our approach, which crucially relies on Nekov\'a\v{r}'s interpretation of algebraic $p$-adic $L$-functions: In earlier works, functional equations of algebraic $p$-adic $L$-functions were established using generalized Cassels--Tate pairings, and interpolations of such. As this is not possible in our context, we take an alternative approach and redefine (following Nekov\'a\v{r}) algebraic $p$-adic $L$-functions as the determinants of Selmer complexes. The required functional equation follows from duality theorems for Selmer complexes (cf. \S\ref{subsec_duality_over_H_A}) in a great level of generality. To tie the loose ends, one then needs to check that these two definitions of algebraic $p$-adic $L$-functions agree. We prove that this is the case under suitable hypotheses in\S\ref{chap:Selmer_Complexes}. We verify in \S\ref{chap:5_Rankin-Selberg_product} that these hypotheses hold in the context of Rankin--Selberg products under very mild assumptions. We believe that the verification of these properties is interesting in their own right: Among other things, this step requires a study of Tamagawa numbers away from $p$ in Hida families (cf. \S\ref{subsec_4_7_11_2024_07_12} and \S\ref{subsubsec_2024_07_12_1227}).

\subsection{Detailed outline of the main results and layout}
As the final section of our introduction section (see \S\ref{subsubsec_121_2024_11_03_826} below), we briefly review modular forms, discuss Galois representations attached to them, and record the basic properties of these Galois representations that we shall need in due course. 

The main body of our paper starts with the treatment of the ordinary scenario in Part \ref{Part_ord}, where we systematically rely on Nekov\'a\v{r}'s duality theory for his Selmer complexes in the context of families of Rankin--Selberg products. These duality theorems a priori yield a functional equation up to local error terms (cf. \S\ref{sec:duality_sel_compl}). In \S\ref{subsubsec_2024_11_04_0603} (see also \S\ref{subsubsec_2024_07_12_1227}), we explain that these error terms can be controlled in terms of Tamagawa factors. When the said Tamagawa factors are trivial, we prove that the relevant Selmer complexes are perfect, and combining this fact with the results of \cite{KLZRecLaw} with their Beilinson--Flach element Euler system, we prove that the determinants of our Selmer complexes coincide with the characteristic ideals of Selmer groups (and therefore deserve to be called ``algebraic $p$-adic $L$-functions''). The required functional equation (Theorems~\ref{thm:Func_eq_hida} and \ref{thm_2_21_2024_11_04}) then follow from Nekov\'a\v{r}'s duality theorems.

We begin Part~\ref{part_general_nonord} (which spans Sections~\ref{chap:Characteristic_Ideal_and_Determinant} through \ref{chap:Selmer_Complexes}, and is dedicated to an abstract study of Selmer complexes over in the non-ordinary setting, and their duality theory) with the introduction of the concepts of the characteristic ideals and the determinants of coadmissible torsion modules over Fr\'echet–Stein algebras in \S\ref{chap:Characteristic_Ideal_and_Determinant}. This involves a study of various rigid analytic spaces (e.g. $p$-adic annuli and rigid analytic functions on these). While the definition over Perrin-Riou's ring $\cH(\ZZ_p^\times):=A\widehat{\otimes }\cH(\ZZ_p^\times)$ of tempered distributions is borrowed from Pottharst (cf. \cite[p. 5]{pottharst}), the definition of (cylindrical) characteristic ideal (cf. Definition~\ref{defn:char_Ideal_coadmissible_H_A_module}) over its higher dimensional variants (which we denote by $\cH_A(\ZZ_p^\times)$ in the main text, where $A$ is a Tate algebra) is new.

In \S\ref{chap:Local_cohomology_and_phi_gamma_module}, we summarize basic input from \cite{p-adicHodgetheorybenoisheights}, and provide an introduction to the theory of $(\varphi, \Gamma)$-modules (over affinoids), relating them to $p$-adic Hodge theory and local (Iwasawa) Galois cohomology. This sets the stage for the detailed study of Selmer complexes in the next section.

In \S\ref{chap:Selmer_Complexes}, we define Selmer complexes over various analytic rings and discuss Selmer complexes over an affinoid algebra $A$, the Perrin-Riou ring $\cH(\ZZ_p^\times)$ of tempered distributions on $\ZZ_p^\times$, and finally over the rings $\cH_A(\ZZ_p^\times)$ where $A$ is an affinoid algebra. We establish, relying on the results from \S\ref{chap:Local_cohomology_and_phi_gamma_module}, duality theorems and use these, in turn, to derive an algebraic functional equation for the algebraic $p$-adic $L$-functions (that are defined as the determinants of appropriate Selmer complexes) under specific technical conditions. The main result of this chapter is the functional equation we prove in Theorem~\ref{thm:char_ideals_for_selCom_and_dual_selCom_H_A} for the algebraic $p$-adic $L$-functions over $\cH_A(\ZZ_p)$, where $A$ is a Tate algebra of arbitrary dimension $n$ (including $n=0$).

In \S\ref{chap:5_Rankin-Selberg_product}, where Part~\ref{part_arithmetic_nonord} begins,  we obtain the main results of the present paper, in the context of Rankin--Selberg products of Coleman families. 

This section begins with a review of the basic properties of Coleman families of modular forms and associated Galois representations (cf. \S\ref{sec_4_4_2024_07_12_1610}). Using the results of \cite{LZ0}, one proves that the cohomology of the Selmer complexes associated with Rankin--Selberg products are concentrated in degree $2$ (cf. Theorem~\ref{thm:Vanish_H^1_torsion-ness_of_H^2_for_specialization}, Theorem~\ref{thm:thm:H^1_zero_torsion_H^2_ranking_sel_ColemanFamily_1_vari}, and Theorem~\ref{thm:H^1_zero_torsion_H^2_ranking_sel_ColemanFamily}). Building on these results, combined with the perfectness results for these Selmer complexes (under a certain ``Tamagawa condition''), we prove that the determinants of our Selmer complexes coincide with the characteristic ideals of Selmer groups as in the ordinary case. With these inputs at hand, we prove the functional for algebraic $p$-adic $L$-functions associated with Rankin--Selberg products (cf. Theorems~\ref{thm:FuncEq_SelCom_Rankin_SelbergProducts_0_vari}, \ref{thm:FuncEq_SelCom_Rankin_SelbergProducts_1_vari}, \ref{thm:FuncEq_SelCom_Rankin_SelbergProducts_2_vari} and \ref{thm:H^1_zero_torsion_H^2_ranking_sel_SymColemanFamily_1_vari}), which are the main results of the present paper.

\subsection{General notation} We record here the notation that we will commonly use across all three parts of our paper. 

Let $\CC_p$ be the $p$-adic completion of $\overline{\QQ}_p$. We denote by $v_p: \CC_p \to \bbR \cup \{\infty\}$ the $p$-adic valuation on $\CC_p$ normalized so that $v_p(p)=1$. We put $|x|_p={p}^{-v_p(x)}$. We fix throughout an isomorphism $\jmath_p: \mathbb{C}\simeq \mathbb{C}_p$, as well as an embedding $\iota_\infty: \overline{\QQ}\hookrightarrow \mathbb{C}$, and put $\iota_p:=\jmath_p\circ\iota_\infty$. 

For any finite extension $F$ of $\QQ_\ell$ (where $\ell$ is any prime), we let $F^{\rm ur}$ denote the maximal unramified extension of $F$. We fix a system of primitive $p^n$-th roots of unity $\epsilon=(\zeta_{p^n})_{n\geq0}$ such that $\zeta^p_{p^{n+1}}=\zeta_{p^n}$ for all $n \geq 0$. For any field $L$, we put $L^{\rm cyc}:=\bigcup_{n=0}^{\infty}L(\zeta_{p^n})$, $H_L={\rm Gal}(\overline{L}/L^{\rm cyc})$, $\Gamma_L={\rm Gal}(L^{\rm cyc}/L)$, and finally let $\chi_L:\Gamma_L \to \ZZ^\times_p$ the $p$-adic cyclotomic character. 

We set $L_\infty=(L^{\rm cyc})^{\Delta_L}$, where $\Delta_L={\rm Gal}(L(\zeta_p)/L)$ (which we may and we will identify as a subgroup of $\Gamma_L$). We put 
$ \Gamma^0_L:={\rm Gal}(L_\infty/L)$. In the remainder of this paragraph, let us assume that $L$ is a finite extension of $\QQ$ or $\QQ_p$. In this case, $\Gamma^0_L\cong 1+p\ZZ_p \simeq \ZZ_p $. We denote by $L_n$ the unique subextension of $L_\infty$ of degree $p^n$. 

Let $E$ denote a fixed finite extension of $\QQ_p$ (which will play the role as the coefficients of the Galois modules we shall consider, and we shall enlarge it in due course as necessary) and let $\cO=\cO_E$ be its ring of integers. We denote by $\Lambda_{\cO}(\Gamma_L^0)=\cO[[\Gamma^0_K]]$ the Iwasawa algebra of $\Gamma^0_L$ with coefficients in $\cO$. A choice of a topological generator $\gamma_L$ of $\Gamma^0_L$ fixes an isomorphism $\Lambda_{\cO}(\Gamma_L^0) \cong \cO[[X]]$, which is given by $\gamma_L \mapsto X+1.$ 

\subsection{Preliminaries on modular forms}
\label{subsubsec_121_2024_11_03_826}
Let $f\in S_{k_f+2}(\Gamma_1(N_f),\varepsilon_f)$ be a normalised cuspidal Hecke eigen-newform of some weight $k_f + 2 \geq 2$ and level $N_f > 4$, and let $L$ be a number field containing the Fourier
coefficients of $f$. We fix an extension $E$ of $\QQ_p$ containing $\jmath_p\circ\iota_\infty(L)$, which we shall enlarge as our purposes require. Let $\cO=\cO_E$ denote its ring of integers, $\mathfrak{m}=\frakm_E \subset \cO$ its maximal ideal, and $\mathbb{k}=\mathbb{k}_E:=\cO/\frakm$ its residue field.

\subsubsection{Galois representations attached to modular forms}
\label{sec:Galois_reps_modular_forms}
We let $V_{E}(f)$ (resp. $V_{E}(f)^*$) denote Deligne's Galois representation (resp. its dual) attached to $f$ given as in \cite[\S 2.8]{KLZRecLaw}, which is unramified outside $S$, the finite set of primes dividing $pN_f$. If $g$ is another eigen-newform of weight $k_g+2$ and level $N_g$ with $k_g\geq0$. We enlarge $E$ so that it contains the images of the Fourier coefficients of $f$ and $g$ under $\jmath_g$. We define the Galois representation for $f \otimes g$ by $V_{E}(f \otimes g):=V_{E}(f) \otimes_{E}\otimes V_{E}(g)$, and similarly its dual $V_{E}(f \otimes g)^*:=V_{E}(f)^* \otimes_{E}V_{E}(g)^*.$
\subsubsection{The residual representation}
\label{sec:resi_reps}
Let us choose a Galois-stable lattice $T_{\cO}(f)$ (resp. $T_{\cO}(f)^*$) inside of $V_{E}(f)$ (resp. of $V_{E}(f)^*$). For all modular forms we shall encounter in this work, we assume that the residual Galois representation\footnote{We note that we denote the residual representations by $\widetilde{\rho}$ instead of $\overline{\rho}$. This is because we have reserved $\overline{\star}$ to denote cyclotomic deformation, following Nekov\'a\v{r} and Benois.}
$$\widetilde{\rho}_f\,:\, G_{\QQ} \xrightarrow{\rho_f} {\rm GL}(T_{\cO}(f)) \simeq {\rm GL}_{2}(\cO_{\cO}) \longrightarrow {\rm GL}_{2}(\mathbb{k})$$
is irreducible. In this case, $\widetilde{\rho_f}$ is independent of the choice of the lattice $T_{\cO}(f)$ and it is called the residual Galois representation attached to $f$. 

\subsubsection{Assumptions}
\label{hypo_ranking_sel_1}
Suppose that  $f$ and $g$ are a pair of eigenforms of levels coprime to $p$. We let $\{\alpha_f,\beta_f\}$ denote the roots of the Hecke polynomial of $f$ at $p$, and similarly define  $\{\alpha_g,\beta_g\}$.

The following assumptions will be needed for some of our results.
\begin{enumerate}
\item[(\mylabel{item_p_large}{$\,{\textbf{p}}\,$})] The prime $p \geq 7$ is large enough to guarantee that Theorem~\ref{thm:Irr_THEOREM 3.2.2_Loff_IMA_OF} and Proposition~\ref{prop:Prop_4.2.1_Loff_Ima} hold. 
   \item[(\mylabel{item_reg}{$\,{\textbf{Reg}}\,$})] We assume that $\alpha_f \neq \beta_f$ and $\alpha_g \neq \beta_g$ where $\alpha_f, \beta_f$ are roots of the Hecke polynomial of $f$ at $p$, and similarly for $g$. This is known as $p$-regularity.
    \item[(\mylabel{item_NZ}{$\,{\textbf{nZ}}\,$})]  None of the pairwise products $\{\alpha_f\alpha_g, \alpha_f\beta_g, \beta_f\alpha_g, \beta_f\beta_g\}$ is equal to $p^j$ or $p^{1+j}$, so the Euler factor of $L(f,g, s)$ at $p$ does not vanish at $s=j$ or $s=1+j$.
    \item[(\mylabel{item_nobf}{$\,{\textbf{Nob}}\,$})] If $f$ is ordinary, then either $\alpha_f$ is the unit root of the Hecke polynomial, or $V_E(f)_{|_{G_{\QQ_p}}}$ is not the direct sum of two characters.
    \item[(\mylabel{item_conj}{$\,{\textbf{Conj}}\,$})] $f$ is not a Galois conjugate to a twist of $g$. Namely, for any $\gamma \in G_\QQ$, the Galois conjugate $f^\gamma$ of $f$ is not equal to $g \otimes \psi$ for a Dirichlet character $\psi$. 
   \item[(\mylabel{item_nonCM}{$\,{\textbf{nCM}}\,$})]  $f$ and $g$ are non-CM modular forms. 
   \item[(\mylabel{item_nontrivial}{$\,{\textbf{nTriv}}\,$})]  The character $\varepsilon_f\varepsilon_g$ is nontrivial.
\end{enumerate}
\begin{remark}
    A conjecture of Greenberg predicts the implication $\eqref{item_nonCM}\qquad \implies{}\qquad  \eqref{item_nobf}$.  Note that this conjecture is known to be true for forms of weight $2$ thanks to the works of Serre~\cite{Serre1968}, Emerton~\cite{EmertonGreenbergConjwt2}, Ghate~\cite{GhateGreenbergConjwt2}, and Zhao~\cite{ZhaoGreenbergConjwt2}.
\end{remark}
\begin{remark}
\label{rem:p-regularity}
The following is \cite[Remark 8.1.1]{LZColeman}, which we record as it is important for our purposes. We note that \eqref{item_reg} implies that we have direct sum decompositions 
$$D_{\rm cris}(V_E(f)^*)=D_{\rm cris}(V_E(f)^*)^{\alpha_f} \oplus D_{\rm cris}(V_E(f)^*)^{\beta_f}$$ where $\varphi$ acts on the two direct summands as multiplication by $\alpha^{-1}_f$ and $\beta^{-1}_f$ respectively, and similarly for $g.$ This induces a decomposition of $D_{\rm cris}(V_E(f \otimes g)^*)$ into a direct sum of four $\varphi$-stable submodules: 
$$D_{\rm cris}(V_E(f \otimes g)^*)=\bigoplus_{x,y\in \{\alpha,\beta\}}D_{\rm cris}(V_E(f \otimes g)^*)^{x_fy_g}\,.$$
\end{remark}
\begin{defn}
\label{defn:Definition_8.1.2_LZCol} We write
$D_{\rm cris}(V_E(f \otimes g)^*)^{\alpha_f \,\circ}=D_{\rm cris}(V_E(f \otimes g)^*)^{\alpha_f\alpha_g} \oplus D_{\rm cris}(V_E(f \otimes g)^*)^{\alpha_f\beta_g}$ and we write ${\rm pr}_{\alpha_f}$ for the projection 
$D_{\rm cris}(V_E(f \otimes g)^*) \lra D_{\rm cris}(V_E(f \otimes g)^*)^{\alpha_f \,\circ}$ with $D_{\rm cris}(V_E(f \otimes g)^*)^{\beta_f \,\circ}$ as kernel. 
\end{defn}


\subsection*{Acknowledgements}
This article is derived from the PhD thesis of the second-named author (MG), who was supervised by the first-named author. KB thanks Antonio Lei and Stefano Vigni for many interesting exchanges related to this work.

KB’s research in this publication was conducted with the financial support of Taighde \'{E}ireann -- Research Ireland under Grant number IRCLA/2023/849 (HighCritical). M.G. was funded by SFI-CRT Foundations of Data Science throughout her postgraduate studies.



\part{Rankin--Selberg products of ordinary families}
\label{Part_ord}

As the first basic case, we first study the functional equations of (multivariate) algebraic $p$-adic $L$-functions associated with Rankin--Selberg products of $p$-ordinary families. In this case, Nekov\'a\v{r}'s theory of Greenberg Selmer complexes is available, and the coefficient rings that we work with are Noetherian (and the remaining portions tackle the scenario when this is no longer the case). Even in the $p$-ordinary setting, our approach to proving the functional equations seems novel. Moreover, this functional equation will likely play a role in strengthening the forthcoming work of Kundu--Ray--Vigni~\cite{FuncEqSymHida} on the Iwasawa theory of symmetric squares of Hida families. Finally, we believe that our analysis of Tamagawa factors in families might be of independent interest. 

\section{Algebraic $p$-adic $L$-functions for ordinary families and functional equation}

\subsection{Hida families} 
\label{sec:Selmer complexes_ordinary_scenario}

In this portion, we will closely follow \cite[\S7]{KLZRecLaw} and \cite[\S2.1]{On_Artin_formalism_for_p-adic_Garrett--Rankin_Lfunctions} in our discussion of Hida families. 

Let ${\mathbf f}$ be a Hida family of $p$-adic cusp forms over $R_{\mathbf f}$ with tame level $N_{\mathbf f}$ and character $\varepsilon_{\mathbf f}$ of conductor dividing $pN_{\mathbf f}$. Here, we recall that $R_{\mathbf f}$ is a branch (i.e. an irreducible component) of Hida’s universal ordinary Hecke algebra, and it is a finitely generated module over $\Lambda_{\rm wt}:=\ZZ_p[[\ZZ^\times_p]]$ (where $\ZZ^\times_p$ acts on the space of modular forms via the inverse diamond operators at $p$) by Hida's fundamental work. Then $R_{\mathbf f}$ is a complete local Noetherian domain with maximal ideal $\frakm_{\f}$. For each $k \geq 0$, and $r \geq 1$, we let $I_{k,r}$ denote the ideal of $\Lambda_{\rm wt}$ generated by $(1+p^r)-(1+p^r)^k$. We define an arithmetic prime of $R_\f$ to be a prime ideal of height $1$, denoted by $P_{k,r}$, that lies over the ideal $I_{k,r}$ for some $k\geq 0$ and $r \geq 1$. It follows from Hida's control theorem that each arithmetic prime $\cP$ of $R_\f$ above $I_{k,r}$ corresponds to an eigenform $f_\cP$ of weight $k+2 \geq 2$ and level $Np^r$, and it determines a choice of prime $\PP \mid p$ of the coefficient field $\QQ(f)$. In this setting, we say that $f_\cP$ is a specialization of the family $\f$.

\subsubsection{} Thanks to Hida, we have a big Galois representation
$$\rho_{\mathbf f}:G_{\QQ,S} \longrightarrow {\rm GL}_2({\rm Frac}(R_{\mathbf f}))$$ attached to ${\mathbf f}$, where $S$ is a finite set of primes containing all those dividing $pN_{\mathbf f}\infty$. The Galois representation $\rho_{\mathbf f}$ is characterized by the property that
$${\rm Tr}(\rho_{\mathbf f}({\rm Frob}_\ell)) = a_\ell({\mathbf f}), \quad\forall \ell \not\in S.$$
Let us denote by $T_{\mathbf f} \subset {\rm Frac}(R_{\mathbf f})^{\oplus 2}$ the Ohta lattice (see \cite[Definition 7.2.5]{KLZRecLaw}, where our $T_{\mathbf f}$
corresponds to $M({\mathbf f})^*$ in op. cit.). Let us define the universal weight character $\bfchi$ as the composite map $\bfchi:G_{\QQ} \xrightarrow{\chi_{\rm cyc}} \ZZ^\times_p \hookrightarrow \Lambda_{\rm wt}^\times$
where $\chi_{\rm cyc}$ is the $p$-adic cyclotomic character. The universal weight character $\bfchi$ gives rise to the character 
$\bfchi_{{\mathbf f}}: G_{\QQ} \xrightarrow{\bfchi} \Lambda_{\rm wt}^\times \to R^\times_{\mathbf f}.$

We shall assume the following hypothesis for all Hida families that appear in this work:
\begin{enumerate}
    \item[(\mylabel{item_Irr_res_Hida_F}{${\rm Irr}$})] The residual representation $\widetilde{\rho}_{\mathbf f}: G_{\QQ,S}\to {\rm GL}(T_\f/\m_\f)$ is an absolutely irreducible $G_{\QQ}$-representation.
\end{enumerate}
Then any $G_{\QQ}$-stable lattice in ${\rm Frac}(R_\f)^{\oplus 2}$ is homothetic to $T_\f$.
By a theorem of Wiles, we have 
$${\rho_\f}_{|_{G_{\QQ_p}}} \simeq \begin{pmatrix} 
\delta_\f & * \\
0 & \alpha_\f
\end{pmatrix}$$
where $\alpha_\f: G_{\QQ_p} \to R^\times_\f$ is the unramified character given by $\alpha_\f({\rm Frob}_p)=a_p(\f)$ and $\delta_\f:=\bfchi_\f\chi^{-1}_{\rm cyc}\alpha^{-1}_\f\epsilon_\f$. We assume (for all Hida families that appear in this work) in addition that 
\begin{enumerate}
    \item[(\mylabel{item_Irr_dist_Hida_F}{${\rm Dist}$})] $\delta_\f \not \equiv \alpha_\f \pmod {\frakm_{\f}}$.
\end{enumerate}
Then the lattice $T_\f$ fits in an exact sequence $$0 \longrightarrow F^+T_\f \longrightarrow T_\f \longrightarrow F^-T_\f \longrightarrow 0$$ 
of $R_\f[G_{\QQ_p}]$-modules, where the action on $F^+T_\f $ (resp. $F^-T_\f $) is given by $\delta_\f$ (resp. $\alpha_\f$). If $T$ is a quotient of $T_\f$, we shall denote the image of $F^+T_\f$ by $F^+T$.

\subsubsection{} 
Let $\g$ be another Hida family. We define $T_2:=T_\f \, \widehat{\otimes}_{\ZZ_p}\, T_\g$, put $N:={\rm lcm}(N_\f, N_\g)$, and let $S=S(T_2)$ denote the set of primes dividing $pN\infty$. Then $T_2$ is a $G_{\QQ,S}$-representation of rank $4$ over the complete local Noetherian ring $R_2:=R_\f \widehat{\otimes}_{\ZZ_p} R_\g$ of of Krull-dimension $3$. We define $F^+T_2:=F^+T_\f\, \widehat{\otimes}_{\ZZ_p}\, T_\g$.

For each specialisation $f$ of $\f$, we have a specialisation-at-$f$ isomorphism ${\rm sp}_f: T_\f \otimes_{R_\f} \cO \xrightarrow{\,\sim\,} T_{\cO}(f)$ (where $\cO=\cO_E$ is as in \S\ref{subsubsec_121_2024_11_03_826}, where we enlarge $E$ so that it contains the Hecke fields of both $f$ and $g$). We then have $T_2 \otimes_{{\rm sp}_f\otimes {\rm sp}_g} \cO \simeq T_{\cO}(f) \otimes_{\cO_E} T_{\cO}(g)=:T_{\cO}(f \otimes g)$ for all specializations $f$ (and $g$) of $\f$ (and $\g$). We also put 
$F^+T_{\cO}(f \otimes g):=F^+T_{\cO}(f) \otimes_{\cO_E} T_{\cO}(g).$ 

\subsection{Greenberg Local conditions.}
For $T_2$ as above, we have a short exact sequences of $G_{\QQ_p}$-modules
\begin{equation}
\label{eq:short_exact_seq_Greenberg}
0 \longrightarrow F^+T_2 \longrightarrow T_2 \longrightarrow T_2/F^+T_2 \longrightarrow 0\,.
\end{equation}
Consider the following local conditions on $T_2$ on the level of continuous cochains, in the sense of \cite[\S6]{nekovar06}:
\begin{equation}
\begin{matrix}
\label{eq:greenberg_local}
U^+_v(T_2)=\begin{cases} C^\bullet(G_p, F^+T_2),& \hbox{ if } v=p \\
C^\bullet(G_v/I_v, (T_2)^{I_v}), &  \hbox{ if }  v \in S \setminus \{p\}
\end{cases}
\end{matrix}
\end{equation}
where $I_v$ is the inertia group at $v.$ These come equipped with a morphism of complexes $$i^+_v:U^+_v(T_2) \longrightarrow C^\bullet(G_v, T_2), \qquad \forall v \in S\,.$$ 
For each $v \in S \setminus \{p\}$, we note that $U^+_v(T_2)$ is quasi-isomorphic to the complex 
$$U^+_v(T_2)=[T_2^{I_v} \xrightarrow{{\rm Frob}_v-1} T_2^{I_v}],$$
concentrated in degrees $0$ and $1$, and ${\rm Frob}_v \in G_v/I_v$ is the Frobenius element.
The data 
$$\Delta=\Delta(F^+T_2):=\{U^+_v(T_2) \xrightarrow{i^+_v} C^\bullet(G_v, T_2): v \in S\}$$
is called a Greenberg-local condition on $T_2.$

\subsubsection{Selmer complexes associated to Greenberg local conditions.}
\label{sec:selCom_Greenberg}

Let us put 
$$U^+_S(T_2):=U^+_p(T_2) \oplus \bigoplus_{v \in S\setminus \{p\}} U^+_v(T_2)\,,$$ 
$$C^\bullet_{S}(T_2):=\bigoplus_{v \in S} C^\bullet(G_v, T_2)\,.$$ 

We define the Selmer complex associated to $(T_2, S, \Delta)$ on setting $$S^\bullet(T_2, F^+T_2):={\rm cone}\left( C^\bullet(G_{\QQ,S}, T_2) \oplus U^+_S(T_2) \xrightarrow{{\rm res}_S-i^+_S} C^\bullet_{S}(T_2)\right)[-1].$$
We denote the corresponding object in the derived category by $\boldR\Gamma(T_2, F^+T_2)$ and its cohomology by $\boldR^i\Gamma(T_2, F^+T_2)$. For $T_2^*:={\rm Hom}(T_2,R_2)$, we similarly define the dual Selmer complex
$S^\bullet(T_2^*(1), F^+T_2^*(1))$ given by the dual Greenberg conditions $F^+T_2^*:=F^+T^*_\f \otimes_{\ZZ_p} T^*_\g$, where $F^+T^*_\f:={\rm Hom}_{R_\f}(F^-T_\f, R_\f)$\,.

\subsubsection{Perfectness}
Let $\frakm_{2}$ denote the maximal ideal of $R_2$ and let us put $\widetilde{T}_2:=T_2 \otimes_{R_2} R_2/\frakm_{2}$. From now on, we assume the following two conditions:
\begin{enumerate}
    \item[(\mylabel{item_Irr_res_H0}{$H^0$})] $\widetilde{T}_2^{G_\QQ}=0={\widetilde{T}_2}^\vee(1)^{G_\QQ},$ where  $(-)^\vee:={\rm Hom}(-, \QQ_p/\ZZ_p)$ denotes the Pontryagin duality functor.
    \item[(\mylabel{item_Irr_res_Tama_Hida}{${\rm Tam}$})] $H^1(I_v, T_2)$ is free for all $v \in S \setminus \{p\}$
\end{enumerate}

\begin{proposition}[Proposition 4.46 in \cite{On_Artin_formalism_for_p-adic_Garrett--Rankin_Lfunctions}]
\label{prop:Prop4.46_ON_Artin}   
\item[1)] For any ideal $I$ of $R_2$, we have $\boldR\Gamma(T_2, F^+T_2) \otimes^{\bbLL}_{R_2} R_2/I \simeq \boldR\Gamma(T_2/I, F^+T_2/I).$
\item[2)] $\boldR\Gamma(T_2, F^+T_2) \in \cD^{[1,2]}_{\rm perf}(R_2).$ Similary, $\boldR\Gamma(T_2^*, F^+T_2^*) \in \cD^{[1,2]}_{\rm perf}(R_2).$
\end{proposition}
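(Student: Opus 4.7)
My plan is to follow Nekov\'a\v{r}'s framework~\cite{nekovar06}, reducing both assertions to (a) base change for each cochain-level constituent of the Selmer complex, and (b) a Nakayama-type descent from the residue field of $R_2$ to $R_2$ itself.

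For part (1), since the derived tensor product $-\otimes^{\bbLL}_{R_2}R_2/I$ commutes with the formation of mapping cones, it suffices to establish the corresponding base change isomorphism for each of the three building blocks: the global cochain complex $C^\bullet(G_{\QQ,S},T_2)$, the semilocal complex $C^\bullet_S(T_2)$, and the local conditions $U^+_S(T_2)$. For the global and local-at-$p$ cochain complexes, the standard input is that, for a finite free $R_2$-module on which a profinite group of finite $p$-cohomological dimension acts continuously, the continuous cochain complex admits a representative that is a bounded complex of finite free $R_2$-modules, and any such representative is automatically compatible with base change. For each $v\in S\setminus\{p\}$, the unramified piece $U^+_v(T_2)$ is quasi-isomorphic to the two-term complex $[T_2^{I_v}\xrightarrow{\mathrm{Frob}_v-1}T_2^{I_v}]$, and the Tamagawa freeness hypothesis~\eqref{item_Irr_res_Tama_Hida} is precisely what is needed to force $T_2^{I_v}$ to be a free $R_2$-module, so this two-term complex is already a perfect representative compatible with base change. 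The verification at the bad primes is where I anticipate the main technical work.

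For part (2), combining the three perfect constituents above through the cone construction yields $\boldR\Gamma(T_2,F^+T_2)\in\cD^{[0,3]}_{\rm perf}(R_2)$ a priori. To tighten the amplitude to $[1,2]$, I would invoke part (1) with $I=\frakm_2$ together with a Nakayama-type principle for perfect complexes: the amplitude of a perfect complex over a local ring is detected by the cohomological support of its derived reduction to the residue field. It therefore suffices to verify $H^0=H^3=0$ for the Selmer complex of $\widetilde{T}_2$. The vanishing of $H^0$ is immediate from $\widetilde{T}_2^{G_\QQ}=0$, provided by the first half of~\eqref{item_Irr_res_H0}. For the vanishing of $H^3$, I would appeal to Nekov\'a\v{r}'s global duality for Greenberg Selmer complexes, which identifies this cohomology with the Pontryagin dual of $H^0$ of the dual Selmer complex of $\widetilde{T}_2^\vee(1)$; the latter is killed by the second half of~\eqref{item_Irr_res_H0}.

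The assertion for $\boldR\Gamma(T_2^*(1),F^+T_2^*(1))$ is entirely symmetric: the Tamagawa condition~\eqref{item_Irr_res_Tama_Hida} at bad primes transfers to the dual local conditions through the same two-term analysis, and the two halves of~\eqref{item_Irr_res_H0} exchange roles in controlling $H^0$ and $H^3$ of the reduction.
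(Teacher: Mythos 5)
Your proposal is correct and is in substance the argument this statement rests on: the paper itself offers no proof here, importing the result as Proposition 4.46 of \cite{On_Artin_formalism_for_p-adic_Garrett--Rankin_Lfunctions}, whose proof runs along exactly the lines you describe --- base change for the three cochain-level constituents, \eqref{item_Irr_res_Tama_Hida} taking care of the unramified conditions at $v\in S\setminus\{p\}$, and \eqref{item_Irr_res_H0} combined with duality and the residue-field (Nakayama) criterion pinning the perfect amplitude to $[1,2]$. The one place to tighten your wording is at the bad primes: what \eqref{item_Irr_res_Tama_Hida} gives directly is freeness of $H^1(I_v,T_2)$, i.e.\ of the cokernel of $t-1$ acting on $T_2^{I_v^{(p)}}$ for $t$ a topological generator of $I_v/I_v^{(p)}$, and it is this Tor-vanishing (which in turn also yields freeness of $T_2^{I_v}$) that makes the formation of $I_v$-invariants, and hence the two-term complex $[T_2^{I_v}\xrightarrow{\mathrm{Frob}_v-1}T_2^{I_v}]$, commute with $\otimes_{R_2}R_2/I$; freeness of $T_2^{I_v}$ alone would not suffice for the base-change compatibility you assert.
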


\begin{corollary}[Corollary 4.49 in \cite{On_Artin_formalism_for_p-adic_Garrett--Rankin_Lfunctions}]
\label{coro:Corollary 4.49_On_Artin}
Suppose that $R_2$ is normal, and that the $R_2$-module $\boldR^2\Gamma(T_2, F^+T_2)$ is torsion. Then, $\boldR^1\Gamma(T_2, F^+T_2)$ vanishes, and the projective dimension of $H^2(T_2, F^+T_2)$ equals to $1$ and $${\rm det}(\boldR\Gamma(T_2, F^+T_2))={\rm char}_{R_2}(\boldR^2\Gamma(T_2, F^+T_2)).$$ 
\end{corollary}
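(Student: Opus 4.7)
The strategy proceeds in three stages: realize the Selmer complex as a two-term complex of free $R_2$-modules; compute a generic Euler characteristic to force $\boldR^1\Gamma(T_2, F^+T_2) = 0$; and identify the Knudsen--Mumford determinant with the characteristic ideal through Fitting ideals.

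By Proposition~\ref{prop:Prop4.46_ON_Artin}(2), $\boldR\Gamma(T_2, F^+T_2) \in \cD^{[1,2]}_{\rm perf}(R_2)$, so one can choose a representative $[P^1 \xrightarrow{d} P^2]$ with $P^i$ finitely generated projective over $R_2$ placed in degrees $1$ and $2$. Since $R_2$ is a normal local Noetherian ring, it is an integral domain; as projectives over a local ring are free, $P^1 \cong R_2^a$ and $P^2 \cong R_2^b$, with $\boldR^1\Gamma = \ker(d)$ and $\boldR^2\Gamma = \mathrm{coker}(d)$. Passing to $K := \mathrm{Frac}(R_2)$, the torsion hypothesis gives $\boldR^2\Gamma \otimes_{R_2} K = 0$, so $a - b = \dim_K(\boldR^1\Gamma \otimes_{R_2} K)$.

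Next, I would invoke Nekov\'a\v{r}'s global and local Euler--Poincar\'e formulas for Selmer complexes (cf.\ \cite[\S0.11, \S7]{nekovar06}) to compute $\chi := a - b = 0$. The local contribution at $p$ reads $\mathrm{rank}_{R_2}(F^+T_2) - \mathrm{rank}_{R_2}(T_2) = 2 - 4 = -2$; the global contribution is $-\mathrm{rank}_{R_2}(T_2^-) = -2$, where $T_2^-$ denotes the $(-1)$-eigenspace of complex conjugation acting on $T_2 = T_\f \otimes T_\g$ (both factors being odd, the tensor has balanced $\pm$ parts of rank $2$); the tame contributions at $v \in S \setminus \{p\}$ vanish thanks to hypothesis \eqref{item_Irr_res_Tama_Hida}. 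Tracking signs through the cone construction defining the Selmer complex yields $\chi = 0$; hence $\boldR^1\Gamma$ is $R_2$-torsion. But $\boldR^1\Gamma = \ker(d) \subseteq P^1$ is a submodule of a free module over the domain $R_2$, hence torsion-free, forcing $\boldR^1\Gamma(T_2, F^+T_2) = 0$.

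With $\boldR^1\Gamma$ vanishing, the complex degenerates to a short exact sequence $0 \to P^1 \xrightarrow{d} P^2 \to \boldR^2\Gamma \to 0$, a free resolution of length $1$; consequently $\mathrm{pd}_{R_2}(\boldR^2\Gamma) \leq 1$, with equality because a nonzero torsion module over a domain cannot be projective. Finally, the Knudsen--Mumford determinant of the complex equals $\det(d) \cdot R_2 = \Fitt_0(\boldR^2\Gamma)$; over the normal Noetherian ring $R_2$, localization at each height-one prime $\fp$ (where $(R_2)_\fp$ is a DVR) shows that $\Fitt_0$ and $\mathrm{char}_{R_2}$ agree for torsion modules of projective dimension $1$, producing the required identity. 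The principal obstacle is the Euler--Poincar\'e computation in the second stage: correctly balancing the global complex-conjugation contribution against the $p$-local Panchishkin contribution, without an explicit cyclotomic variable in the $3$-dimensional coefficient ring $R_2$. Hypothesis \eqref{item_Irr_res_Tama_Hida} is crucial for trivializing the tame contributions, without which the cancellation would fail.
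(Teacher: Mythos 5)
Your proposal is correct in substance, but note that this paper never reproves the corollary: it is imported wholesale from \cite[Corollary 4.49]{On_Artin_formalism_for_p-adic_Garrett--Rankin_Lfunctions}. The closest in-paper analogues (the proofs of Theorem~\ref{thm:Func_eq_hida}, Proposition~\ref{prop:det_torsion_M_Over_H_A_char_Ideal_M} and Theorem~\ref{thm:det_selCOm_over_H_A_=_charIdeal_Cohomo_deg2}) run the same final stage as yours — represent the perfect complex by $[P^1\xrightarrow{\,d\,}P^2]$ with free modules of equal rank, identify $\det\,\boldR\Gamma$ with $\det(d)$, and compare with the characteristic ideal by localizing at height-one primes of the normal ring — but they obtain the vanishing of $\boldR^1\Gamma$ either as a hypothesis or from Euler-system input plus Nakayama (Theorem~\ref{thm:thm:H^1_Selcom_Hida_family_Iwasa_zero}), whereas you derive it from torsionness of $\boldR^2\Gamma$ alone via an Euler--Poincar\'e count. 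That self-contained derivation is exactly what the corollary's hypotheses allow, and it is the genuinely distinct ingredient in your write-up; what it buys is independence from any nonvanishing/Euler-system assumption, at the cost of having to run Nekov\'a\v{r}'s EP formulas over the big ring $R_2$ (legitimate here, since the complex commutes with base change by Proposition~\ref{prop:Prop4.46_ON_Artin}(1), so the rank count can be done generically or at a classical specialization).

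Three points to tighten. (1) Your EP bookkeeping, as written, does not sum to zero: the correct tally is $\chi(C^\bullet(G_{\QQ,S},T_2))=\rank_{R_2} T_2^{c=1}-\rank_{R_2}T_2=-2$ for the global term, while the $p$-local contribution entering the cone is $\chi(U^+_p)-\chi(C^\bullet(G_p,T_2))=-\rank F^+T_2+\rank T_2=+2$, and the tame terms vanish, giving $\chi=0$; your stated values $-2$ and $-2$ are off by a sign in the local term. (2) Hypothesis \eqref{item_Irr_res_Tama_Hida} is not what kills the tame EP contributions — those vanish identically, since $\chi(U^+_v)=0=\chi(C^\bullet(G_v,T_2))$ for $v\nmid p$; its actual role is to underwrite the perfectness statement of Proposition~\ref{prop:Prop4.46_ON_Artin}(2) that you invoke at the outset. (3) In the last step, agreement of $\Fitt_0(\boldR^2\Gamma)$ and ${\rm char}_{R_2}(\boldR^2\Gamma)$ after localization at height-one primes does not by itself force equality of the ideals; you need that $\det(d)R_2$ is principal (hence divisorial over the normal domain $R_2$) together with a comparison lemma for a principal ideal against a product of height-one primes — this is precisely the mechanism used in Proposition~\ref{prop:det_torsion_M_Over_H_A_char_Ideal_M} via \cite[Lemma 4.43]{On_Artin_formalism_for_p-adic_Garrett--Rankin_Lfunctions}, and you should cite or reproduce it. Finally, the assertion that the projective dimension equals $1$ implicitly assumes $\boldR^2\Gamma(T_2,F^+T_2)\neq 0$, as your parenthetical already suggests.
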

See also \cite[Proposition 4.44]{On_Artin_formalism_for_p-adic_Garrett--Rankin_Lfunctions} and \cite[Corollary 4.45]{On_Artin_formalism_for_p-adic_Garrett--Rankin_Lfunctions} for a general perspective on the equality in Corollary~\ref{coro:Corollary 4.49_On_Artin}.

\subsection{Iwasawa theoretic Selmer complexes}
\subsubsection{Ring theoretic preparations}
Let us consider the ring $R_2[[\Gamma_{\QQ}]]$, which is a complete semi-local complete Noetherian ring. The minimal ideals of $R_2[[\Gamma_{\QQ}]]$ are indexed by the characters $\eta \in \widehat{\Delta}_\QQ$ of the torsion subgroup $\Delta_{\QQ}\simeq \mathbb{F}_p^\times$ of $\Gamma_{\QQ}$. Note also that the ring $R_2[[\Gamma_{\QQ}^0]]$ is complete local Noetherian of dimension $4$, and 
$$R_2[[\Gamma_{\QQ}]]\simeq \bigoplus_{\eta \in \widehat{\Delta}_{\QQ}}e_\eta R_2[[\Gamma_{\QQ}]]\simeq \bigoplus_{\eta \in \widehat{\Delta}_{\QQ}} R_2[[\Gamma_{\QQ}^0]]\,,$$
where $e_\eta=\sum_{g\in \Delta_{\QQ}} \eta^{-1}(g) g$ is the idempotent corresponding to $\eta$. In what follows, our results over the ring $R_2[[\Gamma_{\QQ}]]$ will be deduced from those over the complete local Noetherian ring $R_2[[\Gamma_{\QQ}^0]]$ via the isomorphism above.

We define the characteristic ideal of a torsion module $M$ over $R_2[[\Gamma_{\QQ}]]$ as follows:
\begin{defn}
\label{defn:chara_over_units_Zp}
${\rm char}_{R_2[[\Gamma_{\QQ}]]} M:= \Sigma_{\eta \in \Delta_{\QQ}} {\rm char}_{R_2[[\Gamma_{\QQ}^0]]}\left( e_\eta M \otimes_{R_2[[\Gamma_{\QQ}]]} R_2[[\Gamma_{\QQ}^0]] \right) e_\eta\,.$ 
\end{defn}

\subsubsection{}
We set $\overline{T_2}:=T_2 \otimes_{R_2} R_2[[\Gamma_{\QQ}]]$ and define  $F^+\overline{T_2}:=F^+T_2 \otimes_{R_2} R_2[[\Gamma_{\QQ}]]$. As before, we define the Selmer complex associated to $(\overline{T_2}, S, \Delta)$ on setting 
$$S^\bullet(\overline{T_2}, F^+\overline{T_2}):={\rm cone}\left( C^\bullet(G_{\QQ,S}, \overline{T_2}) \oplus U^+_S(\overline{T_2}) \xrightarrow{{\rm res}_S-i^+_S} C^\bullet_{S}(\overline{T_2})\right)[-1]\,,$$
the corresponding object $\boldR\Gamma(\overline{T_2}, F^+\overline{T_2})$ in the derived category.

\subsubsection{Dualtiy of Selmer Complexes}
\label{sec:duality_sel_compl} 
By \cite[8.9.6.3]{nekovar06}, we have an exact triangle (Grothendieck duality)
$$\boldR\Gamma(\overline{T_2}, F^+\overline{T_2}) \xrightarrow{\,u\,} \boldR{\rm Hom}_{R_2[[\Gamma_{\QQ}]]}(\boldR\Gamma(\overline{T_2^*}, F^+\overline{T_2^*}), R_2[[\Gamma_{\QQ}]])[-3] \longrightarrow \bigoplus_{v \in S} {\rm Err}_v\,,$$ 
where: 
\begin{enumerate}
    \item[i)] ${\rm Err}_v={\rm Err}_v(\mathcal{D},\overline{T_2}) \simeq \cD \circ D({\rm Cone}\left(H^{3}_{\{m\}}(H^1(I_v,\overline{T_2}) ) \xrightarrow{{\rm Frob}_v-1} H^{3}_{\{m\}}(H^1(I_v,\overline{T_2}) ))\right)[-1]$\,, see \cite[Proposition 7.6.7]{nekovar06}. We recall here that $\cD$ denotes Grothendieck duality, and $D$ denotes Matlis duality (cf. \cite{nekovar06}, \S0.4).
\item[ii)] $H^{3}_{\{m\}}(H^1(I_v,\overline{T_2}))$ is as in \cite[\S2.4.1]{nekovar06}.
\end{enumerate}


We conclude this subsection with the following result on the error terms ${\rm Err}_v$.
\begin{proposition}[Proposition 8.9.7.3 and Corollary 8.9.7.4 in \cite{nekovar06}]
\label{prop:Proposition 8.9.7.3_nekovar}
For each non-archimedean prime $v \nmid p$,  we have an isomorphism ${\rm Err}_v \simeq 0$ in $\cD^b(_{R_2[[\Gamma_{\QQ}]]}{\rm Mod}/\hbox{\rm pseudo-null})$. In particular, the cohomology of the complex  ${\rm Err}_v$ is pseudo-null in all degrees.
\end{proposition}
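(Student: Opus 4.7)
The plan is to reduce the claim to showing that the two cohomology modules of the complex
$$\mathrm{Cone}\!\left(H^{3}_{\{m\}}(H^1(I_v,\overline{T_2})) \xrightarrow{\mathrm{Frob}_v-1} H^{3}_{\{m\}}(H^1(I_v,\overline{T_2}))\right)$$
--- namely the kernel and cokernel of $\mathrm{Frob}_v-1$ acting on $H^{3}_{\{m\}}(H^1(I_v,\overline{T_2}))$ --- are pseudo-null as $R_2[[\Gamma_{\QQ}]]$-modules. Once this is granted, the proposition follows from the fact that both Matlis duality $D$ and Grothendieck duality $\cD$ preserve the Serre subcategory of pseudo-null modules, so that $\cD\circ D$ of the cone (shifted by $[-1]$) is the zero object in $\cD^b({}_{R_2[[\Gamma_{\QQ}]]}{\rm Mod}/\hbox{\rm pseudo-null})$. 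Via the semi-local decomposition $R_2[[\Gamma_{\QQ}]]\simeq \bigoplus_{\eta} e_\eta R_2[[\Gamma^0_{\QQ}]]$, everything reduces to the complete local Noetherian ring $R:=R_2[[\Gamma^0_{\QQ}]]$ of Krull dimension $4$, in which ``pseudo-null'' simply means supported in codimension $\geq 2$.

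First, I would exploit the fact that the cyclotomic $\ZZ_p$-extension $\QQ^{\rm cyc}/\QQ$ is unramified at every $v\nmid p$. Consequently the inertia subgroup $I_v$ acts trivially on the factor $R_2[[\Gamma_{\QQ}]]$ of $\overline{T_2}=T_2\otimes_{R_2}R_2[[\Gamma_{\QQ}]]$, yielding a natural identification
$$H^1(I_v,\overline{T_2})\;\simeq\; H^1(I_v,T_2)\otimes_{R_2}R_2[[\Gamma_{\QQ}]]$$
on which $\mathrm{Frob}_v$ acts diagonally: through its natural Galois action on the left factor, and through multiplication by the image $\gamma_v\in\Gamma_{\QQ}$ of $\mathrm{Frob}_v$ on the right. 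Since $H^1(I_v,T_2)$ is finitely generated over the Noetherian ring $R_2$, the Cayley--Hamilton theorem produces a monic polynomial $P(X)\in R_2[X]$ annihilating $\mathrm{Frob}_v$; the kernel and cokernel of $\mathrm{Frob}_v-1$ on the above tensor product are therefore both annihilated by $P(\gamma_v)\in R_2[[\Gamma_{\QQ}]]$. Because $\gamma_v$ has infinite order in $\Gamma_{\QQ}\simeq\ZZ_p^\times$, the element $P(\gamma_v)$ is a nonzero divisor outside the maximal ideal of $R_2[[\Gamma^0_{\QQ}]]$, so these modules are supported in codimension $\geq 1$ of $\mathrm{Spec}(R)$.

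Second, the local cohomology functor $H^{3}_{\{m\}}$ raises codimension further. For a finitely generated $R$-module $N$ supported in codimension $c$, the module $H^{i}_{\{m\}}(N)$ vanishes for $i>\dim N = \dim R - c$, and after passing through $H^{3}_{\{m\}}$ the combination with the annihilator $P(\gamma_v)$ above promotes the kernel and cokernel of $\mathrm{Frob}_v-1$ on $H^{3}_{\{m\}}(H^1(I_v,\overline{T_2}))$ to codimension $\geq 2$, i.e.\ to pseudo-nullity. Finally, since Matlis and Grothendieck duality over $R$ interchange top-codimension cohomology with bottom-codimension support, they preserve the class of pseudo-null modules, so $\mathrm{Err}_v$ has pseudo-null cohomology in every degree.

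The main obstacle I expect is the codimension bookkeeping through $\cD\circ D$: although the principle that both dualities preserve pseudo-nullity is standard, making the shifts precise over the four-dimensional ring $R$ requires one to unpack Nekov\'a\v{r}'s local-duality machinery \cite[\S7.6--7.7]{nekovar06} in a way that is compatible with the decomposition by characters $\eta\in\widehat{\Delta}_{\QQ}$. Once the dualities are tracked carefully, the remaining inputs --- the unramifiedness of the cyclotomic tower away from $p$, the finiteness of $H^1(I_v,T_2)$ over $R_2$, and a Cayley--Hamilton argument for $\mathrm{Frob}_v$ --- are routine.
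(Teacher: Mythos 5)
The paper does not actually prove this statement: it is imported verbatim from Nekov\'a\v{r} (Proposition 8.9.7.3 and Corollary 8.9.7.4 of \cite{nekovar06}), so the only question is whether your reconstruction is sound. As written, it is not, for two related reasons.

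First, your reduction is to the wrong statement. The module $H^{3}_{\{m\}}(H^1(I_v,\overline{T_2}))$ is a local cohomology module, hence a direct limit of modules killed by powers of the maximal ideal; it and every subquotient of it (in particular the kernel and cokernel of ${\rm Frob}_v-1$) are supported at the closed point. With ``pseudo-null $=$ support of codimension $\geq 2$'', your proposed reduction would therefore make the proposition trivially true, without ever using $v\nmid p$, the unramifiedness of the cyclotomic tower, or any property of Frobenius --- a sign that this cannot be the right reduction. Relatedly, the assertion that Matlis duality $D$ and Grothendieck duality $\cD$ each ``preserve the Serre subcategory of pseudo-null modules'' is not correct as stated: $D$ exchanges finitely generated and artinian-type modules, and what the composite $\cD\circ D$ really implements here is local duality. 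After dualizing, the cohomology of ${\rm Err}_v$ is built from finitely generated modules of the form ${\rm Ext}^q_{R_2}\!\left(H^1(I_v,T_2),R_2\right)\otimes_{R_2}R_2[[\Gamma_{\QQ}]]$ with $q\geq 1$ (flat base change from $R_2$ to $R_2[[\Gamma_{\QQ}]]$), with ${\rm Frob}_v^{\pm 1}-1$ acting diagonally; this is exactly the description the paper itself invokes in the proof of Proposition~\ref{prop:Iwasa_Tamagama_factor_0__10062024_0830}. Pseudo-nullity has to be established for the kernels and cokernels of ${\rm Frob}_v-1$ on \emph{these} modules.

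Second, even for the correct modules, your mechanism only delivers torsion, not pseudo-nullity. The Cayley--Hamilton/infinite-order argument (correctly run with $P(\gamma_v^{-1})$, or the reciprocal polynomial, and upgraded by Weierstrass division: since $P(\gamma_v^{-1})$ is nonzero modulo $\frakm_{R_2}$ by linear independence of characters, $R_2[[\Gamma^0_{\QQ}]]/(P(\gamma_v^{-1}))$ is finite over $R_2$) shows that the kernel and cokernel of ${\rm Frob}_v-1$ on $N\otimes_{R_2}R_2[[\Gamma_{\QQ}]]$ are finitely generated $R_2$-modules; that bounds their dimension by $\dim R_2=\dim R_2[[\Gamma^0_{\QQ}]]-1$, i.e.\ codimension $\geq 1$ only. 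The missing idea --- the actual content of Nekov\'a\v{r}'s argument --- is a second, independent codimension drop: only the ${\rm Ext}^q$ with $q\geq 1$ (equivalently, only non-top local cohomology) contribute to ${\rm Err}_v$, and these are $R_2$-torsion, so the kernel and cokernel above are in addition killed by ${\rm Ann}_{R_2}(N)$, giving dimension $\leq \dim R_2-1=\dim R_2[[\Gamma^0_{\QQ}]]-2$, i.e.\ pseudo-nullity. Your appeal to ``$H^{3}_{\{m\}}$ raises codimension'' via Grothendieck vanishing does not supply this step; indeed you flag the codimension bookkeeping through $\cD\circ D$ as the obstacle, and that is precisely where the proof lives.
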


\begin{remark}
Our main strategy in this thesis is to exploit the description of the algebraic $p$-adic $L$-functions as determinants of Selmer complexes. Unfortunately, Proposition~\ref{prop:Proposition 8.9.7.3_nekovar} does not allow any control over the determinant of the error terms, and we refine this general result in the next subsection.
\end{remark}
\subsection{Perfect amplitude of Selmer complexes}
\label{subsec_4_7_11_2024_07_12}
Let us denote by $\rho_{T_2}$ the Galois representation afforded by the Galois module $T_2$. 
\subsubsection{Tamagawa numbers in families} 
\label{subsubsec_2024_11_04_0603}
We recall that $I_v$ denotes a fixed inertia group at $v\nmid p$, and $I_v^{(p)}<I_v$ is the unique subgroup such that $I_v/I_v^{(p)}$ is the maximal pro-$p$ quotient of $I_v$ (in particular,  $I_v/I_v^{(p)}\simeq \ZZ_p$).

Proposition~\ref{prop:Corollary 4.26_vari_on_artin} below is an extension of \cite[Corollary 4.26]{On_Artin_formalism_for_p-adic_Garrett--Rankin_Lfunctions}.

\begin{proposition}
\label{prop:Corollary 4.26_vari_on_artin}  
If both $\rho_\f(I_v)$ and $\rho_\g(I_v)$ have infinite cardinality, then the
following are equivalent.
\item[i)] $H^1(I_v, T_2)$ is free.
\item[ii)] $H^1(I_v, T_\cO(f_{\cP_1}) \otimes_{\cO} T_\cO(g_{\cP_2}))$ is free for some $\cO$-valued arithmetic prime ${\cP_1}$ of $R_\f$ (resp. ${\cP_2}$ of $R_\g$).
\item[iii)] The Tamagawa factor for $T_\cO(f_{\cP_1})$ at $v$ equals to $1$ for some arithmetic prime ${\cP}_1$ of $\cR_\f$, and the same holds for  $T_\cO(f_{\cP_2})$ for some arithmetic prime ${\cP}_2$ of $\cR_\g$.

Moreover, whenever these equivalent conditions are satisfied, then $H^1(I_w, T_\f \otimes T_\g)$ is free for any finite index
subgroup $I_w$ of $I_v$ with $p \nmid  [I_v:I_w].$
\end{proposition}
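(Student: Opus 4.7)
The plan is to reduce to the single-family setting treated in \cite[Corollary 4.26]{On_Artin_formalism_for_p-adic_Garrett--Rankin_Lfunctions} and then handle the tensor product via a Künneth-style analysis of the inertia action. Since $v \nmid p$, the wild inertia subgroup $I_v^{(p)}$ is pro-$\ell$ with $\ell \neq p$, so on $\ZZ_p$-free coefficient modules the computation of $H^1(I_v, -)$ is controlled by tame inertia and reduces to $\mathrm{coker}(\sigma - 1)$, where $\sigma$ is a topological generator of the tame quotient. Under the hypothesis that $\rho_\f(I_v)$ and $\rho_\g(I_v)$ have infinite cardinality, after passing (if necessary) to a prime-to-$p$ finite-index subgroup $I_w$---precisely the setting of the ``Moreover'' statement---the inertia action on $T_\f$ and $T_\g$ factors through tame characters whose eigenvalues admit an explicit description that is essentially independent of the weight variable.

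For the equivalence (ii)$\iff$(iii), I would invoke the single-family case of \cite[Corollary 4.26]{On_Artin_formalism_for_p-adic_Garrett--Rankin_Lfunctions} to identify freeness of $H^1(I_v, T_\cO(f_{\cP_1}))$ and of $H^1(I_v, T_\cO(g_{\cP_2}))$ with the triviality of the respective Tamagawa factors at $v$. The tensor-product step then requires showing that $H^1(I_v, A \otimes_\cO B)$ is $\cO$-free if and only if both $H^1(I_v, A)$ and $H^1(I_v, B)$ are, where $A = T_\cO(f_{\cP_1})$ and $B = T_\cO(g_{\cP_2})$: with $\sigma$ acting diagonally as $\sigma_A \otimes \sigma_B$, the torsion of $\mathrm{coker}(\sigma_{A\otimes B} - 1)$ is governed by the eigenvalues of $\sigma_A$ and $\sigma_B$, and the infinite-image hypothesis keeps these eigenvalues sufficiently non-degenerate to exclude accidental cancellations in the tensor product. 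For (i)$\iff$(ii), the key observation is that $R\Gamma(I_v, T_2)$ sits in degrees $[0,1]$, so derived base change along the specialization $R_2 \twoheadrightarrow \cO$ at the arithmetic prime $\cP_1 \otimes \cP_2$ yields an isomorphism $H^1(I_v, T_2) \otimes^{\mathbb{L}}_{R_2} \cO \simeq H^1(I_v, T_\cO(f_{\cP_1}) \otimes T_\cO(g_{\cP_2}))$. Freeness at a single specialization combined with constancy of rank along the family (which itself follows from the tame-inertia reduction being essentially weight-independent in a Hida family) upgrades via Nakayama's lemma to freeness of $H^1(I_v, T_2)$ over $R_2$.

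The ``Moreover'' statement is handled by the same tame-inertia analysis: for $I_w \leq I_v$ of prime-to-$p$ index, the tame generator of $I_w$ is a suitable prime-to-$p$ power of that of $I_v$, and its eigenvalues retain the structure needed for the cokernel computation to produce a free module. I expect the main obstacle to lie in the tensor-product step of (ii)$\iff$(iii), since freeness of $H^1(I_v, A \otimes B)$ does not in general imply freeness of either tensor factor; ruling out pathological cancellations between the eigenvalues of $\sigma_A$ and $\sigma_B$ is what crucially requires the infinite-image hypothesis on both $\rho_\f(I_v)$ and $\rho_\g(I_v)$, and it is the combinatorial bookkeeping of these eigenvalues that I anticipate being the most delicate part of the argument.
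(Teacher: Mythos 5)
The decisive gap is in your model of the local action, and it undermines exactly the step you flag as delicate. In the regime of this proposition (both $\rho_\f(I_v)$ and $\rho_\g(I_v)$ infinite), \cite[Proposition 4.25]{On_Artin_formalism_for_p-adic_Garrett--Rankin_Lfunctions} forces a Steinberg-type structure at $v$: up to a quadratic twist, the topological generator $t$ of $I_v/I_v^{(p)}$ acts on $T_\f$ (and on every specialization $T_\cO(f_{\cP_1})$) unipotently, the only integral invariant being the off-diagonal entry $a_\f$ (resp.\ $a_\g$). There are no tame characters or eigenvalues to bookkeep --- every eigenvalue is $1$ --- so your proposed mechanism (``eigenvalue non-degeneracy excludes cancellations'') has nothing to act on. Worse, your bridging claim that $H^1(I_v, A\otimes_\cO B)$ is free if and only if both $H^1(I_v,A)$ and $H^1(I_v,B)$ are free is false in this unipotent setting: writing $t-1$ on $A\otimes_\cO B$ in a basis adapted to the two Steinberg filtrations (the specialization of the matrix \eqref{eq:05072024_1425}), one computes $\mathrm{coker}(t-1)\simeq\cO^{2}\oplus(\cO/d)^{2}$ with $d=\gcd(a_A,a_B)$, so the tensor product already has free $H^1$ as soon as \emph{one} of $a_A,a_B$ is a unit, whereas (iii) demands triviality of \emph{both} Tamagawa factors. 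Hence your route to (ii)$\Rightarrow$(iii) does not go through, and no K\"unneth-type argument of this shape can close it; the paper instead quotes the Tamagawa discussion at the start of \cite[\S 4.4]{On_Artin_formalism_for_p-adic_Garrett--Rankin_Lfunctions} for this implication. Your (ii)$\Rightarrow$(i) step is also not a proof as written: Nakayama bounds numbers of generators but does not yield freeness, and ``constancy of rank along the family'' is precisely what would have to be established; freeness of a single specialization of a finitely generated module over the three-dimensional local ring $R_2$ does not by itself give freeness over $R_2$.

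For comparison, the paper proves the cycle (i)$\Rightarrow$(ii)$\Rightarrow$(iii)$\Rightarrow$(i): (i)$\Rightarrow$(ii) is the control statement \cite[Lemma 4.21(3)]{On_Artin_formalism_for_p-adic_Garrett--Rankin_Lfunctions}, (ii)$\Rightarrow$(iii) is quoted from op.\ cit., and the substantive implication (iii)$\Rightarrow$(i) is carried out directly over $R_2$: one chooses bases so that $\rho_\f(t)$ and $\rho_\g(t)$ are unipotent with entries $a_\f, a_\g$, notes that (iii) forces $a_\f\in R_\f^\times$ and $a_\g\in R_\g^\times$ (as in the proof of \cite[Corollary 4.26]{On_Artin_formalism_for_p-adic_Garrett--Rankin_Lfunctions}), and then Gaussian elimination on the explicit $4\times 4$ matrix \eqref{eq:05072024_1425} together with \cite[Lemma 4.21(1)]{On_Artin_formalism_for_p-adic_Garrett--Rankin_Lfunctions} gives freeness of $H^1(I_v,T_2)$; the ``Moreover'' clause follows from $I_w/(I_w\cap I_v^{(p)})=I_v/I_v^{(p)}$, essentially as you suggest. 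The parts of your outline that survive are (iii)$\Rightarrow$(ii) (both entries being units makes the cokernel free) and (i)$\Rightarrow$(ii) by base change in top degree; the remaining implications need either the unit analysis of the unipotent entries over the family or the citations the paper relies on, not an eigenvalue argument.
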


\begin{proof}
\label{proof:prop:Corollary 4.26_vari_on_artin}
The condition (i) implies the condition (ii) thanks to  \cite[Lemma 4.21(3)]{On_Artin_formalism_for_p-adic_Garrett--Rankin_Lfunctions}. The condition (ii) implies the condition (iii) by the discussion at the start of \cite[\S4.4]{On_Artin_formalism_for_p-adic_Garrett--Rankin_Lfunctions}. We will prove that (iii) implies (i), which will conclude our proof except for the very final assertion.

By \cite[Proposition 4.25]{On_Artin_formalism_for_p-adic_Garrett--Rankin_Lfunctions}, we have an exact sequence of $G_v$-modules
$$0 \longrightarrow R_2 (1) \otimes \mu \longrightarrow T_2 \longrightarrow R_2 \otimes \mu \longrightarrow 0$$ where $\mu:G_v \to \{\pm 1\}$ is a quadratic character. We shall treat the scenario where $\mu=1$ is unramified, as the remaining case is similar. Note that, in this case, ${\rm ord}_v(N_\f)=1={\rm ord}_v(N_\g)$ by \cite[Proposition 4.25]{On_Artin_formalism_for_p-adic_Garrett--Rankin_Lfunctions}.

Let us take a basis $\{e_1, e_2\}$ of $T_\f$ so that ${R_\f}e_1 = R_\f(1)$ 
and 
$\rho_{\f}(t)=\begin{pmatrix}
1 & a_\f  \\
0 & 1
\end{pmatrix}$
for some element $ 0 \neq a_\f \in  R_\f$. Here, $t$ is a fixed topological generator of $I_v/I^{(p)}_v\simeq \ZZ_p$. Similarly, we take  a basis $\{e'_1, e'_2\}$ of $T_\g$ so that ${R_\g}e'_1 = R_\g(1)$
and 
$\rho_{\g}(t)=\begin{pmatrix}
1 & a_\g \\
0 & 1
\end{pmatrix}$
for some element $ 0 \neq a_g \in  R_\g$. We then have 
\begin{equation}
\label{eq:05072024_1425}
\rho_{T_2}(t)=\begin{pmatrix}
1  & 1 \otimes a_\g & a_\f \otimes 1 & a_\f \otimes a_\g\\
0 & 1 & 0 & a_\f \otimes 1\\
0 & 0 & 1  & 1 \otimes a_\g\\
0 & 0 & 0 & 1 
\end{pmatrix}.
\end{equation}

As explained in the proof of \cite[Corollary 4.26]{On_Artin_formalism_for_p-adic_Garrett--Rankin_Lfunctions}, we have $a_\f\in R_\f^\times$ and $a_\g\in R_\g^\times$ (hence also that $a_\f \otimes a_\g \in R^\times_2$) whenever (iii) holds. By Gaussian elimination, it follows from \eqref{eq:05072024_1425} and \cite[Lemma 4.21(1)]{On_Artin_formalism_for_p-adic_Garrett--Rankin_Lfunctions} that the module $H^1(I_v, T_2)$ is free, which is the condition (i).

The very final assertion follows from the fact that $I_w/(I_w \cap I^{(p)}_v)=I_v/I^{(p)}_v$ combined with the fact that $a_\f\in R_\f^\times$ and $a_\g\in R_\g^\times$ (hence also that $a_\f \otimes a_\g \in R^\times_2$) whenever (iii) holds.

\end{proof}

We shall work with the following technical conditions for all the Hida families involved in our discussion.
\begin{assumption}
\label{assump:assum4.35_on_artin}
For any (rational) prime $v$ such that cardinality of $\rho_{\f}(I_v)$ is finite and greater than $0,$ one of the following holds:
\item[1)] $v \neq 2$ and $p \nmid v-1$,
\item[2)] $v=2$ and $p \neq 3,7$,
\item[3)] $\zeta_p + \zeta^{-1}_p \not\in L_f$ where $L_f$ denotes the Hece field of some specialization $f$ of $\f$.
\end{assumption}

The proposition below is a variant of \cite[Proposition 4.37]{On_Artin_formalism_for_p-adic_Garrett--Rankin_Lfunctions}.
\begin{proposition}
\label{prop:prop4.37_on_artin_Iawasawa}
Suppose that Assumption~\ref{assump:assum4.35_on_artin} holds for both $\f$ and $\g.$ Assume also that the $p$-part of the Tamagawa factor (at $v\nmid p$) for some member of the Hida family $\f$ and some member of the family $\g$ equals to $1$. Then $H^1(I_v, \overline{T_2})$ is free. 
\end{proposition}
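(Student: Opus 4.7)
The plan is to reduce the claim to the freeness of $H^1(I_v, T_2)$ over $R_2$, a statement which is the direct Hida-theoretic variant of \cite[Proposition 4.37]{On_Artin_formalism_for_p-adic_Garrett--Rankin_Lfunctions} and can be deduced from Proposition~\ref{prop:Corollary 4.26_vari_on_artin} above together with Assumption~\ref{assump:assum4.35_on_artin}. The key point enabling this reduction is that the cyclotomic $\ZZ_p$-extension is unramified away from $p$, so the Iwasawa variable is ``transparent'' to inertia at $v\nmid p$.

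The first step is to realise $\overline{T_2}=T_2\,\widehat{\otimes}_{R_2}\,R_2[[\Gamma_\QQ]]$ with $G_\QQ$ acting diagonally, and to observe that since $\QQ^{\rm cyc}/\QQ$ is unramified at $v$, the inertia subgroup $I_v$ acts trivially on the factor $R_2[[\Gamma_\QQ]]$. Combined with the flatness (in fact pro-freeness) of $R_2[[\Gamma_\QQ]]$ over $R_2$ and the fact that $T_2$ is finitely generated over the Noetherian ring $R_2$, this yields a canonical identification
$$H^1(I_v,\overline{T_2})\;\simeq\;H^1(I_v,T_2)\,\widehat{\otimes}_{R_2}\,R_2[[\Gamma_\QQ]]\,.$$
Consequently, freeness of $H^1(I_v,T_2)$ over $R_2$ propagates to freeness of $H^1(I_v,\overline{T_2})$ over $R_2[[\Gamma_\QQ]]$, which is precisely the desired conclusion.

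The second step is to establish freeness of $H^1(I_v,T_2)$. If both $\rho_\f(I_v)$ and $\rho_\g(I_v)$ have infinite image, Proposition~\ref{prop:Corollary 4.26_vari_on_artin} applies directly: the Tamagawa hypothesis in the statement is exactly its condition (iii), which delivers (i). If at least one of the inertia images (say $\rho_\f(I_v)$) is finite, then Assumption~\ref{assump:assum4.35_on_artin} forces that image to have order prime to $p$ (each of the three alternatives rules out the relevant cyclotomic behaviour at $v$), so the continuous $I_v$-cohomology of $T_\f$ vanishes in positive degrees. A K\"unneth-type computation then reduces $H^1(I_v,T_2)$ to a module already controlled by Proposition~\ref{prop:Corollary 4.26_vari_on_artin}, or alternatively one descends to a finite-index open subgroup $I_w<I_v$ with $p\nmid[I_v:I_w]$, invokes the final clause of Proposition~\ref{prop:Corollary 4.26_vari_on_artin}, and concludes by inflation--restriction (an isomorphism on $H^1$ modulo prime-to-$p$ torsion).

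The principal obstacle, in my view, is the finite-image subcase of the second step: one must carefully exploit each of the three alternatives in Assumption~\ref{assump:assum4.35_on_artin} to rule out $p$-torsion in the image of tame inertia and to match the resulting vanishing/K\"unneth computation with the explicit freeness output of Proposition~\ref{prop:Corollary 4.26_vari_on_artin}. By contrast, the Iwasawa-theoretic descent carried out in the first step is essentially formal once freeness over $R_2$ is secured, since everything is driven by the unramifiedness of $\QQ^{\rm cyc}/\QQ$ at $v\nmid p$ and by the pro-freeness of $R_2[[\Gamma_\QQ]]$ over $R_2$.
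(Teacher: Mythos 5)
Your reduction to the freeness of $H^1(I_v,T_2)$ via $H^1(I_v,\overline{T_2})\simeq H^1(I_v,T_2)\,\widehat{\otimes}_{R_2}\,R_2[[\Gamma_\QQ]]$, and your treatment of the case where both $\rho_\f(I_v)$ and $\rho_\g(I_v)$ are infinite, agree with the paper. The finite-image subcase, however, contains a genuine gap. Your key claim — that a finite image of prime-to-$p$ order forces the positive-degree $I_v$-cohomology of $T_\f$ to vanish — is false: since $I_v/I_v^{(p)}\simeq \ZZ_p$ and $\rho_\f(I_v^{(p)})=\rho_\f(I_v)$ whenever the image is finite of prime-to-$p$ order, one computes $H^1(I_v,T_\f)\simeq (T_\f)^{I_v}$, which is typically nonzero (it is all of $T_\f$ when $v\nmid N_\f$, and can have rank $1$ in ramified principal-series situations). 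Consequently the K\"unneth reduction you build on this vanishing collapses; indeed, when both inertia images are finite the correct conclusion is not vanishing but freeness of $H^1(I_v,T_2)\simeq (T_2)^{I_v}$, which the paper obtains from \cite[Corollary 4.23]{On_Artin_formalism_for_p-adic_Garrett--Rankin_Lfunctions}, the point being that $(T_2)^{I_v}$ is cut out of the free module $T_2$ by a prime-to-$p$ averaging idempotent. Your alternative route is also unavailable: Proposition~\ref{prop:Corollary 4.26_vari_on_artin}, including its final clause about subgroups $I_w$ of prime-to-$p$ index, is stated only under the hypothesis that \emph{both} $\rho_\f(I_v)$ and $\rho_\g(I_v)$ are infinite, so it cannot be invoked once one image is finite; nor does passing to a finite-index subgroup alter the finiteness of the image.

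What the paper actually does in the mixed case (one image finite, the other infinite) is absent from your sketch: by \cite[Proposition 4.25]{On_Artin_formalism_for_p-adic_Garrett--Rankin_Lfunctions} the subgroup $I_v^{(p)}$ acts trivially on the factor with infinite inertia image, and by \cite[Corollary 4.34]{On_Artin_formalism_for_p-adic_Garrett--Rankin_Lfunctions} (this is exactly where Assumption~\ref{assump:assum4.35_on_artin} enters, beyond merely ruling out $p$-torsion in the image) the factor with finite image satisfies $(T_\f)^{I_v^{(p)}}=0$; hence $(T_2)^{I_v^{(p)}}=0$ and $H^1(I_v,T_2)=0$. To repair your argument you would need these two inputs (or equivalents), together with the case split finite/finite versus finite/infinite, in place of the false vanishing statement and the inapplicable final clause of Proposition~\ref{prop:Corollary 4.26_vari_on_artin}.
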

\begin{proof}
\label{proof:prop:prop4.37_on_artin_Iwasawa}
The proof follows once we show that $H^1(I_v, T_2)$ is a free $\cR_2$-module under our running assumptions, since $H^1(I_v, \overline{T_2})\simeq H^1(I_v, T_2)\widehat{\otimes_{R_2}}R_2[[\Gamma_{\QQ}]]$. The freeness of  $H^1(I_v, T_2)$ is proved in a manner similar to \cite[Proposition 4.37]{On_Artin_formalism_for_p-adic_Garrett--Rankin_Lfunctions}, which we review in what follows.

We retain our notation in the proof of Proposition~\ref{prop:Corollary 4.26_vari_on_artin}. If both $\rho_{\f}(I_v)$ and $\rho_{\g}(I_v)$ have finite cardinality,  then $p$ does not divide the cardinality of $\rho_{\f}(I_v)$ and $\rho_{\g}(I_v)$ by \cite[Corollary 4.34]{On_Artin_formalism_for_p-adic_Garrett--Rankin_Lfunctions}. Since $T_2=T_\f \widehat\otimes_{\ZZ_p} T_\g$ as $I_v$-modules, we conclude that the cardinality of $\rho_{T_2}(I_v)$ is finite and $p$ does not divide the cardinality of $\rho_{T_2}(I_v).$ It then follows from \cite[Corollary 4.23]{On_Artin_formalism_for_p-adic_Garrett--Rankin_Lfunctions} that $H^1(I_v, T_2)$ is free, as required.

Next, we consider the case that cardinality of $\rho_{\f}(I_v)$ is finite but cardinality of $\rho_{\g}(I_v)$ is infinite. Then by \cite[Proposition 4.25]{On_Artin_formalism_for_p-adic_Garrett--Rankin_Lfunctions} (see also \cite[Proposition 12.7.14.1(i)]{nekovar06}), it follows that the subgroup $I^{(p)}_v$ acts trivially on $T_\g$. Hence, 
$$(T_2)^{I^{(p)}_v}=(T_\f)^{I^{(p)}_v} \otimes_{\ZZ_p} T_\g.$$
Moreover, by \cite[Corollary 4.34]{On_Artin_formalism_for_p-adic_Garrett--Rankin_Lfunctions}, we see that $(T_\f)^{I^{(p)}_v}=0$. Thence, $(T_2)^{I^{(p)}_v}=0$, and in particular $H^1(I_v, T_2)=0$ by Remark 4.22 in op. cit. The case that the cardinality of $\rho_{\g}(I_v)$ is finite but the cardinality of $\rho_{\f}(I_v)$ is infinite is similar. The case that both the cardinality of $\rho_{\f}(I_v)$ and cardinality of $\rho_{\g}(I_v)$ are infinite was already treated in Proposition~\ref{prop:Corollary 4.26_vari_on_artin}, and our proof is complete.


\end{proof}

\begin{proposition}
\label{prop:Iwasa_Tamagama_factor_0__10062024_0830}
Suppose that Assumption~\ref{assump:assum4.35_on_artin} holds for both $\f$ and $\g$. Assume also that the $p$-part of the Tamagawa factor (at $v\nmid p$) for some member of the Hida family $\f$, and some member of the family $\g$ equals $1$. Then, 
${\rm Err}_v(\mathcal{D},\overline{T_2}) \simeq 0 \hbox{ in } \cD^b(R_2[[\Gamma_{\QQ}]])$. 
\end{proposition}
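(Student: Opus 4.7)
The plan is to reduce the assertion to showing $H^3_{\{m\}}(H^1(I_v,\overline{T_2})) = 0$, and to obtain this vanishing by combining Proposition~\ref{prop:prop4.37_on_artin_Iwasawa} with the Cohen--Macaulay property of the relevant coefficient ring.

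First, I would pass to the complete local Noetherian setting via the isotypic decomposition
\[
R_2[[\Gamma_\QQ]] \;\simeq\; \bigoplus_{\eta\in\widehat\Delta_\QQ} e_\eta R_2[[\Gamma_\QQ^0]]\,,
\]
which reduces the statement to proving that each $\eta$-component of ${\rm Err}_v$ is zero in $\cD^b(R_2[[\Gamma_\QQ^0]])$. Both $H^1(I_v,\overline{T_2})$ and the operator ${\rm Frob}_v - 1$ respect this decomposition, as do the local cohomology and the duality functors involved in the explicit description of ${\rm Err}_v$ recalled in \S\ref{sec:duality_sel_compl}.

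Next, Proposition~\ref{prop:prop4.37_on_artin_Iwasawa}, applied under our running assumptions on $\f,\g$ together with the Tamagawa hypothesis, tells us that $H^1(I_v,\overline{T_2})$ is free over $R_2[[\Gamma_\QQ]]$; each $\eta$-component is therefore free of finite rank over $R_2[[\Gamma_\QQ^0]]$. The key computational input is then standard commutative algebra: for a finite free module $F$ over a Cohen--Macaulay local ring of Krull dimension $d$, one has $H^i_{\{m\}}(F) = 0$ for every $i<d$. Applied with $d=4$, this forces $H^3_{\{m\}}(H^1(I_v,\overline{T_2})) = 0$, so that the cone
\[
{\rm Cone}\bigl(H^3_{\{m\}}(H^1(I_v,\overline{T_2})) \xrightarrow{{\rm Frob}_v-1} H^3_{\{m\}}(H^1(I_v,\overline{T_2}))\bigr)
\]
is the zero object; applying $\cD\circ D$ and shifting by $[-1]$ preserves this, giving ${\rm Err}_v\simeq 0$ as required.

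The main obstacle, and the step I would justify carefully, is the Cohen--Macaulayness of $R_2[[\Gamma_\QQ^0]]$ together with its dimension being exactly $4$. I would appeal to Hida's structure theorem: each branch $R_\f$ (and $R_\g$) is a finite free module over the weight algebra $\Lambda_{\rm wt}=\Zp[[\Zp^\times]]$, whose connected component at the identity is a formal power series ring over $\Zp$. Consequently, $R_2=R_\f\,\widehat\otimes_{\Zp}\,R_\g$ is a finite free module over the regular local ring $\Lambda_{\rm wt}\,\widehat\otimes_{\Zp}\,\Lambda_{\rm wt}$, hence Cohen--Macaulay of Krull dimension $3$; the ring $R_2[[\Gamma_\QQ^0]] \simeq R_2[[X]]$ then inherits the Cohen--Macaulay property with Krull dimension $4$. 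This is the ingredient that makes the depth-equals-dimension vanishing for local cohomology of free modules valid, and it is the most substantive point of the argument; everything else is then formal manipulation of the explicit formula for ${\rm Err}_v$ from Proposition~\ref{prop:Proposition 8.9.7.3_nekovar}.
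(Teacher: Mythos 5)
There is a genuine gap, and it sits exactly at the step you yourself flag as the most substantive one: the claim that each branch $R_\f$ (and $R_\g$) is finite \emph{free} over $\Lambda_{\rm wt}$, hence that $R_2[[\Gamma_\QQ^0]]$ is Cohen--Macaulay of dimension $4$. Hida's control/freeness theorem applies to the full ordinary Hecke algebra, not to an irreducible component: a branch $R_\f$ is only finite and torsion-free over $\Lambda_{\rm wt}$ (this is all the paper records), and neither freeness nor Cohen--Macaulayness of branches is known in general, nor is it assumed anywhere in the paper; quotients by minimal primes do not inherit these properties even when the full Hecke algebra localized at $\frakm_\f$ is Gorenstein. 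Your vanishing criterion really needs $\operatorname{depth} R_2[[\Gamma_\QQ^0]]=4$: if the depth were, say, $3$, then for a nonzero free module $F$ one has $H^3_{\{\frakm\}}(F)\neq 0$ (local cohomology is nonzero in degree equal to the depth), so the cone of ${\rm Frob}_v-1$ need not vanish and the argument collapses. So as written the proof is not complete; to salvage it along these lines you would have to either prove CM-ness of $R_\f\widehat\otimes_{\ZZ_p}R_\g$ under the running hypotheses, or show directly that ${\rm Frob}_v-1$ acts invertibly on $H^3_{\{\frakm\}}$ of the (free) module, neither of which you address.

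The paper's own proof avoids this issue entirely: it does not touch local cohomology or any depth/CM hypothesis. Instead it invokes Nekov\'a\v{r}'s Proposition 8.9.7.6(iv) (with Corollary 7.6.12(iii)), which says that ${\rm Err}_v(\cD,\overline{T_2})\simeq 0$ over $R_2[[\Gamma_\QQ]]$ if and only if ${\rm Err}_v(\cD,T_2)=0$ over $R_2$, if and only if ${\rm Frob}_v^{-1}-1$ is bijective on ${\rm Ext}^q_{R_2}(H^1(I_v,T_2),R_2)$ for $q=1,2,3$; then the same Tamagawa input you use (via Proposition~\ref{prop:prop4.37_on_artin_Iawasawa}) gives that $H^1(I_v,T_2)$ is free, hence projective, over $R_2$, so these ${\rm Ext}$ groups vanish and the criterion holds trivially. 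Your first reduction (to the $\eta$-isotypic components) and your use of the freeness of $H^1(I_v,\overline{T_2})$ are fine and consistent with the paper; the divergence, and the gap, is in replacing the ${\rm Ext}$-group criterion by a local-cohomology computation that needs an unproven regularity property of Hida branches.
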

\begin{proof}
\label{proof:prop:Iwasa_Tamagama_factor_0__10062024_0830}

By the proof of \cite[Proposition 8.9.7.6(iv)]{nekovar06} (see also \cite[Corollary 7.6.12(iii)]{nekovar06}), we see that ${\rm Err}_v(\mathcal{D},\overline{T_2}) \simeq 0$ in $\cD(R_2[[\Gamma_{\QQ}]])$ if and only if ${\rm Err}_v(\cD,T_2)=0$ in $\cD^b(R_2)$ (where ${\rm Err}_v(\cD,T_2)$ is defined in \cite[Corollary 7.6.12(iii)]{nekovar06}), if and only if 
$$(\forall q=1,2,3)\,\qquad \alpha'_q:{\rm Ext}^q_{R_2}(H^1(I_v, T_2), R_2) \xrightarrow{{\rm Frob}^{-1}_v-1}{\rm Ext}^q_{R_2}(H^1(I_v, T_2), R_2) $$ is an isomorphism. By the proof of Proposition~\ref{prop:prop4.37_on_artin_Iawasawa}, the $R_2$-module $H^1(I_v, T_2)$ is free (under our running assumptions) therefore projective, and 
we ${\rm Ext}^q_{R_2}(H^1(I_v, T_2), R_2)=0$ for all $q=1,2,3$,  as required.
\end{proof}

\subsubsection{Consequences}

\begin{proposition}
\label{prop:perfect_duality_selCOm}
Suppose that we are in the situation of Proposition~\ref{prop:Iwasa_Tamagama_factor_0__10062024_0830}.
Then the morphism $u$ in \S\ref{sec:duality_sel_compl} is an isomorphism. 
\end{proposition}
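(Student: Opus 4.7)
The plan is to read the claim straight off the Grothendieck duality exact triangle already recalled in \S\ref{sec:duality_sel_compl}, namely
$$\boldR\Gamma(\overline{T_2}, F^+\overline{T_2}) \xrightarrow{\,u\,} \boldR{\rm Hom}_{R_2[[\Gamma_{\QQ}]]}(\boldR\Gamma(\overline{T_2^*}, F^+\overline{T_2^*}), R_2[[\Gamma_{\QQ}]])[-3] \longrightarrow \bigoplus_{v \in S} {\rm Err}_v\,.$$
Since $u$ is an isomorphism in $\cD^b(R_2[[\Gamma_{\QQ}]])$ exactly when its mapping cone vanishes, the task reduces to showing $\bigoplus_{v \in S} {\rm Err}_v \simeq 0$, which I would handle one place at a time.

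For each non-archimedean $v \in S$ with $v \nmid p$, the hypotheses of Proposition~\ref{prop:Iwasa_Tamagama_factor_0__10062024_0830} are in force by assumption, and that proposition delivers ${\rm Err}_v(\mathcal{D}, \overline{T_2}) \simeq 0$ in $\cD^b(R_2[[\Gamma_{\QQ}]])$ directly. In effect, the whole purpose of Propositions~\ref{prop:Corollary 4.26_vari_on_artin}--\ref{prop:Iwasa_Tamagama_factor_0__10062024_0830} was precisely to absorb all the work required to kill these error terms, and I anticipate no further obstacle in this step.

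For the remaining prime $v = p$, I expect no genuine new argument to be needed: the Greenberg-type local condition $U^+_p(\overline{T_2}) = C^\bullet(G_p, F^+\overline{T_2})$ is built out of the short exact sequence \eqref{eq:short_exact_seq_Greenberg}, and the dual local condition $C^\bullet(G_p, F^+\overline{T_2^*}(1))$ is precisely its orthogonal complement under local Tate duality, since the cup-product pairing annihilates the image of $F^+T_2 \otimes F^+T_2^*(1)$. This built-in self-duality of Greenberg conditions at $p$ is exactly what makes them compatible with Nekov\'a\v{r}'s duality formalism (cf. \cite[\S6.7, \S7.8]{nekovar06}), and it forces the error contribution at $p$ to vanish by construction. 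Assembling the two cases gives $\bigoplus_{v\in S}{\rm Err}_v\simeq 0$, so $u$ is an isomorphism as claimed; the genuinely hard step, as indicated above, has already been carried out in the Tamagawa analysis upstream.
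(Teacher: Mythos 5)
Your argument is correct and takes essentially the same route as the paper: the paper's proof likewise reads the claim off the duality triangle, invoking Proposition~\ref{prop:Iwasa_Tamagama_factor_0__10062024_0830} to kill ${\rm Err}_v(\cD,\overline{T_2})$ at the primes $v\nmid p$, the contribution at $v=p$ being zero by construction in Nekov\'a\v{r}'s formalism since the Greenberg conditions $F^+\overline{T_2}$ and $F^+\overline{T_2^*}(1)$ are exact orthogonal complements under local duality. Your explicit check at $p$ merely spells out what the paper leaves implicit.
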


\begin{proof}
\label{proof:prop:perfect_duality_selCOm}
We proved in Proposition~\ref{prop:Iwasa_Tamagama_factor_0__10062024_0830} that ${\rm Err}_v(\mathcal{D},\overline{T_2}) \simeq 0$. Hence, the map $u$ in \S\ref{sec:duality_sel_compl} is an isomorphism, and the result follows on taking determinants.
\end{proof}

The following result is a special case of \cite[Proposition 4.46]{On_Artin_formalism_for_p-adic_Garrett--Rankin_Lfunctions}, where we take $R=e_\eta R_2[[\Gamma_{\QQ}]]$ and $T=e_\eta\overline{T_2}$, and let $\eta$ run through $\widehat{\Delta}_{\QQ_p}$.

\begin{proposition}
\label{prop:perfect_Iwasawa_complex_cycloto}
\item[i)] For any ideal $I$ of $R_2[[\Gamma_{\QQ}]]$, we have $\boldR\Gamma(\overline{T_2}, F^+\overline{T_2}) \otimes^{\bbLL}_{R_2[[\Gamma_{\QQ}]]} R_2[[\Gamma_{\QQ}]]/I \simeq \boldR\Gamma(\overline{T_2}/I, F^+\overline{T_2}/I).$
\item[ii)] Suppose we are in the situation of Proposition~\ref{prop:Iwasa_Tamagama_factor_0__10062024_0830}. Then 
$$\boldR\Gamma(\overline{T_2}, F^+\overline{T_2}) \in \cD^{[1,2]}_{\rm perf}(R_2[[\Gamma_{\QQ}]])\,.$$
\end{proposition}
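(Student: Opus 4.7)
The plan is to reduce both parts to the already-established Proposition~\ref{prop:Prop4.46_ON_Artin} applied in a deeper Iwasawa base. First, I would exploit the decomposition
$$R_2[[\Gamma_{\QQ}]]\simeq \bigoplus_{\eta \in \widehat{\Delta}_{\QQ}} e_\eta R_2[[\Gamma_{\QQ}]]\simeq \bigoplus_{\eta \in \widehat{\Delta}_{\QQ}} R_2[[\Gamma_{\QQ}^0]]$$
into a finite product of complete local Noetherian rings of Krull dimension $4$. The formation of Selmer complexes is compatible with this decomposition (one simply multiplies by $e_\eta$ in each constituent), so both (i) and (ii) can be checked $\eta$-component-by-$\eta$-component. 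This replaces the ambient base by the local Noetherian ring $R_2[[\Gamma_{\QQ}^0]]$, which is precisely the setting in which Proposition~\ref{prop:Prop4.46_ON_Artin} was formulated, modulo its hypotheses.

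For part (i), I would observe that the Selmer complex is a mapping cone involving the three building blocks $C^\bullet(G_{\QQ,S}, \overline{T_2})$, $U^+_S(\overline{T_2})$, and $C^\bullet_S(\overline{T_2})$, and derived tensor product commutes with cone formation. Each building block is a continuous cochain complex with coefficients in the coinduced module $\overline{T_2}=T_2\widehat\otimes_{R_2} R_2[[\Gamma_{\QQ}]]$, for which the standard base change machinery of \cite[Ch.~3 and Ch.~8]{nekovar06} yields the required isomorphism $(-)\otimes^{\bbLL}_{R_2[[\Gamma_{\QQ}]]}R_2[[\Gamma_{\QQ}]]/I$. In particular, no new input beyond formal derived-category manipulations is needed for this half.

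For part (ii), I would invoke Proposition~\ref{prop:Prop4.46_ON_Artin} with $(R_2, T_2)$ replaced by $(e_\eta R_2[[\Gamma_{\QQ}]], e_\eta\overline{T_2})$. This demands that one verify the $\eta$-twisted analogues of hypotheses \eqref{item_Irr_res_H0} and \eqref{item_Irr_res_Tama_Hida}. The residue field of $R_2[[\Gamma_{\QQ}^0]]$ coincides with $\mathbb{k}$, and the residual $G_\QQ$-module of $e_\eta\overline{T_2}$ is a twist of $\widetilde{T_2}$ by a finite-order character of $\Delta_\QQ$; because the irreducibility hypotheses \eqref{item_Irr_res_Hida_F} on $\f$ and $\g$ guarantee that $\widetilde{T_2}$ (and its Cartier dual) have no $G_\QQ$-invariants even after such twists, the \eqref{item_Irr_res_H0} hypothesis transfers cleanly. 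The freeness requirement \eqref{item_Irr_res_Tama_Hida} for $H^1(I_v,\overline{T_2})$ over each component is precisely what Proposition~\ref{prop:prop4.37_on_artin_Iawasawa} delivers under our running Tamagawa and Assumption~\ref{assump:assum4.35_on_artin} hypotheses.

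The main obstacle -- propagating the Tamagawa-freeness condition from single arithmetic specializations of $\f$ and $\g$ all the way up the two-variable Hida family and then up the cyclotomic tower to $\overline{T_2}$ -- has already been resolved in Propositions~\ref{prop:Corollary 4.26_vari_on_artin} and \ref{prop:prop4.37_on_artin_Iawasawa}. Once the hypotheses of Proposition~\ref{prop:Prop4.46_ON_Artin} are in force for the enlarged base, the perfectness and amplitude claim in (ii) is a direct invocation, and part (i) then assembles from the per-component base change isomorphisms via the $e_\eta$-decomposition.
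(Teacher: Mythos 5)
Your proposal is correct and takes essentially the same route as the paper, whose proof consists precisely of invoking \cite[Proposition 4.46]{On_Artin_formalism_for_p-adic_Garrett--Rankin_Lfunctions} with $R=e_\eta R_2[[\Gamma_{\QQ}]]$ and $T=e_\eta\overline{T_2}$ for $\eta$ running over the characters of $\Delta_{\QQ}$, with the Tamagawa-freeness input supplied exactly as you say by Proposition~\ref{prop:prop4.37_on_artin_Iawasawa}. The only small caveat is your claim that \eqref{item_Irr_res_Hida_F} alone forces the vanishing of $G_\QQ$-invariants of the $\eta$-twisted residual module: a tensor product of two irreducible two-dimensional representations can acquire invariants after a character twist, and the paper instead keeps \eqref{item_Irr_res_H0} as a standing hypothesis rather than deducing it from irreducibility.
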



\begin{theorem}[Theorem 11.6.4 in \cite{KLZRecLaw}]
\label{thm:Theorem 11.6.4KLZRecLaw}
Suppose that the specializations $f$ and $g$ are crystalline (in the sense that they arise as the unique ordinary $p$-stabilizations of newforms $f^\circ$ and $g^\circ$ of levels $N_f$ and $N_g$, respectively). Let us assume that \eqref{item_p_large} and \eqref{item_NZ} hold true for $f$ and $g$. Suppose also that $\eta\in\widehat{\Delta}_{\QQ_p}$ is such that $e_\eta L_p(f, g) \neq 0$, where $L_p(f,g)$ is Hida's $f$-dominant $p$-adic $L$-function as in \cite[Theorem 3.5.3]{KLZModForm}. Let us put $T:=T_\cO(f)\otimes T_{\cO}(g)$. Then:  
\item[i)] $e_\eta\boldR^1\Gamma(\overline{T},F^+\overline{T})=0.$
\item[ii)] $e_\eta\boldR^2\Gamma(\overline{T},F^+\overline{T})$ is 
torsion over $\cO_E[[\Gamma_{\QQ}]].$
\end{theorem}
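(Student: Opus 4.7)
My plan is to deduce this from the Beilinson--Flach Euler system of Kings--Loeffler--Zerbes together with their explicit reciprocity law. First, I would recall the Beilinson--Flach Iwasawa class
$$\BF_{f,g} \;\in\; H^1_{\Iw}(\QQ, T) \;:=\; H^1\!\left(\QQ_S,\, T\otimes_{\cO}\cO[[\Gamma_{\QQ}]]\right),$$
which sits at the bottom of a cyclotomic Euler system $\{\BF_{f,g,m}\}_{m\geq 1}$ for $T$ with norm-compatibility at primes $\ell\nmid pN_fN_g$. The explicit reciprocity law of \cite{KLZRecLaw} identifies the image of $\mathrm{res}_p(\BF_{f,g})$ under the Perrin--Riou regulator
$$H^1_{\Iw}(\QQ_p, T)\,\twoheadrightarrow\, H^1_{\Iw}(\QQ_p, F^-T)\,\xrightarrow{\;\cL_{\mathrm{PR}}\;}\, \cH(\Gamma_{\QQ})\otimes\Dcris(F^-T)$$
with Hida's $f$-dominant $p$-adic $L$-function $L_p(f,g)$, up to an explicit Euler factor that is nonvanishing by~\eqref{item_NZ}. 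Consequently, the assumption $e_\eta L_p(f,g)\neq 0$ forces the $\eta$-isotypic component $e_\eta\mathrm{res}_p(\BF_{f,g})$ to be nonzero modulo $F^+T$.

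Second, I would feed the Euler system $\{\BF_{f,g,m}\}$ into the standard Kolyvagin-style descent machinery. The ``big image'' hypotheses required of $\widetilde{\rho}_{f\otimes g}$ are exactly what~\eqref{item_p_large} supplies via Theorem~\ref{thm:Irr_THEOREM 3.2.2_Loff_IMA_OF} and Proposition~\ref{prop:Prop_4.2.1_Loff_Ima}. Under these hypotheses, the nontriviality of the bottom class produces enough global annihilators to bound the dual Selmer group; concretely, one obtains that the characteristic ideal of $e_\eta\boldR^2\Gamma(\overline{T}, F^+\overline{T})$ divides the ideal generated by $e_\eta L_p(f,g)$ up to an explicit unit. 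Since the latter is nonzero by assumption, part~(ii) follows. For part~(i), the vanishing of $e_\eta\boldR^1\Gamma(\overline{T}, F^+\overline{T})$ follows by a standard dichotomy: any nontrivial class in this Selmer module would localize trivially at $p$ modulo $F^+$, but the Beilinson--Flach class provides a global class with \emph{nonzero} such localization; a rank comparison---using that, under the big-image input, any surviving free $\cO_E[[\Gamma_\QQ]]$-submodule of the $F^-$-localization would have to be captured by the Euler system---rules out any nontrivial element in degree one.

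The principal obstacle will be matching up local conditions. Rubin-style Euler system arguments most naturally control Selmer groups with ``strict'' or relaxed conditions at $p$, whereas the Selmer complex here imposes the Greenberg conditions $F^+\overline{T}$. Translating between the two requires the long exact sequence in Iwasawa cohomology induced by~\eqref{eq:short_exact_seq_Greenberg} together with the nontriviality of the $F^-$-projection of the Euler system class established above. A secondary subtlety is that our coefficient ring is $\cO_E[[\Gamma_\QQ]]$ rather than $\cO_E[[\Gamma_\QQ^0]]$; this is handled component-by-component via the idempotent decomposition underlying Definition~\ref{defn:chara_over_units_Zp}, which is precisely why the statement is phrased separately for each $e_\eta$.
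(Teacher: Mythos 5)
The paper does not actually prove this statement: it is imported verbatim as Theorem 11.6.4 of \cite{KLZRecLaw} (the attribution in the theorem header is the ``proof''), so there is no internal argument to compare yours against. What you have sketched is, in outline, the route that Kings--Loeffler--Zerbes themselves take to prove it: the Beilinson--Flach Iwasawa classes, the explicit reciprocity law identifying the Perrin-Riou regulator of the $F^-$-projection of $\mathrm{res}_p$ with $L_p(f,g)$ up to Euler factors controlled by \eqref{item_NZ}, and the Euler-system machine (whose big-image input is what \eqref{item_p_large} guarantees) to bound the dual Selmer group and hence obtain torsionness in degree $2$; the translation between strict/relaxed conditions and the Greenberg condition $F^+\overline{T}$ via the sequence \eqref{eq:short_exact_seq_Greenberg}, and the component-by-component treatment of $\cO_E[[\Gamma_\QQ]]$, are likewise exactly the bookkeeping that occurs in the source. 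So as a reconstruction of the cited proof your plan is on target, but within the present paper the correct ``proof'' is simply the citation.

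As a standalone argument, your treatment of part (i) has a genuine gap. The claimed dichotomy --- ``any nontrivial class in $e_\eta\boldR^1\Gamma(\overline{T},F^+\overline{T})$ localizes trivially at $p$ modulo $F^+$, but the Beilinson--Flach class has nonzero such localization'' --- proves nothing by itself: the Beilinson--Flach class is in general \emph{not} an element of the Greenberg Selmer module (precisely because its $F^-$-localization is nonzero), so its existence does not exclude other, genuinely Greenberg, classes, and the appeal to a ``rank comparison'' capturing ``any surviving free submodule'' is circular as stated. The actual mechanism is different: residual irreducibility forces the degree-$0$ cohomology to vanish at every layer, whence $e_\eta\boldR^1\Gamma(\overline{T},F^+\overline{T})$ is torsion-free over $e_\eta\cO_E[[\Gamma_\QQ]]$; a global Euler characteristic / Poitou--Tate computation for the Selmer complex shows that the ranks of the degree-$1$ and degree-$2$ cohomologies agree; and then torsionness of degree $2$ (your part (ii)) forces the torsion-free degree-$1$ module to have rank $0$, hence to vanish. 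Without this rank identity and the torsion-freeness statement, part (i) does not follow from the nonvanishing of the Euler system class alone.
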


\begin{remark}[Remark 11.6.5 in \cite{KLZRecLaw}]
\label{remark_KLZRemark1165}
    Note that if $k_f-k_g \geq 2$, then the assumption that $e_{\eta} L_p(f,g) \neq 0$ is automatically satisfied (for all $\eta \in \widehat{\Delta}_{\QQ_p}$) thanks to \cite[Proposition 2.7.6]{KLZRecLaw}.
\end{remark}

\begin{theorem}
\label{thm:thm:H^1_Selcom_Hida_family_Iwasa_zero}
Suppose that the families $\f$ and $\g$ admit crystalline specializations $f$ and $g$ such that \eqref{item_p_large} and \eqref{item_NZ} hold true for $f$ and $g$, and that $k_f-k_g\geq 2$.
Then we have:
\item[i)] $\boldR^1\Gamma(\overline{T_2}, F^+\overline{T_2})=0.$
\item[ii)] $\boldR^2\Gamma(\overline{T_2}, F^+\overline{T_2})$ is torsion over $R_2[[\Gamma_{\QQ}]]$.
\end{theorem}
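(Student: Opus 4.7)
My plan is to reduce the statement to the specialization of the Iwasawa-theoretic Selmer complex at $(f,g)$ handled by Theorem~\ref{thm:Theorem 11.6.4KLZRecLaw}, and then to transport the conclusions back to the full Hida family via a determinant argument.

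First, I would decompose the semi-local ring $R_2[[\Gamma_{\QQ}]] = \bigoplus_{\eta \in \widehat{\Delta}_{\QQ}} e_\eta R_\eta$, where each $R_\eta \cong R_2[[\Gamma^0_{\QQ}]]$ is a complete local Noetherian integral domain, thereby reducing the theorem to the corresponding claims for each $e_\eta$-component. Fix $\eta$ and set $K_\eta^\bullet := e_\eta \boldR\Gamma(\overline{T_2}, F^+\overline{T_2})$. Invoking the perfectness provided by Proposition~\ref{prop:perfect_Iwasawa_complex_cycloto}(ii) (which is available under the standing hypothesis~\eqref{item_Irr_res_Tama_Hida} together with the technical conditions of Assumption~\ref{assump:assum4.35_on_artin}), I would represent $K_\eta^\bullet$ by a two-term complex $[P^1 \xrightarrow{d} P^2]$ of finitely generated free $R_\eta$-modules in degrees $1$ and $2$.

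Next, I would specialize at $(f,g)$. Let $P \triangleleft R_2$ denote the kernel of the specialization $R_2 \twoheadrightarrow \cO$ corresponding to $(f,g)$, set $I := P \cdot R_\eta$ (which lies inside the maximal ideal of $R_\eta$), and identify $R_\eta/I$ with $S := \cO[[\Gamma^0_{\QQ}]]$, a two-dimensional regular local domain. Writing $T := T_\cO(f) \otimes T_\cO(g)$, Proposition~\ref{prop:perfect_Iwasawa_complex_cycloto}(i) combined with the flatness of each $P^i$ would yield
$$K_\eta^\bullet \otimes^{\bbLL}_{R_\eta} S \;\simeq\; \bigl[ P^1 \otimes_{R_\eta} S \xrightarrow{d \otimes 1} P^2 \otimes_{R_\eta} S \bigr] \;\simeq\; e_\eta \boldR\Gamma(\overline{T}, F^+\overline{T}).$$
Since $k_f - k_g \geq 2$, Remark~\ref{remark_KLZRemark1165} ensures the hypothesis of Theorem~\ref{thm:Theorem 11.6.4KLZRecLaw} is met for every $\eta$, so $d \otimes 1$ is injective with $S$-torsion cokernel. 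This forces $\rank_{R_\eta}(P^1) = \rank_{R_\eta}(P^2)$, so that $d$ is represented by a square matrix in chosen bases.

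Finally, set $\Delta := \det(d) \in R_\eta$; under the reduction $R_\eta \twoheadrightarrow S$, $\Delta$ maps to $\det(d \otimes 1)$, which is a generator of the characteristic ideal of the torsion $S$-module $e_\eta \boldR^2\Gamma(\overline{T}, F^+\overline{T})$. This characteristic ideal is non-zero, so $\Delta$ has non-zero image in $S$, whence $\Delta \neq 0$ in the integral domain $R_\eta$. Consequently $d$ is injective over $R_\eta$: its kernel is a submodule of the free module $P^1$ annihilated by $\Delta$, and must vanish, establishing (i). Likewise, $\mathrm{coker}(d) = e_\eta \boldR^2\Gamma(\overline{T_2}, F^+\overline{T_2})$ is the cokernel of an injective map of free $R_\eta$-modules of equal rank, hence $R_\eta$-torsion; summing over $\eta$ yields (ii). The principal obstacle in this plan is securing the perfectness and amplitude $[1,2]$ of the Iwasawa-theoretic Selmer complex — a delicate point supplied by the analysis of Tamagawa factors in families carried out in \S\ref{subsec_4_7_11_2024_07_12} — after which the determinant reduction modulo $I$ cleanly transports the specialization-level conclusions to the full Hida family.
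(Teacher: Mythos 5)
Your route is genuinely different from the paper's, and as written it has two gaps. First, you base everything on perfectness of $\boldR\Gamma(\overline{T_2},F^+\overline{T_2})$ with amplitude $[1,2]$, i.e.\ on Proposition~\ref{prop:perfect_Iwasawa_complex_cycloto}(ii). But that statement is only available in the situation of Proposition~\ref{prop:Iwasa_Tamagama_factor_0__10062024_0830}, which requires Assumption~\ref{assump:assum4.35_on_artin} for both $\f$ and $\g$ \emph{and} the triviality of the $p$-part of the Tamagawa factors at $v\nmid p$ for members of both families; none of these occur among the hypotheses of Theorem~\ref{thm:thm:H^1_Selcom_Hida_family_Iwasa_zero}, and the paper's proof is deliberately arranged to avoid them: it uses only the control theorem of Proposition~\ref{prop:perfect_Iwasawa_complex_cycloto}(i) together with the vanishing $\boldR^0\Gamma=\boldR^3\Gamma=0$, deducing (i) from the injection of the coinvariants of $\boldR^1\Gamma(\overline{T_2},F^+\overline{T_2})$ modulo $\ker({\rm sp}_f\otimes{\rm sp}_g)$ into $\boldR^1\Gamma(\overline{T},F^+\overline{T})=0$ (Theorem~\ref{thm:Theorem 11.6.4KLZRecLaw} and Remark~\ref{remark_KLZRemark1165}) followed by Nakayama's lemma, and (ii) from the degree-$2$ base-change isomorphism plus torsionness of $\boldR^2\Gamma(\overline{T},F^+\overline{T})$. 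This separation matters, since Theorem~\ref{thm:Func_eq_hida} later adds the Tamagawa-type hypotheses on top of those of the present theorem; your argument proves the statement only under strictly stronger assumptions.

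Second, your determinant step takes $e_\eta R_2[[\Gamma_\QQ]]\simeq R_2[[\Gamma^0_\QQ]]$ to be an integral domain. The paper never claims that $R_2=R_\f\widehat\otimes_{\ZZ_p}R_\g$ is a domain (completed tensor products of two complete local domains over $\ZZ_p$ need not be domains, which is why normality of $R_2$ is imposed separately in Corollary~\ref{coro:Corollary 4.49_On_Artin}). Without the domain property, knowing that $\Delta=\det(d)$ has nonzero image in $S$ only says $\Delta\notin I$, i.e.\ that $\Delta$ avoids the minimal primes contained in the specialization prime; it does not make $\Delta$ a non-zero-divisor, so neither the injectivity of $d$ (hence part (i)) nor the torsionness of ${\rm coker}(d)$ (hence part (ii)) follows from the adjugate argument. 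The paper's Nakayama-plus-specialization argument sidesteps this issue entirely. If you do grant the Tamagawa hypotheses and an integrality/normality assumption, your computation is essentially the one the paper carries out later to prove the functional equation (Theorem~\ref{thm:Func_eq_hida}), rather than a proof of the present theorem as stated.
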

\begin{proof}
\label{proof:thm:H^1_Selcom_Specia_Hida_family_Iwasa_zero}
By Proposition~\ref{prop:perfect_Iwasawa_complex_cycloto}(i), we have 
$$\boldR^1\Gamma(\overline{T_2},F^+\overline{T_2}) \Big{/}\ker(R_2\xrightarrow{{{\rm sp}_f\otimes {\rm sp}_g}}\cO_E)\hookrightarrow \boldR^1\Gamma(\overline{T},F^+\overline{T})=0$$
(where the vanishing follows from Theorem~\ref{thm:Theorem 11.6.4KLZRecLaw} and Remark~\ref{remark_KLZRemark1165}), and 
$$\boldR^2\Gamma(\overline{T_2},F^+\overline{T_2}) \otimes_{{\rm sp}_f\otimes {\rm sp}_g}  \cO_E[[\Gamma_{\QQ}]] \simeq \boldR^2(\overline{T},F^+\overline{T})\,.$$
Note that we only use the vanishing 
$$\boldR^0\Gamma(\overline{T_2},F^+\overline{T_2})=0=\boldR^3\Gamma(\overline{T_2},F^+\overline{T_2})$$
for the injection and isomorphism above, rather than the perfectness of the Selmer complex $\boldR\Gamma(\overline{T_2}, F^+\overline{T_2})$.

The proof of the first assertion follows from Nakayama's lemma, whereas the second is clear thanks to Theorem~\ref{thm:Theorem 11.6.4KLZRecLaw}(ii), which tells us that $\boldR^2(\overline{T},F^+\overline{T})$ is a torsion $\cO_E[[\Gamma_{\QQ}]]$-module.
\end{proof}

\subsubsection{Functional equation}
\label{sec:func_eq_ordinary}
We are now ready to state and prove one of the main results of our paper: Functional equation for a 3-variable algebraic $p$-adic $L$-function.
\begin{theorem}
\label{thm:Func_eq_hida}
Suppose that we are in the situation of Proposition~\ref{prop:perfect_duality_selCOm}. Assume also the hypotheses of Theorem~\ref{thm:thm:H^1_Selcom_Hida_family_Iwasa_zero}.
Then,
$${\rm char}_{R_2[[\Gamma_{\QQ}]]}\left(\boldR^2\Gamma(\overline{T_2}, F^+\overline{T_2})\right)={\rm char}_{R_2[[\Gamma_{\QQ}]]}\left(\boldR^2\Gamma(\overline{T_2^*}(1), F^+\overline{T_2^*}(1))\right)^\iota\,.$$
\end{theorem}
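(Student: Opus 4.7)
The plan is to combine three ingredients: the Grothendieck-duality quasi-isomorphism obtained from Proposition~\ref{prop:perfect_duality_selCOm} (which collapses to an isomorphism precisely because the Tamagawa-factor analysis kills the error terms), the degree-$2$ concentration of cohomology from Theorem~\ref{thm:thm:H^1_Selcom_Hida_family_Iwasa_zero}, and the standard Iwasawa-theoretic fact that for a torsion module $M$ of projective dimension $\leq 1$ over $R_2[[\Gamma_{\QQ}]]$, one has $\char_{R_2[[\Gamma_{\QQ}]]}\Ext^1_{R_2[[\Gamma_{\QQ}]]}(M,R_2[[\Gamma_{\QQ}]])=(\char_{R_2[[\Gamma_{\QQ}]]}M)^\iota$, the twist $\iota$ coming from the involution $\gamma\mapsto \gamma^{-1}$ on $\Gamma_{\QQ}$ via which $\Hom$ twists the action.

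Concretely, first I would invoke Proposition~\ref{prop:perfect_duality_selCOm} to produce the quasi-isomorphism
$$\boldR\Gamma(\overline{T_2}, F^+\overline{T_2}) \xrightarrow{\,\sim\,} \boldR\Hom_{R_2[[\Gamma_{\QQ}]]}\!\left(\boldR\Gamma(\overline{T_2^*}(1), F^+\overline{T_2^*}(1)),\, R_2[[\Gamma_{\QQ}]]\right)[-3]\,.$$
Next, I would check that the hypotheses of Theorem~\ref{thm:thm:H^1_Selcom_Hida_family_Iwasa_zero} are symmetric in $(\overline{T_2},F^+\overline{T_2})$ and $(\overline{T_2^*}(1),F^+\overline{T_2^*}(1))$, so that both Selmer complexes have $H^1=0$ and $H^2$ torsion; alternatively, this symmetry can be read off directly from the displayed isomorphism together with a local-to-global Ext-spectral-sequence argument.

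With cohomology concentrated in degree $2$ and the perfectness of amplitude $[1,2]$ from Proposition~\ref{prop:perfect_Iwasawa_complex_cycloto}(ii), the Iwasawa-theoretic variant of Corollary~\ref{coro:Corollary 4.49_On_Artin} (applied branch-by-branch via Definition~\ref{defn:chara_over_units_Zp}) yields
$$\det\boldR\Gamma(\overline{T_2}, F^+\overline{T_2})=\char_{R_2[[\Gamma_{\QQ}]]}\boldR^2\Gamma(\overline{T_2}, F^+\overline{T_2})\,,$$
and analogously for the dual complex. Running the $\Ext^{p,q}$ spectral sequence on the right-hand side of the duality quasi-isomorphism, the $\Hom$-term vanishes (torsionness), while $\Ext^1$ contributes in degree $2$; comparing characteristic ideals and using $\char \Ext^1(M,R_2[[\Gamma_{\QQ}]])=(\char M)^\iota$ converts the equality of determinants into the asserted equality of characteristic ideals.

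The main technical obstacle is bookkeeping of the involution $\iota$: the $\Gamma_{\QQ}$-action on $\boldR\Hom(-,R_2[[\Gamma_{\QQ}]])$ is twisted because $\Gamma_{\QQ}$ acts on the coefficient module by multiplication on the left while $\Hom$ interchanges source and target. Verifying that this twist matches the involution appearing in the statement (rather than its square or a different automorphism) requires a careful bilinear-form computation, most cleanly phrased by passing to the $e_\eta$-branches and working over the local ring $R_2[[\Gamma_{\QQ}^0]]\cong R_2[[X]]$, where the involution is concretely $X+1\mapsto (X+1)^{-1}$. A subsidiary subtlety is ensuring the Iwasawa analogue of Corollary~\ref{coro:Corollary 4.49_On_Artin} applies without normality obstructions; this is handled by decomposing along characters of $\Delta_{\QQ}$ and invoking normality of $R_2[[\Gamma_{\QQ}^0]]$ at each branch, which is implicit in the setup of Proposition~\ref{prop:perfect_Iwasawa_complex_cycloto}.
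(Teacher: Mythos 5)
Your proposal is correct and follows essentially the same route as the paper: the vanishing of the error terms makes Nekov\'a\v{r}'s duality map an isomorphism, the perfect amplitude $[1,2]$ together with $\boldR^1=0$ and torsion $\boldR^2$ lets one represent $\boldR\Gamma(\overline{T_2},F^+\overline{T_2})$ by an injective map $\phi$ of free $R_2[[\Gamma_\QQ]]$-modules of equal rank, and then $\mathrm{char}=\det$ plus the transpose/$\Ext^1$ identity carrying the $\iota$-twist yields the functional equation. The only cosmetic difference is that the paper does not verify the hypotheses of Theorem~\ref{thm:thm:H^1_Selcom_Hida_family_Iwasa_zero} for the dual complex separately (which would not be immediate); it instead reads off the presentation $\left[P_2^*\xrightarrow{\,{}^t\phi\,}P_1^*\right]$ of $\boldR\Gamma(\overline{T_2^*}(1),F^+\overline{T_2^*}(1))$ directly from the duality isomorphism — exactly your ``alternatively'' clause — with injectivity of ${}^t\phi$ coming from the torsionness of $\mathrm{coker}(\phi)$.
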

\begin{proof}
By Proposition~\ref{prop:perfect_Iwasawa_complex_cycloto}, we have $\boldR\Gamma(\overline{T_2}, F^+\overline{T_2}) \in \cD^{[1,2]}_{\rm perf}(R_2[[\Gamma_{\QQ}]])$. Moreover, by Theorem~\ref{thm:thm:H^1_Selcom_Hida_family_Iwasa_zero}, we have 
$\boldR^1\Gamma(\overline{T_2}, F^+\overline{T_2})=0\,,$ hence $\boldR\Gamma(\overline{T_2}, F^+\overline{T_2})$ can be represented by a complex of the form $[P_1\xrightarrow{\,\,\phi\,\,} P_2]\,,$ where $P_1$ and $P_2$ are free $R_2[[\Gamma_{\QQ}]]$-modules of the same rank with $\phi$ injective, so that 
$${\rm char}_{R_2[[\Gamma_{\QQ}]]}\left(\boldR^2\Gamma(\overline{T_2}, F^+\overline{T_2})\right)=\det \phi= \det \boldR\Gamma(\overline{T_2}, F^+\overline{T_2}) \,.$$
By Proposition~\ref{prop:perfect_duality_selCOm}, the map $u$ in \S\ref{sec:duality_sel_compl} is isomorphism. As a result, we have 
\begin{equation}
\label{eqn_2024_11_03_1714}
\boldR\Gamma(\overline{T_2^*}(1), F^+\overline{T_2^*}(1))\simeq \left[P_2^* \stackrel{\,\,^{t}\phi\,\,}{\lra} P_1^* \right] \quad \hbox{ concentrated in degrees 1 and 2}\,,
\end{equation}
where $P_i^*:={\rm Hom}(P_i,R_2[[\Gamma_\QQ]])$ ($i=1,2$) and $^t\phi$ is the transpose of $\phi$. Moreover, the map $^{t}\phi$ is injective because ${\rm coker}(\phi)\simeq \boldR^2\Gamma(\overline{T_2^*}(1), F^+\overline{T_2^*}(1))$ is torsion by assumption (so that ${\rm Hom}({\rm coker}(\phi), R_2[[\Gamma_\QQ]])=\{0\}$). From \eqref{eqn_2024_11_03_1714} and the discussion that follows, we conclude that 
$$\det(\phi)^\iota=\det(^t\phi)={\rm char}_{R_2[[\Gamma_\QQ]]} \,\boldR^2\Gamma(\overline{T_2^*}(1), F^+\overline{T_2^*}(1))\,, $$
and the proof of our theorem follows.
\end{proof}

\subsection{The adjoint representation}
\label{sec:Adj_reps}
Note that the hypothesis \eqref{item_p_large} that we require in the previous section dictates that the Hida families $\f$ and $\g$ are ``distinct''. In this subsection, where we closely follow closely follow \cite{Loeffler_2016}, we treat the opposite extreme case of the symmetric square of a Hida family $\f$. Our main results in this setting, when combined with the forthcoming work \cite{FuncEqSymHida} of Kundu, Ray and Vigni, have applications towards the Iwasawa main conjectures for the symmetric squares of Hida families.

\subsubsection{} Let $f \in S_{k_f+2}(\Gamma_1(N_f),\varepsilon_f)$ be a ($p$-ordinary) normalized cuspodal eigen-newform such that $(N_f,p)=1$. Let $\alpha_f$ and $\beta_f$ denote the roots of the Hecke polynomial of $f$ at $p$, where $v_p(\alpha_f)=0$. Let $f_{\alpha_f}$ denote the $p$-ordinary $p$-stabilization of $f$.  Let $\psi : (\ZZ/N_\psi\ZZ)^\times \to E^\times$ be a Dirichlet character whose conductor $N_\psi$ is coprime to $pN_f$. We put $V:={\rm Sym}^2V_E(f)^*(1+\psi)$, where we identify $\psi$ with a character of $G_{\QQ}$ in the usual way. Set $T:={\rm Sym}^2 T_{\cO}(f)^*(1+\psi)$.

\subsubsection{} Let $\f$ be the unique Hida family of tame-level $N_f$ and tame-nebentype $\varepsilon_f$ that admits $f_{\alpha_f}$ as a specialization. We define $\boldT:={\rm Sym}^2T_\f^*(1+\psi)$. 

\subsubsection{} As explained in \cite[Note 3.2.2]{Loeffler_2016}, the vector space $T$ is an irreducible representation of $G_\QQ$ of $\cO$-rank $3$, unramified outside $pN_fN_\psi$, and crystalline at $p$. When regarded as a $G_{\QQ_p}$-representation, it admits a $3$-step filtration 
\begin{align}
\begin{aligned}
\label{eqn_2024_07_18_1331}
    T={\mathcal F}^0T \supset {\mathcal F}^1T=&\,\left(F^+T_{\cO}(f)\otimes T_{\cO}(f)+T_{\cO}(f)\otimes F^+T_{\cO}(f)\right)(1+\psi)\,\cap\, T  \\
    &\qquad \supset {\mathcal F}^2T=F^+T_{\cO}(f) \otimes F^+T_{\cO}(f)(1+\psi)\supset {\mathcal F}^3T=\{0\}\,,
\end{aligned}
\end{align}
with all graded pieces $1$-dimensional. Similar discussion applies with $V$ and $\boldT$.

\subsubsection{} Recall that we denote by $(-)^\vee:={\rm Hom}(-, \QQ_p/\ZZ_p)$ the Pontryagin duality functor. Let us consider the discrete Galois module $T^{\vee}(1)$ and equip it with a $3$-step filtration (by a decreasing sequence of $G_{\QQ_p}$-stable $\cO$-submodules) by defining $${\mathcal F}^iT^{\vee}:=(T/{\mathcal F}^{3-i}T)^{\vee}\,.$$

\subsubsection{Tamagawa factors in families (bis)}
\label{subsubsec_2024_07_12_1227}
The proposition below is a variant of \cite[Proposition 4.37] {On_Artin_formalism_for_p-adic_Garrett--Rankin_Lfunctions} for adjoint representation in the Hida family.
\begin{proposition}
\label{prop:prop4.37_on_artin_Hida_Adjoint}
Suppose that Assumption~\ref{assump:assum4.35_on_artin} holds for $\f.$ Assume also that the $p$-part of the Tamagawa factor (at $v\nmid p$) for some member of the Hida family $\f$ equals to $1$. Then $H^1(I_v, \overline{\boldT})$ is free. 
\end{proposition}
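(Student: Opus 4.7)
The plan is to adapt the strategy of Proposition~\ref{prop:prop4.37_on_artin_Iawasawa} from the Rankin--Selberg setting to the symmetric square setting. The first reduction is to observe that
$$H^1(I_v,\overline{\boldT})\;\simeq\; H^1(I_v,\boldT)\,\widehat{\otimes}_{R_\f}\, R_\f[[\Gamma_\QQ]]\,,$$
so it suffices to show that $H^1(I_v,\boldT)$ is a free $R_\f$-module. Note also that at $v\nmid p$, the cyclotomic twist $(1)$ is trivial on $I_v$ and $\psi|_{I_v}$ has finite order; under the mild hypothesis that $\psi$ has order prime to $p$ (which is implicit in our Iwasawa-theoretic setting and automatic when $N_\psi$ is prime to $p$ in the standard setup), the twist by $\psi$ contributes only a finite, prime-to-$p$ character on $I_v$.

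The argument then splits, as in the proof of Proposition~\ref{prop:prop4.37_on_artin_Iawasawa}, according to whether $\rho_\f(I_v)$ is finite or infinite. When $|\rho_\f(I_v)|<\infty$, the combination of Assumption~\ref{assump:assum4.35_on_artin} and \cite[Corollary 4.34]{On_Artin_formalism_for_p-adic_Garrett--Rankin_Lfunctions} gives $p\nmid|\rho_\f(I_v)|$; since the inertial action on $\mathrm{Sym}^2 T_\f^*$ factors through $\rho_\f(I_v)$ and the $\psi$-twist adds only a prime-to-$p$ contribution, one has $p\nmid|\rho_{\boldT}(I_v)|$, and \cite[Corollary 4.23]{On_Artin_formalism_for_p-adic_Garrett--Rankin_Lfunctions} yields the desired freeness.

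In the infinite-cardinality case, I would invoke \cite[Proposition 4.25]{On_Artin_formalism_for_p-adic_Garrett--Rankin_Lfunctions} to pick a basis $\{e_1,e_2\}$ of $T_\f^*$ on which a topological generator $t$ of $I_v/I_v^{(p)}$ acts through a unipotent matrix whose off-diagonal entry is (up to sign) the element $a_\f\in R_\f$ appearing in the proof of Proposition~\ref{prop:Corollary 4.26_vari_on_artin}; the Tamagawa assumption at $v$ for some member of $\f$ forces $a_\f\in R_\f^\times$ by the same argument as there. In the basis $\{e_1^2,\,e_1 e_2,\,e_2^2\}$ of $\mathrm{Sym}^2 T_\f^*$, the operator $t$ is then represented by
$$\rho(t)\;=\;\begin{pmatrix} 1 & a_\f & a_\f^{\,2}\\ 0 & 1 & 2a_\f\\ 0 & 0 & 1 \end{pmatrix},$$
and the $\psi$-twist multiplies this matrix by a prime-to-$p$ scalar on $I_v$, which does not affect freeness. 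A Gaussian elimination on $\rho(t)-\mathrm{id}$ (using that $a_\f$, and therefore $2a_\f$ since $p\neq 2$ by Assumption~\ref{assump:assum4.35_on_artin}, is a unit) together with \cite[Lemma 4.21(1)]{On_Artin_formalism_for_p-adic_Garrett--Rankin_Lfunctions} identifies $H^1(I_v,\boldT)$ with a free $R_\f$-module of rank one, finishing the proof after base change to $R_\f[[\Gamma_\QQ]]$.

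The main obstacle is bookkeeping in the infinite case: one must verify that the $\psi$-twist (possibly at $v\mid N_\psi$) and the quadratic character $\mu$ of Proposition 4.25 of op.~cit.\ do not obstruct the Gaussian elimination. For $\mu$, one handles the ramified quadratic case by passing to the subgroup $I_w<I_v$ on which $\mu$ is trivial and using the final assertion of Proposition~\ref{prop:Corollary 4.26_vari_on_artin} to descend; for $\psi$, one notes that its restriction to $I_v$ contributes only a scalar unit to each entry of $\rho(t)-\mathrm{id}$, so the pivot analysis goes through verbatim.
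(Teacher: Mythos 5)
Your argument is correct and follows the paper's skeleton — the reduction to showing $H^1(I_v,\boldT)$ is $R_\f$-free and the finite-inertia case (Assumption~\ref{assump:assum4.35_on_artin} plus Corollaries 4.34 and 4.23 of the Artin-formalism paper) are exactly the paper's — but in the infinite-inertia case you take a genuinely different route. You compute the unipotent matrix of the tame generator on $\mathrm{Sym}^2 T_\f^*$ in the basis $\{e_1^2,e_1e_2,e_2^2\}$ and Gaussian-eliminate, using $a_\f, 2a_\f\in R_\f^\times$, which yields $H^1(I_v,\boldT)$ free of rank one. The paper instead avoids any new matrix computation: it uses the decomposition $H^1(I_v,T^*_\f\otimes T^*_\f(1+\psi))\simeq H^1(I_v,\boldT)\oplus H^1(I_v,\wedge^2 T^*_\f)$, applies the already-established Rankin--Selberg statement (Corollary 4.26 of op.\ cit., i.e.\ the case $\g=\f$ of Proposition~\ref{prop:Corollary 4.26_vari_on_artin}) to see that the left-hand side is free, and concludes that the direct summand $H^1(I_v,\boldT)$ is projective, hence free since $R_\f$ is local. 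The paper's route subsumes all the bookkeeping for the quadratic character $\mu$ (your $I_w$-descent step) in the previously proved tensor-product case, at the cost of not exhibiting the rank; your direct computation gives the rank explicitly but redoes that bookkeeping, and your appeal to the final assertion of Proposition~\ref{prop:Corollary 4.26_vari_on_artin} for the descent should really be the standard restriction--corestriction/direct-summand argument, which does work since $p\nmid[I_v:I_w]$. Two small imprecisions: the invertibility of $2$ follows from the standing assumption $p\geq 7$ (any odd $p$ suffices), not from Assumption~\ref{assump:assum4.35_on_artin}, which says nothing about $p\neq 2$; and prime-to-$p$ order of $\psi|_{I_v}$ is not automatic from $(N_\psi,p)=1$ (e.g.\ $v\mid N_\psi$ with $v\equiv 1\bmod p$) — though the paper's own proof makes the same silent identification $H^1(I_v,T^*_\f\otimes T^*_\f)\simeq H^1(I_v,T^*_\f\otimes T^*_\f(1+\psi))$, so this is not a defect of your proposal relative to the paper.
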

\begin{proof}
\label{proof:prop:prop4.37_on_artin_Hida_Adjoint}
The proof follows once we show that $H^1(I_v, \boldT)$ is a free $R_\f$-module under our running assumptions, since $H^1(I_v, \overline{\boldT})\simeq H^1(I_v, \boldT)\otimes_{R_\f}R_\f[[\Gamma_{\QQ}]]$. The freeness of  $H^1(I_v, \boldT)$, which we explain below, is proved in a manner similar to \cite[Proposition 4.37]{On_Artin_formalism_for_p-adic_Garrett--Rankin_Lfunctions}.

If $\rho_{\f}(I_v)$ has finite cardinality, then by \cite[Corollary 4.34]{On_Artin_formalism_for_p-adic_Garrett--Rankin_Lfunctions}, $p$ does not divide the cardinality of $\rho_{\f}(I_v)$. We conclude that the cardinality of $\rho_{T^*_\f \otimes_{R_\f} T^*_\f}(I_v)$ is also finite with order coprime to $p$. By \cite[Corollary 4.23]{On_Artin_formalism_for_p-adic_Garrett--Rankin_Lfunctions},  
$$H^1(I_v, T^*_\f \otimes T^*_\f)\simeq H^1(I_v, T^*_\f \otimes T^*_\f(1+\psi)) \simeq H^1(I_v, \boldT) \oplus H^1(I_v, \wedge^2\,T^*_\f)$$
is free. In the scenario when the cardinality of $\rho_{T^*_\f}(I_v)$ is infinite, we apply \cite[Corollary 4.26]{On_Artin_formalism_for_p-adic_Garrett--Rankin_Lfunctions}, to conclude that
$$H^1(I_v, T^*_\f \otimes T^*_\f(1+\psi)) \simeq H^1(I_v, \boldT) \oplus H^1(I_v, \wedge^2\,T^*_\f)$$ is free. In either case, it follows that from \cite[Theorem 3.5]{rotmanHomoAl} that $H^1(I_v, \boldT)$ is projective (hence free, since the coefficient ring is local), as it is a direct summand of a free $R_\f$-module. Our proof is complete.
\end{proof}

\subsubsection{Greenberg Selmer Complexes} 
\label{subsubsec_2024_07_18_1333}
Let us put $R:=R_\f[[\Gamma_{\QQ}]]$, and as before define the $G_\QQ$-modules $R^\iota$ and $R^\sharp:={\rm Hom}_R (R^\iota,R)$. We let $\overline{\boldT}:=\boldT \otimes_{R_\f} R^\iota\,,$
and ${\mathcal F}^i\overline{\boldT}:={\mathcal F}^i\boldT \otimes_{R_\f} R^\iota.$ Associated to the data $(\overline{\boldT}, {\mathcal F}^i\overline{\boldT})$, we have the Greenberg Selmer complex $\boldR\Gamma(\overline{\boldT}, {\mathcal F}^i\overline{\boldT})$ for $i=1,2$. Likewise, we have the Selmer complexes $\boldR\Gamma(T,{\mathcal F}^iT)$ and $\boldR\Gamma(\overline{T},{\mathcal F}^i\overline{T})$.

\begin{lemma}
\label{lemma:GreenbergSelCom_Symm_ordinary}
Suppose that $i\in \{1,2\}$.
\item[i)] For any ideal $I$ of R, we have $\boldR\Gamma(\overline{\boldT}, {\mathcal F}^i\overline{\boldT}) \otimes^{\bbLL}_{R} R/I \simeq \boldR\Gamma(\overline{\boldT}/I, {\mathcal F}^i\overline{\boldT}/I).$
\item[ii)] Suppose that we are in the situation of Proposition~\ref{prop:prop4.37_on_artin_Hida_Adjoint}. Then, 
$$\boldR\Gamma(\overline{\boldT}, {\mathcal F}^i\overline{\boldT}) \in \cD^{[1,2]}_{\rm perf}(R).$$
\end{lemma}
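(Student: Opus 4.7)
The plan is to transport the strategy of Proposition~\ref{prop:perfect_Iwasawa_complex_cycloto} verbatim to the adjoint setting, using the three-step filtration \eqref{eqn_2024_07_18_1331} on $\boldT={\rm Sym}^2 T_\f^*(1+\psi)$ in place of the two-step Greenberg filtration on $T_2$. Both assertions of the lemma reduce to applying the abstract criterion \cite[Proposition 4.46]{On_Artin_formalism_for_p-adic_Garrett--Rankin_Lfunctions} over the ring $R=R_\f[[\Gamma_\QQ]]$; after decomposing this as $\bigoplus_\eta e_\eta R_\f[[\Gamma_\QQ^0]]$ along characters of $\Delta_\QQ$, the criterion applies over each Noetherian local factor $e_\eta R_\f[[\Gamma_\QQ^0]]$ separately and the resulting statements glue back along the idempotents $e_\eta$.

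For part (i), I would write $\boldR\Gamma(\overline{\boldT},{\mathcal F}^i\overline{\boldT})$ as the $(-1)$-shift of the cone on the map $C^\bullet(G_{\QQ,S},\overline{\boldT})\oplus U_S^+(\overline{\boldT})\to C_S^\bullet(\overline{\boldT})$ and verify derived base-change termwise. The global cochain complex and the $v=p$ local complex commute with $\otimes^{\bbLL}_R R/I$ by the standard continuity argument in finite $p$-cohomological dimension, while at $v\nmid p$ the local condition is quasi-isomorphic to the two-term complex $[(\overline{\boldT})^{I_v}\xrightarrow{{\rm Frob}_v-1}(\overline{\boldT})^{I_v}]$ of $R$-flat modules thanks to Proposition~\ref{prop:prop4.37_on_artin_Hida_Adjoint}, so it too commutes with the derived base-change. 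Assembling the cone and shifting yields the claimed isomorphism.

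For part (ii), I would check the two substantive inputs of the perfectness criterion. First, each ${\mathcal F}^i\overline{\boldT}$ for $i=1,2$ is free of finite rank over $R$: the graded pieces of \eqref{eqn_2024_07_18_1331} are $1$-dimensional over the local ring $R_\f$, so each ${\mathcal F}^i\boldT$ is free over $R_\f$, and tensoring by $R^\iota$ preserves freeness. Second, $H^1(I_v,\overline{\boldT})$ is free over $R$ for every $v\in S\setminus\{p\}$, which is precisely the content of Proposition~\ref{prop:prop4.37_on_artin_Hida_Adjoint}. These two inputs, combined with the intrinsic perfectness of $C^\bullet(G_{\QQ,S},\overline{\boldT})$ as an $R$-complex in amplitude $[0,2]$ (using the vanishing of $H^3(G_{\QQ,S},-)$ over the cyclotomic tower), yield perfectness of $\boldR\Gamma(\overline{\boldT},{\mathcal F}^i\overline{\boldT})$. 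The main subtlety I anticipate is the refinement of the a priori amplitude $[0,3]$ down to $[1,2]$: this demands the residual vanishing $\widetilde{\boldT}^{G_\QQ}=0=(\widetilde{\boldT})^\vee(1)^{G_\QQ}$, which is not listed among the lemma's hypotheses but ought to follow from the absolute irreducibility \eqref{item_Irr_res_Hida_F} of $\widetilde{\rho}_\f$ combined with a mild genericity of $\psi\bmod p$. Once this is in hand, the cone construction places the Selmer complex in $\cD^{[1,2]}_{\rm perf}(R)$, completing the proof.
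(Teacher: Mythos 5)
Your proposal follows essentially the same route as the paper: the paper's proof is precisely an appeal to \cite[Proposition 4.46]{On_Artin_formalism_for_p-adic_Garrett--Rankin_Lfunctions} with $M=\boldT$, $R=R_\f$, and $U^+_p(T)={\mathcal F}^i\overline{\boldT}$, which is exactly the reduction you carry out (your unpacking of base change, freeness of the filtration steps, and the input from Proposition~\ref{prop:prop4.37_on_artin_Hida_Adjoint} is just the content of that cited criterion). The residual $H^0$-vanishing you flag is likewise left implicit in the paper's one-line proof and is supplied by the standing hypotheses (e.g. \eqref{item_BI_Sym}) in the results where the lemma is applied, so your caveat does not constitute a gap relative to the paper's argument.
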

\begin{proof}
\label{proof:lemma:GreenbergSelCom_Symm_ordinary}
This is \cite[Proposition 4.46]{On_Artin_formalism_for_p-adic_Garrett--Rankin_Lfunctions}, where we set $M=\boldT$ and $R=R_\f$ in \cite[\S4.4.1]{On_Artin_formalism_for_p-adic_Garrett--Rankin_Lfunctions} and we take in $U^+_v(T)={\mathcal F}^i\overline{\boldT}$ for $v=p$.
\end{proof}

\subsubsection{Bounding the Selmer group}
Let us put $T^{\rm alt}:=\bigwedge^2 T_{\cO}(f)^*(1+\psi)\,$, so that $T_\cO(f)^*\otimes T_\cO(f)^* (1+\psi)\,\simeq \, T \oplus T^{\rm alt}\,.$ We will work under the following alternative big image assumption to \eqref{item_p_large} (primarily to employ the Euler system machinery to prove Theorem~\ref{thm:Theorem 5.4.1_THm5.4.2_Loeffler_2016_11072024_1352} below):

\begin{itemize}
\item[(\mylabel{item_BI_Sym}{$\,{\textbf{BI}}_{\rm Sym}\,$})] $p\geq 7$ and $E \simeq \QQ_p$. Moreover, ${\rm im}(G_{\QQ} \to {\rm GL}(V_E(f)))$ contains of conjugate of ${\rm SL}_2(\ZZ_p)$, and there exists $u \in (\ZZ/N_fN_\psi)^\times$ such that $\psi(u) \neq \pm 1\pmod \frakm_E$ and $\varepsilon^{-1}_f\psi(u)$  is a square modulo $\frakm_E$. 
\end{itemize}

\begin{theorem}[Theorem 5.4.1 and 5.4.2 in \cite{Loeffler_2016}]
\label{thm:Theorem 5.4.1_THm5.4.2_Loeffler_2016_11072024_1352}
Let $\eta$ be a character of $\Delta_{\QQ}$ with $\eta(-1)= \psi(-1).$ Then:
\item[i)] The Greenberg Selmer group ${e_\eta}H^1_{{\rm Iw, Gr}, 1}(\QQ(\zeta_{p^\infty}), T)$ is free of rank $1$ over ${e_\eta}\cO_E[[\Gamma_{\QQ}]]$.
\item[ii)] The Greenberg Selmer group  ${e_\eta}H^1_{{\rm Gr},1}(\QQ(\zeta_{p^\infty}),T^\vee(1))^\vee$ is a torsion ${e_\eta}\cO_E[[\Gamma_{\QQ}]]$-module. 
\end{theorem}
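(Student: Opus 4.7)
The plan is to combine a structural analysis of the Iwasawa-theoretic Selmer complex attached to $T$ (in the spirit of \S\ref{subsubsec_2024_07_18_1333}) with an Euler system argument based on Beilinson--Flach classes. Throughout, I would lean heavily on \eqref{item_BI_Sym}, which supplies both the big-image input needed to rule out pseudo-null submodules and the non-triviality input required for the Euler system bound. The parity condition $\eta(-1)=\psi(-1)$ selects the component on which $T$ has a non-trivial $G_\QQ$-invariant sign, so that the Euler--Poincar\'e characteristic of the Greenberg complex gives the expected rank $1$ on that component (and rank $0$ on the opposite parity, which would be handled separately, but is not claimed here).

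For part (i), I would first use Lemma~\ref{lemma:GreenbergSelCom_Symm_ordinary} together with the Greenberg filtration \eqref{eqn_2024_07_18_1331} to rewrite $\boldR\Gamma(\overline{T},\mathcal{F}^1\overline{T})$ as a perfect complex of amplitude $[1,2]$ over $\cO_E\lb \Gamma_\QQ\rb$. The global Euler--Poincar\'e characteristic formula combined with the local computation at $p$ (where the rank of $\mathcal{F}^1T$ is $1$) gives that the generic rank of $e_\eta \boldR^1\Gamma(\overline{T},\mathcal{F}^1\overline{T})$ equals $1+\dim e_\eta \boldR^2\Gamma(\overline{T},\mathcal{F}^1\overline{T})$ on the parity-matching component. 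Then, I would invoke the standard no-pseudo-null-submodules result (Perrin-Riou/Nekov\'a\v{r}): the $\mathrm{SL}_2(\ZZ_p)$ content of $\rho_f(G_\QQ)$ guaranteed by \eqref{item_BI_Sym} forces the residual representation on $T$ to be absolutely irreducible with no non-trivial $H^0$, which in turn implies that $e_\eta H^1_{\mathrm{Iw},\mathrm{Gr},1}(\QQ(\zeta_{p^\infty}),T)$ is $\cO_E\lb\Gamma_\QQ\rb$-torsion-free. Combined with the generic rank computation and Proposition~\ref{prop:prop4.37_on_artin_Hida_Adjoint} (which controls Tamagawa-type contributions at bad primes in families, hence in particular for the specialization $T$), freeness over the local ring $e_\eta\cO_E\lb\Gamma_\QQ\rb$ follows at once from torsion-freeness plus rank~$1$, since any torsion-free module of rank $1$ over a two-dimensional regular local ring that is of projective dimension $\leq 1$ is free.

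For part (ii), the key input is the Beilinson--Flach Euler system. The plan is to apply the system to the pair $(f,\theta_\psi)$, where $\theta_\psi$ is the theta series (or CM form) attached to the Dirichlet character $\psi$; since $V_E(f)\otimes V_E(\theta_\psi)$ decomposes so that $\mathrm{Sym}^2 V_E(f)^*(1+\psi)$ is a direct summand after twisting, the Euler system of \cite{KLZRecLaw} supplies classes in $H^1_{\mathrm{Iw}}(\QQ(\zeta_{p^\infty}), T)$ satisfying the usual Euler system norm relations. The Kolyvagin/Rubin/Mazur--Rubin machinery then yields
\[
\mathrm{char}_{e_\eta\cO_E\lb\Gamma_\QQ\rb}\bigl(e_\eta H^1_{\mathrm{Gr},1}(\QQ(\zeta_{p^\infty}),T^\vee(1))^\vee\big/\text{Euler system image}\bigr)\ \Big{|}\ \text{bound involving }\mathrm{loc}_p\,,
\]
provided one verifies that the bottom Euler system class is non-trivial on the $e_\eta$-component. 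This non-triviality is precisely where the auxiliary character $\psi$ and the unit $u\in(\ZZ/N_fN_\psi)^\times$ from \eqref{item_BI_Sym} come in: the conditions $\psi(u)\not\equiv\pm 1\pmod{\frakm_E}$ and $\varepsilon_f^{-1}\psi(u)$ being a square mod $\frakm_E$ are exactly what Loeffler's non-vanishing argument (a congruence between Eisenstein and cuspidal contributions together with a distinguishing twist) requires to produce a non-zero bottom class on the right parity branch. Torsion-ness of the dual Selmer group then follows from the standard implication of the Euler system bound.

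The main obstacle, as usual in this type of argument, is the non-triviality of the bottom Euler system class on the $e_\eta$-component. The big-image hypothesis \eqref{item_BI_Sym} is tailored precisely for this purpose; the remaining steps (perfectness, Euler--Poincar\'e, torsion-freeness from big image, Kolyvagin descent) are standard given the framework already in place. I would also note that the restriction $p\ge 7$ in \eqref{item_BI_Sym} is needed to guarantee both that the Euler system machinery (which requires $p>3$ with some margin) applies in the symmetric-square context, and that certain exceptional congruence phenomena do not spoil the non-triviality argument.
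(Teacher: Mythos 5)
A preliminary but important remark: the paper does not prove this statement at all — it is imported verbatim as Theorems 5.4.1 and 5.4.2 of \cite{Loeffler_2016}, so the only ``proof'' in the text is the citation. Your proposal is therefore a reconstruction of Loeffler's argument, and in broad outline (Beilinson--Flach Euler system plus Rubin-type descent, with \eqref{item_BI_Sym} feeding the big-image hypotheses and the parity condition $\eta(-1)=\psi(-1)$ selecting the branch on which the classes live) it has the right shape. However, two concrete steps are off. For part (ii), the Euler system is not attached to a pair $(f,\theta_\psi)$: a Dirichlet character $\psi$ has no cuspidal theta series, and a tensor product of the form $V_E(f)\otimes V_E(\theta_\psi)$ would in any case not contain ${\rm Sym}^2V_E(f)^*$ as a direct summand. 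The correct source is the self-convolution: one uses $V_E(f)\otimes V_E(f)\simeq {\rm Sym}^2V_E(f)\oplus \wedge^2 V_E(f)$ (suitably twisted by $\psi$ and a cyclotomic character), so that the Beilinson--Flach classes of \cite{KLZRecLaw} for $f\otimes f$ project onto the symmetric-square component. Moreover, the conditions on $u$ in \eqref{item_BI_Sym} are not a non-vanishing input for the bottom class; they serve to verify the Euler-system big-image hypotheses (existence of $\tau\in\Gal(\overline{\QQ}/\QQ(\mu_{p^\infty}))$ with $T/(\tau-1)T$ free of rank one, and the separation of the ${\rm Sym}^2$ from the $\wedge^2$ part). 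Non-torsionness of the classes on the parity-matching branch comes instead from the explicit reciprocity law together with the non-vanishing of the relevant (imprimitive) symmetric-square $p$-adic $L$-function.

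For part (i), the final step as you state it is false: over the two-dimensional regular local ring $e_\eta\cO_E[[\Gamma_{\QQ}]]$, a torsion-free rank-one module of projective dimension $\leq 1$ need not be free — the maximal ideal is a counterexample (it is torsion-free of rank one and has projective dimension exactly $1$). What does give freeness is, for instance, that the Iwasawa/Selmer complex is perfect with amplitude $[1,2]$ (cf.\ Lemma~\ref{lemma:GreenbergSelCom_Symm_ordinary} for the analogous statement in this paper), so that the degree-one cohomology is the kernel of a map of free modules, hence a second syzygy, hence reflexive, and reflexive modules over a two-dimensional regular local ring are free; the rank-one assertion then follows from the Euler--Poincar\'e computation once the torsionness in (ii) is available, which is the logical order in \cite{Loeffler_2016}. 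Finally, Proposition~\ref{prop:prop4.37_on_artin_Hida_Adjoint} concerns Tamagawa factors for the Hida family $\boldT$ and plays no role in this punctual statement, so invoking it here is a red herring.
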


We refer the reader to \cite[\S3.4]{Loeffler_2016} for the definition of the classical Greenberg Selmer groups that appear in the statement of Theorem~\ref{thm:Theorem 5.4.1_THm5.4.2_Loeffler_2016_11072024_1352}, that come associated to the data \eqref{eqn_2024_07_18_1331}. In the subsequent sections, we will explain how to recover them (up to negligible error) as the cohomology of our Selmer complexes.

\subsubsection{Comparison with Selmer Complexes}
In this subsection, we will compare the cohomology of the Selmer Complexes we have introduced in \S\ref{subsubsec_2024_07_18_1333} with the Greenberg Selmer groups $H^1_{{\rm Iw, Gr}, i}(\QQ(\zeta_{p^\infty}), T)$. We also have the Greenberg Selmer complex $\boldR\Gamma(\overline{T}^\vee(1), \overline{{\mathcal F}^{i}T^{\vee}}(1))$. 

The following is a direct consequence of \cite[Lemma 9.6.3]{nekovar06}.

\begin{lemma} For $i=1,2$:
\label{lemma:comp_greenberg_SelComp_Greenberg_SelGroup}
\item[i)] We have an isomorphism
\begin{align}
\begin{aligned}
\label{eq:01072024_1356}
 \boldR^1\Gamma(\overline{T},  {\mathcal F}^i\overline{T}) \xrightarrow{\,\,\sim\,\,}  H^1_{{\rm Iw, Gr}, i}(\QQ(\zeta_{p^\infty}), T) \,.
\end{aligned}
\end{align}

\item[ii)] We have a surjective map
\begin{align}
\begin{aligned}
\label{eq:01072024_1356_2}
 \boldR^1\Gamma(\overline{T}^\vee(1),  \overline{{\mathcal F}^{i}T^{\vee}}(1)) \lra  H^1_{{\rm Gr}, i}(\QQ(\zeta_{p^\infty}), T^\vee(1)) 
\end{aligned}
\end{align}
with cotorsion kernel.
\end{lemma}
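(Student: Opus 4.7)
The plan is to deduce both assertions as direct specializations of \cite[Lemma 9.6.3]{nekovar06}, which in general terms compares the cohomology of a Greenberg-style Selmer complex to the classical Greenberg Selmer group, with explicit error terms in degrees $0$ and $1$ coming from the local $H^0$'s of the quotient modules $\overline{T}/\mathcal{F}^i\overline{T}$ at the primes $v \in S$. Concretely, this lemma yields, for either compact or discrete coefficients, a functorial exact sequence of the shape
\begin{equation*}
0 \lra E^0 \lra \boldR^1\Gamma(\overline{T},\mathcal{F}^i\overline{T}) \lra H^1_{\mathrm{Iw,Gr},i}(\QQ(\zeta_{p^\infty}),T) \lra E^1 \lra \cdots
\end{equation*}
where $E^0,E^1$ are assembled from local $H^0$-type terms at the primes of $S$. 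So the proof reduces to estimating these error terms in the two cases at hand.

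For part (i), I would argue that $E^0 = 0 = E^1$ term by term. At $v=p$, the local condition imposed by our Selmer complex is precisely the Greenberg condition associated with $\mathcal{F}^i\overline{T}$, so there is no contribution. At $v \in S \setminus \{p\}$, the error term is a direct summand of $H^0(G_v/I_v,\,(\overline{T}/\overline{T}^{I_v}))$; this vanishes because $\overline{T} = \boldT \otimes_{R_\f} R^\iota$ carries the tautological $\Gamma_{\QQ}$-twist, and on $\cO_E[[\Gamma_{\QQ}]]$-coefficients no $\mathrm{Frob}_v$-fixed vector can survive (the cyclotomic character has infinite order on $\mathrm{Frob}_v$). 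This yields the claimed isomorphism in \eqref{eq:01072024_1356}.

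For part (ii), the same machinery applies with $\overline{T}$ replaced by $\overline{T}^\vee(1)$ and $\mathcal{F}^i\overline{T}$ by $\overline{\mathcal{F}^{i}T^\vee}(1)$. The surjectivity in \eqref{eq:01072024_1356_2} follows from the same exact sequence truncated after $E^1$, and what remains is to identify the kernel $E^0$. In the discrete setting, the local error terms are Pontryagin duals of the local $H^0$-terms that appeared (and vanished) in part (i); by local Tate duality and the Iwasawa-theoretic control of Euler characteristics, each such dual is a cofinitely generated $\cO_E[[\Gamma_{\QQ}]]$-cotorsion module. Assembling the contributions from the finitely many $v\in S$ gives the asserted cotorsion kernel.

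The principal technical point, and the one I would check most carefully, is the verification at ramified primes $v\in S\setminus \{p\}$: one must confirm that the tautological $\Gamma_{\QQ}$-twist in $\overline{T}$ genuinely kills $(\overline{T}/\overline{T}^{I_v})^{G_v/I_v}$ in the compact setting (a purely Iwasawa-theoretic statement about the action of a lift of $\mathrm{Frob}_v$ on $R^\iota$) so that the resulting sequence from \cite[Lemma 9.6.3]{nekovar06} collapses to the isomorphism in (i) and to the surjection with cotorsion kernel in (ii).
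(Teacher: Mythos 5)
You have the right key input — the paper's proof is also a direct application of \cite[Lemma 9.6.3]{nekovar06} — but your handling of the error terms is where the argument breaks. In Nekov\'a\v{r}'s comparison the defect between $\boldR^1\Gamma(\overline{T},\mathcal{F}^i\overline{T})$ and the classical (strict) Greenberg Selmer group sits at the prime $p$, not at the primes $v\in S\setminus\{p\}$: the exact sequence one gets is
$$(\overline{T}/\mathcal{F}^i\overline{T})^{G_{\QQ_p}}\lra \boldR^1\Gamma(\overline{T},\mathcal{F}^i\overline{T})\lra H^1_{{\rm Iw,Gr},i}(\QQ(\zeta_{p^\infty}),T)\lra 0\,,$$
the first term measuring the failure of injectivity of $H^1(G_{\QQ_p},\mathcal{F}^i\overline{T})\to H^1(G_{\QQ_p},\overline{T})$, i.e.\ the difference between imposing the local condition through the complex $C^\bullet(G_{\QQ_p},\mathcal{F}^i\overline{T})$ and through the image/strict condition defining the classical group. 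Your assertion that ``at $v=p$ there is no contribution'' dismisses exactly the term that must be controlled, and you give no argument for the vanishing of $(\overline{T}/\mathcal{F}^i\overline{T})^{G_{\QQ_p}}$ (which is what yields the isomorphism in (i), e.g.\ via the standard vanishing of Iwasawa-theoretic $H^0$ for the cyclotomic deformation). Conversely, the terms you do argue about, at $v\in S\setminus\{p\}$, do not enter the degree-one comparison in the form of Lemma 9.6.3 used here, so the ``cyclotomic twist kills Frobenius-fixed vectors'' discussion addresses a non-issue.

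The same misplacement damages part (ii). The kernel of the surjection is (a quotient of) $H^0\bigl(G_{\QQ_p},\,\overline{T}^{\vee}(1)/\overline{\mathcal{F}^{3-i}T^{\vee}}(1)\bigr)$, which involves the \emph{complementary} filtration step $3-i$ (dual to the submodule $\mathcal{F}^iT$), and the point is simply that this local $H^0$ is a cotorsion $R_\f[[\Gamma_\QQ]]$-module. Your description of the kernel as ``Pontryagin duals of the local $H^0$-terms that appeared (and vanished) in part (i)'' is not coherent — duals of modules that vanish would themselves vanish, and the actual term is not dual to the one occurring in (i) — and the appeal to local Tate duality and Euler characteristics is not needed. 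So while the route (Nekov\'a\v{r}'s Lemma 9.6.3 plus control of local $H^0$-error terms) is the paper's route, the proposal as written has a genuine gap: the $p$-local error terms, which are the whole content of the lemma, are neither correctly identified nor estimated.
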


\begin{proof}
\item[i)] This follows from \cite[Lemma 9.6.3]{nekovar06}, which reads:
\begin{align}
\begin{aligned}
\label{eq:18072024_1356}
0= (\overline{T}/{\mathcal F}^i\overline{T})^{G_{\QQ_p}}\longrightarrow \boldR^1\Gamma(\overline{T},  {\mathcal F}^i\overline{T}) \longrightarrow  H^1_{{\rm Iw, Gr}, i}(\QQ(\zeta_{p^\infty}), T) \longrightarrow 0\,.
\end{aligned}
\end{align}
We remark that $H^1_{\rm Iw}(\QQ(\zeta_{p^\infty}), T) \simeq H^1(\QQ, \overline{T})$,\quad  $H^1_{\rm Iw}(\QQ_{p,\infty}, {\mathcal F}^iT) \simeq H^1(\QQ_p, {\mathcal F}^i\overline{T})$,\quad  and $ H^1_{\rm Iw}(\QQ_{p,\infty}, T/{\mathcal F}^iT) \simeq H^1(\QQ_p, \overline{T/{\mathcal F}^iT})$ by \cite[Propostion 8.3.5]{nekovar06}. We also note that $S^{\rm str}_{X}(\QQ)$ in \cite[Lemma 9.6.3]{nekovar06} coincides with $H^1_{{\rm Iw, Gr}, i}(\QQ(\zeta_{p^\infty}), T)$ when  $X=\overline{T}$  and $X^+_v=\overline{{\mathcal F}^i{T}}$.

\item[ii)] This also follows from \cite[Lemma 9.6.3]{nekovar06}, which gives rise to an exact sequence
\begin{align}
\begin{aligned}
\label{eq:18072024_1355}
H^0(G_{\QQ_p}, \overline{T}^{\vee}(1)/\overline{{\mathcal F}^{3-i}T}^{\vee}(1))\longrightarrow \boldR^1\Gamma(\overline{T}^{\vee}(1),  {\mathcal F}^i\overline{T}^{\vee}(1)) \longrightarrow  H^1_{{\rm Gr}, i}(\QQ(\zeta_{p^\infty}), T^\vee(1)) \longrightarrow 0\,.
\end{aligned}
\end{align}
Our claim follows noting that the $R$-module $H^0(G_{\QQ_p}, \overline{T}^{\vee}(1)/\overline{{\mathcal F}^{3-i}T}^{\vee}(1))$ is cotorsion.
\end{proof}

\subsubsection{Matlis duality for Iwasawa theoretic Selmer Complexes}
\label{sec:dualites_selcompl_Hida_adjoint_reps}

By \cite[\S8.9.6.1]{nekovar06}, we have an isomorphism 
$$\boldR\Gamma(\overline{T},  {\mathcal F}^2\overline{T})^\iota \simeq  {\boldR}{\rm Hom}\left(\boldR\Gamma(\overline{T}^\vee(1),  \overline{{\mathcal F}^1T^\vee}(1)), \QQ_p/\ZZ_p\right)[-3]$$ 
which induces isomorphisms on cohomology 
\begin{align}
\begin{aligned}
\label{eq:02_07_2024_1340}
\boldR^i\Gamma(\overline{T},  {\mathcal F}^2\overline{T})^\iota &\simeq H^{j-3}\left({\bold R}{\rm Hom}(\boldR\Gamma(\overline{T}^\vee(1),  \overline{{\mathcal F}^1T^\vee}(1)), \QQ_p/\ZZ_p)\right) \\
&= \boldR^{3-j}\Gamma(\overline{T}^\vee(1),  \overline{{\mathcal F}^1T^\vee}(1))^\vee 
\end{aligned}
\end{align}
where the final equality is from \cite[\S6.3.5]{nekovar06}.

\begin{theorem}
\label{thm:H^1_0_Hida_family_Sym2}
Suppose we are in the situation of Proposition~\ref{prop:prop4.37_on_artin_Hida_Adjoint}. Let $\eta$ be a character of $\Delta_{\QQ}$ with $\eta(-1)= \psi(-1)$. We assume that the Big Image Hypothesis~\eqref{item_BI_Sym} holds. Then:
\item[i)] $e_\eta\boldR^1\Gamma(\overline{\boldT}, {\mathcal F}^i\overline{\boldT})=0$ where $i=1,2$. 
\item[ii)] $e_\eta\boldR^2\Gamma(\overline{\boldT}, {\mathcal F}^2\overline{\boldT})$ is torsion over $R.$
\end{theorem}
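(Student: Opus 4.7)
The plan is to follow the strategy of Theorem~\ref{thm:thm:H^1_Selcom_Hida_family_Iwasa_zero}: specialize at $f$, use the single-form input provided by Theorem~\ref{thm:Theorem 5.4.1_THm5.4.2_Loeffler_2016_11072024_1352}, and bootstrap back to the Hida family via control and Nakayama. The essential wrinkle is that, unlike in Theorem~\ref{thm:thm:H^1_Selcom_Hida_family_Iwasa_zero}, the specialization of $\boldR^1\Gamma$ at $f$ is \emph{not} zero (it is free of rank one under the parity hypothesis $\eta(-1)=\psi(-1)$ by Theorem~\ref{thm:Theorem 5.4.1_THm5.4.2_Loeffler_2016_11072024_1352}(i) and Lemma~\ref{lemma:comp_greenberg_SelComp_Greenberg_SelGroup}(i)), so for part (i) a generic-rank / torsion-freeness argument replaces the direct appeal to Nakayama.

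First, apply Lemma~\ref{lemma:GreenbergSelCom_Symm_ordinary}(i) with $I = \ker({\rm sp}_f\colon R_\f \to \cO_E)\cdot R$, obtaining $\boldR\Gamma(\overline{\boldT}, {\mathcal F}^i\overline{\boldT}) \otimes^{\bbLL}_R R/I \simeq \boldR\Gamma(\overline{T}, {\mathcal F}^i\overline{T})$. By Proposition~\ref{prop:prop4.37_on_artin_Hida_Adjoint} and Lemma~\ref{lemma:GreenbergSelCom_Symm_ordinary}(ii), $\boldR\Gamma(\overline{\boldT}, {\mathcal F}^i\overline{\boldT})$ is represented by a two-term complex $[P_1 \xrightarrow{\phi} P_2]$ of finite-rank free $R$-modules. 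Two immediate consequences: $e_\eta\boldR^1\Gamma(\overline{\boldT}, {\mathcal F}^i\overline{\boldT})$ embeds into $e_\eta P_1$ and is thus torsion-free over the domain $e_\eta R$; and the natural map yields a surjection $e_\eta\boldR^2\Gamma(\overline{\boldT}, {\mathcal F}^i\overline{\boldT})/I \twoheadrightarrow e_\eta\boldR^2\Gamma(\overline{T}, {\mathcal F}^i\overline{T})$.

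For part (ii), combine the Matlis-duality isomorphism \eqref{eq:02_07_2024_1340} with Lemma~\ref{lemma:comp_greenberg_SelComp_Greenberg_SelGroup}(ii) to identify $e_\eta\boldR^2\Gamma(\overline{T}, {\mathcal F}^2\overline{T})^\iota$ (up to a cotorsion error controlled by $H^0(G_{\QQ_p}, \overline{T}^\vee(1)/\overline{{\mathcal F}^1T^\vee}(1))$) with $e_\eta H^1_{\rm Gr, 1}(\QQ(\zeta_{p^\infty}), T^\vee(1))^\vee$, which is torsion over $e_\eta\cO_E[[\Gamma_\QQ]]$ by Theorem~\ref{thm:Theorem 5.4.1_THm5.4.2_Loeffler_2016_11072024_1352}(ii). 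The surjection of the previous paragraph, together with a Nakayama-at-minimal-primes argument, then lifts this torsion-ness to $e_\eta\boldR^2\Gamma(\overline{\boldT}, {\mathcal F}^2\overline{\boldT})$ over $e_\eta R$. For part (i), torsion-freeness reduces us to showing that the generic rank of $e_\eta\boldR^1\Gamma(\overline{\boldT}, {\mathcal F}^i\overline{\boldT})$ over $e_\eta R$ is zero. This will follow from Nekov\'a\v{r}'s global Euler--Poincar\'e characteristic formula for Greenberg Selmer complexes \cite[\S8]{nekovar06}, applied to the ordinary local conditions ${\mathcal F}^i\overline{\boldT}$: the alternating sum of generic ranks is controlled by the local ranks of $\overline{\boldT}/{\mathcal F}^i\overline{\boldT}$ at $p$ and the archimedean contribution, and combined with the vanishing of $\boldR^0\Gamma = \boldR^3\Gamma = 0$ (ensured by the big image hypothesis~\eqref{item_BI_Sym}) and with generic rank zero for $\boldR^2\Gamma$ from part (ii), this forces the generic rank of $\boldR^1\Gamma$ to vanish. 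Torsion-freeness then yields $e_\eta\boldR^1\Gamma(\overline{\boldT}, {\mathcal F}^i\overline{\boldT})=0$.

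The principal obstacle is the Euler--Poincar\'e rank computation in the final step: one must carefully match the local dimensions of ${\mathcal F}^i\overline{\boldT}$ at $p$ with the archimedean sign forced by $\eta(-1)=\psi(-1)$, and verify that the alternating sum comes out to zero uniformly for both choices $i=1,2$. Conceptually this is the family-level counterpart of the assertion that the ``extra'' rank-one class witnessed at the specialization is absorbed into ${\rm Tor}^R_1(\boldR^2\Gamma(\overline{\boldT}, {\mathcal F}^i\overline{\boldT}), R/I)$ and does not persist over the full Hida family.
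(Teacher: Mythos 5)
Your part (ii) follows the paper's own route: Lemma~\ref{lemma:comp_greenberg_SelComp_Greenberg_SelGroup}(ii) together with the Matlis duality isomorphism \eqref{eq:02_07_2024_1340} converts Theorem~\ref{thm:Theorem 5.4.1_THm5.4.2_Loeffler_2016_11072024_1352}(ii) into torsionness of $e_\eta\boldR^2\Gamma(\overline{T},{\mathcal F}^2\overline{T})$, and the control theorem plus the Nakayama-type rank argument of Theorem~\ref{thm:thm:H^1_Selcom_Hida_family_Iwasa_zero} lifts this to $R$. One small correction there: to lift torsionness you need the \emph{injection} $\boldR^2\Gamma(\overline{\boldT},{\mathcal F}^2\overline{\boldT})\otimes_R R/I\hookrightarrow \boldR^2\Gamma(\overline{T},{\mathcal F}^2\overline{T})$ (in fact an isomorphism, since the amplitude is $[1,2]$), not the surjection you invoke; a surjection onto a torsion module by itself gives nothing.

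For part (i) you depart from the paper, which simply cites Lemma~\ref{lemma:comp_greenberg_SelComp_Greenberg_SelGroup}(i), Theorem~\ref{thm:Theorem 5.4.1_THm5.4.2_Loeffler_2016_11072024_1352}(i) and Nakayama as in Theorem~\ref{thm:thm:H^1_Selcom_Hida_family_Iwasa_zero}; your point that the specialization of the ${\mathcal F}^1$-complex is free of rank one, so a bare Nakayama reduction is not available for $i=1$, is well taken. However, the replacement you propose does not close: the Euler--Poincar\'e verification you defer to the last step does \emph{not} come out to zero for $i=1$. Complex conjugation acts on ${\rm Sym}^2$ of a two-dimensional odd representation with eigenspaces of ranks $2$ (eigenvalue $+1$) and $1$ (eigenvalue $-1$); after twisting by $\chi_{\rm cyc}\psi$ and by $\eta$ with $\eta(-1)=\psi(-1)$, one finds $\rank\,(e_\eta\overline{\boldT})^{c=-1}=2$, while $\rank\,(\boldT/{\mathcal F}^1\boldT)=1$, so Nekov\'a\v{r}'s formula yields $\rank\,\boldR^1\Gamma(\overline{\boldT},{\mathcal F}^1\overline{\boldT})-\rank\,\boldR^2\Gamma(\overline{\boldT},{\mathcal F}^1\overline{\boldT})=2-1=1$; the alternating sum is $0$ only for the ${\mathcal F}^2$-condition. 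This is exactly what Loeffler's rank-one theorem at the specialization predicts: the rank-one class is expected to persist over the family rather than be absorbed into a ${\rm Tor}_1$ term, so forcing the generic rank of the ${\mathcal F}^1$-complex in degree one to vanish is not possible by this bookkeeping. Moreover, even the inputs of your rank count are missing for $i=1$: part (ii) supplies torsionness of $\boldR^2\Gamma$ only for the ${\mathcal F}^2$-condition. In sum, your method gives a clean proof of part (i) for $i=2$ (arguably more robust than the paper's one-line reduction), but the $i=1$ case is not established by your argument, and your own Euler-characteristic computation indicates that it would require an argument of a genuinely different nature.
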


\begin{proof}
 
Part (i) follows from Lemma~\ref{lemma:comp_greenberg_SelComp_Greenberg_SelGroup}(i), combined with Theorem~\ref{thm:Theorem 5.4.1_THm5.4.2_Loeffler_2016_11072024_1352}(i) and Nakayama's lemma as in the proof of Theorem~\ref{thm:thm:H^1_Selcom_Hida_family_Iwasa_zero}.

Part (ii) follows from Lemma~\ref{lemma:comp_greenberg_SelComp_Greenberg_SelGroup}(ii) and \eqref{eq:02_07_2024_1340} combined with  Theorem~\ref{thm:Theorem 5.4.1_THm5.4.2_Loeffler_2016_11072024_1352}(ii) and Nakayama's lemma as in the proof of Theorem~\ref{thm:thm:H^1_Selcom_Hida_family_Iwasa_zero}.
\end{proof}

\subsubsection{Functional equation}
We are now ready to prove another main result of our paper: The functional equation for the algebraic $p$-adic $L$-function associated with the Hida families of symmetric squares of modular forms. 

\begin{theorem}
\label{thm_2_21_2024_11_04}
Suppose we are in the situation of Theorem~\ref{thm:H^1_0_Hida_family_Sym2} . Then, 
$${\rm char}_{R_\f[[\Gamma_\QQ]]}\left(\boldR^2\Gamma(\overline{T},  {\mathcal F}^2\overline{T})\right)={\rm char}_{R_\f[[\Gamma_\QQ]]}\left(\boldR^2\Gamma(\overline{T}^*(1),  \overline{{\mathcal F}^1T^*}(1))\right)^\iota\,,$$
where ${\mathcal F}^1T^*:=(T/{\mathcal F}^2T)^*$.

\end{theorem}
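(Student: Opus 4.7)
The plan is to mirror the argument for Theorem~\ref{thm:Func_eq_hida}, adapted to the symmetric-square setting of \S\ref{sec:Adj_reps}, with $\overline{T}$ and $\overline{T}^*(1)$ interpreted as the family-level objects $\overline{\boldT}$ and $\overline{\boldT}^*(1)$ of \S\ref{subsubsec_2024_07_18_1333} (which is forced by the choice of ring $R_\f[[\Gamma_\QQ]]$ in the statement). My first move is to combine Lemma~\ref{lemma:GreenbergSelCom_Symm_ordinary}(ii)---which asserts that $\boldR\Gamma(\overline{\boldT}, {\mathcal F}^i\overline{\boldT})$ is a perfect complex of amplitude $[1,2]$ under the Tamagawa hypothesis of Proposition~\ref{prop:prop4.37_on_artin_Hida_Adjoint}---with the cohomological vanishing of Theorem~\ref{thm:H^1_0_Hida_family_Sym2}(i) and the torsionness of Theorem~\ref{thm:H^1_0_Hida_family_Sym2}(ii) (both applied $\eta$-component-wise and reassembled over $\widehat{\Delta}_\QQ$). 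This will permit me to represent $\boldR\Gamma(\overline{\boldT}, {\mathcal F}^2\overline{\boldT})$ by a two-term complex $[P_1 \xrightarrow{\,\phi\,} P_2]$ of free $R$-modules of the same rank with $\phi$ injective, so that $\det\phi = {\rm char}_R\,\boldR^2\Gamma(\overline{\boldT}, {\mathcal F}^2\overline{\boldT})$.

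Next, I would invoke Nekov\'a\v{r}'s Grothendieck duality triangle for Selmer complexes (cf.~\S\ref{sec:duality_sel_compl}), pairing the Greenberg local condition ${\mathcal F}^2\overline{\boldT}$ at $p$ with its Tate-orthogonal complement ${\mathcal F}^1\overline{\boldT}^*(1)$, as prescribed by the Pontryagin-dual filtration constructed in \S\ref{sec:Adj_reps}. For this duality to be an actual isomorphism rather than merely a pseudo-isomorphism, I must verify that the local error terms ${\rm Err}_v$ at primes $v\nmid p$ vanish outright---this is the symmetric-square analogue of Proposition~\ref{prop:Iwasa_Tamagama_factor_0__10062024_0830}, and it reduces to checking that $H^1(I_v, \overline{\boldT})$ is a free $R$-module, which is precisely the content of Proposition~\ref{prop:prop4.37_on_artin_Hida_Adjoint} under our running Tamagawa assumption. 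With this in hand, $\boldR\Gamma(\overline{\boldT}^*(1), {\mathcal F}^1\overline{\boldT}^*(1))$ is represented by the transpose complex $[P_2^* \xrightarrow{\,{}^t\phi\,} P_1^*]$ concentrated in degrees $1$ and $2$; the injectivity of ${}^t\phi$ follows from the observation that ${\rm coker}(\phi) \cong \boldR^2\Gamma(\overline{\boldT}, {\mathcal F}^2\overline{\boldT})$ is torsion, so ${\rm Hom}_R({\rm coker}(\phi), R) = 0$.

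Taking determinants then yields
$$\det({}^t\phi) = {\rm char}_R\,\boldR^2\Gamma(\overline{\boldT}^*(1), {\mathcal F}^1\overline{\boldT}^*(1)),$$
and since the involution $\iota$ on $R$ (induced by inversion on $\Gamma_\QQ$) identifies $\det({}^t\phi)$ with $\det(\phi)^\iota$, the functional equation asserted in the theorem drops out. The bookkeeping across idempotents $e_\eta$ handles itself because Matlis duality (as in \eqref{eq:02_07_2024_1340}) swaps the parity of $\eta$, so whichever side is governed by the parity condition $\eta(-1) = \psi(-1)$ required by Theorem~\ref{thm:H^1_0_Hida_family_Sym2} on the original complex furnishes, via duality, the corresponding statement for the complementary parity on the dual complex.

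The main obstacle I anticipate is the careful verification that ${\mathcal F}^2\overline{\boldT}$ and ${\mathcal F}^1\overline{\boldT}^*(1)$ are exactly the complementary Greenberg conditions under Nekov\'a\v{r}'s duality triangle---that is, that the Pontryagin-dual filtration of \S\ref{sec:Adj_reps} pairs the $2$-step stratum of $\overline{\boldT}$ with the $1$-step stratum of $\overline{\boldT}^*(1)$, rather than with some neighbouring piece shifted by one step. Once this orthogonality is spelled out and the vanishing of the error terms is secured via Proposition~\ref{prop:prop4.37_on_artin_Hida_Adjoint}, the rest is a formal transcription of the argument of Theorem~\ref{thm:Func_eq_hida}.
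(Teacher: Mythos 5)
Your proposal is correct and follows essentially the same route as the paper: the paper's proof is literally a transcription of the argument for Theorem~\ref{thm:Func_eq_hida}, substituting Lemma~\ref{lemma:GreenbergSelCom_Symm_ordinary} for Proposition~\ref{prop:perfect_Iwasawa_complex_cycloto} and Theorem~\ref{thm:H^1_0_Hida_family_Sym2} for Theorem~\ref{thm:thm:H^1_Selcom_Hida_family_Iwasa_zero}, which is exactly what you do. Your explicit verification that the error terms ${\rm Err}_v$ vanish via the freeness of $H^1(I_v,\overline{\boldT})$ from Proposition~\ref{prop:prop4.37_on_artin_Hida_Adjoint}, and that ${\mathcal F}^1\overline{\boldT}^*(1)$ is the orthogonal complement of ${\mathcal F}^2\overline{\boldT}$, simply spells out steps the paper leaves implicit in its citations.
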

\begin{proof}
The proof of this theorem is identical to the proof of Theorem~\ref{thm:Func_eq_hida}, where we use Lemma~\ref{lemma:GreenbergSelCom_Symm_ordinary} in place of Proposition~\ref{prop:perfect_Iwasawa_complex_cycloto}, and Theorem~\ref{thm:H^1_0_Hida_family_Sym2} in place of Theorem~\ref{thm:thm:H^1_Selcom_Hida_family_Iwasa_zero}.

\end{proof}

\part{General results in the non-ordinary case}
\label{part_general_nonord}
\section{Characteristic Ideals and Determinants} 
\label{chap:Characteristic_Ideal_and_Determinant}
\subsection{Notation and Conventions}
\label{sec:Notation_and_Conventions}
Let $K$ be a finite extension of $\QQ_p$ with ${\texttt k}$ be the residue field of $K$ and $\cO_K$ its ring of integers. We fix an algebraic closure of $\overline{K}$ of $K$ and set $G_K={\rm Gal}(\bar{K}/K)$.

\subsubsection{Coefficient rings}
\label{sec:Coefficient_rings}

The ring of analytic functions, denoted by $\cR^{[s,r]},$ is the completion of $\QQ_p[T^{\pm 1}]$ with respect to the Gauss norm. 
Note that one can have $r=\infty$, but we do not allow $s=\infty$; in which case the inner radius is $0$, and our annulus is a disc with outer radius $p^{-\frac{s}{p-1}}$.  

We set $\cH_i:=\cR^{[p^{-i}, \infty]}$ and $\cH_{E,i}=\cH_i\otimes_{\QQ_p} E$. We note that we have a natural map 
$\cH_{E,i+1}\ra \cH_{E,i}\,,$ 
which is simply the restriction of rigid analytic functions on the disc $A^1[p^{-i-1},\infty]$ (of radius $p^{-\frac{1}{p^{i+1}(p-1)}}$) to the smaller disc $A^1[p^{-i},\infty]$ (of radius $p^{-\frac{1}{p^{i}(p-1)}}$). We put 
$\cH=\varprojlim_i \cH_{i}\,,$ 
which is explicitly given as the algebra of formal power series $f(T) \in \QQ_p[[T]]$ that converges on the open unit disk $B(0,1)=\{x \in \CC_p : |x|_p<1\}=\cup_i\, A^1[p^{-i},\infty]$. We finally put $\cH_E=\cH\otimes_{\QQ_p} E$.
We define $\cR^{[s,r]}(\Gamma^0_K)$ by formally substituting $\gamma_K-1$ in place of $T$ in the ring $\cR^{[s,r]}$. We similarly define $\cH_{i}(\Gamma_K^0)$, $\cH(\Gamma_K^0)$, $\cH_{E,i}(\Gamma^0_K)$, and $\cH_E(\Gamma_K^0)$.  We put 
$$\cH_{i}(\Gamma_K):=\cR^{[p^{-i}[\Gamma_K:\Gamma_K^0], \infty]}(\Gamma^0_K) \otimes_{\ZZ_p} \ZZ_p[\Delta_K]\,,\quad \cH_{E,i}(\Gamma_K)=\cH_{i}(\Gamma_K)\otimes_{\QQ_p}E\,, 
$$ 
and
$\cH(\Gamma_K):={\varprojlim_i}\, \cH_{i}(\Gamma_K)\,, \hbox{and} \quad \cH_E(\Gamma_K):=\cH(\Gamma_K)\otimes_{\QQ_p}E\,.$ 
\begin{remark}
\label{remark:Union_Max(H_i)_quasi-Stein}
Following \cite{pottharst}, we set $X_i={\rm Max}(\cH_i(\Gamma^0_K))$ where ${\rm Max}(\cH_i(\Gamma^0_K))$ denotes the associated rigid analytic space. We view $X_{\rm cyc}=\bigcup_{n \geq 1}X_i$ as a quasi-Stein rigid analytic space, which is a union of open affinoid discs.
\end{remark}

For $?=\{\}, 0$, we let $\Lambda^\iota_{\cO_E}(\Gamma^?_K)$ (resp. $\Lambda^\sharp_{\cO_E}(\Gamma^?_K)$) denote the free $\Lambda_{\cO}(\Gamma^?_K)$-module of rank one that is equipped with the $G_K$-module structure via $G_K\to \Gamma_K^?\xrightarrow{\gamma\,\mapsto\, \gamma^{-1}}  \Gamma_K^? \hookrightarrow \Lambda_{\cO}(\Gamma^?_K)^\times\,$ (resp. via $G_K\to \Gamma_K^? \hookrightarrow \Lambda_{\cO}(\Gamma^?_K)^\times$)\,. We put $\cH_E^\bullet(\Gamma^?_K)):=\Lambda^\bullet_{\cO_E}(\Gamma^?_K)\otimes_{\Lambda_{\cO}(\Gamma^?_K)}\cH_E(\Gamma^?_K))$ for $\bullet=\iota,\sharp$.

\subsection{Determinants of coadmissible modules}
Let $M=\varprojlim M_i$ be a torsion coadmissible $\cH_E(\Gamma^0_K)$-module in the sense of \cite{pottharst}, so that each $M_i$ is a finitely generated $\cH_{E,i}(\Gamma^0_K)$-module.  Then $M$ admits a  projective resolution 
$P^\bullet=(P_1\ra P_0)$ of length $1$ by free $\cH_E(\Gamma^0_K)$-modules of the same rank $n$ by \cite[Lemma 3.28]{Arithmetic_critical_p-adic_L-functions}. 

We put 
${\rm det}_{\cH_{E}(\Gamma^0_K)}(P_m):=\bigwedge^n P_m\simeq \cH_{E}(\Gamma^0_K)$ for $m=0,1$, and define
$${\rm det}_{\cH_{E}(\Gamma^0_K)}(M):={\rm det}(P^\bullet)={\rm det_{\cH_{E}(\Gamma^0_K)}}(P_1)^{-1} \otimes {\rm det_{\cH_{E}(\Gamma^0_K)}}(P_0)\,.$$
 By slight abuse, we denote a generator of ${\rm det_{\cH_{E}(\Gamma^0_K)}}(M)$ also by the same symbol. Observe that such a generator is only defined up to units, so it is prudent to note
 ${\rm det}_{\cH_{E}(\Gamma^0_K)}(M)\in \cH_E(\Gamma_K^0)/\cH_E(\Gamma_K^0)^\times\,.$
By a further abuse of notation, we denote in what follows the ideal generated by $\det_{\cH_{E}(\Gamma^0_K)}(M)$ also by $\det_{\cH_{E}(\Gamma^0_K)}(M)$, which doesn't depend on any choice. 

We similarly define the ideal ${\rm det_{\cH_{E,i}(\Gamma^0_K)}}(M_i)\subset \cH_{E,i}(\Gamma^0_K)$ for a finitely generated torsion $\cH_{E,i}(\Gamma^0_K)$-module $M_i$.

\begin{proposition}
\label{prop:det_torsion_M_Over_H_E_char_Ideal_M}
Suppose that $M$ is a torsion coadmissible $\cH_E(\Gamma^0_K)$-module. Then,
$${\rm det}_{\cH_E(\Gamma^0_K)}(M)={\rm char}_{\cH_E(\Gamma^0_K)}(M)\,,$$
where ${\rm char}_{\cH_E(\Gamma^0_K)}(M)$  is defined as in \cite[p.5]{pottharst}.
\end{proposition}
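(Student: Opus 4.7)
The plan is to reduce the equality of ideals in $\cH_E(\Gamma^0_K)$ to a compatible family of equalities at each finite level $i$, where $\cH_{E,i}(\Gamma^0_K)$ is a principal ideal domain and both constructions coincide with the product of elementary divisors.

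First, I would use the resolution $P^\bullet = (P_1 \xrightarrow{\phi} P_0)$ of $M$ by finite free $\cH_E(\Gamma^0_K)$-modules of equal rank $n$ (available by \cite[Lemma 3.28]{Arithmetic_critical_p-adic_L-functions}). Since $M$ is torsion, $\phi$ is injective; choosing bases identifies $\det_{\cH_E(\Gamma^0_K)}(M)$ with the principal ideal generated by $\det(\phi) \in \cH_E(\Gamma^0_K)$. For each $i$, base change along $\cH_E(\Gamma^0_K) \to \cH_{E,i}(\Gamma^0_K)$ (which is flat, as the restriction map between Fr\'echet--Stein pieces) yields a presentation
\begin{equation*}
0 \longrightarrow P_{1,i} \xrightarrow{\phi_i} P_{0,i} \longrightarrow M_i \longrightarrow 0,
\end{equation*}
so $\det_{\cH_{E,i}(\Gamma^0_K)}(M_i)$ is generated by $\det(\phi_i)$, which is the image of $\det(\phi)$ under the restriction map.

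Next, I would recall that $\cH_{E,i}(\Gamma^0_K)$ is (after the change of variable $\gamma_K - 1 \leftrightarrow T$) the Tate algebra of analytic functions on the closed disc of radius $p^{-i}$, hence a principal ideal domain. Over such a ring, any finitely generated torsion module $M_i$ has the structure
\begin{equation*}
M_i \cong \bigoplus_j \cH_{E,i}(\Gamma^0_K)/(f_{i,j})
\end{equation*}
via Smith normal form applied to $\phi_i$, and classical linear algebra gives
\begin{equation*}
\det(\phi_i) \cdot \cH_{E,i}(\Gamma^0_K) = \prod_j (f_{i,j}) = \mathrm{char}_{\cH_{E,i}(\Gamma^0_K)}(M_i).
\end{equation*}
On the other hand, Pottharst's definition of the characteristic ideal (see \cite[p.~5]{pottharst}) of a coadmissible torsion module is precisely the compatible system $(\mathrm{char}_{\cH_{E,i}(\Gamma^0_K)}(M_i))_i$, viewed as an ideal of $\cH_E(\Gamma^0_K)$ through the identification of coadmissible modules with their inverse systems.

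Finally, I would conclude by assembling the level-wise equalities: the ideal $\det(\phi) \cdot \cH_E(\Gamma^0_K)$ restricts at each level $i$ to $\det(\phi_i) \cdot \cH_{E,i}(\Gamma^0_K) = \mathrm{char}_{\cH_{E,i}(\Gamma^0_K)}(M_i)$, so by the coadmissibility/quasi-Stein formalism (Remark~\ref{remark:Union_Max(H_i)_quasi-Stein}) it agrees with $\mathrm{char}_{\cH_E(\Gamma^0_K)}(M)$. The main technical point I would expect to need care with is the flatness/compatibility of the restriction maps $\cH_E(\Gamma^0_K) \to \cH_{E,i}(\Gamma^0_K)$, so that base-changing the length-$1$ resolution really produces the resolution used at level $i$; but this is precisely the content of the Fr\'echet--Stein axioms, so no new input is required.
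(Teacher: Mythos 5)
Your argument is correct, but it is not the route the paper takes: the paper simply omits the proof and deduces the statement as the degenerate case $A=E$ of Proposition~\ref{prop:det_torsion_M_Over_H_A_char_Ideal_M}, whose proof runs through the structure theorem up to pseudo-null over the Noetherian rings $\cH_{A,i}(\Gamma^0_K)$, localization at height-one primes (which are discrete valuation rings by normality), and the ideal-comparison lemma \cite[Lemma 4.43]{On_Artin_formalism_for_p-adic_Garrett--Rankin_Lfunctions}, before passing to the limit. Your proof instead exploits what is special to the one-variable case: each $\cH_{E,i}(\Gamma^0_K)$ is a principal ideal domain, so flat base change of the length-one free resolution followed by Smith normal form identifies $\det(\phi_i)$ with the product of elementary divisors, i.e.\ with the level-$i$ characteristic ideal, and coadmissibility glues these into Pottharst's characteristic ideal. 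This is more elementary and self-contained, and the ingredients you flag (flatness of $\cH_E(\Gamma^0_K)\to\cH_{E,i}(\Gamma^0_K)$ and the identification $M\otimes_{\cH_E(\Gamma^0_K)}\cH_{E,i}(\Gamma^0_K)\simeq M_i$) are indeed supplied by the Fr\'echet--Stein/coadmissibility formalism, so there is no gap; the only point worth making explicit is that Pottharst's definition on \cite[p.~5]{pottharst} (vanishing orders at the discrete support) agrees with the inverse system of level-wise characteristic ideals, which is a routine translation. The trade-off is generality: Smith normal form is unavailable over the higher-dimensional rings $\cH_{A,i}(\Gamma^0_K)$, so your method does not extend to the affinoid setting, whereas the paper's height-one localization argument covers both cases uniformly, which is why the paper structures things the way it does.
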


\begin{proof}
\label{proof:prop:det_torsion_M_Over_H_E_char_Ideal_M}
Since this is well-known to experts and it follows from Proposition~\ref{prop:det_torsion_M_Over_H_A_char_Ideal_M} below as a degenerate case (with $A=E$), we omit its proof.
\end{proof}

\subsection{Algebra of affinoid-valued distributions on $\Gamma_K^0$}
\label{sec:Coefficient_rings_affinoid}

\subsubsection{Affinoid algebras} As before, we let $E$ denote a fixed finite extension of $\QQ_p.$ The Tate algebra in $n$ variables over $E$ is 
$$T_n(E):=\{f(x)= \sum_{I}c_Ix^I : c_I \in E, |c_I| \mapsto 0, |I| \mapsto \infty\}$$ 
where the sum runs over tuples $I = (i_1, . . . , i_n)$ of natural numbers,  $x^I:=x_1^{i_1}...x_n^{i_n},$ and $|I| := i_1 + ... + i_n.$ When the underlying field is understood, we will write $T_n$ instead of $T_n(E).$
We record some of the useful (and well-known) properties of Tate-algebras: the ring $T_n$ is a Noetherian regular unique factorization domain, the Krull dimension of $T_n$ is $n$, for each maximal ideal $\mathfrak{m}$ of $T_n$, the residue field $T_n/\mathfrak{m}$ is a finite extension of $E$. An affinoid algebra is a $E$-algebra $A$ of the form $T_n/\mathfrak{a}$ for some ideal $\mathfrak{a}$. See \cite{Tate_affinoid_RICARDO,affinoid_kedlaya,BGR84} for further details on Tate-Algebras.

\subsubsection{}

Let us put $\cH_{A,i}(\Gamma^0_K):=A \widehat{\otimes}_{\QQ_p}\cH_i(\Gamma^0_K),$ and define 
$$\cH_A(\Gamma^0_K):=A \widehat{\otimes}_{\QQ_p} \cH(\Gamma^0_K)=A \widehat{\otimes}_{\QQ_p} \underset{i}{\varprojlim} \cH_i(\Gamma^0_K)=\underset{i}{\varprojlim}A \widehat{\otimes}_{\QQ_p}\cH_i(\Gamma^0_K)=\cap_{i\geq 0}A \widehat{\otimes}_{\QQ_p}\cH_i(\Gamma^0_K)\,.$$ 
We also set $\cH_A(\Gamma_K):=\cH_A(\Gamma^0_K) \otimes_{\ZZ_p} \ZZ_p[\Delta_K].$

We remark that ${\rm Max}(A) \times X_{\rm cyc}$ is rigid analytic spaces which is a disjoint union of open affinoid discs by Remark~\ref{remark:Union_Max(H_i)_quasi-Stein}. We also note that each ${\rm Max}(A) \times X_i$ coincides with the spaces denoted by $Y_i$ in \cite[\S1C]{jayanalyticfamilies}. For more details on the analytic ring $\cH_A(\Gamma^0_K)$ and on affinoid algebras see \cite[\S2.1]{KPX2014} and \cite{affinoid_kedlaya}. 

\subsection{Cylindrical characteristic ideal of a torsion coadmissible $\cH_A(\Gamma^0_K)$-module}
\label{subsubsec_2023_11_16_1245}
We introduce the notion of the characteristic ideal of a finitely generated torsion module over rings of higher dimension. We first carry this out over the Noetherian domains $\cH_{A,i}(\Gamma^0_K)$, where our main input is \cite[Proposition 2.10.19]{nekovar06}.


\begin{defn}
\label{defn:char_ideal_f_g_M_notherian_H_Ai}
Let $M$ be a finitely generated torsion $\cH_{A,i}(\Gamma^0_K)$-module. We define the characteristic ideal of $M$ on setting
\begin{align}
\begin{aligned}
{\rm char}_{\cH_{A,i}(\Gamma^0_K)}(M):&=
\prod_{{\rm ht}(\fp)=1} \fp^{n(\fp, 1)}\cdots \fp^{t\cdot n(\fp, t)} \\ 
&=\prod_{{\rm ht}(\fp)=1\atop t\geq 1} \fp^{e_p} \qquad \text{ where } e_\fp=n(\fp, 1)+...+t\cdot n(\fp, t)\,,
\end{aligned}
\end{align}
where the integers $n(\fp,i)$ are determined via \cite[Proposition 2.10.19]{nekovar06}.
\end{defn}

Here, we remark that ring $R$ in \cite[Proposition 2.10.18, Proposition 2.10.19]{nekovar06} being local plays no role in the proofs. We also remark that when the affinoid Algebra $A$ is a Tate algebra (as it will be so for our purposes in the latter sections), then $A$ satisfies condition Serre's condition $(R_1)$ (i.e. its localizations at all primes of height at most one are regular). We also note that $\cH_{E,i}$ satisfies condition $(R_1)$, therefore also $\cH_{A,i}.$

\begin{defn}
\label{defn:char_Ideal_coadmissible_H_A_module}
Suppose $M=\varprojlim_iM_i$ is a torsion coadmissible $\cH_A(\Gamma^0_K)$-module. Note then that each $M_i$ is a finitely generated torsion $\cH_{A,i}(\Gamma^0_K)$-module. We define the (cylindrical) characteristic ideal of such $M$ on setting
${\rm char}_{\cH_A(\Gamma^0_K)}(M):=\varprojlim_i {\rm char}_{\cH_{A,i}(\Gamma^0_K)}(M_i)\,,$
where 
${\rm char}_{\cH_{A,i}(\Gamma^0_K)}(M_i)= \prod_{k=1}^{s_i}\fp_{k_i}^{n_{k_i}}$ 
is the characteristic ideal of the torsion $\cH_{A,i}(\Gamma^0_K)$-module $M_i$ given as in Definition~\ref{defn:char_ideal_f_g_M_notherian_H_Ai}.
\end{defn}

We remark that the definition of ${\rm char}_{\cH_A(\Gamma^0_K)}(M)$ makes sense:
Let us denote the natural map $\cH_{A,j}(\Gamma^0_K) \hookrightarrow \cH_{A,i}(\Gamma^0_K)$ by $\psi_j^i$ for $j \geq i.$ Then,
\begin{align}
\begin{aligned}
\label{eq:char_transition_map_H_A}
\psi_j^i(\fp_{k_j}) = \begin{cases}
    \fp_{k_j}\cH_{A,i}(\Gamma_K^0)\qquad\qquad \hbox{ if } {k_j} \leq s_{j}\,,\\
\cH_{A,i}(\Gamma^0_K)\qquad\qquad\qquad\text{otherwise}\,.
\end{cases} 
\end{aligned}
\end{align}
It follows from Definition~\ref{defn:char_Ideal_coadmissible_H_A_module} and \eqref{eq:char_transition_map_H_A} that
\begin{align}
\begin{aligned}
\label{eq:char_M_(i+1)_char_M_i_H_Ai}
{\rm char} M_{i+1} \otimes_{\cH_{A,{i+1}}}\cH_{A,i}
={\rm char}(M_i)\,,
\end{aligned}
\end{align}
so that $\varprojlim_i {\rm char}_{\cH_{A,i}(\Gamma^0_K)}(M_i)$ is a coadmissible $\cH_A(\Gamma^0_K)$-module. 

\subsection{Determinants (bis)}

Suppose $M$ is a torsion coadmissible $\cH_A(\Gamma^0_K)$-module with a  projective resolution $P^\bullet=(P_1 \xrightarrow{} P_0)$ of length $1$ by free $\cH_A(\Gamma^0_K)$-modules (necessarily of the same rank, say $n$).  We put 
${\rm det}_{\cH_{A}(\Gamma^0_K)}(P_m):=\bigwedge^n P_m\simeq \cH_{A}(\Gamma^0_K)$ for $m=0,1$ and define
${\rm det}_{\cH_{A}(\Gamma^0_K)}(M):={\rm det}(P^\bullet)={\rm det_{\cH_{A}(\Gamma^0_K)}}(P_1)^{-1} \otimes {\rm det_{\cH_{A}(\Gamma^0_K)}}(P_0)\,.$
 By slight abuse, we denote a generator of ${\rm det_{\cH_{A}(\Gamma^0_K)}}(M)$ also by the same symbol. 

We similarly define ${\rm det_{\cH_{A,i}(\Gamma^0_K)}}(M_i)\subset \cH_{A,i}(\Gamma^0_K)$ for a finitely generated torsion $\cH_{A,i}(\Gamma^0_K)$-module $M_i$. 

\begin{proposition}
\label{prop:det_torsion_M_Over_H_A_char_Ideal_M}
Suppose that $A$ is Tate algebra and we have a torsion coadimissible $\cH_A(\Gamma^0_K)$-module $M$ with projective resolution $P^\bullet=(P_1 \xrightarrow{} P_0)$ of dimension $1$ by finitely generated free $\cH_A(\Gamma^0_K)$-modules $P_0$ and $P_1$ of (necessarily) the same rank (say $n$).  Then we have 
$${\rm det}_{\cH_{A}(\Gamma^0_K)}(M)={\rm char}_{\cH_A(\Gamma^0_k)}(M).$$ 

\end{proposition}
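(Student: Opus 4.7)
The plan is to deduce the proposition from the analogous equality at each finite level $i$ over the Noetherian ring $\cH_{A,i}(\Gamma^0_K)$, and then pass to the inverse limit, exploiting the compatibilities built into Definition~\ref{defn:char_Ideal_coadmissible_H_A_module}.

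First, I would set $M_i := M \otimes_{\cH_A(\Gamma^0_K)} \cH_{A,i}(\Gamma^0_K)$ and tensor the projective resolution $P^\bullet = [P_1 \to P_0]$ with $\cH_{A,i}(\Gamma^0_K)$ to obtain a resolution $P^\bullet_i = [P_{1,i} \xrightarrow{\phi_i} P_{0,i}]$ by finitely generated free $\cH_{A,i}(\Gamma^0_K)$-modules of rank $n$. Because $M$ is torsion over $\cH_A(\Gamma^0_K)$, each $M_i$ is a finitely generated torsion $\cH_{A,i}(\Gamma^0_K)$-module (so $\phi_i$ is injective), and one has $\det_{\cH_{A,i}(\Gamma^0_K)}(M_i) = \det_{\cH_A(\Gamma^0_K)}(M)\otimes \cH_{A,i}(\Gamma^0_K)$ because the formation of determinants of bounded complexes of finite projective modules commutes with base change.

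The heart of the argument is the equality
\[
\det_{\cH_{A,i}(\Gamma^0_K)}(M_i) = \mathrm{char}_{\cH_{A,i}(\Gamma^0_K)}(M_i)
\]
at each level $i$. Here I would use that, since $A$ is a Tate algebra, the ring $R_i := \cH_{A,i}(\Gamma^0_K)$ is a Noetherian domain that satisfies Serre's condition $(R_1)$: its localization $R_{i,\fp}$ at any height-one prime $\fp$ is a discrete valuation ring. Localizing the resolution $P^\bullet_i$ at such a $\fp$ yields an injective endomorphism $\phi_{i,\fp}\colon (R_{i,\fp})^{\oplus n} \to (R_{i,\fp})^{\oplus n}$ of free modules of rank $n$ over a DVR, whose cokernel is $M_{i,\fp}$. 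By the elementary divisor theorem (Smith normal form), $\phi_{i,\fp}$ is diagonalizable with diagonal entries $\pi^{a_1}, \ldots, \pi^{a_n}$ for some uniformizer $\pi$, and
\[
v_\fp(\det \phi_{i,\fp}) = \sum_{k} a_k = \mathrm{length}_{R_{i,\fp}}(M_{i,\fp}) = \sum_{t\geq 1} t\cdot n(\fp, t),
\]
where the last equality is exactly the definition of the multiplicities $n(\fp,t)$ underlying Definition~\ref{defn:char_ideal_f_g_M_notherian_H_Ai}. Since $R_i$ is a Krull domain (being a regular Noetherian domain), any principal ideal is determined by its divisor, hence the ideals $\det_{R_i}(M_i)$ and $\mathrm{char}_{R_i}(M_i)$ agree prime-by-prime in codimension one and therefore coincide in $R_i$.

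Finally, both $\det_{\cH_A(\Gamma^0_K)}(M)$ and $\mathrm{char}_{\cH_A(\Gamma^0_K)}(M)$ are, by construction and by the coadmissibility compatibility \eqref{eq:char_M_(i+1)_char_M_i_H_Ai}, obtained as the coherent inverse limit of their images in each $\cH_{A,i}(\Gamma^0_K)$. Since these images agree for every $i$ by the previous paragraph, the proposition follows. The main technical obstacle is the first step: to justify that the formation of $\det$ (as an ideal class) commutes with the base change $\cH_A(\Gamma^0_K) \to \cH_{A,i}(\Gamma^0_K)$; this is where one must invoke that the transition maps are flat (so torsionness and acyclicity of the resolution are preserved) and that $\cH_A(\Gamma^0_K)$ is the Fr\'echet--Stein completion of the $\cH_{A,i}(\Gamma^0_K)$, so a projective resolution over the coadmissible ring restricts to projective resolutions of rank $n$ at every finite level.
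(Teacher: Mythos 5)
Your argument follows essentially the same route as the paper's proof: reduce to each Noetherian level $\cH_{A,i}(\Gamma^0_K)$, establish $\det=\mathrm{char}$ there by localizing at height-one primes (which are discrete valuation rings by normality) and comparing valuations of $\det\phi_{i}$ with lengths, and then pass to the inverse limit using flatness of the transition maps and the compatibility \eqref{eq:char_M_(i+1)_char_M_i_H_Ai}. The only difference is cosmetic: you carry out the local computation and the codimension-one globalization explicitly (Smith normal form and the divisorial/Krull argument), where the paper cites \cite[Proposition 4.44]{On_Artin_formalism_for_p-adic_Garrett--Rankin_Lfunctions} and \cite[Lemma 4.43]{On_Artin_formalism_for_p-adic_Garrett--Rankin_Lfunctions} for these steps.
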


\begin{proof}
\label{proof:prop:det_torsion_M_Over_H_A_char_Ideal_M}

To say that $M$ is a coadimissible $\cH_A(\Gamma^0_K)$-module amounts to the requirement that $M=\varprojlim_iM_i$ with each $M_i$ is finitely generated torsion $\cH_{A,i}(\Gamma^0_K)$-module, and the map $M_{i+1} \xrightarrow{} M_i$ induces an isomorphism $M_{i+1} \otimes_{\cH_{A,i+1}(\Gamma^0_K)} \cH_{A,i}(\Gamma^0_K) \simeq M_i$ for each $i$.

The structure theorem for torsion $\cH_{A,i}(\Gamma^0_K)$-modules provides an exact sequence (see \S\ref{subsubsec_2023_11_16_1245})
\begin{equation}
\label{eq:torsion_H_Ai_M_i_structureThm}
0\lra A_i\lra M_i\lra\oplus_{k=1}^{s_i} \cH_{A,i}(\Gamma^0_K)/\fp_{k_i}^{n_{k_i}}\cH_{A,i}(\Gamma^0_K)\lra N_i \lra  0
\end{equation}
where each $\fp_{k_i}$ is a  height-$1$ prime ideal of $\cH_{A,i}(\Gamma^0_K)$, $A_i$ and $N_i$ are pseudo-null $\cH_{A,i}(\Gamma^0_K)$-modules and ${\rm char}_{\cH_{A,i}(\Gamma^0_K)}(M_i)= \prod_{k=1}^{s_i}\fp_{k_i}^{n_{k_i}}$. We are set to prove that 
\begin{equation}
\label{eq:det_M_i_equal_char_HAi}{\rm det}_{\cH_{A,i}(\Gamma^0_K)}(M_i)={\rm char}_{\cH_{A,i}(\Gamma^0_K)}(M_i).
\end{equation}

We know, by its definition, that ${\rm det}_{\cH_{A,i}(\Gamma^0_K)}(M_i)$ is a principal proper ideal of $\cH_{A,i}(\Gamma^0_K)$. It is a well-known fact that if a ring $R$ is a Noetherian normal domain, then every associated prime $\fp\in {\rm Spec}(R)$  of a principal ideal has height $1$. In our situation, for a fixed $i$, the Tate algebra $\cH_{A,i}(\Gamma^0_K)$ is a Noetherian normal ring and therefore, every prime ideal $\fp \in \cH_{A,i}(\Gamma^0_K)$ associated to ${\rm det}_{\cH_{A,i}(\Gamma^0_K)}(M_i)$ has height equal to $1$. Since ${\rm char}_{\cH_{A,i}(\Gamma^0_K)_{\fp}}(M_i)$ is, by definition, a product of height-1 primes, it suffices to prove that the $\fp$-primary parts of both sides in \eqref{eq:det_M_i_equal_char_HAi} agree for every height $1$ prime ideal $\fp$ of $\cH_{A,i}(\Gamma^0_K)$. 

Note that the localization $\cH_{A,i}(\Gamma^0_K)_{\fp}$ of the Tate algebra $\cH_{A,i}(\Gamma^0_K)$ at a height-$1$ prime ideal $\fp$ is a discrete valuation ring, as the ring $\cH_{A,i}(\Gamma^0_K)$ is normal. For each height-1 prime $\fp\subset \cH_{A,i}(\Gamma^0_K)$, we have  
$${\rm det}_{\cH_{A,i}(\Gamma^0_K)_{\fp}}(M_i)_{\fp}={\rm char}_{\cH_{A,i}(\Gamma^0_K)_{\fp}}(M_i)_{\fp}$$
 by the proof of \cite[Proposition 4.44]{On_Artin_formalism_for_p-adic_Garrett--Rankin_Lfunctions}. We conclude that 
 $${\rm det}_{\cH_{A,i}(\Gamma^0_K)}(M_i)={\rm char}_{\cH_{A,i}(\Gamma^0_K)}(M_i)$$ 
 by setting $I={\rm char}_{\cH_{A,i}(\Gamma^0_K)}(M_i)$ and $J={\rm det}_{\cH_{A,i}(\Gamma^0_K)}(M_i)$ in \cite[Lemma 4.43]{On_Artin_formalism_for_p-adic_Garrett--Rankin_Lfunctions}.

We denote the natural map $\cH_{A,j}(\Gamma^0_K) \hookrightarrow \cH_{A,i}(\Gamma^0_K)$ by $\psi_j^i$ for $j \geq i.$ Then we have
\begin{align}
\begin{aligned}
\label{eq:det_transition_map_H_A}
\psi_j^i(\fp_{k_j}) = \begin{cases}
    \fp_{k_j}\cH_{A,i}(\Gamma_K^0)\qquad\qquad \hbox{ if } {k_j} \leq s_{j}\,,\\
\cH_{A,i}(\Gamma^0_K)\qquad\qquad\qquad\text{otherwise}\,.
\end{cases} 
\end{aligned}
\end{align}
It follows from \eqref{eq:det_M_i_equal_char_HAi} and \eqref{eq:det_transition_map_H_A} that
\begin{align}
\begin{aligned}
\label{eq:det_M_(i+1)_det_M_i_H_Ai}
\det M_{i+1} \otimes_{\cH_{A,{i+1}}}\cH_{A,i}
={\rm det}(M_i).
\end{aligned}
\end{align}
As before, we observe that $\varprojlim_i {\rm det}(M_i)$ is a coadmissible $\cH_A(\Gamma^0_K)$-module. We claim that 
$${\rm det}_{\cH_A(\Gamma^0_K)}(M)=\varprojlim_i {\rm det}_{\cH_{A,i}(\Gamma^0_K)}(M_i)=\varprojlim_i {\rm char}_{\cH_{A,i}(\Gamma^0_K)}(M_i)={\rm char}_{\cH_A(\Gamma^0_K)}(M).$$ 
We already proved that the second equality in \eqref{eq:det_M_i_equal_char_HAi} and third equality follows from the definition of ${\rm char}_{\cH_A(\Gamma^0_k)}(M).$ So it remains to prove that 
$${\rm det}_{\cH_A(\Gamma^0_K)}(M)=\varprojlim_i {\rm det}_{\cH_{A,i}(\Gamma^0_K)}(M_i).$$

Recall that, we are given a projective resolution of $M$ 
$$0 \lra P_1 \xrightarrow{\,\,\phi\,\,} P_0 \lra M\lra 0.$$
by finite $\cH_A(\Gamma^0_K)$-modules of the same rank $n$. As $\cH_{A,i}(\Gamma^0_K)$ is flat over $\cH_{A}(\Gamma^0_K)$, then we have
\begin{equation}
\label{eq:proj_reso_M_i_H_A,i}
0 \longrightarrow{} P_1 \otimes_{\cH_{A}(\Gamma^0_K)} \cH_{A,i}(\Gamma^0_K) \xrightarrow{\phi_i} P_0 \otimes_{\cH_{A}(\Gamma^0_K)} \cH_{A,i}(\Gamma^0_K) \longrightarrow{} M_i \longrightarrow{} 0
\end{equation}
since $M_i=M \otimes_{\cH_{A}(\Gamma^0_K)} \cH_{A,i}(\Gamma^0_K).$ For simplicity, let us set $P_{m,i}=P_m \otimes_{\cH_{A}(\Gamma^0_K)} \cH_{A,i}(\Gamma^0_K)$ for $m=0,1$. Then each $P_{m,i}$ is a free $\cH_{A,i}(\Gamma^0_K)$-module of rank $n$, as $P_m$ is the same over $\cH_{A}(\Gamma^0_K)$.

From exact sequence \eqref{eq:proj_reso_M_i_H_A,i} and the definition of the determinant functor, it then follows that we have ${\rm det}_{\cH_A(\Gamma^0_K)}(M)=\varprojlim_i{\rm det}_{\cH_{A,i}(\Gamma^0_K)}(M_i)$ using arguments similar to those utilised in the proof of Proposition \ref{prop:det_torsion_M_Over_H_E_char_Ideal_M}. This completes the proof of our proposition.

\end{proof}


\section{\texorpdfstring{$p$}{}-adic Hodge theory and local cohomology}
\label{chap:Local_cohomology_and_phi_gamma_module}
We recall basic input we shall rely on from the theory of $(\varphi,\Gamma)$-modules and its relation to local Galois cohomology, as well as to $p$-adic Hodge theory. We refer readers to \cite{ExceptionalZerosOfnon-ordinaryL-funcs,berger2004equations, p-adicHodgetheorybenoisheights} for more background on these topics.

\subsection{Local cohomology of \texorpdfstring{$(\varphi,\Gamma_K)$-}{}modules}
\label{sec:cohomology_of_varphi_Gamma_K_module}
We retain the notation and conventions from the previous sections. Let $\boldD$ be a $(\varphi, \Gamma_K)$-module over the Robba ring $\cR_A$ (cf. \cite{p-adicHodgetheorybenoisheights, ExceptionalZerosOfnon-ordinaryL-funcs}).
Let us set $\Delta_K={\rm Gal}(K(\zeta_p)/K)$, which we may and will view as a subgroup of $\Gamma_K$, so that 
$\Gamma_K=\Delta_K \times \Gamma^{0}_K\,, \hbox{ and }\Gamma^{0}_K \cong \ZZ_p.$ 
Let $\gamma_K$ be a fixed topological generator of $\Gamma^{0}_K$. We define the complex
$C^{\bullet}_{\gamma_K}(\boldD)\,:\,\quad \boldD^{\Delta_K} \xrightarrow{\gamma_K-1} \boldD^{\Delta_K}$ where the terms are placed in degree $0$ and $1.$ We also consider the complex given by
$$C^{\bullet}_{\varphi,\gamma_K}(\boldD)\,:\, \boldD^{\Delta_K} \xrightarrow{\,d_0\,} \boldD^{\Delta_K} \oplus \boldD^{\Delta_K} \xrightarrow{\,d_1\,} \boldD^{\Delta_K}$$
(in degrees $0,1,2$), where 
$d_0(x)=((\varphi-1)x,(\gamma_K-1)x)\,\hbox{, and } d_1(x,y)=((\gamma_K-1)x-(\varphi-1)y)\,.$ 
The complex $C^{\bullet}_{\varphi,\gamma_K}(\boldD)$ is known as the ``Fontaine--Herr'' complex.  For a given $(\varphi,\Gamma_K)$-module $\boldD$, we denote by 
$\boldR\Gamma(K,D):=[C^{\bullet}_{\varphi,\gamma_K}(\boldD)]$
the corresponding object in the derived category $\cD(A)$ of $A$-modules. The cohomology of $\boldD$ is denoted by $H^i(\boldD):=\boldR^i\Gamma(K,D)=H^i(C^\bullet_{\varphi, \gamma_K}(\boldD))$ (where $i=0,1,2$).

\subsection{The complex \texorpdfstring{$K^\bullet(V)$}{}}
\label{sec:Complex_K_V}
Let $V$ be a $G_K$-representation over $A$. Let us set $V^{\dagger}_{\rm rig}=V \otimes_A \widetilde{B}^{\dagger}_{{\rm rig},A}$, where $\widetilde{B}^{\dagger}_{{\rm rig},A}$ is as in \cite[~p. 43]{p-adicHodgetheorybenoisheights}, and following op. cit., let us put
\begin{equation}
\label{eq:complex_K_bullet_V}
K^\bullet(V)={\rm Tot}\left (C^\bullet(G_K,V^{\dagger}_{\rm rig}) \xrightarrow{\varphi-1} C^\bullet(G_K,V^{\dagger}_{\rm rig})\right )\,.
\end{equation}
We consider the map $C^\bullet_{\gamma_K}(D^{\dagger}_{\rm rig}(V)) \xrightarrow{\alpha_V} C^\bullet(G_K,V^{\dagger}_{\rm rig})$ and $C^\bullet(G_K,V) \xrightarrow{\xi_V} K^\bullet(V)$ given as in op. cit. By \cite[Proposition 2.5.2]{p-adicHodgetheorybenoisheights}, both the map $\alpha_V$ and $\xi_V$ are quasi-isomorphisms.


\subsection{Local Iwasawa cohomology over affinoid algebras}
\label{sec:Local_Iwasawa_Cohomology_over_affinoids}

For a $(\varphi, \Gamma_K)$-module $\boldD$ over $\cR_A$, we define its cyclotomic deformation by 
$\overline{\boldD}:=\boldD \otimes_{A} {\cH_A(\Gamma^0_K)}^\iota={\varprojlim_i}\,\boldD \otimes \cH_{A,i}(\Gamma^0_K)^\iota\,.$
Similarly, we define 
$\overline{\boldD}^*:=\boldD^* \otimes_{A} {\cH_A^\sharp(\Gamma^0_K)}$, where $\boldD^*:={\rm Hom}_{\cR_A}(\boldD, \cR_A)$ is the dual $(\varphi,\Gamma_K)$-module. The Fontaine--Herr complexes $C^{\bullet}_{\varphi,\gamma_K}(\overline{\boldD})$ and $C^{\bullet}_{\psi,\gamma_K}(\overline{\boldD})$ are defined in a manner similar to \S\ref{sec:cohomology_of_varphi_Gamma_K_module}. 
\begin{defn}
\label{defn:Iwasawa_Cohomology_phi-gamma_Over_H_A}
Let $\boldD$ be a $(\varphi,\Gamma_K)$-module over $\cR_A.$ Then we define $C^\bullet_{\psi}(\boldD)$ to be the complex $[\boldD \xrightarrow{\psi-1} \boldD]$ concentrated in degrees $1$ and $2$. Its cohomology groups $H^*_{\psi}(\boldD)$ are called the Iwasawa cohomology of $\boldD.$ 
\end{defn}

Following from \cite[p. 41]{p-adicHodgetheorybenoisheights} and \cite[p. 10]{jayanalyticfamilies}, we have the cup-product pairing: 
$$\bigcup_{\varphi, \gamma_K}: C^\bullet_{\varphi,\gamma_K}(\overline{\boldD})\otimes_{\cR_A}  C^\bullet_{\varphi,\gamma_K}(\overline{\boldD}^*(\chi_K)) \lra  C^\bullet_{\varphi,\gamma_K} (\overline{\boldD} \otimes \overline{\boldD}^*(\chi_K)) \,.$$
Noting that $G_K$ acts trivially on ${\cH_A^\iota(\Gamma^0_K)}\otimes {\cH_A^\sharp(\Gamma^0_K)}$, we can extend to a map
\begin{equation}
\label{eq:cup_product_phi-gamma_module_over_H_A_2}
\bigcup_{\varphi, \gamma_K}: C^\bullet_{\varphi,\gamma_K}(\overline{\boldD})\otimes_{\cR_A}  C^\bullet_{\varphi,\gamma_K}(\overline{\boldD}^*(\chi_K)) \lra A[-2] \otimes \cH_A(\Gamma^0_K)=\cH_A(\Gamma^0_K)[-2].
\end{equation}

We remark that all the discussion in this chapter applies to affinoid domains of arbitrary dimension.

\section{Functional equation of algebraic $p$-adic $L$-functions: abstract setting}
\label{chap:Selmer_Complexes}
In this section, we follow closely \cite[\S3.1]{p-adicHodgetheorybenoisheights} to define Selmer complexes over $\QQ_p$-affinoid algebra $A$, as well as their Iwasawa theoretic versions over the rings $\cH_E(\ZZ_p)$ and $\cH_A(\ZZ_p)$.

\subsection{Selmer complexes over \texorpdfstring{$A$}{}}
\label{sec:Selmer_Complexes_over_A}
\subsubsection{Set-up}
We let $F$ be a number field. We denote by $S_f$ (resp. $S_\infty$) the set of all non-archimedean (resp. archimedean) absolute values on $F$. As before, let us fix a prime number $p$ and a compatible system of $p$-power roots of unity $\epsilon=(\zeta_{p^n})_{n \geq 1}$. 
Let $ S \subset S_f$ be a finite subset containing the subset $S_p$ of $\fq \in S_f$ that lies above $p$. We will write $S^{(p)}$ for the complement of $S_p$ in $S.$ Let $G_{F,S}$  denote the Galois group of the maximal algebraic extension of $F$ unramified outside $S \cup S_\infty.$ For each $\fq \in S$, we fix a decomposition
group at $\fq$, which we identify with $G_{F_\fq}$. 
If $\fq \in S_p,$ we denote by $\Gamma_\fq=\Gamma_{F_\fq}$ the cyclotomic Galois group of $F_\fq$ and we fix a topological generator $\gamma_q \in \Gamma^0_\fq$.

Let us put $F^{\rm cyc}=\bigcup_{n=0}^{\infty}F(\zeta_{p^n})$ and $F_\infty=(F^{\rm cyc})^{\Delta_F}\,,$ where $\Delta_F={\rm Gal}(F(\zeta_p)/F)$.  Then 
$\Gamma_F^0:={\rm Gal}(F_\infty/F) \simeq {\rm Gal}(F^{\rm cyc}/F(\zeta_p))\simeq \ZZ_p$  and  $\Gamma_F:={\rm Gal}(F^{\rm cyc}/F)\simeq \Delta_F \times \Gamma^0_F\simeq \ZZ_p^\times\,.$ If $F$ is unramified at all primes above $p$ (e.g. when $F=\QQ$, which is the typical scenario for the main arithmetic applications of our work), we may and will identify $\Gamma_\fq$ with the Galois group 
$\Gamma_F=\Gal\left(\cup_n \QQ(\zeta_{p^n})/\QQ\right)=\varprojlim_n \Gal\left(\QQ(\zeta_{p^n})/\QQ\right)\,.$
\subsubsection{} Let $V$ be a $p$-adic representation of $G_{F,S}$ with coefficients in a $\QQ_p$-affinoid algebra $A$. We will sometimes denote $V_\fq$ for the restriction of $V$ on the decomposition group $G_{F_\fq}$ (but also often write $V$ in place of $V_\fq$). For each $q \in S_p$, we fix a $(\varphi,\Gamma_\fq)$-submodule $\boldD_\fq$ of $D^{\dagger}_{\rm rig}(V_\fq)$ and we set $\boldD=(\boldD_\fq)_{\fq \in S_p}$. We define the complex
\begin{equation}
\label{eq:def_U_dot_V_D}
\begin{matrix}
U^\bullet_\fq(V,\boldD)=
\begin{cases}
C^\bullet_{\varphi,\gamma_\fq}(\boldD_\fq), & \text{if } \fq \in S_p\\
C^\bullet_{\rm ur}(V_\fq) & \text{if } \fq \in S^{(p)}
\end{cases}
\end{matrix}
\end{equation}
where $$C^\bullet_{\rm ur}(V_\fq)\,:=\, [V^{I_\fq}_\fq \xrightarrow{{\rm Frob}_\fq-1} V^{I_\fq}_\fq], \qquad \fq \in S^{(p)}$$  
concentrated in degrees $0$ and $1$ and ${\rm Frob}_\fq\in G_{F_\fq}/I_\fq$ is the Frobenius element.
If $\fq \in S_p$, the objects $\boldR\Gamma(F_\fq,V)=[C^\bullet(G_{F_\fq},V)]$ and $\boldR\Gamma(F_\fq,\boldD_\fq)=[U^\bullet_\fq(V,\boldD)]$ belong to $\cD^{[0,2]}_{\rm perf}(A)$ by \cite[Theorems 2.3.2 and 2.4.3]{p-adicHodgetheorybenoisheights}.
\subsubsection{} For $\fq \in S^{(p)}$, we have a morphism of complexes
\begin{equation}
\label{eq:g_q_map_for_q_not_dividing_p}
g_\fq: U^\bullet_\fq(V,\boldD) \longrightarrow C^\bullet(G_{F_\fq},V)
\end{equation}
given as in \cite[p. 60]{p-adicHodgetheorybenoisheights}, and the restriction map
\begin{equation}
\label{eq:f_q_map}
f_\fq\,:\,\quad C^\bullet(G_{F,S},V) \xrightarrow{{\rm res}_\fq} C^\bullet(G_{F_\fq},V)\,.
\end{equation}
\subsubsection{} We now assume that $\fq \in S_p.$ The inclusion $\boldD_\fq \subset D^\dagger_{\rm rig}(V_\fq)$ induces a morphism
$$U^\bullet_\fq(V,\boldD)=C^\bullet_{\varphi,\gamma_\fq}(\boldD_\fq) \longrightarrow C^\bullet_{\varphi,\gamma_\fq}(D^\dagger_{\rm rig}(V_\fq)).$$ We denote by 
\begin{equation}
\label{eq:g_q_map_for_q_dividing_p}
g_\fq: U^\bullet_\fq(V,\boldD) \longrightarrow K^\bullet(V_\fq)\,,\qquad \forall\, \fq\mid p
\end{equation}
the composition of this morphism with the quasi-isomorphism 
$$\alpha_{V_\fq}:C^\bullet_{\varphi,\gamma_\fq}(D^\dagger_{\rm rig}(V_\fq)) \longrightarrow K^\bullet(V_\fq)$$ 
and by slight abuse of notation, we let
\begin{equation}
\label{eq:f_q_map_for_q_dividing_p}
f_\fq: C^\bullet(G_{F,S},V) \longrightarrow K^\bullet(V_\fq)\,,\,\qquad \forall\,\fq\mid p
\end{equation}
denote the composition of the restriction map 
$$C^\bullet(G_{F,S},V) \xrightarrow{{\rm res}_\fq} C^\bullet(G_{F_\fq},V)$$ 
with the quasi-isomorphism 
$$\xi_{V_\fq}\,:\, C^\bullet(G_{F_\fq},V) \longrightarrow K^\bullet(V_\fq)$$
given as in \cite[Proposition 2.5.2]{p-adicHodgetheorybenoisheights}.
Let us set 
\begin{equation}
\label{eq:def_K_bullet_q_V}
\begin{matrix}
K^\bullet_{\fq}(V)=
\begin{cases}
K^\bullet(V_\fq)\,, & \qquad \text{if } \fq \in S_p\\
C^\bullet(G_{F_\fq},V)\, & \qquad \text{if } \fq \in S^{(p)}\,,
\end{cases}
\end{matrix}
\end{equation}
and put
$$K^\bullet(V)=\oplus_{\fq \in S}\,K^\bullet_{\fq}(V) \qquad\hbox{ and } \qquad U^\bullet(V,\boldD)=\oplus_{\fq \in S} \, U^\bullet_{\fq}(V,\boldD).$$
Let  
$\boldR\Gamma_S(V):=[C^\bullet(G_{F,S},V)] \in \cD^{[0,3]}_{\rm perf}(A)$ denote object represented by the complex $C^\bullet(G_{F,S},V)$.
\subsubsection{The Selmer complex} With this data at hand, we define the Selmer Complex as follows:
\begin{equation}
\label{eq:SelmerComplex_for_V}
{\mathbf S}(V,\boldD)={\rm cone}\left(C^\bullet(G_{F,S},V) \oplus U^\bullet(V,\boldD) \xrightarrow{f-g} K^\bullet(V)\right)[-1]\,,
\end{equation}
where 
$f=(f_\fq)_{\fq \in S}\,, g=(g_\fq)_{\fq \in S}\,.$
We set $\boldR\Gamma(V,\boldD):=[{\mathbf S}(V,\boldD)]$ for the object in the derived category and write $\boldR^n\Gamma(V,\boldD)$ for the cohomology of $\boldR\Gamma(V,\boldD).$ 
If we assume that $[C^\bullet_{\rm ur}(V_\fq)] \in \cD^{[0,1]}_{\rm perf}(A)$ for all $\fq \in S^{(p)}$ (in what follows, we will give sufficient conditions to ensure this property), then $\boldR\Gamma(V,\boldD) \in \cD^{[0,3]}_{\rm perf}(A).$
\subsubsection{} To slightly generalize the previous constructions, we fix a finite subset $\Sigma \subset S^{(p)}$ and, for each $\fq \in \Sigma$ a locally direct summand $M_\fq$ of the $A$-module $V_\fq$ stable
under the action of $G_{F_\fq}.$ Let us put $M=(M_\fq)_{\fq \in \Sigma}$ and define
$$
\begin{matrix}
U^\bullet_\fq(V,\boldD,M)=
\begin{cases}
C^\bullet_{\varphi,\gamma_\fq}(\boldD_\fq), & \qquad \text{if } \fq \in S_p\\
C^\bullet_{{\rm ur}}(V_\fq) &\qquad \text{if } \fq \in S^{(p)} \setminus \Sigma \\
C^\bullet(G_{F_\fq},M_\fq), & \qquad \text{if } \fq \in \Sigma.
\end{cases}
\end{matrix}
$$
We denote the associated Selmer Complex by $\boldR\Gamma(V,\boldD,M)$ and $\boldR\Gamma(V,\boldD,M):=[\boldR\Gamma(V,\boldD,M)]$. 
\subsection{Duality of Selmer Complexes over $A$}
\label{sec:duality_Sel_Compl_A}
Let us consider the dual Galois representation $V^*:={\rm Hom}_A(V,A)(1)$. We define the \emph{dual local conditions} $(\boldD^{\perp}, M^{\perp})$ on $V^*(1)$ given by
$$D^\perp_\fq={\rm Hom}_{\cR_A}(\boldD^\dagger_{\rm rig}(V)/\boldD_\fq, \cR_A(\chi_\fq))\qquad \hbox{ for all } \fq \in S_p\,,$$ and $M^\perp_\fq={\rm Hom}_{A}(V_\fq/M_\fq, A(\chi_\fq)) \hbox{ for all } \fq \in \Sigma$, where $\chi_\fq$ is the restriction of the cyclotomic character $\chi: \Gamma_F \to \ZZ^\times_p$ to $\Gamma_\fq$. We denote by $f^{\perp}_\fq$ and $g^{\perp}_\fq$ the morphism of complexes corresponding to \eqref{eq:f_q_map} and \eqref{eq:g_q_map_for_q_dividing_p} for the triple $(V^*(1),\boldD^{\perp}, M^{\perp})$.

\begin{remark}
\label{rem:map_g_q_tnsored_g_q_perp_zero_at_unramified}
The composition 
\begin{equation}
\label{eq:rem:map_g_q_tnsored_g_q_perp_zero_at_unramified}
C^\bullet_{\rm ur}(V_\fq) \otimes C^\bullet_{\rm ur}(V^*_\fq(1)) \xrightarrow{g_\fq \otimes g^\perp_\fq} C^\bullet(G_{F_\fq},V) \otimes C^\bullet(G_{F_\fq}, V^*(1)) \xrightarrow{\cup_c} A[-2]
\end{equation}
is the zero map. 
\end{remark}

\subsection{Algebraic functional equation over an affinoid algebra in terms of determinants}
\label{sec:Algebraic_Functional_Equation_SelComplex}
The morphism in ${\bigcup_{V,\boldD,M}}$ (cup-product pairing) in \cite[Theorem 3.1.5(iii)]{p-adicHodgetheorybenoisheights} induces a homomorphism 
$${\bigcup_{{-,V,\boldD,M}}}_*\,:\,\quad\boldR\Gamma(V,\boldD,M) \longrightarrow \boldR{\rm Hom}_A(\boldR\Gamma(V^*(1),\boldD^\perp,M^\perp),A[-3])\,,$$ 
which can be completed to an exact triangle
$$\boldR\Gamma(V,\boldD,M) \xrightarrow{{\bigcup_{-,V,\boldD,M}}_*} \boldR{\rm Hom}_A(\boldR\Gamma(V^*(1),\boldD^\perp,M^\perp),A[-3]) \lra {\rm cone}\left({\bigcup_{-,V,\boldD,M}}_*\right) \lra \boldR\Gamma(V,\boldD,M)[1]$$ 

The following is the first version of the algebraic functional equation, which is given in terms of the determinants of Selmer complexes. We assume throughout that both objects $\boldR\Gamma(V,\boldD,M)$ and $\boldR\Gamma(V^*(1),\boldD^\perp,M^\perp)$ are perfect. In later parts of this paper, we shall provide very general sufficient conditions to ensure that.  

\begin{proposition}[Functional equation, preliminary form]
\label{prop:det_selmerCom_dualSelCom_rel}
We have the following equality of determinants of perfect complexes:
$$\det\,\boldR{\rm Hom}_A(\boldR\Gamma(V^*(1),\boldD^\perp,M^\perp),A[-3])=\det\,\boldR\Gamma(V,\boldD,M)\otimes_A \det\,{\rm cone}\left({\bigcup_{-,V,\boldD,M}}_*\right).$$ 
Moreover, if for all $\fq \in S^{(p)} \setminus \Sigma$ the $A$-module $V^{I^w_\fq}/(t_\fq-1)V^{I^w_\fq}$ is projective, then we have 
$$\det\,\boldR\Gamma(V^*(1),\boldD^\perp,M^\perp)=\det\,\boldR{\rm Hom}_A(\boldR\Gamma(V,\boldD,M), A)[-3]\,.$$  
Here, ${I^w_\fq}\subset I_\fq$ denotes the wild ramification subgroup and $t_\fq$ is a fixed topological generator of $I_\fq/{I^w_\fq}$.
\end{proposition}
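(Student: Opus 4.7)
The first displayed equality is a formal consequence of the multiplicativity of the determinant functor on the exact triangle displayed immediately above the proposition: for any exact triangle $X \to Y \to Z \to X[1]$ of perfect $A$-complexes one has $\det Y \simeq \det X \otimes \det Z$ canonically, which yields exactly the stated identity.

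For the second equality, the plan is to show that $\det\,{\rm cone}({\bigcup_{{-,V,\boldD,M}}}_*) \simeq A$ canonically. Granted this, the functional equation follows by rearranging the first equality and invoking the standard fact that $\boldR{\rm Hom}_A(-,A)$ is a duality on the derived category of perfect $A$-complexes that inverts determinants (keeping track of the shift $[-3]$, which enters only through parity of indices).

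The cone of the cup-product map is controlled place-by-place. The global piece arising from $C^\bullet(G_{F,S},-)$ enters a perfect pairing by global Tate duality with affinoid coefficients, and so contributes acyclically. At each $\fq \in S_p$, the pairing $C^\bullet_{\varphi,\gamma_\fq}(\boldD_\fq) \otimes C^\bullet_{\varphi,\gamma_\fq}(\boldD_\fq^\perp) \to A[-2]$ is a perfect duality via local duality for $(\varphi,\Gamma_\fq)$-modules over $\cR_A$ (compare \cite[Thm.~3.1.5]{p-adicHodgetheorybenoisheights}). At each $\fq \in \Sigma$, the cup product on full local Galois cohomology is a perfect pairing by local Tate duality with affinoid coefficients. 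Hence the contributions of these places to the cone are quasi-isomorphic to zero.

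The only genuinely non-trivial contribution to the cone is at the places $\fq \in S^{(p)}\setminus\Sigma$. By Remark~\ref{rem:map_g_q_tnsored_g_q_perp_zero_at_unramified}, the restriction of the cup product to unramified cohomology already vanishes; consequently the local cone at such $\fq$ is represented by the ``unramified defect'' complex built from $[V_\fq^{I_\fq} \xrightarrow{{\rm Frob}_\fq - 1} V_\fq^{I_\fq}]$ and its analogue for $V^*(1)$, coupled in opposite directions. Under the hypothesis that $V^{I^w_\fq}/(t_\fq-1)V^{I^w_\fq}$ is projective, both $V_\fq^{I_\fq}$ and its $V^*(1)$-counterpart appear as direct summands of projective $A$-modules, so the local cone is a perfect two-term complex of projective modules of equal rank in which the $V$- and $V^*(1)$-contributions to the determinant mutually cancel. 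The main obstacle is the precise identification of the local cone at unramified $\fq$ with this explicit defect complex---an affinoid-coefficient analogue of the error-term analysis of \cite[\S7.6]{nekovar06}---after which $\det\,{\rm cone}({\bigcup_{{-,V,\boldD,M}}}_*) \simeq A$ and the second equality follow.
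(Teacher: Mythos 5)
Your treatment of the first equality is the same as the paper's: it is the multiplicativity of the determinant functor on the exact triangle defining the cone (the paper cites \cite[Lemma 4.40(2)]{On_Artin_formalism_for_p-adic_Garrett--Rankin_Lfunctions}, adding only the technical remark that homotopic morphisms of complexes have isomorphic cones, since the map ${\bigcup_{-,V,\boldD,M}}_*$ is only well defined up to homotopy).

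For the second equality the paper's proof is a one-liner: it invokes \cite[Theorem 3.1.7(iii)]{p-adicHodgetheorybenoisheights}, which asserts that under the hypothesis that $V^{I^w_\fq}/(t_\fq-1)V^{I^w_\fq}$ is projective for all $\fq\in S^{(p)}\setminus\Sigma$, the morphism ${\bigcup_{-,V,\boldD,M}}_*$ is an isomorphism in the derived category; the two complexes are then isomorphic and their determinants agree (equivalently, the cone is acyclic). Your plan re-derives this by a place-by-place analysis, which is in fact how Benois proves his theorem (compare the Iwasawa-theoretic version in Theorem~\ref{thm:duality_selComplex_Iw_iso}), but the step you yourself flag as ``the main obstacle'' is where the argument has a genuine gap: after identifying the local contribution at $\fq\in S^{(p)}\setminus\Sigma$ with an unramified defect complex, the claim that its determinant is trivial ``because it is a two-term complex of projective modules of equal rank whose contributions mutually cancel'' does not follow. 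The determinant of a two-term complex $[P\to Q]$ of projectives of equal rank is trivial in the normalized sense required here only when the differential is a quasi-isomorphism (or at least has unit determinant); without the projectivity hypothesis this genuinely fails, and the resulting nontrivial contributions are exactly the Tamagawa-type error terms of \cite[\S 7.6]{nekovar06}. The projectivity hypothesis is used in Benois' argument (see Theorem~\ref{thm:duality_selComplex_Iw_iso}(ii) for the analogue proved in this paper) not merely to provide projectivity of the modules involved, but to show that the induced local pairing between the unramified complex for $V^*(1)$ and the modified local complex $\widetilde{U}^\bullet_\fq(V,\boldD)$ is a perfect duality, i.e.\ that the local comparison map is a quasi-isomorphism; once that is in place the cone of ${\bigcup_{-,V,\boldD,M}}_*$ is acyclic and your conclusion follows. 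So the repair is to replace the cancellation claim by this local quasi-isomorphism statement, at which point you will have reproved the theorem that the paper simply cites.
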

\begin{proof}
\label{proof:prop:det_selmerCom_dualSelCom_rel}
The first asserted equality follows from \cite[Lemma 4.40(2)]{On_Artin_formalism_for_p-adic_Garrett--Rankin_Lfunctions}. We remark that we have used the fact that if $f,g\,:\, X \to Y$ is a homotopic pair of morphisms of complexes, then ${\rm cone}(f) \simeq {\rm cone}(g).$ The second assertion follows immediately from \cite[Theorem 3.1.7(iii)]{p-adicHodgetheorybenoisheights}.
\end{proof}

\subsection{Iwasawa theoretic Selmer complexes for families over an affinoid}
\label{sec:Selmer_Complexes_over_A_tnesored_Lambda_Infinity_Iw}
Let $A$ be an $\QQ_p$-affinoid algebra. We let ${\rm Max}(A)$ denote the associated rigid analytic space. In this section, we will review the basic properties of Selmer Complexes over the ring $\cH_A(\Gamma_F^0)$.
We retain the notation and conventions of \S\ref{sec:Selmer_Complexes_over_A}. In particular, we let $V$ denote a $p$-adic representation of $G_{F,S}$ with coefficients in a $\QQ_p$-affinoid algebra $A$.
For each $\fq \in S_p$, we fix a $(\varphi,\Gamma_\fq)$-submodule 
$\boldD_\fq\subset D^{\dagger}_{\rm rig}(V_\fq)$. 

\subsubsection{} We will be interested in the following Iwasawa theoretic objects: $\overline{V}:=V \otimes_A \cH_A^\iota(\Gamma_F^0)$, 
$\overline{\boldD}_\fq:=\boldD_\fq \otimes_A \cH_A^\iota(\Gamma_{F_\fq}^0)$,  $\overline{\boldD}:=\oplus_{\fq\in S_p} \,\overline{\boldD}_\fq$. 
We also consider the corresponding complexes $K^\bullet(\overline{V})$ and $U^\bullet(\overline{V},\overline{\boldD})$, and morphisms $f=(\overline{f}_\fq)$ and $g=(\overline{g}_\fq)$ given as in \S\ref{sec:Selmer_Complexes_over_A}.

We define the Selmer Complex over $\cH_A(\Gamma_F^0)$ associated to the data $(\overline{V},\overline{\boldD})$ in a manner identical to above:
\begin{equation}
\label{eq:SelmerComplex_for_V_Iw}
{\mathbf S}(\overline{V},\overline{\boldD})={\rm cone}\left(C^\bullet(G_{F,S},\overline{V}) \oplus U^\bullet(\overline{V},\overline{\boldD}) \xrightarrow{\,\overline{f}-\overline{g}\,} K^\bullet(\overline{V})\right)[-1]\,.
\end{equation}
We set $\boldR\Gamma(\overline{V},\overline{\boldD}):=[{\mathbf S}(\overline{V},\overline{\boldD})] \in \cD(\cH_A(\Gamma_{F}^0))$ and write $\boldR^n\Gamma(\overline{V},\overline{\boldD})$ for its cohomology. From the definition of $\boldR\Gamma(\overline{V},\overline{\boldD})$, it is easy to see that $\boldR\Gamma(\overline{V},\overline{\boldD})$ is cohomologically bounded. 
\subsubsection{} We define the dual local condition on the dual Galois representation $\overline{V}^*(1):=V^*(1) \otimes_A \cH_A^\sharp(\Gamma_F^0)$ at each $\fq \in S_p$ via the $(\varphi,\Gamma)$-submodule $\overline{\boldD}^\perp_\fq:=\boldD^\perp_\fq \otimes_A \cH_A^\sharp(\Gamma_{F_\fq}^0)$, and put $\overline{\boldD}^\perp:=(\boldD^\perp_\fq)_{\fq\in S_p}$.


\subsection{Duality for punctual Iwasawa theoretic Selmer complexes} We record the key properties of our (punctual) Iwasawa theoretic Selmer complexes when $A=E$. These are proved in \cite[Proposition $6$]{selComp_p-adicHodgetheory} and \cite[Theorem 4.1 and Corollary 4.2]{pottharst}.


\begin{theorem}
\label{thm:Prop_Iw_SelComp}
Assume that our $p$-adic representation $V$ with coefficients in $E$ is potentially semistable at all primes above $p$. Then:
    \item[i)] The complex $\boldR\Gamma(\overline{V},\overline{\boldD})$ has cohomology concentrated in degrees $[1,3]$, and the respective cohomology groups are coadmissible $\cH_E(\Gamma^0_F)$-modules. Moreover, $$\boldR^3\Gamma(\overline{V},\overline{\boldD})\simeq (T^*(1)^{H_{F,S}})^* \otimes_{\Lambda_{\cO}(\Gamma_F^0)} \cH_E(\Gamma_F^0)$$ where $H_{F,S}={\rm Gal}(F_S/F_{\infty})$ and $T^*(1)\subset  V^*(1)$ is a $G_{F,S}$-stable lattice.
    \item[ii)]  If the local conditions are (strict) ordinary in the sense of \cite[p. 15]{pottharst} (see also \cite{jayanalyticfamilies}, \S3A and \S3B),
    $${\rm rank}_{\cH_E(\Gamma^0_F)}\,\boldR^1\Gamma(\overline{V},\overline{\boldD})={\rm rank}_{\cH_E(\Gamma^0_F)}\,\boldR^2\Gamma(\overline{V},\overline{\boldD})\,.$$
    \item[iii)] We have $\boldR\Gamma(\overline{V},\overline{\boldD}) \otimes^{\bbLL}_{\cH_A(\Gamma^0_F)} E \simeq \boldR\Gamma(V,\boldD)\,.$
    In particular, there is a canonical exact sequence
    $$0 \lra  \boldR^i\Gamma(\overline{V},\overline{\boldD})_{\Gamma_F} \lra \boldR^i(V,\boldD) \lra\boldR^{i+1}\Gamma(\overline{V},\overline{\boldD})^{\Gamma_F} \lra 0\,.$$
    \item[iv)] There is canonical duality
    $$\cD{(\boldR\Gamma(\overline{V},\overline{\boldD}))}^\iota \simeq \boldR\Gamma(\overline{V}^*(1),\overline{\boldD}^\perp)[3]$$
    where $\cD$ here denotes Grothendieck duality as in \cite[p.5]{pottharst} (see also \cite{factorization_alge_p_adic}, Proposition 5.14). In particular, we have a canonical exact sequence (for all $i$)
    $$0 \lra  \cD^1 \boldR^{4-i}\Gamma(\overline{V},\overline{\boldD})  \lra \boldR^{i}\Gamma(\overline{V}^*(1),\overline{\boldD}^\perp)  \lra\cD^0 \boldR^{3-i}\Gamma(\overline{V},\overline{\boldD})  \lra 0\,.$$

        
\end{theorem}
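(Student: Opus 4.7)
The plan is to adapt the framework of Pottharst \cite{pottharst,selComp_p-adicHodgetheory} to our setting with $(\varphi,\Gamma)$-module local conditions, handling each of the four statements by analysing the cone defining ${\mathbf S}(\overline{V},\overline{\boldD})$ together with the inputs from $p$-adic Hodge theory recorded in Section~\ref{chap:Local_cohomology_and_phi_gamma_module}.

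For part (i), I would begin with the distinguished triangle
$${\mathbf S}(\overline{V},\overline{\boldD}) \lra C^\bullet(G_{F,S},\overline{V}) \oplus U^\bullet(\overline{V},\overline{\boldD}) \lra K^\bullet(\overline{V}) \lra {\mathbf S}(\overline{V},\overline{\boldD})[1]\,,$$
and verify that each constituent complex has cohomology concentrated in appropriate degrees and is coadmissible over $\cH_E(\Gamma_F^0)$. The global Iwasawa cohomology $H^i(G_{F,S},\overline{V})$ lives in degrees $[0,2]$ (via Shapiro's lemma in the cyclotomic tower, coupled with the Fr\'echet--Stein presentation of $\cH_E(\Gamma_F^0)$); the local Iwasawa cohomology of $\overline{\boldD}_\fq$ for $\fq \mid p$ lives in degrees $[0,2]$ via the $\psi$-description of Iwasawa cohomology for $(\varphi,\Gamma)$-modules; and the unramified terms for $\fq \in S^{(p)}$ live in degrees $[0,1]$. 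The long exact sequence then confines $\boldR^i\Gamma$ to $[1,3]$. The identification of $\boldR^3\Gamma$ follows from global Iwasawa--Tate duality, which identifies the top-degree global Iwasawa cohomology with the Pontryagin dual of the $H_{F,S}$-invariants of $T^*(1)$, suitably tensored up to $\cH_E(\Gamma_F^0)$.

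For parts (ii) and (iii), I invoke respectively the Euler--Poincar\'e characteristic formula for coadmissible $\cH_E(\Gamma_F^0)$-modules (noting that under the ordinary hypothesis the local Euler characteristics at primes above $p$ balance cleanly, so that the generic ranks of $\boldR^1\Gamma$ and $\boldR^2\Gamma$ must coincide), and derived base change applied componentwise to the Selmer complex (using the base-change results for $(\varphi,\Gamma)$-module cohomology recorded in Section~\ref{chap:Local_cohomology_and_phi_gamma_module}). Since derived tensor product commutes with cones, one obtains $\boldR\Gamma(\overline{V},\overline{\boldD}) \otimes^{\bbLL}_{\cH_E(\Gamma_F^0)} E \simeq \boldR\Gamma(V,\boldD)$; the short exact sequence follows either from the Grothendieck spectral sequence for $\mathrm{Tor}$, or more concretely from the distinguished triangle induced by multiplication by $\gamma_F-1$ (recalling $\Gamma_F^0 \simeq \ZZ_p$ and that $\cH_E(\Gamma_F^0)/(\gamma_F-1)\cong E$).

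For part (iv), the plan is to construct the duality from the cup-product pairing \eqref{eq:cup_product_phi-gamma_module_over_H_A_2}, combined with global Poitou--Tate duality in the cyclotomic tower. Pottharst's Grothendieck duality functor $\cD$ on coadmissible $\cH_E(\Gamma_F^0)$-modules is represented by $\boldR\mathrm{Hom}_{\cH_E(\Gamma_F^0)}(-,\cH_E(\Gamma_F^0))$, and the cup-product induces a morphism from $\boldR\Gamma(\overline{V},\overline{\boldD})$ to the shifted $\cD$-dual of $\boldR\Gamma(\overline{V}^*(1),\overline{\boldD}^\perp)$. Verifying that this morphism is a quasi-isomorphism reduces, by working through the constituent terms of the cone, to known local dualities (Tate duality for $(\varphi,\Gamma)$-modules at $\fq\mid p$, and classical local Tate duality at $\fq\nmid p$) together with global Poitou--Tate duality. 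The stated exact sequence for $\boldR^i\Gamma(\overline{V}^*(1),\overline{\boldD}^\perp)$ then arises from the local-to-global spectral sequence associated with the two $\cD$-terms. The main obstacle I anticipate lies precisely in (iv): ensuring that the conventions for cup-products, signs, and the shifts arising from $\cH^\iota$ versus $\cH^\sharp$ transfer faithfully from \cite{pottharst,selComp_p-adicHodgetheory} to our setup, and that the genuinely non-ordinary local conditions (more general than those treated directly in op.~cit.) do not introduce additional duality obstructions beyond those controlled by \eqref{eq:cup_product_phi-gamma_module_over_H_A_2}.
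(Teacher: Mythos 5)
Your proposal is correct and follows essentially the same route as the paper: parts (i)--(iii) amount to the arguments of Pottharst and K\"u\c{c}\"uk--Pottharst (cone analysis with vanishing of $H^0$ of global Iwasawa cohomology, the Euler--Poincar\'e characteristic under the ordinary hypothesis, and derived base change along $\cH_E(\Gamma_F^0)\to E$), which the paper simply cites, while your part (iv) is exactly the paper's reduction, via Pottharst's formalism, to duality for the constituent complexes --- base-changed Nekov\'a\v{r} global duality, Benois' duality for cyclotomic deformations of $(\varphi,\Gamma)$-modules at $\fq\mid p$, and base-changed local duality at $\fq\nmid p$. The only point to make explicit in (i) is that confinement to degrees $[1,3]$ (rather than $[0,3]$) uses the vanishing of $H^0(G_{F,S},\overline{V})$ together with injectivity in degree $0$ of the maps $U^\bullet_\fq\to K^\bullet_\fq$, which is standard.
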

\begin{proof}
\label{proof:thm:Prop_Iw_SelComp}
  For proof of Parts $(i)$, $(ii)$, and $(iii)$, see \cite[Proposition 6]{selComp_p-adicHodgetheory}, \cite[Theorem 4.1 and Corollary 4.2]{pottharst}, and \cite[Proposition 3.7]{jayanalyticfamilies}.

We sketch proof of (iv), and refer the reader to  \cite{pottharst} for further details. By \cite[Theorem 4.1]{pottharst}, it suffices to prove that the relevant duality statement holds with $\boldR\Gamma(G_{F,S}, X)$ (where $X=\overline{V}$, $\overline{V}^*(1)$) and $U^\bullet_\fq(\overline{X},\overline{Y})$ for each $\fq \in S$ (where $Y=\overline{\boldD}, \overline{\boldD}^\perp$). The claim for $\boldR\Gamma(G_{F,S}, X)$ follows by applying $\otimes^{\bbLL}_{\Lambda_{\cO}(\Gamma^0_F)} \cH_E(\Gamma^0_F)$ to \cite[Theorem 8.5.6]{nekovar06}. To verify that the required duality holds for $U^\bullet_\fq(\overline{X},\overline{Y})$ for each $\fq \in S$, we argue for $\fq\in S_p$ and $\fq\in S^{(p)}$ separately. If $\fq \in S_p$, then the required duality statement is provided by \cite[Theorem 2.8.2]{p-adicHodgetheorybenoisheights}.  If $\fq \in S^{(p)}$, then the result follows by applying $\otimes^{\bbLL}_{\Lambda_{\cO}(\Gamma^0_F)} \cH_E(\Gamma^0_F)$ to \cite[Theorem 8.9.7.3]{nekovar06}.
\end{proof}

\begin{theorem}
\label{thm:det_selCOm_over_H_E_=_charIdeal_Cohomo_deg2}
Suppose we have $\boldR^i\Gamma(\overline{V}, \overline{\boldD})=0$ for $i \neq 2$ and $\boldR^2\Gamma(\overline{V}, \overline{\boldD})$ is a torsion $\cH_E(\Gamma_F^0)$-module. Then, $\det\,\boldR\Gamma(\overline{V}, \overline{\boldD})\,=\,{\rm char}_{\cH_E(\Gamma_F^0)}\,\boldR^2\Gamma(\overline{V}, \overline{\boldD})\,.$
\end{theorem}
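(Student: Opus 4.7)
The plan is to reduce the statement to Proposition~\ref{prop:det_torsion_M_Over_H_E_char_Ideal_M}, which already identifies the determinant of a torsion coadmissible module with its characteristic ideal. The content is therefore to show that, under our hypotheses, $\boldR\Gamma(\overline{V}, \overline{\boldD})$ can be represented by a two-term complex of free $\cH_E(\Gamma_F^0)$-modules of equal rank whose only non-zero cohomology sits in degree $2$ and coincides with $\boldR^2\Gamma(\overline{V}, \overline{\boldD})$.

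First I would collect the structural properties of $\boldR\Gamma(\overline{V}, \overline{\boldD})$ already at our disposal. By Theorem~\ref{thm:Prop_Iw_SelComp}(i), this complex has cohomology concentrated in degrees $[1,3]$, all of which are coadmissible $\cH_E(\Gamma_F^0)$-modules. Our running assumptions force $\boldR^1\Gamma(\overline{V}, \overline{\boldD})=\boldR^3\Gamma(\overline{V}, \overline{\boldD})=0$, so the cohomology is concentrated in degree $2$ and the single non-zero cohomology module $M:=\boldR^2\Gamma(\overline{V}, \overline{\boldD})$ is torsion and coadmissible.

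Next I would invoke \cite[Lemma 3.28]{Arithmetic_critical_p-adic_L-functions} (used already in the definition of the determinant in \S4.2) to obtain a projective resolution $P^\bullet=[P_1\xrightarrow{\,\phi\,} P_0]$ of $M$ of length one, with $P_0$ and $P_1$ free $\cH_E(\Gamma_F^0)$-modules of the same rank and $\phi$ injective (the injectivity being equivalent to $M$ being torsion). Placing this resolution in degrees $1$ and $2$ yields a perfect complex with the same cohomology as $\boldR\Gamma(\overline{V}, \overline{\boldD})$, and standard homological algebra (lifting the identity on cohomology via freeness of $P_1$ and projectivity in the derived category) produces a quasi-isomorphism
\begin{equation*}
\bigl[\,P_1\xrightarrow{\,\phi\,}P_0\,\bigr]\;\xrightarrow{\,\sim\,}\;\boldR\Gamma(\overline{V}, \overline{\boldD})
\end{equation*}
in $\cD(\cH_E(\Gamma_F^0))$, placed in degrees $1,2$.

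From this representation, the definition of the determinant functor gives
\begin{equation*}
\det\boldR\Gamma(\overline{V}, \overline{\boldD})=\det(P_1)^{-1}\otimes\det(P_0)=\det{}_{\cH_E(\Gamma_F^0)}(M),
\end{equation*}
and Proposition~\ref{prop:det_torsion_M_Over_H_E_char_Ideal_M} identifies the right-hand side with ${\rm char}_{\cH_E(\Gamma_F^0)}(M)$, concluding the proof.

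The main obstacle I foresee is the passage from the cohomology vanishing statement to the two-term free resolution of the complex itself: a priori $\boldR\Gamma(\overline{V}, \overline{\boldD})$ is only known to be cohomologically bounded with coadmissible cohomology, not perfect. The cited structural result for torsion coadmissible modules produces a free resolution of $M$ of length one, and using this together with the vanishing in degrees $\ne 2$ one can replace the original Selmer complex by $[P_1\to P_0]$ in the derived category; once this replacement is justified, the identification of $\det$ with ${\rm char}$ is formal via Proposition~\ref{prop:det_torsion_M_Over_H_E_char_Ideal_M}.
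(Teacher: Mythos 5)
Your proposal is correct and follows essentially the same route as the paper: both reduce to Proposition~\ref{prop:det_torsion_M_Over_H_E_char_Ideal_M} (determinant $=$ characteristic ideal for torsion coadmissible modules) after identifying $\det\boldR\Gamma(\overline{V},\overline{\boldD})$ with $\det\boldR^2\Gamma(\overline{V},\overline{\boldD})$, which the paper handles by citing the determinant-functor formalism while you spell it out via the explicit length-one free resolution and a truncation/quasi-isomorphism argument. The extra appeal to Theorem~\ref{thm:Prop_Iw_SelComp}(i) is harmless (it is only needed to guarantee coadmissibility of the degree-$2$ cohomology), and the argument is sound.
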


\begin{proof}
\label{proof:thm:det_selCOm_over_H_E_=_charIdeal_Cohomo_deg2}
By Proposition~\ref{prop:det_torsion_M_Over_H_E_char_Ideal_M}, we have that 
$$\det\,\boldR^2\Gamma(\overline{V}, \overline{\boldD}))\,=\,{\rm char}_{\cH_E(\Gamma_F^0)}\,\boldR^2\Gamma(\overline{V}, \overline{\boldD})\,.$$ 
On the other hand, it is clear that we have
$\det\,\boldR\Gamma(\overline{V}, \overline{\boldD})\,=\,\det\,\boldR^2\Gamma(\overline{V}, \overline{\boldD})$ in our setting (cf.  \cite{Determinant_Functors}, Proposition 3.2.3), and the proof of our theorem follows.
\end{proof}

\begin{corollary}[Functional equation for $1$-variable algebraic $p$-adic $L$-functions]
    \label{cor_thm:det_selCOm_over_H_E_=_charIdeal_Cohomo_deg2}
    In the situation of Theorem~\ref{thm:det_selCOm_over_H_E_=_charIdeal_Cohomo_deg2}, we have
    $${\rm char}_{\cH_E(\Gamma_F^0)}(\boldR^2\Gamma(\overline{V}, \overline{\boldD}))={\rm char}_{\cH_E(\Gamma_F^0)}(\boldR^2\Gamma(\overline{V}^*(\chi), \overline{\boldD}^*(\chi)))^\iota\,.$$
\end{corollary}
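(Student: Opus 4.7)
The plan is to bootstrap the identification of determinants with characteristic ideals from Theorem~\ref{thm:det_selCOm_over_H_E_=_charIdeal_Cohomo_deg2} on both sides, and connect the two sides through the Grothendieck duality isomorphism of Theorem~\ref{thm:Prop_Iw_SelComp}(iv). First, I would apply Theorem~\ref{thm:det_selCOm_over_H_E_=_charIdeal_Cohomo_deg2} to the triple $(\overline{V},\overline{\boldD})$ itself to rewrite the left-hand side as $\det \boldR\Gamma(\overline{V},\overline{\boldD})$.

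Next, I would transfer the degree/torsion hypotheses to the dual side. Plugging the assumption $\boldR^i\Gamma(\overline{V},\overline{\boldD})=0$ for $i\neq 2$ (together with the fact that $\boldR^2\Gamma(\overline{V},\overline{\boldD})$ is torsion, so that $\cD^0$ of it vanishes) into the short exact sequence
\[
0 \longrightarrow \cD^1 \boldR^{4-i}\Gamma(\overline{V},\overline{\boldD}) \longrightarrow \boldR^{i}\Gamma(\overline{V}^*(1),\overline{\boldD}^\perp) \longrightarrow \cD^0 \boldR^{3-i}\Gamma(\overline{V},\overline{\boldD}) \longrightarrow 0
\]
supplied by Theorem~\ref{thm:Prop_Iw_SelComp}(iv), I would deduce that $\boldR^{i}\Gamma(\overline{V}^*(1),\overline{\boldD}^\perp)=0$ for $i\in\{0,1,3\}$, while
\[
\boldR^{2}\Gamma(\overline{V}^*(1),\overline{\boldD}^\perp) \simeq \cD^1 \boldR^{2}\Gamma(\overline{V},\overline{\boldD})\,.
\]
Since $\cD^1$ of a torsion coadmissible $\cH_E(\Gamma_F^0)$-module is again torsion coadmissible, Theorem~\ref{thm:det_selCOm_over_H_E_=_charIdeal_Cohomo_deg2} applies to $(\overline{V}^*(1),\overline{\boldD}^\perp)$ as well, yielding
\[
\det \boldR\Gamma(\overline{V}^*(1),\overline{\boldD}^\perp) = {\rm char}_{\cH_E(\Gamma_F^0)}\, \boldR^2\Gamma(\overline{V}^*(1),\overline{\boldD}^\perp)\,.
\]

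Finally, I would conclude by invoking the standard identity
\[
{\rm char}_{\cH_E(\Gamma_F^0)}\,(\cD^1 M) \;=\; \bigl({\rm char}_{\cH_E(\Gamma_F^0)}\, M\bigr)^\iota
\]
for any torsion coadmissible $\cH_E(\Gamma_F^0)$-module $M$, applied to $M=\boldR^2\Gamma(\overline{V},\overline{\boldD})$. Combined with the isomorphism displayed above and with the involutivity $(\iota)^2=\id$, this transports the equality on the dual side into precisely the asserted functional equation. The only delicate point is the vanishing of the $\cD^0$ and $\cD^1$ terms at the extreme degrees in the exact sequence; this is where the perfect concentration hypothesis of Theorem~\ref{thm:det_selCOm_over_H_E_=_charIdeal_Cohomo_deg2} does all the work, and I expect no other substantive obstacle since the behavior of $\cH_E(\Gamma_F^0)$-characteristic ideals under $\cD^1$ is the analogue of the classical reflection $\iota$ on Iwasawa modules (as recorded, e.g., in \cite{pottharst}).
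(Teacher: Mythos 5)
Your proposal is correct and is essentially the paper's own proof, which likewise deduces the corollary directly from Theorem~\ref{thm:Prop_Iw_SelComp}(iv) and Theorem~\ref{thm:det_selCOm_over_H_E_=_charIdeal_Cohomo_deg2}: using the exact sequences in (iv) to transfer the degree-$2$ concentration and torsionness to the dual side, identifying $\boldR^2\Gamma(\overline{V}^*(1),\overline{\boldD}^\perp)$ with the Grothendieck dual of $\boldR^2\Gamma(\overline{V},\overline{\boldD})$, and then passing to characteristic ideals is exactly the intended unpacking. The only point to watch is the placement of $\iota$: over $\cH_E(\Gamma_F^0)$ the bare functor $\cD^1={\rm Ext}^1(-,\cH_E(\Gamma_F^0))$ preserves characteristic ideals of torsion coadmissible modules, and the twist $\iota$ enters through the involution appearing in the duality isomorphism $\cD(\boldR\Gamma(\overline{V},\overline{\boldD}))^\iota\simeq \boldR\Gamma(\overline{V}^*(1),\overline{\boldD}^\perp)[3]$, so your stated identity ${\rm char}(\cD^1 M)=({\rm char}\,M)^\iota$ should be read with that twist absorbed into $\cD$ — with either bookkeeping the asserted functional equation follows.
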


\begin{proof}
    This is an immediate consequence of Theorem~\ref{thm:Prop_Iw_SelComp}(iv) and Theorem~\ref{thm:det_selCOm_over_H_E_=_charIdeal_Cohomo_deg2}.
\end{proof}

\subsection{Duality for Iwasawa theoretic Selmer complexes over affinoids}
\subsubsection{Inverse Limits}
\begin{proposition}
    \label{Prop_InverseLimit_phi-gamma_Over_H_A}
Let $M$ be a $(\varphi, \Gamma_\fq)$-module over $\cR_A$, where $A$ is an affinoid algebra. Let us put $\overline{M}=M \otimes_A \cH_A^{?}(\Gamma_\fq^0)$. Then,
$$\varprojlim_i\, C^\bullet_{\varphi, \gamma_\fq}(\overline{M}_i) \simeq C^\bullet_{\varphi,\gamma_\fq}(\overline{M})$$ 
where $\overline{M}_i=M \otimes_A \cH_{A,i}^{?}(\Gamma_{F_\fq}^0))$ and $? \in \{\iota, \sharp\}.$ 
\end{proposition}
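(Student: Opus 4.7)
The plan is to reduce the claimed identification of complexes to an identification of their constituent modules, since the Fontaine--Herr complex is built in a term-by-term functorial way from $\overline{M}^{\Delta_\fq}$ together with the operators $\varphi-1$ and $\gamma_\fq-1$.

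First, I would invoke the definition of $\cH_A^?(\Gamma_\fq^0)$ given in \S\ref{sec:Coefficient_rings_affinoid}, namely that $\cH_A^?(\Gamma_\fq^0)=\varprojlim_i \cH_{A,i}^?(\Gamma_\fq^0)$ (this is essentially the Fr\'echet--Stein structure), and deduce the module-level identification $\overline{M}=\varprojlim_i \overline{M}_i$ directly from the definition $\overline{M}=M\otimes_A \cH_A^?(\Gamma_\fq^0)$ used throughout \S\ref{sec:Local_Iwasawa_Cohomology_over_affinoids}. This is where one implicitly uses that the ambient tensor product is the appropriately completed one, so that it commutes with the inverse limit defining $\cH_A^?(\Gamma_\fq^0)$. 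One should keep track of the fact that the transition maps $\overline{M}_{i+1}\to \overline{M}_i$ are compatible with the natural $(\varphi,\Gamma_\fq)$-actions, which is clear because $\varphi$ acts on the $M$-factor only, while $\gamma_\fq$ acts in a way that is preserved by the restriction maps $\cH_{A,i+1}^?(\Gamma_\fq^0)\to \cH_{A,i}^?(\Gamma_\fq^0)$.

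Second, I would observe that $\Delta_\fq$ is a finite group of order prime to $p$, and therefore the $\Delta_\fq$-invariants functor is an equalizer (equivalently, a kernel of a map into a finite direct sum of copies), which is a limit; since limits commute with limits, this gives
\[
\overline{M}^{\Delta_\fq}=\Bigl(\varprojlim_i \overline{M}_i\Bigr)^{\Delta_\fq}=\varprojlim_i \overline{M}_i^{\Delta_\fq}.
\]
Combining this with the fact that inverse limits commute with finite direct sums, each term of $\varprojlim_i C^\bullet_{\varphi,\gamma_\fq}(\overline{M}_i)$ is canonically identified with the corresponding term of $C^\bullet_{\varphi,\gamma_\fq}(\overline{M})$. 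The differentials $d_0$ and $d_1$ on both sides are induced by $\varphi-1$ and $\gamma_\fq-1$, and these are compatible with the transition maps by the first step, so the identification is compatible with differentials. The result is then an equality (not merely a quasi-isomorphism) of complexes.

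The main potential obstacle is the very first step: justifying the equality $\overline{M}=\varprojlim_i \overline{M}_i$ when interpreted as a genuine (completed) tensor product rather than an algebraic one. For finitely generated (or finite projective) $M$ over $\cR_A$, the completed tensor product of $M$ with the Fr\'echet--Stein algebra $\cH_A^?(\Gamma_\fq^0)$ behaves well under the inverse limit because the situation is locally a direct summand of a finite free module; on free modules the identification is automatic, and it descends to direct summands. This is already implicit in how $\overline{M}$ and $\overline{\boldD}$ have been used throughout the paper, so one may simply record this as a formal consequence of the setup rather than proving it from scratch.
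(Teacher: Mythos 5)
Your proposal is correct and follows the same route as the paper, which simply records that the identification is clear from the definition of the Fontaine--Herr complex: you unwind exactly that definition, identifying $\overline{M}=\varprojlim_i\overline{M}_i$ term by term, noting that $\Delta_\fq$-invariants and finite direct sums commute with the inverse limit, and checking compatibility of $\varphi-1$ and $\gamma_\fq-1$ with the transition maps. Your extra remark on why the completed tensor product with the Fr\'echet--Stein algebra commutes with the limit (free modules, then direct summands) is a reasonable way of making explicit what the paper leaves implicit.
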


\begin{proof}
This is clear from the definition of the Fontaine--Herr complex.
\end{proof}

\begin{proposition} 
\label{Prop_H^n_phi-gamma_over_A_tensored_H_i_M_L} 
Suppose we are in the situation of Proposition~\ref{Prop_InverseLimit_phi-gamma_Over_H_A}. Then the isomorphism of Proposition~\ref{Prop_InverseLimit_phi-gamma_Over_H_A} induces an isomorphism $H^n(C^\bullet_{\varphi, \gamma_\fq}(\overline{M})) \simeq \varprojlim_i\,H^n(C^\bullet_{\varphi, \gamma_\fq}(\overline{M}_i)).$ 
\end{proposition}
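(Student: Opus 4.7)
The plan is to combine the complex-level isomorphism $\varprojlim_i C^\bullet_{\varphi,\gamma_\fq}(\overline{M}_i) \simeq C^\bullet_{\varphi,\gamma_\fq}(\overline{M})$ supplied by Proposition~\ref{Prop_InverseLimit_phi-gamma_Over_H_A} with the standard Milnor-type short exact sequence
$$0 \lra \varprojlim_i{}^1 H^{n-1}(C^\bullet_i) \lra H^n\Big(\varprojlim_i C^\bullet_i\Big) \lra \varprojlim_i H^n(C^\bullet_i) \lra 0,$$
where $C^\bullet_i := C^\bullet_{\varphi,\gamma_\fq}(\overline{M}_i)$. This sequence is available whenever $\varprojlim_i{}^1 C^k_i$ vanishes for each degree $k$, and the proposition will follow at once as soon as both of the displayed $\varprojlim{}^1$ terms are shown to vanish.

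To verify that $\varprojlim_i{}^1 C^k_i = 0$ for each $k$, I would first observe that each $C^k_i$ is (a finite direct sum of copies of) $(\overline{M}_i)^{\Delta_\fq}$. Since $\Delta_\fq$ is finite of order dividing $p-1$, hence prime to $p$, the functor $(-)^{\Delta_\fq}$ is exact and splits off as a direct summand; moreover the tower $\{\overline{M}_i\}_i$ is coadmissible over the Fr\'echet-Stein algebra $\cH_A(\Gamma^0_\fq)$ by construction. Consequently $\{C^k_i\}_i$ remains a coadmissible system, and the fundamental vanishing theorem of Schneider-Teitelbaum for coadmissible modules over Fr\'echet-Stein algebras delivers $\varprojlim_i{}^1 C^k_i = 0$.

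The second vanishing $\varprojlim_i{}^1 H^{n-1}(C^\bullet_i) = 0$ will require upgrading the cohomology groups $\{H^n(C^\bullet_i)\}_i$ themselves to a coadmissible system. The key input here is the derived base-change isomorphism
$$C^\bullet_{\varphi,\gamma_\fq}(\overline{M}_{i+1}) \otimes^{\bbLL}_{\cH_{A,i+1}(\Gamma^0_\fq)} \cH_{A,i}(\Gamma^0_\fq) \xrightarrow{\,\sim\,} C^\bullet_{\varphi,\gamma_\fq}(\overline{M}_i),$$
which follows from the perfectness of the Fontaine-Herr complex over each $\cH_{A,i}(\Gamma^0_\fq)$ recalled in \S\ref{sec:cohomology_of_varphi_Gamma_K_module}, together with the flatness of the transition maps $\cH_{A,i+1}(\Gamma^0_\fq) \to \cH_{A,i}(\Gamma^0_\fq)$. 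Combined with the finite generation of $H^n(C^\bullet_i)$ over $\cH_{A,i}(\Gamma^0_\fq)$, this identifies $\{H^n(C^\bullet_i)\}_i$ as a coadmissible system and Schneider-Teitelbaum's vanishing will apply once more.

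The main potential obstacle is the derived base-change identity above, which transports perfectness of a single Fontaine-Herr complex into compatibility across the whole tower. I expect this to be essentially routine given the perfectness results already recorded in \S\ref{chap:Local_cohomology_and_phi_gamma_module} and the flatness of the Fr\'echet-Stein tower $\{\cH_{A,i}(\Gamma^0_\fq)\}_i$; no new technical input should be required beyond a careful application of the Schneider-Teitelbaum machinery.
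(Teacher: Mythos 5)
Your proposal follows essentially the same route as the paper's (sketched) proof: the complex-level identification $\varprojlim_i C^\bullet_{\varphi,\gamma_\fq}(\overline{M}_i)\simeq C^\bullet_{\varphi,\gamma_\fq}(\overline{M})$, the Milnor $\varprojlim^1$ exact sequence (the paper cites Weibel, Theorem 3.5.8), and the vanishing of $\varprojlim^1_i H^{n-1}_i$ because the cohomology tower is a Mittag--Leffler (coadmissible) system, via Schneider--Teitelbaum/KPX (the paper cites \cite[Lemma 2.1.4(2)]{KPX2014} and Theorem B of \cite{schneiderteitelbaum2003}); your extra step of checking base change for the cohomology is exactly what underlies the paper's phrase ``by construction'', and note that perfectness is not even needed there --- flatness of $\cH_{A,i+1}(\Gamma^0_\fq)\to\cH_{A,i}(\Gamma^0_\fq)$ plus the KPX finiteness of $H^n(C^\bullet_i)$ over $\cH_{A,i}(\Gamma^0_\fq)$ already gives coadmissibility. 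One imprecision: the terms $C^k_i$, being built from $(\overline{M}_i)^{\Delta_\fq}$, are modules over the Robba ring and are \emph{not} finitely generated over $\cH_{A,i}(\Gamma^0_\fq)$, so they do not form a coadmissible system in the Schneider--Teitelbaum sense; the degreewise $\varprojlim^1$-vanishing needed for the Milnor sequence should instead be justified by the topological Mittag--Leffler criterion (the transition maps have dense image, since $\cH_{A,i+1}\to\cH_{A,i}$ does), which is precisely how the paper argues in the analogous step of Theorem~\ref{thm:inverselimit_MIttag_SelCOm_Over_H_A}. This is a fixable misattribution of the reference rather than a gap in the argument.
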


\begin{proof}
Since this is well-known to the experts, we only sketch a proof. Let us put 
    $H_i^n:=H^n(C^\bullet_{\varphi, \gamma_\fq}(\overline{M}_i))$ to ease our notation. As we have $\varprojlim_i\, C^\bullet_{\varphi, \gamma_\fq}(\overline{M}_i) \simeq C^\bullet_{\varphi,\gamma_\fq}(\overline{M}),$ it follows from \cite[Theorem 3.5.8]{weibel} that 
    $$\underset{i}{{\rm lim}^1}H_i^{n-1} \lra H^n(C^\bullet_{\varphi, \gamma_\fq}(\overline{M})) \lra \varprojlim_i\,H^n(C^\bullet_{\varphi, \gamma_\fq}(\overline{M}_i)) \lra 0\,.$$ 
By \cite[Lemma 2.1.4(2)]{KPX2014} (see also \cite{schneiderteitelbaum2003}, Theorem B), we see that $\underset{i}{{\rm lim}^1}H_i^{n-1}=0$ since $\{H_i^{n}\}_i$ is a Mittag--Leffler system by construction. 
\end{proof}

\begin{proposition}
    \label{Prop_InverseLimit_cochain_complex_F_q_and_F_S_Over_H_A}
For $\overline{V}$ as above, let us put $\overline{V}_i=V \otimes \cH_{A,i}^\iota(\Gamma_{F}^0)$, so that $\varprojlim_i\,\overline{V}_i=\overline{V}$. Then,
$$\varprojlim_i\, H^\bullet(G, \overline{V}_i)\simeq H^\bullet(G, \overline{V})$$ 
where $G=G_{F,S}$ or $G=G_{F_\fq}$. Moreover, we have $[C^\bullet(G,\overline{V})] \in \mathcal{D}^b_{\rm ft}(\cH_A(\Gamma_{F}^0))$, where the subscript ${\rm ft}$ means finite-type in the sense of \cite[\S1C]{jayanalyticfamilies}\footnote{Explicitly,   $[C^\bullet(G,\overline{V})]$ can be represented by a bounded complex of coadmissible $\cH_A(\Gamma_{F}^0)$-modules.}. 
\end{proposition}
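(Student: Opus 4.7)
The plan is to mirror the strategy used in Proposition~\ref{Prop_H^n_phi-gamma_over_A_tensored_H_i_M_L}. First I would establish the level-wise identification $C^\bullet(G,\overline{V}) \simeq \varprojlim_i C^\bullet(G,\overline{V}_i)$ of continuous cochain complexes. Since $\overline{V} = \varprojlim_i \overline{V}_i$ as topological $G$-modules (with the inverse-limit topology coming from the Fr\'echet--Stein structure on $\cH_A(\Gamma_F^0) = \varprojlim_i \cH_{A,i}(\Gamma_F^0)$), and since the continuous-cochain functor commutes with inverse limits of compatible topological coefficient modules (recall that $C^n(G,\cdot) = \Hom_{\rm cts}(G^n, \cdot)$), this identification is formal.

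Next, I would feed this into \cite[Theorem 3.5.8]{weibel} to obtain the Milnor short exact sequence
$$0 \lra \varprojlim_i{}^1 H^{n-1}(G,\overline{V}_i) \lra H^n(G,\overline{V}) \lra \varprojlim_i H^n(G,\overline{V}_i) \lra 0\,.$$
To conclude the first assertion, it then suffices to show $\varprojlim_i{}^1 H^{n-1}(G,\overline{V}_i) = 0$, for which I would verify (via \cite[Lemma 2.1.4(2)]{KPX2014}, as in the proof of Proposition~\ref{Prop_H^n_phi-gamma_over_A_tensored_H_i_M_L}) that the system $\{H^{n-1}(G,\overline{V}_i)\}_i$ satisfies the Mittag--Leffler condition. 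This reduces to two inputs: each $H^n(G,\overline{V}_i)$ is finitely generated over the Noetherian affinoid $\cH_{A,i}(\Gamma_F^0)$, and the transition maps are given by base change, i.e.\ $H^n(G,\overline{V}_{i+1}) \otimes_{\cH_{A,i+1}(\Gamma_F^0)} \cH_{A,i}(\Gamma_F^0) \xrightarrow{\sim} H^n(G,\overline{V}_i)$. Since the inclusion $\cH_{A,i+1}(\Gamma_F^0) \hookrightarrow \cH_{A,i}(\Gamma_F^0)$ is flat, base-change from one level to the next yields the required surjectivity of transition maps, establishing Mittag--Leffler.

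The second assertion follows from exactly these same inputs. Since each $H^n(G,\overline{V}_i)$ is finitely generated over $\cH_{A,i}(\Gamma_F^0)$ and the transition system is coherent in the Fr\'echet--Stein sense, each cohomology group $H^n(G,\overline{V})$ is a coadmissible $\cH_A(\Gamma_F^0)$-module. Boundedness is automatic: $G_{F,S}$ has finite $p$-cohomological dimension (bounded by $2 + [F:\QQ]$, and equal to $2$ under the usual assumptions on $S$) and $G_{F_\fq}$ has cohomological dimension $2$. Combining boundedness with coadmissible cohomology and a standard truncation argument (replacing $C^\bullet(G,\overline{V})$ by a quasi-isomorphic bounded complex of coadmissible modules), we obtain $[C^\bullet(G,\overline{V})] \in \mathcal{D}^b_{\rm ft}(\cH_A(\Gamma_F^0))$.

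The main technical obstacle is verifying the two Mittag--Leffler inputs, namely finite generation of $H^n(G,\overline{V}_i)$ and base-change compatibility of the transition maps. For $G = G_{F_\fq}$ with $\fq \mid p$, this can be deduced by passing through the $(\varphi,\Gamma_\fq)$-module formalism: using the quasi-isomorphism $\xi_{V_\fq}$ of \S\ref{sec:Complex_K_V} to compare with the Fontaine--Herr complex, the result follows directly from Propositions~\ref{Prop_InverseLimit_phi-gamma_Over_H_A}--\ref{Prop_H^n_phi-gamma_over_A_tensored_H_i_M_L}. For $\fq \in S^{(p)}$, finiteness is clear from the explicit two-term complex $[V^{I_\fq}_i \xrightarrow{\Frob_\fq - 1} V^{I_\fq}_i]$ together with the fact that $V^{I_\fq}$ is finitely generated over $A$. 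For $G = G_{F,S}$, the required finiteness is the affinoid analogue of Pottharst's global finiteness results (cf.\ \cite[Theorem 4.1]{pottharst} and the arguments of \cite[\S1C]{jayanalyticfamilies}), which apply verbatim in our higher-dimensional setting since the Noetherian hypotheses on the intermediate rings $\cH_{A,i}(\Gamma_F^0)$ are preserved.
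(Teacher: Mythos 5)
Your proposal follows essentially the same route as the paper: the first assertion is obtained, exactly as in Proposition~\ref{Prop_H^n_phi-gamma_over_A_tensored_H_i_M_L}, from the identification $C^\bullet(G,\overline{V})\simeq\varprojlim_i C^\bullet(G,\overline{V}_i)$, the Milnor sequence of \cite[Theorem 3.5.8]{weibel}, and vanishing of ${\rm lim}^1$ via \cite[Lemma 2.1.4(2)]{KPX2014}; the second assertion rests, as in the paper, on the finiteness and base-change formalism of \cite[\S1C]{jayanalyticfamilies} applied to the quasi-Stein space ${\rm Max}(A)\times X_{\rm cyc}$.

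One justification needs repair: flatness of $\cH_{A,i+1}(\Gamma_F^0)\hookrightarrow\cH_{A,i}(\Gamma_F^0)$ does \emph{not} make the transition maps $H^n(G,\overline{V}_{i+1})\to H^n(G,\overline{V}_{i+1})\otimes_{\cH_{A,i+1}(\Gamma_F^0)}\cH_{A,i}(\Gamma_F^0)\simeq H^n(G,\overline{V}_i)$ surjective (already the ring map $\cH_{A,i+1}(\Gamma_F^0)\to\cH_{A,i}(\Gamma_F^0)$ is not surjective); they only have dense image. That, however, is exactly what is needed: finite generation over each $\cH_{A,i}(\Gamma_F^0)$ together with the base-change isomorphisms says that $\{H^n(G,\overline{V}_i)\}_i$ is a coherent sheaf for the Fr\'echet--Stein structure, so the topological Mittag--Leffler condition holds and ${\rm lim}^1$ vanishes by \cite[Lemma 2.1.4(2)]{KPX2014} (equivalently \cite[Theorem B]{schneiderteitelbaum2003}), which is precisely the mechanism the paper invokes. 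With that wording corrected, your argument matches the paper's proof.
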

\begin{proof}
The first claim is a special case of Proposition~\ref{Prop_H^n_phi-gamma_over_A_tensored_H_i_M_L} when $G=G_{F_\fq}$, and the proof in the scenario when $G=G_{F,S}$ is identical. We explain the second assertion. As we have remarked above, the $\cH_A(\Gamma_F^0)$-module $H^n(G, \overline{V})$ is coadmissable, and 
$H^n(G, \overline{V}) \otimes_{\cH_A(\Gamma_F^0)} \cH_{A,i}(\Gamma_F^0) \simeq H^n(G, \overline{V}_i)\,.$ 
Finally, as we have remarked in \S\ref{sec:Coefficient_rings_affinoid}, ${\rm Max}(A) \times X_{\rm cyc}$ is a quasi-Stein space, and the formalism of \cite[\S1C]{jayanalyticfamilies} applies, and shows that $C^\bullet(G,\overline{V})$ is indeed quasi-isomorphic to a complex of $\cH_A(\Gamma_{F}^0)$-modules of finite type.
\end{proof}

\begin{theorem}
\label{thm:inverselimit_MIttag_SelCOm_Over_H_A}
Suppose we are in the situation of Proposition~\ref{Prop_InverseLimit_cochain_complex_F_q_and_F_S_Over_H_A}.
\item[i)] For any positive integer $i$ and $\fq \in S^{(p)}$, the image of the natural projection maps 
$$\varprojlim_i\, C^\bullet_{\rm ur}(G_{F_\q},\overline{V}_i)\lra C^\bullet_{\rm ur}(G_{F_\q},\overline{V}_i)$$ 
is dense.
    \item[ii)] For $\fq$ as above, we have 
    $C^\bullet_{\rm ur}(G_{F_\q},\overline{V}) \simeq \varprojlim_i\, C^\bullet_{\rm ur}(G_{F_\q},\overline{V}_i)\,.$
    \item[iii)] We have 
   $ H^n(C^\bullet_{\rm ur}(\overline{V})) \simeq \varprojlim_i\, H^n(C^\bullet_{\rm ur}(\overline{V}_i))$ for all natural numbers $n$. Similar statements hold if we replace $\overline{V}$ with $\overline{V}^*(1)$ and  $\overline{V}_i$ with $\overline{V}_i^*(1)=\overline{V}^*(1) \otimes A_i$.
    \item[iv)] Let us put $\overline{\boldD}_i:=\boldD \otimes \cH_{A,i}^\iota(\Gamma_{F}^0)$. Then, 
    $\boldR\Gamma(\overline{V},\overline{D}) \simeq \varprojlim_i\,\boldR\Gamma(\overline{V}_i,\overline{D}_i)\,.$
    \item[v)] $\boldR^n\Gamma(\overline{V},\overline{D}) \simeq \varprojlim_i\,\boldR^n\Gamma(\overline{V}_i,\overline{D}_i)$.

Similar statements hold if we replace $(\overline{V},\overline{\boldD})$ with $(\overline{V}^*(1),\overline{\boldD}^\perp)$.

\end{theorem}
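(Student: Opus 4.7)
The plan is to systematically pass through the five parts, reducing everything to two standard inputs: first, the Mittag--Leffler/density properties for the tower $\{\cH_{A,i}(\Gamma_F^0)\}_i$ (whose inverse limit is $\cH_A(\Gamma_F^0)$), and second, the vanishing of $\varprojlim^1$-terms for coadmissible systems, already exploited in Proposition~\ref{Prop_H^n_phi-gamma_over_A_tensored_H_i_M_L}.

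For (i), the projections $\cH_A(\Gamma_F^0) \to \cH_{A,i}(\Gamma_F^0)$ have dense image because ${\rm Max}(A) \times X_{\rm cyc}$ is quasi-Stein and each ${\rm Max}(A)\times X_i$ is an admissible affinoid subdomain; this density is inherited by $\overline{V}_i^{I_\fq} = V^{I_\fq} \otimes_A \cH_{A,i}^\iota(\Gamma_F^0)$, and hence by the two-term complex $C^\bullet_{\rm ur}(G_{F_\fq}, \overline{V}_i)$ termwise. For (ii), the naive inverse limit $\varprojlim_i C^\bullet_{\rm ur}(G_{F_\fq}, \overline{V}_i)$ coincides termwise with $V^{I_\fq} \otimes_A \varprojlim_i \cH_{A,i}^\iota(\Gamma_F^0) = V^{I_\fq} \otimes_A \cH_A^\iota(\Gamma_F^0) = C^\bullet_{\rm ur}(G_{F_\fq}, \overline{V})$, since $V^{I_\fq}$ is a finitely generated $A$-module. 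For (iii), the inverse system $\{H^n(C^\bullet_{\rm ur}(\overline{V}_i))\}_i$ is a Mittag--Leffler system (indeed, the transition maps are surjective, thanks to (i) and the fact that $H^n$ of a two-term complex is a quotient or subobject); applying the Milnor exact sequence \cite[Theorem~3.5.8]{weibel}
\[
0 \lra {\varprojlim_i}^1\, H^{n-1}(C^\bullet_{\rm ur}(\overline{V}_i)) \lra H^n(C^\bullet_{\rm ur}(\overline{V})) \lra \varprojlim_i H^n(C^\bullet_{\rm ur}(\overline{V}_i)) \lra 0
\]
(where we have used (ii)) and invoking \cite[Lemma~2.1.4(2)]{KPX2014} for the vanishing of the $\varprojlim^1$-term on coadmissible systems yields the claim. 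The argument is identical for $\overline{V}^*(1)$.

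For (iv), observe that the Selmer complex ${\mathbf S}(\overline{V},\overline{\boldD})$ of \eqref{eq:SelmerComplex_for_V_Iw} is assembled from the three constituent complexes $C^\bullet(G_{F,S},\overline{V})$, $U^\bullet(\overline{V},\overline{\boldD})$ and $K^\bullet(\overline{V})$, each of which is known to commute with the inverse limit in $i$: the first by Proposition~\ref{Prop_InverseLimit_cochain_complex_F_q_and_F_S_Over_H_A}; the Fontaine--Herr summands $C^\bullet_{\varphi,\gamma_\fq}(\overline{\boldD}_\fq)$ of $U^\bullet$ by Proposition~\ref{Prop_InverseLimit_phi-gamma_Over_H_A}; the unramified summands by part (ii); and the local complexes $K^\bullet(\overline{V}_\fq)$ for $\fq\in S_p$ by the same cochain argument (or via the quasi-isomorphism $\xi_{\overline{V}_\fq}$ together with the corresponding statement for $C^\bullet(G_{F_\fq}, \overline{V})$). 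Since the cone functor commutes with term-wise inverse limits of complexes, we obtain the desired isomorphism $\boldR\Gamma(\overline{V},\overline{\boldD})\simeq \varprojlim_i \boldR\Gamma(\overline{V}_i,\overline{\boldD}_i)$.

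Finally, (v) follows from (iv) by running the Milnor $\varprojlim^1$ sequence one more time for the system of cohomology groups $\{\boldR^n\Gamma(\overline{V}_i,\overline{\boldD}_i)\}_i$; these are coadmissible $\cH_{A,i}(\Gamma_F^0)$-modules (as finite-dimensional pieces glue into a coadmissible system in the sense of \cite[\S1C]{jayanalyticfamilies}), so the $\varprojlim^1$ term vanishes by \cite[Lemma~2.1.4(2)]{KPX2014}. The corresponding statements for $(\overline{V}^*(1),\overline{\boldD}^\perp)$ are proved verbatim, replacing $\cH_{A,i}^\iota(\Gamma_F^0)$ by $\cH_{A,i}^\sharp(\Gamma_F^0)$. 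The main technical point is ensuring that the coadmissibility of all intermediate systems is preserved, so that $\varprojlim^1$ vanishes uniformly; this is what allows the inverse limit to commute past cohomology and past the cone construction.
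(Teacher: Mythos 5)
Your argument is correct and follows essentially the same route as the paper: density of the transition maps (from the quasi-Stein structure) plus the identification $\overline{V}_i^{I_\fq}\simeq \overline{V}^{I_\fq}\otimes_A \cH_{A,i}$ gives (i)--(ii), the vanishing of $\varprojlim^1$ for the resulting Mittag--Leffler-type systems gives (iii), and (iv)--(v) follow by assembling the constituent complexes (global cochains, Fontaine--Herr, unramified, and $K^\bullet$) through the cone construction, exactly the ``diagram chase'' the paper performs using Propositions~\ref{Prop_InverseLimit_phi-gamma_Over_H_A} and \ref{Prop_InverseLimit_cochain_complex_F_q_and_F_S_Over_H_A}. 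The only imprecision is your parenthetical claim in (iii) that the transition maps on cohomology are \emph{surjective}: what one actually gets from (i) is dense image, which suffices for the topological Mittag--Leffler condition and the $\varprojlim^1$-vanishing (Schneider--Teitelbaum Theorem A, or the cited KPX lemma), and this is the form in which the paper invokes it.
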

\begin{proof}

Parts (i) and (ii) follow noting that 
$$\overline{V}_i^{I_\fq}\simeq \overline{V}^{I_\fq}\otimes_A A_i=\left(V^{I_\fq}\otimes \cH_{A}^\iota(\Gamma_F^0)\right)\otimes_E \cH_{E,i}(\Gamma_{F}^0)\,,$$
(where the final factor carries no Galois action) since $\cH_{E}(\Gamma_{F}^0)$ is dense in $\cH_{E,i}(\Gamma_{F}^0)$.

Part (iii) follows from the density (which implies the Mittag--Leffler condition); see \cite[p. 1586]{jayanalyticfamilies} and \cite[Theorem A]{schneiderteitelbaum2003}.

Pert (iv) and (v) is proved (by diagram chase) using Proposition~\ref{Prop_InverseLimit_phi-gamma_Over_H_A}, Lemma~\ref{Prop_InverseLimit_cochain_complex_F_q_and_F_S_Over_H_A}, and Part (iii).
\end{proof}

\subsubsection{Inverse limits and duality}

\begin{lemma}
\label{lemma:inverseLimit_Hom_over_A_tensored_H_i_equal_Hom_over_H_A}
Suppose that for each positive integer $i$, we have finitely generated $\cH_{A,i}(\Gamma_F^0)$-modules $M_i$, such that 
$$M_{i+1} \otimes_{\cH_{A,i+1}(\Gamma_F^0)} \cH_{A,i}(\Gamma_F^0) \xrightarrow{\,\,\sim\,\,} M_i\,,\qquad \forall i\in \ZZ^+\,.$$ 
Let us define the $\cH_A(\Gamma_F^0)$-module $M:=\varprojlim_i\,M_i$. Then, we have natural isomorphisms 
$$\varprojlim_i\,{\rm Hom}_{\cH_{A,i}(\Gamma_{F}^0)}(M_i, \cH_{A,i}(\Gamma_{F}^0))\simeq \varprojlim_i\,{\rm Hom}_{\cH_A(\Gamma_{F}^0)}(M, \cH_{A,i}(\Gamma_{F}^0)) \simeq {\rm Hom}_{\cH_A(\Gamma_{F}^0)}(M, \cH_A(\Gamma_{F}^0)).$$
\end{lemma}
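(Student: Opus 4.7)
The plan is to establish each of the two stated isomorphisms separately, first the level-by-level comparison and then the passage to the inverse limit.

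For the first isomorphism, I would fix an index $i$ and invoke the coadmissibility of $M$. The hypothesis $M_{i+1} \otimes_{\cH_{A,i+1}(\Gamma_F^0)} \cH_{A,i}(\Gamma_F^0) \xrightarrow{\,\sim\,} M_i$ together with $\cH_A(\Gamma_F^0) = \varprojlim_j \cH_{A,j}(\Gamma_F^0)$ (recorded in \S\ref{sec:Coefficient_rings_affinoid}) yields the canonical identification $M \otimes_{\cH_A(\Gamma_F^0)} \cH_{A,i}(\Gamma_F^0) \cong M_i$. Applying the standard tensor-Hom adjunction for the ring map $\cH_A(\Gamma_F^0) \to \cH_{A,i}(\Gamma_F^0)$ and the $\cH_{A,i}(\Gamma_F^0)$-module $\cH_{A,i}(\Gamma_F^0)$, I would then obtain a natural isomorphism
\begin{equation*}
\Hom_{\cH_A(\Gamma_F^0)}(M, \cH_{A,i}(\Gamma_F^0)) \xrightarrow{\,\sim\,} \Hom_{\cH_{A,i}(\Gamma_F^0)}(M_i, \cH_{A,i}(\Gamma_F^0))\,.
\end{equation*}
One then verifies that this isomorphism is compatible with the transition morphisms on both sides that are induced by the restriction $\cH_{A,i+1}(\Gamma_F^0) \to \cH_{A,i}(\Gamma_F^0)$ and the hypothesised compatibility $M_{i+1} \to M_i$; this compatibility is automatic from the naturality of the adjunction. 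Taking $\varprojlim_i$ then produces the first asserted isomorphism.

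For the second isomorphism, I would combine $\cH_A(\Gamma_F^0) = \varprojlim_i \cH_{A,i}(\Gamma_F^0)$ with the formal fact that the functor $\Hom_{\cH_A(\Gamma_F^0)}(M, -)$ commutes with arbitrary inverse limits, obtaining
\begin{equation*}
\Hom_{\cH_A(\Gamma_F^0)}(M, \cH_A(\Gamma_F^0)) \cong \varprojlim_i \Hom_{\cH_A(\Gamma_F^0)}(M, \cH_{A,i}(\Gamma_F^0))\,.
\end{equation*}

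Both steps are essentially formal, so there is no genuine obstacle; the argument amounts to careful bookkeeping. The only nontrivial input is the coadmissibility identification $M \otimes_{\cH_A(\Gamma_F^0)} \cH_{A,i}(\Gamma_F^0) \cong M_i$, which in our setting follows directly from the stated compatibility hypothesis (and ultimately from the Fr\'echet-Stein structure on $\cH_A(\Gamma_F^0)$ in the sense of Schneider-Teitelbaum). No topological considerations beyond those built into the definition of $\cH_A(\Gamma_F^0)$ as an inverse limit of Banach algebras are required, since the statement is one of abstract modules.
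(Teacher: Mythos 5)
Your argument is correct and is in substance the same as the paper's: both proofs rest on the coadmissibility identification $M \otimes_{\cH_A(\Gamma_F^0)} \cH_{A,i}(\Gamma_F^0) \simeq M_i$ (Kedlaya--Pottharst--Xiao / Schneider--Teitelbaum, which is indeed the only non-formal input, rather than a direct consequence of the compatibility hypothesis alone) and on the fact that $\Hom_{\cH_A(\Gamma_F^0)}(M,-)$ commutes with inverse limits in the second variable. The only difference is cosmetic: where you obtain $\Hom_{\cH_A(\Gamma_F^0)}(M,\cH_{A,i}(\Gamma_F^0)) \simeq \Hom_{\cH_{A,i}(\Gamma_F^0)}(M_i,\cH_{A,i}(\Gamma_F^0))$ from the extension-of-scalars adjunction, the paper writes down the two mutually inverse maps explicitly, which is the same isomorphism unwound.
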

\begin{proof}
\label{proof:lemma:inverseLimit_Hom_over_A_tensored_H_i_equal_Hom_over_H_A}
For simplicity, only in this proof, we let $R_i=\cH_{A,i}(\Gamma_F^0)$, $R=\cH_A(\Gamma_F^0)$. The right-most isomorphism in the lemma follows from \cite[Proposition 5.21]{rotmanHomoAl} viewing each $R_i$ as a module over $R.$ It remains to prove the isomorphism  
\begin{equation}
\label{eqn_2023_08_30_1038}
    \varprojlim_i\,{\rm Hom}_{R_i}(M_i, R_i)\simeq {\rm Hom}_{R}(M, R)
\end{equation}
in our lemma.

Note that by \cite[Lemma 2.1.4(3)]{KPX2014} (see also \cite{schneiderteitelbaum2003}, Corollary 3.1), we have a natural identification $M \otimes_{R} R_i \xrightarrow{\sim} M_i$ thanks to our running conditions on the inverse system. 



Since $M=\varprojlim_i\,M_i$ and $R=\varprojlim_i\,R_i$, giving an element $\{\phi_i\}\in \varprojlim_i{\rm Hom}_{R_i}(M_i,R_i)$ is the same as giving an element $\phi\in {\rm Hom}_{R}(M,R)$. 
In more precise wording, given $\phi$, the maps $\phi_i$ is given by (recalling the natural identification $M_i\simeq M\otimes_R R_i$ and the injection $R\hookrightarrow R_i$) 
$$\phi_i\,:\quad M_i\ni m \otimes r_i\stackrel{\phi \otimes {\rm id}} {\longmapsto}\phi(m)\otimes r_i \longmapsto \phi(m)r_i \in R_i$$
This gives the map from the right-hand side of \eqref{eqn_2023_08_30_1038} to the left. 

In the opposite direction, given $\{\phi_i\}$, the map $\phi$ is given by (relying on the fact that $M=\varprojlim M_i$ and $R=\varprojlim R_i$)
$$\phi\,:\quad M=\varprojlim M_i\ni \{m_i\}\stackrel{\{\phi_i\}} {\longmapsto} \{\phi_i(m_i)\}\in \varprojlim R_i =R\,.$$
These maps are mutual inverses, and our lemma is proved.
\end{proof}

\subsubsection{Local duality over $\cH_A$}
\label{subsec_duality_over_H_A}
We prove various local (Iwasawa theoretic) duality results, in preparation for the same for Selmer complexes (that we will use later to prove the functional equation for algebraic $p$-adic $L$-functions). 

\begin{theorem}[Kedlaya--Pottharst--Xiao]
\label{thm:cup-product-phi-gamma-over_H_A}
The cup-product given as in \eqref{eq:cup_product_phi-gamma_module_over_H_A_2} induces an isomorphism 
\begin{align}
\begin{aligned}
\boldR\Gamma(F_\fq, \overline{\boldD}_\fq^*(\chi_\fq)) &\simeq \boldR{\rm Hom}_{\cH_A(\Gamma_{F_\fq}^0)}( \boldR\Gamma(F_\fq, \overline{\boldD}_\fq), \cH_A(\Gamma_{F_\fq}^0)[-2])\,.
\end{aligned}
\end{align}
\end{theorem}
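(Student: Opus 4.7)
The plan is to deduce this statement from the affinoid-level local duality theorem of Kedlaya--Pottharst--Xiao (in the form we need, recorded as \cite[Theorem 2.8.2]{p-adicHodgetheorybenoisheights}) by passing to an inverse limit. For each positive integer $i$, set $A_i := \cH_{A,i}(\Gamma_{F_\fq}^0)$, which is itself a $\QQ_p$-affinoid algebra (being a completed tensor product of two affinoids). The module $\overline{\boldD}_{\fq,i} := \boldD_\fq \otimes_A \cH_{A,i}^\iota(\Gamma_{F_\fq}^0)$ is a $(\varphi, \Gamma_\fq)$-module over $\cR_{A_i}$, and its $\cR_{A_i}$-linear dual is naturally identified with $\boldD_\fq^* \otimes_A \cH_{A,i}^\sharp(\Gamma_{F_\fq}^0)$, consistent with the twist conventions used in the paper. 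The affinoid-level duality, applied at each level, yields a quasi-isomorphism
$$\boldR\Gamma(F_\fq, \overline{\boldD}_{\fq,i}^*(\chi_\fq)) \xrightarrow{\,\sim\,} \boldR{\rm Hom}_{A_i}\bigl(\boldR\Gamma(F_\fq, \overline{\boldD}_{\fq,i}),\, A_i\bigr)[-2]\,,$$
induced by the level-$i$ Fontaine--Herr cup product. Functoriality of this cup product under base change, together with the identification $\overline{\boldD}_{\fq,i+1} \otimes_{A_{i+1}} A_i \simeq \overline{\boldD}_{\fq,i}$ (cf. \cite[Lemma 2.1.4(3)]{KPX2014}), shows that these quasi-isomorphisms assemble into a morphism of pro-systems as $i$ varies.

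I would then take the inverse limit on both sides. On the left, Proposition~\ref{Prop_InverseLimit_phi-gamma_Over_H_A} together with Proposition~\ref{Prop_H^n_phi-gamma_over_A_tensored_H_i_M_L} give
$$\varprojlim_i \boldR\Gamma(F_\fq, \overline{\boldD}_{\fq,i}^*(\chi_\fq)) \simeq \boldR\Gamma(F_\fq, \overline{\boldD}_\fq^*(\chi_\fq))\,.$$
For the right-hand side, one picks at each finite level a bounded representative of the Fontaine--Herr complex by finitely generated projective $A_i$-modules (which exists by perfectness over the affinoid $A_i$), chosen compatibly under the transition maps so that the inverse limit represents $\boldR\Gamma(F_\fq, \overline{\boldD}_\fq)$. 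Applying Lemma~\ref{lemma:inverseLimit_Hom_over_A_tensored_H_i_equal_Hom_over_H_A} term-by-term then yields
$$\varprojlim_i \boldR{\rm Hom}_{A_i}\bigl(\boldR\Gamma(F_\fq, \overline{\boldD}_{\fq,i}),\, A_i\bigr) \simeq \boldR{\rm Hom}_{\cH_A(\Gamma_{F_\fq}^0)}\bigl(\boldR\Gamma(F_\fq, \overline{\boldD}_\fq),\, \cH_A(\Gamma_{F_\fq}^0)\bigr)\,,$$
where vanishing of the relevant $\varprojlim^1$ terms is handled by the Mittag--Leffler argument from the proof of Proposition~\ref{Prop_H^n_phi-gamma_over_A_tensored_H_i_M_L}, invoking \cite[Lemma 2.1.4(2)]{KPX2014}. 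Combining the two displays produces the desired isomorphism.

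The main obstacle I anticipate is the bookkeeping required to assemble the finite-level dualities into a single map in the derived category of $\cH_A(\Gamma_{F_\fq}^0)$-modules — specifically, making compatible choices of bounded perfect representatives of $\boldR\Gamma(F_\fq, \overline{\boldD}_{\fq,i})$ so that passage to the inverse limit commutes with the $\boldR{\rm Hom}$ functor. Once this is arranged (using the coadmissibility of the cohomology and the Mittag--Leffler property guaranteed by \cite[Theorem B]{schneiderteitelbaum2003}), the rest is formal: one checks that the limit of the cup-product maps coincides with the cup-product map \eqref{eq:cup_product_phi-gamma_module_over_H_A_2} defined at the level of $\cH_A(\Gamma_{F_\fq}^0)$, so that the induced map on the two sides of the theorem is precisely that induced by $\bigcup_{\varphi,\gamma_\fq}$, as asserted.
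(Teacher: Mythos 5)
Your argument is sound in outline, but it takes a genuinely different route from the paper. The paper's proof is essentially a citation: it invokes the Iwasawa-theoretic duality quasi-isomorphisms (4.4.8.1) and (4.4.8.2) of \cite{KPX2014}, which are already formulated for the cyclotomic deformation over the distribution algebra, and then only needs Proposition 2.3.6 of op.\ cit.\ together with Proposition~\ref{Prop_H^n_phi-gamma_over_A_tensored_H_i_M_L} to match that statement with the complexes $C^\bullet_{\varphi,\gamma_\fq}(\overline{\boldD}_\fq)$ as defined here. You instead re-derive that Iwasawa duality from the affinoid-level duality (\cite[Theorem 2.8.2]{p-adicHodgetheorybenoisheights}, i.e.\ KPX over an affinoid), applied at each level $A_i=\cH_{A,i}(\Gamma_{F_\fq}^0)$ to the finite-level deformation $\overline{\boldD}_{\fq,i}$, and then pass to the inverse limit using Propositions~\ref{Prop_InverseLimit_phi-gamma_Over_H_A}, \ref{Prop_H^n_phi-gamma_over_A_tensored_H_i_M_L} and Lemma~\ref{lemma:inverseLimit_Hom_over_A_tensored_H_i_equal_Hom_over_H_A}. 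Your identification of the level-$i$ dual of $\boldD_\fq\otimes_A\cH^\iota_{A,i}$ with $\boldD_\fq^*\otimes_A\cH^\sharp_{A,i}$ is consistent with the paper's twist conventions, and the base-change compatibility you invoke is exactly what KPX provide. What your route buys is self-containedness (only the affinoid duality is taken as input); what it costs is precisely the point you flag yourself: commuting $\boldR{\rm Hom}$ with the inverse limit requires compatible bounded representatives by finitely generated projective $A_i$-modules (the naive Fontaine--Herr terms are $\cR_{A_i}$-modules, not finitely generated over $A_i$, so Lemma~\ref{lemma:inverseLimit_Hom_over_A_tensored_H_i_equal_Hom_over_H_A} cannot be applied to them directly), plus the Mittag--Leffler/$\varprojlim^1$ vanishing from \cite[Theorem B]{schneiderteitelbaum2003}. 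This bookkeeping is carried out implicitly inside \cite{KPX2014}, which is exactly why the paper prefers to quote their global statement; if you flesh out the compatible-representative step (e.g.\ via coadmissibility of the cohomology, or by working with the $\psi$-complex whose terms are finitely generated), your proof is complete and correct.
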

\begin{proof}
\label{proof:thm:cup-product-phi-gamma-over_H_A}
This follows on combining the quasi-isomorphims (4.4.8.1) and  (4.4.8.2) of \cite{KPX2014} with Proposition 2.3.6 of op. cit. and Proposition~\ref{Prop_H^n_phi-gamma_over_A_tensored_H_i_M_L} above.
\end{proof}

As a special case of Theorem~\ref{thm:cup-product-phi-gamma-over_H_A} (which we apply with $\boldD_\fq:=D_{\rm rig}^\dagger(V)$), we have:

\begin{corollary}
    \label{cor_Ind_V_otimes_Ind_dualV_pairing_H_A}
Cup-product pairing induces a natural isomorphism 
$$\boldR\Gamma(F_\fq,\overline{V}^*(1)) \simeq \boldR{\rm Hom}_{\cH_A(\Gamma_F^0)} (\boldR\Gamma(F_\fq,\overline{V}), \cH_A(\Gamma_F^0))[-2]\,.$$

\end{corollary}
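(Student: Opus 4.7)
The strategy is to deduce the corollary directly from Theorem~\ref{thm:cup-product-phi-gamma-over_H_A} applied to the particular choice $\boldD_\fq := D^\dagger_{\rm rig}(V_\fq)$. The plan is to verify that both sides of the asserted duality can be rewritten as the corresponding sides of Theorem~\ref{thm:cup-product-phi-gamma-over_H_A}, and then invoke that theorem as a black box.

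First I would identify $\boldR\Gamma(F_\fq,\overline{V}) \simeq \boldR\Gamma(F_\fq,\overline{D^\dagger_{\rm rig}(V_\fq)})$. At the punctual level (i.e.\ before deforming over $\cH_A(\Gamma^0_F)$), this is precisely the content of the quasi-isomorphisms $\alpha_V$ and $\xi_V$ recalled in \S\ref{sec:Complex_K_V} (cf.\ \cite[Proposition 2.5.2]{p-adicHodgetheorybenoisheights}), which together furnish a natural isomorphism $[C^\bullet(G_{F_\fq},V)] \simeq [C^\bullet_{\varphi,\gamma_\fq}(D^\dagger_{\rm rig}(V_\fq))]$ in the derived category. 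Tensoring by $\cH_A^\iota(\Gamma_{F_\fq}^0)$ and passing to the cyclotomic deformation (using the identification $\Gamma_{F_\fq}\simeq \Gamma_F$ that we have fixed, since $F$ is unramified at all primes above $p$ in the settings relevant here) yields the desired isomorphism on the cyclotomic tower.

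Second I would treat the other side. The functor $D^\dagger_{\rm rig}$ is compatible with duality and with Tate twists, so there is a canonical isomorphism of $(\varphi,\Gamma_\fq)$-modules
$$D^\dagger_{\rm rig}(V_\fq)^*(\chi_\fq) \;\simeq\; D^\dagger_{\rm rig}(V_\fq^*(1))\,.$$
Running the same argument (via $\alpha_{V^*(1)}$ and $\xi_{V^*(1)}$) and deforming over $\cH_A^\sharp(\Gamma_{F_\fq}^0) \simeq \cH_A^\sharp(\Gamma_F^0)$ then identifies $\boldR\Gamma(F_\fq,\overline{V}^*(1))$ with $\boldR\Gamma(F_\fq,\overline{\boldD}_\fq^*(\chi_\fq))$ for $\boldD_\fq = D^\dagger_{\rm rig}(V_\fq)$.

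With these two identifications in hand, Theorem~\ref{thm:cup-product-phi-gamma-over_H_A} applied to $\boldD_\fq=D^\dagger_{\rm rig}(V_\fq)$ immediately yields
$$\boldR\Gamma(F_\fq,\overline{V}^*(1)) \;\simeq\; \boldR{\rm Hom}_{\cH_A(\Gamma^0_F)}\bigl(\boldR\Gamma(F_\fq,\overline{V}),\,\cH_A(\Gamma^0_F)\bigr)[-2]\,,$$
and the naturality of the isomorphism is inherited from the naturality of the cup-product pairing \eqref{eq:cup_product_phi-gamma_module_over_H_A_2}. The only genuinely delicate point to monitor is the compatibility of the twist by $\chi_\fq$ in Theorem~\ref{thm:cup-product-phi-gamma-over_H_A} with the Tate twist $V^*(1)$ appearing in the corollary; this is not a serious obstacle, but it does require the standard identification $D^\dagger_{\rm rig}(V_\fq)^*(\chi_\fq) \simeq D^\dagger_{\rm rig}(V_\fq^*(1))$ together with the observation that Fontaine's equivalence of categories intertwines the cyclotomic deformation $\overline{(-)}$ on the Galois side with its analogue on the $(\varphi,\Gamma_\fq)$-module side.
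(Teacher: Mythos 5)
Your proposal is correct and follows exactly the route the paper takes: the paper introduces the corollary as the special case of Theorem~\ref{thm:cup-product-phi-gamma-over_H_A} with $\boldD_\fq:=D^\dagger_{\rm rig}(V_\fq)$, leaving implicit precisely the identifications you spell out (the quasi-isomorphisms $\alpha_V$, $\xi_V$ of \S\ref{sec:Complex_K_V}, the compatibility $D^\dagger_{\rm rig}(V_\fq)^*(\chi_\fq)\simeq D^\dagger_{\rm rig}(V_\fq^*(1))$, and their cyclotomic deformations). No gap; your write-up just makes the paper's one-line reduction explicit.
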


\subsubsection{Cohomology with compact support and duality} Let us consider the complex $ C^\bullet_{\rm c}(G_{F,S},\,\cdot\,)$ compactly supported continuous cochains given by 
\[ 
 C^n_{\rm c}(G_{F,S}, \,\cdot\, ):=\ker\left( C^n(G_{F,S}, \,\cdot\,) \lra  \oplus_{\fq \in S} C^n(G_{F_\fq}, \,\cdot\,)  \right)\,,
\]
and denote its cohomology by $H^n_{\rm c}(G_{F,S}, \,\cdot\,)$.

\begin{lemma}
    \label{lemma_InverseLimit_And_ML_C_c_cont_V_Cochain_V^*_H_A}
In the situation of Proposition~\ref{Prop_InverseLimit_cochain_complex_F_q_and_F_S_Over_H_A},
we have $H^n_{\rm c}(G_{F,S},\overline{V})\, \simeq\,\varprojlim_i\,H^n_{\rm c}(G_{F,S},\overline{V}_i)\,.$ 
\end{lemma}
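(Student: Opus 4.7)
The strategy is to mimic, for compactly supported cohomology, the inverse limit / Mittag--Leffler arguments already established in Proposition~\ref{Prop_InverseLimit_cochain_complex_F_q_and_F_S_Over_H_A} and Theorem~\ref{thm:inverselimit_MIttag_SelCOm_Over_H_A}(iii).

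First I would identify the compactly supported cochain complex with an inverse limit. Because continuous cochain groups $C^n(G,-)$ commute with inverse limits of topological coefficient modules, applying $\varprojlim_i$ to the defining short exact sequence of complexes
\[
0 \lra C^\bullet_{\rm c}(G_{F,S}, \overline{V}_i) \lra C^\bullet(G_{F,S}, \overline{V}_i) \xrightarrow{\,{\rm res}\,} \bigoplus_{\fq \in S} C^\bullet(G_{F_\fq}, \overline{V}_i) \lra 0
\]
reproduces the analogous exact sequence for $\overline{V}$; in particular, $C^\bullet_{\rm c}(G_{F,S}, \overline{V}) \simeq \varprojlim_i C^\bullet_{\rm c}(G_{F,S}, \overline{V}_i)$ as complexes of $\cH_A(\Gamma_F^0)$-modules.

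Next I would apply the Milnor short exact sequence
\[
0 \lra \underset{i}{{\rm lim}^1}\,H^{n-1}_{\rm c}(G_{F,S}, \overline{V}_i) \lra H^n_{\rm c}(G_{F,S}, \overline{V}) \lra \varprojlim_i\, H^n_{\rm c}(G_{F,S}, \overline{V}_i) \lra 0\,,
\]
which reduces the lemma to the vanishing of the $\underset{i}{{\rm lim}^1}$-term. Using the long exact sequence relating $H^\bullet_{\rm c}$, $H^\bullet(G_{F,S},-)$ and $\oplus_\fq H^\bullet(G_{F_\fq},-)$, this vanishing would follow once one knows that both $\{H^m(G_{F,S}, \overline{V}_i)\}_i$ and each local system $\{H^m(G_{F_\fq}, \overline{V}_i)\}_i$ are Mittag--Leffler, which is already implicit in Proposition~\ref{Prop_InverseLimit_cochain_complex_F_q_and_F_S_Over_H_A} (coadmissibility) combined with \cite[Lemma 2.1.4(2)]{KPX2014}.

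The main technical point is the last step --- propagating the Mittag--Leffler condition through the long exact sequence. Rather than chasing this by hand, the cleanest approach is probably to verify directly that $\{H^{n-1}_{\rm c}(G_{F,S}, \overline{V}_i)\}_i$ is itself a coadmissible inverse system (its transition maps induce isomorphisms after base change $\cH_{A,i+1}(\Gamma_F^0)\to \cH_{A,i}(\Gamma_F^0)$, as may be checked on cochain complexes using flatness of this base change), and then to invoke \cite[Lemma 2.1.4(2)]{KPX2014} directly.
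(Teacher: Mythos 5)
Your proposal is correct and is essentially the paper's own argument: the paper dispatches this lemma in one line (``follows from Proposition~\ref{Prop_InverseLimit_cochain_complex_F_q_and_F_S_Over_H_A} by diagram chase''), and your route --- identifying $C^\bullet_{\rm c}(G_{F,S},\overline{V})\simeq \varprojlim_i C^\bullet_{\rm c}(G_{F,S},\overline{V}_i)$ and then killing the ${\rm lim}^1$ term via the Mittag--Leffler/coadmissibility input already used in Proposition~\ref{Prop_H^n_phi-gamma_over_A_tensored_H_i_M_L} and implicitly in the proof of Theorem~\ref{thm:cup-product_C_c_cont_V_Cochain_V^*_H_A} (finite generation of $H^n_{\rm c}(G_{F,S},\overline{V}_i)$ with base-change isomorphisms) --- is precisely the content that chase requires. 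No gap.
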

\begin{proof}
       This follows from Proposition~\ref{Prop_InverseLimit_cochain_complex_F_q_and_F_S_Over_H_A} by diagram chase.  
\end{proof}

We now turn our attention to global duality. We begin with recording the following cup-product pairing (very slightly extending the scope of \cite{nekovar06}, \S5.3.3.2):
\begin{align}
\begin{aligned}
\label{eq:cup-product_C_c_cont_V_Cochain_V^*_H_A}   
{_c}\cup\,:\, \boldR\Gamma_{c,{\rm cont}}(G_{F,S}, \overline{V}^*(1)) \otimes \boldR\Gamma(G_{F,S},\overline{V}) &\lra \boldR\Gamma_{c,{\rm cont}}(G_{F,S}, \overline{V}^*(1) \otimes \overline{V})  \\
&\lra\,_{\tau \geq 3} \boldR\Gamma_{c,{\rm cont}}(G_{F,S}, \cH_A(\Gamma_F^0)(1)) \simeq \cH_A(\Gamma_F^0)[-3]\,.
\end{aligned}
\end{align}

\begin{theorem}
\label{thm:cup-product_C_c_cont_V_Cochain_V^*_H_A}
The cup-product pairing \eqref{eq:cup-product_C_c_cont_V_Cochain_V^*_H_A} induces an isomorphism 
$$\boldR\Gamma_{c,{\rm cont}}(G_{F,S}, \overline{V}^*(1)) \simeq \boldR{\rm Hom}_{\cH_A(\Gamma_F^0)}(\boldR\Gamma(G_{F,S},\overline{V}), \cH_A(\Gamma_F^0))[-3].$$
\end{theorem}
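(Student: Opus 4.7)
The plan is to follow closely the argument used to prove Theorem~\ref{thm:Prop_Iw_SelComp}(iv), namely to deduce the desired duality over $\cH_A(\Gamma_F^0)$ from its finite-level analogue by an inverse-limit argument. At finite level $\cH_{A,i}(\Gamma_F^0)$, the classical global Grothendieck duality of Nekov\'a\v{r} \cite[Theorem 8.5.6]{nekovar06}, originally formulated over the Iwasawa algebra $\Lambda_{\cO}(\Gamma_F^0)$, extends after base change (via $A\,\widehat{\otimes}_{\QQ_p}(-)$ and then along the flat ring homomorphism $\Lambda_{\cO}(\Gamma_F^0)\otimes A \to \cH_{A,i}(\Gamma_F^0)$) to the statement
$$\boldR\Gamma_{c,{\rm cont}}(G_{F,S}, \overline{V}_i^*(1))\simeq \boldR{\rm Hom}_{\cH_{A,i}(\Gamma_F^0)}\!\left(\boldR\Gamma(G_{F,S}, \overline{V}_i), \cH_{A,i}(\Gamma_F^0)\right)[-3].$$
These finite-level quasi-isomorphisms are compatible with the transition maps induced by $\cH_{A,i+1}(\Gamma_F^0) \to \cH_{A,i}(\Gamma_F^0)$, because the cup product \eqref{eq:cup-product_C_c_cont_V_Cochain_V^*_H_A} is compatible with these restrictions (it is induced by the same Galois-theoretic cochain-level cup product at every level), and it realizes the morphism we wish to show is an isomorphism.

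Next I would pass to the inverse limit. The left-hand side correctly limits to $\boldR\Gamma_{c,{\rm cont}}(G_{F,S}, \overline{V}^*(1))$ by Lemma~\ref{lemma_InverseLimit_And_ML_C_c_cont_V_Cochain_V^*_H_A}. For the right-hand side, I first use Proposition~\ref{Prop_InverseLimit_cochain_complex_F_q_and_F_S_Over_H_A} to represent $\boldR\Gamma(G_{F,S},\overline{V})$ by a bounded complex $M^\bullet$ of coadmissible $\cH_A(\Gamma_F^0)$-modules satisfying $M^\bullet_i:=M^\bullet \otimes_{\cH_A(\Gamma_F^0)} \cH_{A,i}(\Gamma_F^0) \simeq \boldR\Gamma(G_{F,S},\overline{V}_i)$. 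Then, applying Lemma~\ref{lemma:inverseLimit_Hom_over_A_tensored_H_i_equal_Hom_over_H_A} degreewise (after killing the $\lim^1$-terms via the Mittag--Leffler property of coadmissible modules, \cite[Lemma 2.1.4(2)]{KPX2014}), one obtains
$$\varprojlim_i \boldR{\rm Hom}_{\cH_{A,i}(\Gamma_F^0)}(M^\bullet_i, \cH_{A,i}(\Gamma_F^0)) \simeq \boldR{\rm Hom}_{\cH_A(\Gamma_F^0)}(M^\bullet, \cH_A(\Gamma_F^0)).$$
The compatibility of the finite-level pairings with transition maps then upgrades the system of quasi-isomorphisms to the claimed quasi-isomorphism on inverse limits.

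The main obstacle I expect is the verification that the $\lim^1$-terms vanish when commuting $\boldR{\rm Hom}$ with $\varprojlim$; this reduces to establishing the Mittag--Leffler property for the system $\{H^n\boldR{\rm Hom}_{\cH_{A,i}(\Gamma_F^0)}(M^\bullet_i, \cH_{A,i}(\Gamma_F^0))\}_i$, which should follow from the coadmissibility framework of \cite{schneiderteitelbaum2003} and \cite{KPX2014} combined with the finite-type representability established in Proposition~\ref{Prop_InverseLimit_cochain_complex_F_q_and_F_S_Over_H_A}. A secondary subtlety is ensuring that Nekov\'a\v{r}'s duality applies verbatim over the Noetherian regular affinoid rings $\cH_{A,i}(\Gamma_F^0)$, which are not adic Iwasawa-type rings of the sort primarily treated in \cite{nekovar06}; this extension is standard once one checks that the hypotheses (finite Krull dimension, existence of a dualizing complex, perfectness of the Galois cochain complexes) are met, the last of which is already recorded in \cite[Theorems 2.3.2 and 2.4.3]{p-adicHodgetheorybenoisheights}.
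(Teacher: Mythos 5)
Your overall architecture (a finite-level duality over each $\cH_{A,i}(\Gamma_F^0)$, then passage to the inverse limit via Proposition~\ref{Prop_InverseLimit_cochain_complex_F_q_and_F_S_Over_H_A}, Lemma~\ref{lemma:inverseLimit_Hom_over_A_tensored_H_i_equal_Hom_over_H_A} and Lemma~\ref{lemma_InverseLimit_And_ML_C_c_cont_V_Cochain_V^*_H_A}) is exactly the paper's, and the limit step as you describe it is fine. The gap is in how you produce the finite-level input. Nekov\'a\v{r}'s \cite[Theorem 8.5.6]{nekovar06} is proved for coefficient rings that are admissible in his sense (complete Noetherian local with finite residue field, such as $\Lambda_{\cO}(\Gamma_F^0)$) and for Galois modules of finite type over such rings. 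The object $\overline{V}_i$ has coefficients in the $\QQ_p$-affinoid $\cH_{A,i}(\Gamma_F^0)$, and there is no statement over $\Lambda_{\cO}(\Gamma_F^0)$ (or over $\Lambda_{\cO}(\Gamma_F^0)\otimes A$) available to base change from: $V$ is only an $A$-module and in general admits no $G_{F,S}$-stable lattice over $A^\circ$, and even when such a lattice exists the ring $\Lambda_{\cO}(\Gamma_F^0)\,\widehat{\otimes}\,A^\circ$ is not local with finite residue field, hence lies outside Nekov\'a\v{r}'s framework. So the phrase ``extends after base change via $A\,\widehat{\otimes}_{\QQ_p}(-)$ and along $\Lambda_{\cO}(\Gamma_F^0)\otimes A \to \cH_{A,i}(\Gamma_F^0)$'' asserts precisely what needs proof. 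This is why the flat-base-change trick that works in the punctual case of Theorem~\ref{thm:Prop_Iw_SelComp}(iv) (choose an $\cO_E$-lattice, apply Nekov\'a\v{r} over $\Lambda_{\cO_E}(\Gamma_F^0)$, then apply $\otimes^{\bbLL}_{\Lambda_{\cO}(\Gamma_F^0)}\cH_E(\Gamma_F^0)$) does not carry over to families.

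What you relegate to a ``secondary subtlety'' is in fact the main point, and it is not a routine verification: the finite-level duality over affinoid coefficients is a theorem in its own right. The paper's proof invokes it directly, namely Pottharst's results \cite[Theorem 1.1(4) and Theorem 1.15(2)]{jayanalyticfamilies}, which give finite generation of $H^n_{c,{\rm cont}}(G_{F,S},\overline{V}_i^*(1))$ over $\cH_{A,i}(\Gamma_F^0)$, compatibility with base change along $\cH_{A,i+1}(\Gamma_F^0)\to\cH_{A,i}(\Gamma_F^0)$, and the duality
$H^{3-n}_{c,{\rm cont}}(G_{F,S},\overline{V}_i^*(1))\simeq \boldR{\rm Hom}_{\cH_{A,i}(\Gamma_F^0)}\bigl(H^n(G_{F,S},\overline{V}_i),\cH_{A,i}(\Gamma_F^0)\bigr)$;
the limit argument then proceeds as you indicate. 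To repair your write-up, replace the Nekov\'a\v{r}-plus-base-change step by an appeal to these affinoid-coefficient statements (or reprove them, which requires genuinely more than checking perfectness of the cochain complexes).
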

\begin{proof}
\label{proof:thm:cup-product_C_c_cont_V_Cochain_V^*_H_A}
We remark that $H^n_{c,{\rm cont}}(G_{F,S}, \overline{V}_i^*(1))$ is a finitely generated $\cH_{A,i}(\Gamma_F^0)$-module for each $i$, because all complexes involved in the definition of $\boldR\Gamma_{c,{\rm cont}}(G_{F,S}, \overline{V}_i^*(1))$ are finitely generated by \cite[Theorem 1.1(4)]{jayanalyticfamilies}. 
    Also, we note that 
    $$H^n_{c,{\rm cont}}(G_{F,S},\overline{V}_{i+1}) \otimes_{\cH_{A,i+1}(\Gamma_F^0)} \cH_{A,i}(\Gamma_F^0) \simeq H^n_{c,{\rm cont}}(G_{F,S},\overline{V}_{i}).$$   
By \cite[Theorem 1.15(2)]{jayanalyticfamilies}, 
$$H^{3-n}_{c,{\rm cont}}(G_{F,S}, \overline{V}_i^*(1)) \simeq \boldR{\rm Hom}_{\cH_{A,i}(\Gamma_F^0)}(H^n(G_{F,S},\overline{V}_i), \cH_{A,i}(\Gamma_F^0))\,.$$ The claimed isomorphism follows on passing to limit, using Proposition~\ref{Prop_InverseLimit_cochain_complex_F_q_and_F_S_Over_H_A}, Lemma~\ref{lemma:inverseLimit_Hom_over_A_tensored_H_i_equal_Hom_over_H_A} and Lemma~\ref{lemma_InverseLimit_And_ML_C_c_cont_V_Cochain_V^*_H_A}. 
\end{proof}

\subsubsection{Duality of Selmer complexes}
\label{sec:Duality_SelmerCom}
In this subsection, we will record Theorem~\ref{thm:duality_selComplex_Iw}, which is a very slight generalization of \cite[Theorem 3.1.5]{p-adicHodgetheorybenoisheights} to Galois representations with coefficients in $\cH_A(\Gamma_F^0)$.

\begin{theorem}
\label{thm:duality_selComplex_Iw}
  There exists a canonical cup-product pairing
    $$\bigcup_{\overline{V},\overline{\boldD}}\,:\, \boldR\Gamma(\overline{V},\overline{\boldD}) \otimes^\bbLL_{\cH_A(\Gamma_F^0)} \boldR\Gamma(\overline{V}^*(1),\overline{\boldD}^{\perp}) \longrightarrow \cH_A(\Gamma_F^0)[-3]\,.$$
\end{theorem}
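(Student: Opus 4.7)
The plan is to construct the pairing level-by-level over the truncations $\cH_{A,i}(\Gamma_F^0)$ and then pass to the inverse limit, leveraging the fact that each $\cH_{A,i}(\Gamma_F^0)$ is itself a $\QQ_p$-affinoid algebra (being the completed tensor product of the affinoid $A$ with the Tate algebra $\cH_i(\Gamma_F^0)$). First, applying \cite[Theorem 3.1.5(i)]{p-adicHodgetheorybenoisheights} to the Galois representation $\overline{V}_i:=V\otimes_A \cH_{A,i}^\iota(\Gamma_F^0)$ over the affinoid ring $\cH_{A,i}(\Gamma_F^0)$, equipped with the local conditions $\overline{\boldD}_{\fq,i}:=\boldD_\fq\otimes_A\cH_{A,i}^\iota(\Gamma_{F_\fq}^0)$ at primes $\fq\in S_p$ and the unramified local conditions $C^\bullet_{\rm ur}(\overline{V}_{\fq,i})$ at the remaining primes of $S$, we obtain canonical cup-product pairings
$$\bigcup_{\overline{V}_i,\overline{\boldD}_i}\,:\,\boldR\Gamma(\overline{V}_i,\overline{\boldD}_i)\otimes^{\bbLL}_{\cH_{A,i}(\Gamma_F^0)}\boldR\Gamma(\overline{V}_i^*(1),\overline{\boldD}_i^{\perp})\lra \cH_{A,i}(\Gamma_F^0)[-3]$$
for each $i\geq 0$. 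Benois's construction assembles three pieces: the global cup product on continuous cochains of $G_{F,S}$ landing in $\cH_{A,i}(\Gamma_F^0)(1)$ via $\overline{V}_i\otimes\overline{V}_i^*(1)\to\cH_{A,i}(\Gamma_F^0)(1)$; at $\fq\in S_p$, the Fontaine--Herr local cup product (a truncated version of the pairing in Theorem~\ref{thm:cup-product-phi-gamma-over_H_A}); and at $\fq\in S^{(p)}$, the elementary cup product on unramified cochains, whose composition with $g_\fq\otimes g_\fq^\perp$ vanishes by Remark~\ref{rem:map_g_q_tnsored_g_q_perp_zero_at_unramified}. These are glued through the mapping cone defining $\mathbf{S}(\overline{V}_i,\overline{\boldD}_i)$, with the requisite homotopies coming from the compatibility of $f_\fq, g_\fq$ with the cup products.

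The next step is to pass to the inverse limit in $i$. The naturality of Benois's construction in the coefficient ring, applied to the restriction maps $\cH_{A,i+1}(\Gamma_F^0)\hookrightarrow \cH_{A,i}(\Gamma_F^0)$, ensures that $\{\bigcup_{\overline{V}_i,\overline{\boldD}_i}\}_i$ forms a compatible inverse system of pairings. Using Theorem~\ref{thm:inverselimit_MIttag_SelCOm_Over_H_A}(iv)--(v) to identify $\varprojlim_i\boldR\Gamma(\overline{V}_i,\overline{\boldD}_i)\simeq \boldR\Gamma(\overline{V},\overline{\boldD})$ (and the analogous identification for the dual data via $\overline{\boldD}_i^\perp$), and Lemma~\ref{lemma:inverseLimit_Hom_over_A_tensored_H_i_equal_Hom_over_H_A} together with Proposition~\ref{Prop_InverseLimit_cochain_complex_F_q_and_F_S_Over_H_A} to interpret the target $\varprojlim_i\cH_{A,i}(\Gamma_F^0)\simeq\cH_A(\Gamma_F^0)$ correctly, the inverse limit of the truncated pairings produces the desired map $\bigcup_{\overline{V},\overline{\boldD}}$.

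The principal technical obstacle will be ensuring that the inverse limit interacts well with the derived tensor product, so that the limit of pairings over $\cH_{A,i}(\Gamma_F^0)$ genuinely descends to a pairing of derived tensor products over $\cH_A(\Gamma_F^0)$. Concretely, one must verify that the system $\{\boldR\Gamma(\overline{V}_i,\overline{\boldD}_i)\}_i$ is Mittag--Leffler so that no $\varprojlim^1$-correction intervenes, and that the transition maps $\cH_A(\Gamma_F^0)\to\cH_{A,i}(\Gamma_F^0)$ are flat so that $\otimes^{\bbLL}_{\cH_A(\Gamma_F^0)}$ is computed correctly at each finite level. The Mittag--Leffler condition follows from the density statements in Theorem~\ref{thm:inverselimit_MIttag_SelCOm_Over_H_A}(i)--(iii) combined with the coadmissibility of the cohomology established in Proposition~\ref{Prop_InverseLimit_cochain_complex_F_q_and_F_S_Over_H_A}; the flatness is standard in the coadmissible/quasi-Stein framework (see \cite[Lemma 2.1.4]{KPX2014} and \cite{schneiderteitelbaum2003}). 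Once these verifications are in place, the pairing $\bigcup_{\overline{V},\overline{\boldD}}$ is canonical (up to coherent quasi-isomorphism) by construction.
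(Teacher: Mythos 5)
Your construction is sound, but it follows a different route from the paper. The paper's proof is direct: it observes that $\cH_A(\ZZ_p)$ is flat over $A$ (via \cite[Corollary 2.1.5]{KPX2014}) and then repeats Benois's cochain-level construction of the cup product from \cite[Theorem 3.1.5]{p-adicHodgetheorybenoisheights} verbatim with $\cH_A(\Gamma_F^0)$ as the coefficient ring, the necessary local ingredients over $\cH_A(\Gamma_F^0)$ (notably the pairing \eqref{eq:cup_product_phi-gamma_module_over_H_A_2} and the global and local duality preparations) having already been put in place. You instead treat each truncation $\cH_{A,i}(\Gamma_F^0)$ as an affinoid algebra, invoke Benois's theorem there as a black box for $\overline{V}_i$ and $\overline{\boldD}_i$, and pass to the inverse limit using Theorem~\ref{thm:inverselimit_MIttag_SelCOm_Over_H_A}, Proposition~\ref{Prop_InverseLimit_cochain_complex_F_q_and_F_S_Over_H_A} and Lemma~\ref{lemma:inverseLimit_Hom_over_A_tensored_H_i_equal_Hom_over_H_A}. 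Your approach buys the convenience of never having to re-examine Benois's arguments over the non-affinoid Fr\'echet--Stein ring, while the paper's approach buys a shorter argument that sidesteps all limit bookkeeping. One caution about your route: a compatible system of morphisms in the derived categories $\cD(\cH_{A,i}(\Gamma_F^0))$ does not automatically assemble into a morphism over $\cH_A(\Gamma_F^0)$ (there is in general a $\varprojlim^1$-type obstruction on Hom groups), so to make the gluing honest you should note that Benois's cup product is defined by explicit formulas on the complexes $\mathbf{S}(\overline{V}_i,\overline{\boldD}_i)$ themselves, strictly compatible with the transition maps, so that the limit is an actual morphism of complexes (using the natural map from the tensor product of the limits to the limit of the tensor products) rather than merely a limit of derived-category classes; your Mittag--Leffler discussion, which is phrased in terms of the cohomology modules, does not by itself settle this point, but the cochain-level compatibility does.
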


\begin{proof}
\label{proof:thm:duality_selComplex_Iw_Good_way}
Note that $\cH_A(\ZZ_p)$ is flat over $A$ (cf. \cite{KPX2014}, Corollary 2.1.5). Based on this fact, combined with our preparation above, this is proved applying verbatim the arguments of Benois in his proof of \cite[Theorem 3.1.5]{p-adicHodgetheorybenoisheights}.
\end{proof}

Let us denote by 
\begin{equation}
\label{eqn_2024_11_01_1736}
    {\bigcup_{\overline{V},\overline{\boldD}}}_*\,:\,\quad \boldR\Gamma(\overline{V}^*(1),\overline{\boldD}^\perp) \longrightarrow \boldR{\rm Hom}_{\cH_A(\Gamma_F^0)}(\boldR\Gamma(\overline{V},\overline{\boldD}),\cH_A(\Gamma_F^0)[-3])
\end{equation}
the morphism induced by the cup product $\bigcup_{\overline{V},\overline{\boldD}}$. Before we close this subsection, we record the following easy lemma that will be needed later.

\begin{lemma}
\label{lemma:Hom_Bet_M_and_A_tensored_H_A}
Let $M$ be any finitely generated $A$-module. Then 
$${\rm Hom}_A(M,A)\, {\widehat\otimes} \,\cH_A(\ZZ_p) \simeq {\rm Hom}_{\cH_A(\ZZ_p)}(M {\widehat\otimes} \cH_A(\ZZ_p) , \cH_A(\ZZ_p))$$ as $\cH_A(\ZZ_p)$-modules. 

\end{lemma}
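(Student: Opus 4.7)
The plan is to reduce to the case $M=A$ via a finite presentation and then invoke the five lemma. First, recall that any affinoid $\mathbb{Q}_p$-algebra $A$ is Noetherian, so the finitely generated $A$-module $M$ is in fact finitely presented. Fix a presentation
\[
A^m \xrightarrow{\ \phi\ } A^n \lra M \lra 0.
\]
Since $M$ is finitely generated over the Noetherian ring $A$, the completed tensor product $M\,\widehat{\otimes}\,\cH_A(\ZZ_p)$ coincides with the ordinary tensor product $M\otimes_A \cH_A(\ZZ_p)$, and, as $\cH_A(\ZZ_p)$ is flat over $A$ (by \cite[Corollary 2.1.5]{KPX2014}, as was used in the proof of Theorem~\ref{thm:duality_selComplex_Iw}), base change is exact. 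Hence both sides of the claimed isomorphism are left-exact contravariant functors of $M$, and both are finite direct sums of $\cH_A(\ZZ_p)$ when $M$ is free of finite rank.

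Explicitly, applying $\mathrm{Hom}_A(-,A)$ to the presentation and then completed-tensoring with $\cH_A(\ZZ_p)$ yields the exact sequence
\[
0 \lra \mathrm{Hom}_A(M,A)\,\widehat{\otimes}\,\cH_A(\ZZ_p) \lra \cH_A(\ZZ_p)^n \xrightarrow{\ ^{t}\phi\ } \cH_A(\ZZ_p)^m\,,
\]
using flatness of $\cH_A(\ZZ_p)/A$. On the other hand, base-changing the presentation and then applying $\mathrm{Hom}_{\cH_A(\ZZ_p)}(-,\cH_A(\ZZ_p))$ yields
\[
0 \lra \mathrm{Hom}_{\cH_A(\ZZ_p)}(M\,\widehat{\otimes}\,\cH_A(\ZZ_p),\cH_A(\ZZ_p)) \lra \cH_A(\ZZ_p)^n \xrightarrow{\ ^{t}\phi\ } \cH_A(\ZZ_p)^m\,,
\]
where the right-hand arrow is the same transpose of $\phi$ under the canonical identifications $\mathrm{Hom}_{\cH_A(\ZZ_p)}(\cH_A(\ZZ_p)^k,\cH_A(\ZZ_p))\simeq \cH_A(\ZZ_p)^k$.

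There is a natural $\cH_A(\ZZ_p)$-linear map between the two left-hand terms, induced by sending $\psi\otimes h\in \mathrm{Hom}_A(M,A)\,\widehat{\otimes}\,\cH_A(\ZZ_p)$ to the $\cH_A(\ZZ_p)$-linear extension of $h\cdot(\psi\otimes\mathrm{id})$. This map is compatible with the two sequences above and is an isomorphism when $M=A$ (both sides then being $\cH_A(\ZZ_p)$), and hence also for $M=A^k$ by additivity. The five lemma then forces the map to be an isomorphism in general. The only subtlety worth double-checking is the identification of the completed and ordinary tensor product for finitely generated $M$; this is the single step where one has to be mildly careful, but it follows immediately from the right-exactness of $-\,\widehat{\otimes}\,\cH_A(\ZZ_p)$ applied to the presentation, combined with $A\,\widehat{\otimes}\,\cH_A(\ZZ_p)=\cH_A(\ZZ_p)$.
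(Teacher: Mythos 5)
Your proof is correct and is essentially the paper's argument made explicit: the paper deduces the isomorphism from flatness of $\cH_A(\ZZ_p)$ over $A$ together with the standard base-change/adjunction fact ${\rm Hom}_A(M,A)\otimes_A\cH_A(\ZZ_p)\simeq{\rm Hom}_A(M,\cH_A(\ZZ_p))\simeq{\rm Hom}_{\cH_A(\ZZ_p)}(M\otimes_A\cH_A(\ZZ_p),\cH_A(\ZZ_p))$, citing \cite[Theorem 6.31]{TENSOR_PRODUCTS_KEITH_CONRAD}, and your finite-presentation-plus-five-lemma argument (valid since $A$ is Noetherian, so $M$ is finitely presented, and it also justifies identifying $\widehat\otimes$ with $\otimes_A$ here) is precisely the standard proof of that cited fact.
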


\begin{proof}
Since $\cH_A(\ZZ_p)$ is flat over $A$, we have 
$${\rm Hom}_A(M,A) {\widehat\otimes} \cH_A(\ZZ_p) \simeq {\rm Hom}_A(M,\cH_A(\ZZ_p))={\rm Hom}_{\cH_A(\ZZ_p)}(M{\widehat\otimes} \cH_A(\ZZ_p),\cH_A(\ZZ_p))$$ 
where the last isomorphism is explained in \cite[Theorem 6.31]{TENSOR_PRODUCTS_KEITH_CONRAD}.  
\end{proof}

\subsubsection{Perfectness of the cup-product pairing}  
Our goal in this subsection is to give (cf. Theorem~\ref{thm:duality_selComplex_Iw_iso}(iii) below) a sufficient condition for the perfectness of the cup-product pairing of Theorem~\ref{thm:duality_selComplex_Iw}. 

To that end, suppose that $\fq \in S^{(p)}$. Recall that we denote by $I^w_\fq$ the wild ramification subgroup of $I_\fq$, and we have fixed a topological generator $t_\fq$ of $I_\fq/I^w_\fq$ such that, for any uniformizer $\varpi_\fq$ of $F_\fq$, we have $t_\fq(\varpi^{1/p^n}_\fq)=\zeta_{p^n}\varpi^{1/p^n}_\fq$ for all  $n$. 
As explained in \cite[p. 65]{p-adicHodgetheorybenoisheights}, since $I^w_\fq$ acts on $\overline{V}$ through a finite quotient $H$ and $I_\fq$ acts trivially on $\cH_A(\Gamma_{F_\fq}^0)$, we have a decomposition 
$$\overline{V} \simeq \left(V^{I^w_\fq} \otimes_A \cH_A(\Gamma_{F_\fq}^0)\right) \oplus \left(I_H\cdot V \otimes_A \cH_A(\Gamma_{F_\fq}^0)\right) \,,$$ 
where $I_H={\rm ker}(\ZZ[H] \xrightarrow{} \ZZ)$ is the augmentation ideal. Since $\cH_A(\Gamma_{F_\fq}^0)$ is free and flat over $A$, the submodule $V^{I^w_\fq} \otimes_A \cH_A(\Gamma_{F_\fq}^0)$ is a direct factor of the projective $\cH_A(\Gamma_{F_\fq}^0)$-module $\overline{V}$, it is therefore is projective.
Using Equation (56) of \cite{p-adicHodgetheorybenoisheights}, we also see that 
${V^*(1)}^{I^w_\fq} \otimes_A \cH_A(\Gamma_{F_\fq}^0) ={\rm Hom}_A(V^{I^w_\fq}, A)(1) \otimes_A \cH_A(\Gamma_{F_\fq}^0)\,.$ \newline 

For any $\fq \in S$, let us define
$$\widetilde{U}^\bullet_\fq(\overline{V},\overline{\boldD}):={\rm cone}\left( U^\bullet_q(\overline{V},\overline{\boldD})  \xrightarrow{\overline{g}_\fq} K^\bullet(\overline{V}_\fq)\right)[-1]$$
and put $\boldR\widetilde{\Gamma}(F_\fq, \overline{V},\overline{D}):=[\widetilde{U}^\bullet_\fq(\overline{V},\overline{\boldD})]$  in derived category of $\cH_A(\Gamma_{F_\fq}^0)$-modules. From the orthogonality of $\overline{g}_\fq$ and $\overline{g}^\perp_\fq$ under the cup-product (where $\bigcup_K$ is the natural cup-product pairing as in \cite[\S3.1.4]{p-adicHodgetheorybenoisheights})
\begin{align*}
    K^\bullet(\overline{V}_\fq) \otimes K^\bullet(\overline{V}^*_\fq(1)) \xrightarrow{\bigcup_K} K^\bullet(\overline{V}_\fq \otimes \overline{V}^*_\fq(1)) \longrightarrow K^\bullet(\cH_A(\Gamma_{F_\fq}^0)_\fq(1)) &\lra _{\tau \geq 2}K^\bullet(\cH_A(\Gamma_{F_\fq}^0)_\fq(1)) \\
    &\qquad \lra  \cH_A(\Gamma_{F_\fq}^0)[-2]\,,
\end{align*}
we have an induced pairing
$\widetilde{U}^\bullet_\fq(\overline{V},\overline{\boldD}) \otimes U^\bullet_q(\overline{V}^*(1),\overline{\boldD}^\perp) \xrightarrow{\bigcup_Z} \cH_A(\Gamma_{F_\fq}^0)[-2]\,,$ 
which induces a morphism    
\begin{equation}
\label{eq:cup_Z_induced_morphishm}
\boldR\Gamma(F_\fq, \overline{\boldD}^\perp_\fq) \lra \boldR{\rm Hom}_{\cH_A(\Gamma_{F_\fq}^0)}(\boldR\widetilde{\Gamma}(F_\fq, \overline{V},\overline{D}), \cH_A(\Gamma_{F_\fq}^0))^\iota[-2].
\end{equation}
in the derived category of $\cH_A(\Gamma_{F_\fq}^0)$-modules.

\begin{proposition}
\label{prop:othogonal_complement}
The map \eqref{eq:cup_Z_induced_morphishm} is a quasi-isomorphism if and only if \eqref{eq:cup_Z_induced_morphishm} is.
\end{proposition}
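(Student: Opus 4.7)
Parsing the statement generously (the two cross-references to \eqref{eq:cup_Z_induced_morphishm} should presumably point to \eqref{eq:cup_Z_induced_morphishm} on the one hand and its analogue for the duality-exchanged data $(\overline{V}^*(1),\overline{\boldD}^\perp)$ on the other), the content is a symmetry: the cup-product pairing $\bigcup_Z$ is perfect in one variable iff it is perfect in the other. My approach is to apply the derived dualising functor $\boldR{\rm Hom}_{\cH_A(\Gamma_{F_\fq}^0)}(-,\cH_A(\Gamma_{F_\fq}^0))^\iota[-2]$ to the map \eqref{eq:cup_Z_induced_morphishm}, and then identify the result canonically with the swapped-role analogue of \eqref{eq:cup_Z_induced_morphishm}. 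Since this functor is involutive on perfect complexes, and in particular preserves quasi-isomorphisms, the biconditional follows.

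First I would check that both $\boldR\Gamma(F_\fq,\overline{\boldD}^\perp_\fq)$ and $\boldR\widetilde{\Gamma}(F_\fq, \overline{V},\overline{\boldD})$ are perfect $\cH_A(\Gamma_{F_\fq}^0)$-complexes, so that biduality is available. For the former, this is a consequence of \cite[Theorems 2.3.2 and 2.4.3]{p-adicHodgetheorybenoisheights} transported to the Iwasawa theoretic setting via the inverse-limit compatibilities assembled in Proposition~\ref{Prop_H^n_phi-gamma_over_A_tensored_H_i_M_L} and Theorem~\ref{thm:inverselimit_MIttag_SelCOm_Over_H_A}. For $\boldR\widetilde{\Gamma}(F_\fq, \overline{V},\overline{\boldD})$, perfectness follows from the distinguished triangle defining $\widetilde{U}^\bullet_\fq(\overline{V},\overline{\boldD})$ together with perfectness of $U^\bullet_\fq(\overline{V},\overline{\boldD})$ and of $K^\bullet(\overline{V}_\fq)$ (the latter following, when $\fq\in S_p$, from Corollary~\ref{cor_Ind_V_otimes_Ind_dualV_pairing_H_A}).

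Next I would compute the image of \eqref{eq:cup_Z_induced_morphishm} under the dualising functor. Biduality identifies the target with $\boldR\widetilde{\Gamma}(F_\fq,\overline{V},\overline{\boldD})$. For the source, Theorem~\ref{thm:cup-product-phi-gamma-over_H_A} applied to the $(\varphi,\Gamma)$-module $\overline{\boldD}^\perp_\fq$, combined with the reciprocity $(\overline{\boldD}^\perp_\fq)^*(\chi_\fq)\simeq\overline{\boldD}_\fq$ built into the very definition of the dual local condition and the compatibility of the $\iota$-operation with the cyclotomic twist, identifies the source with $\boldR\Gamma(F_\fq,\overline{\boldD}_\fq)$. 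The resulting arrow is then a map
\[
\boldR\Gamma(F_\fq,\overline{\boldD}_\fq)\longrightarrow \boldR{\rm Hom}_{\cH_A(\Gamma_{F_\fq}^0)}\bigl(\boldR\widetilde{\Gamma}(F_\fq,\overline{V}^*(1),\overline{\boldD}^\perp),\cH_A(\Gamma_{F_\fq}^0)\bigr)^\iota[-2],
\]
and it remains to verify that it coincides (up to sign) with the map \eqref{eq:cup_Z_induced_morphishm} attached to the swapped data $(\overline{V}^*(1),\overline{\boldD}^\perp)$. By functoriality of cones this reduces to the (anti-)symmetry of the pairing $\bigcup_K$ and to the orthogonality between $\overline{g}_\fq$ and $\overline{g}^\perp_\fq$ that was built into the construction of $\bigcup_Z$ at the outset.

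\textbf{The main obstacle.} The principal difficulty is bookkeeping: tracking the shift $[-2]$, the $\iota$-twist, and the interaction of Grothendieck duality with the distinguished triangle defining $\boldR\widetilde{\Gamma}$, and then checking that the arrow produced by biduality genuinely agrees on the nose (and not merely as a parallel morphism between the same source and target) with the symmetric pairing's adjoint. The technical crux is the compatibility between the biduality isomorphism, the cup-product $\bigcup_K$, and the reciprocity isomorphism $(\overline{\boldD}^\perp_\fq)^*(\chi_\fq)\simeq\overline{\boldD}_\fq$; this compatibility is morally true by the definition of $\overline{\boldD}^\perp_\fq$ but requires careful unraveling at the chain-level in the derived category before one may conclude.
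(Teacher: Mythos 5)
Your route is genuinely different from the paper's. The paper never dualizes directly over the Fr\'echet--Stein algebra: it works at each Noetherian stage $\cH_{A,i}(\Gamma_{F_\fq}^0)$, where the symmetry between the two adjoint maps of the pairing $\bigcup_Z$ is supplied by \cite[p.~67]{p-adicHodgetheorybenoisheights} and \cite[Lemma-Definition 6.2.7]{nekovar06} (in effect, derived biduality over the regular Noetherian ring $\cH_{A,i}(\Gamma_{F_\fq}^0)$ for complexes with finitely generated cohomology), and then passes to the limit using Lemma~\ref{lemma:inverseLimit_Hom_over_A_tensored_H_i_equal_Hom_over_H_A}. Your skeleton --- dualize \eqref{eq:cup_Z_induced_morphishm} by $\boldR{\rm Hom}_{\cH_A(\Gamma_{F_\fq}^0)}(-,\cH_A(\Gamma_{F_\fq}^0))^\iota[-2]$ and use that the second adjoint of a pairing is the evaluation (biduality) map composed with the dual of the first adjoint --- is a legitimate alternative, and this formal identity is exactly what makes the ``if and only if'' work, provided both complexes are derived-reflexive.

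Two concrete problems remain. First, the ``reciprocity'' $(\overline{\boldD}^\perp_\fq)^*(\chi_\fq)\simeq\overline{\boldD}_\fq$ is false: since $\boldD^\perp_\fq={\rm Hom}_{\cR_A}(D^\dagger_{\rm rig}(V_\fq)/\boldD_\fq,\cR_A(\chi_\fq))$, one has $(\boldD^\perp_\fq)^*(\chi_\fq)\simeq D^\dagger_{\rm rig}(V_\fq)/\boldD_\fq$ (what is true is $(\boldD^\perp_\fq)^\perp\simeq\boldD_\fq$), so your final matching of the dualized arrow with ``\eqref{eq:cup_Z_induced_morphishm} for the swapped data'' does not come out as described. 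It is also unnecessary: the statement (as the paper's proof makes clear) compares the two adjoints of the single pairing $\widetilde{U}^\bullet_\fq(\overline{V},\overline{\boldD})\otimes U^\bullet_\fq(\overline{V}^*(1),\overline{\boldD}^\perp)\to\cH_A(\Gamma_{F_\fq}^0)[-2]$, and the second adjoint already lands tautologically in $\boldR{\rm Hom}_{\cH_A(\Gamma_{F_\fq}^0)}(\boldR\Gamma(F_\fq,\overline{\boldD}^\perp_\fq),\cH_A(\Gamma_{F_\fq}^0))^\iota[-2]$; you should stop after dualizing and composing with evaluation, with no further identification of the source. Second, the reflexivity input over the non-Noetherian ring $\cH_A(\Gamma_{F_\fq}^0)$ is not obtained the way you propose: perfectness does not pass from the finite levels through the inverse-limit lemmas you cite (a limit of perfect complexes need not be perfect), and Corollary~\ref{cor_Ind_V_otimes_Ind_dualV_pairing_H_A} is a duality statement, not a finiteness one. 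For $\fq\in S_p$ the correct input is \cite[Theorem 4.4.6 and Proposition 2.3.6]{KPX2014}, as invoked in Proposition~\ref{prop_Finiteness_SelCOm_Over_H_A_And_coho_in_deg3_anddeg0_is_0}; for $\fq\in S^{(p)}$, perfectness of $\widetilde{U}^\bullet_\fq$ can fail without the projectivity hypothesis on $V^{I^w_\fq}/(t_\fq-1)V^{I^w_\fq}$, which is precisely why the paper's finite-level argument (Noetherian regular $\cH_{A,i}$, finitely generated cohomology, then limits) is the cleaner and unconditional route.
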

\begin{proof}
\label{proof:prop:othogonal_complement}
Let us set $\overline{\boldD}^\perp_{\fq,i}=\boldD^\perp_\fq \otimes_A \cH_{A,i}$ and $\overline{\boldD}_i=\boldD \otimes_A \cH_{A,i}$. By \cite[p.67]{p-adicHodgetheorybenoisheights} (see also \cite{nekovar06}, Lemma-Definition 6.2.7), we have
\begin{align}
\begin{aligned}
\label{eq:prop:othogonal_complement}
\boldR^{2-j}\Gamma(F_\fq, \overline{\boldD}^\perp_{\fq,i})) &\simeq {\rm Hom}_{\cH_{A,i}(\Gamma_{F_\fq}^0)}\left( (\boldR^{j}\widetilde{\Gamma}(F_\fq, \overline{V}_i,\overline{\boldD}_i)), \cH_{A,i}(\Gamma_{F_\fq}^0)\right) \\
&\Longleftrightarrow \boldR^j\widetilde{\Gamma}(F_\fq, \overline{V}_i,\overline{\boldD}_i) \simeq {\rm Hom}_{\cH_{A,i}(\Gamma_{F_\fq}^0)}\left(\boldR^{2-j}\Gamma(F_\fq, \overline{\boldD}^\perp_{\fq,i}), \cH_{A,i}(\Gamma_{F_\fq}^0)\right)
\end{aligned}
\end{align}
Applying Lemma~\ref{lemma:inverseLimit_Hom_over_A_tensored_H_i_equal_Hom_over_H_A} together with \eqref{eq:prop:othogonal_complement}, we conclude that
\begin{align}
\begin{aligned}
\label{eq:prop:othogonal_complement_2}
\boldR^{2-j}\Gamma(F_\fq, \overline{\boldD}^\perp_\fq) &\simeq \boldR{\rm Hom}_{\cH_{A}(\Gamma_{F_\fq}^0)}\left( \boldR^j\widetilde{\Gamma}(F_\fq, \overline{V},\overline{\boldD}), \cH_A(\Gamma_{F_\fq}^0)\right) \\
&\Longleftrightarrow \boldR^j\widetilde{\Gamma}(F_\fq, \overline{V},\overline{\boldD}) \simeq \boldR{\rm Hom}_{\cH_A(\Gamma_{F_\fq}^0)}\left(\boldR^{2-j}\Gamma(F_\fq, \overline{\boldD}^\perp_\fq), \cH_A(\Gamma_{F_\fq}^0)\right)\,.
\end{aligned}
\end{align}
\end{proof}
 Theorem~\ref{thm:duality_selComplex_Iw_iso} below is key to our proof of Theorem~\ref{thm:char_ideals_for_selCom_and_dual_selCom_H_A}, where we establish the functional equation for algebraic $p$-adic $L$-functions under an explicit ``vanishing of Tamagawa number'' condition. We shall mainly follow the arguments in the proof of \cite[Theorem 3.1.7]{p-adicHodgetheorybenoisheights} in its proof.

\begin{theorem}[Benois]
\label{thm:duality_selComplex_Iw_iso}
    \item[i)] For all $\fq \in S_p$, the map \eqref{eq:cup_Z_induced_morphishm} is an isomorphism in $\cD(\cH_A(\Gamma_{F_\fq}^0)).$ 
    \item[ii)] Suppose that $\fq \in S^{(p)}.$ If the $A$-module 
    $V^{I^w_\fq} /(t_\fq-1)V^{I^w_\fq}\,\simeq H^1(I_\fq, V)(1)$ 
    is projective (see \cite[\S7.5.8]{nekovar06} for an explanation of the isomorphism), then the $\cH_A(\Gamma_{F_\fq}^0)$-modules 
    $$V^{I_\fq} \otimes_A \cH_A^\iota(\Gamma_{F_\fq}^0),\qquad  V^*(1)^{I_\fq} \otimes_A \cH_A^\sharp(\Gamma_{F_\fq}^0),\qquad  (V^*(1)^{I^w_\fq} /(t_\fq-1)V^*(1)^{I^w_\fq}) \otimes_A \cH_A^\sharp(\Gamma_{F_\fq}^0)$$ 
    are projective as well, and \eqref{eq:cup_Z_induced_morphishm} is an isomorphism in $\cD(\cH_A(\Gamma_{F_\fq}^0)).$
    \item[iii)] If the $A$-module $V^{I^w_\fq} /((t_\fq-1)V^{I^w_\fq})$ is projective for all $\fq \in S^{(p)}$, then the duality map \eqref{eqn_2024_11_01_1736}
    is an isomorphism. 

\end{theorem}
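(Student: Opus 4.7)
The plan is to follow the structure of Benois's argument in \cite[Theorem 3.1.7]{p-adicHodgetheorybenoisheights}, adapting every step from an affinoid $A$ to the quasi-Stein coefficient ring $\cH_A(\Gamma_{F_\fq}^0)$ with the help of the inverse-limit and $\Hom$-compatibility results assembled earlier in this section (notably Theorem~\ref{thm:inverselimit_MIttag_SelCOm_Over_H_A} and Lemma~\ref{lemma:inverseLimit_Hom_over_A_tensored_H_i_equal_Hom_over_H_A}). I would address the three assertions in order, with (iii) assembled from (i), (ii), and the global duality of Theorem~\ref{thm:cup-product_C_c_cont_V_Cochain_V^*_H_A} by a diagram chase.

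For (i), the first step is to identify the cone. Using the quasi-isomorphism $\alpha_{\overline{V}_\fq}$ of \S\ref{sec:Complex_K_V}, one obtains
$$\boldR\widetilde{\Gamma}(F_\fq, \overline{V}, \overline{\boldD}) \simeq \boldR\Gamma\bigl(F_\fq,\, D^\dagger_{\rm rig}(\overline{V}_\fq)/\overline{\boldD}_\fq\bigr)[-1]\,.$$
Since $(D^\dagger_{\rm rig}(V_\fq)/\boldD_\fq)^*(\chi_\fq) = \boldD^\perp_\fq$ by the definition of the dual local condition, the Kedlaya--Pottharst--Xiao duality (Theorem~\ref{thm:cup-product-phi-gamma-over_H_A}) applied to this quotient produces the desired quasi-isomorphism. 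It then only remains to check that the resulting map coincides (up to sign) with the cup-product map \eqref{eq:cup_Z_induced_morphishm}, which follows from the functoriality of the Fontaine--Herr cup product.

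For (ii), I would first establish the projectivity claims. From the exact sequence
$$0 \lra V^{I_\fq} \lra V^{I^w_\fq} \xrightarrow{\,t_\fq - 1\,} V^{I^w_\fq} \lra V^{I^w_\fq}/(t_\fq - 1)V^{I^w_\fq} \lra 0$$
the projectivity of the rightmost term forces the sequence to split on the right, so $V^{I_\fq}$ is a direct summand of the projective $A$-module $V^{I^w_\fq}$ and therefore projective. Local tame duality at $\fq$ gives $V^*(1)^{I^w_\fq}/(t_\fq-1)V^*(1)^{I^w_\fq} \simeq \Hom_A(V^{I_\fq}, A)$, so the dual hypothesis holds automatically and the same splitting argument shows $V^*(1)^{I_\fq}$ is projective. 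Flatness of $\cH_A^\iota$ and $\cH_A^\sharp$ over $A$ (cf.\ \cite{KPX2014}, Corollary~2.1.5) then promotes these projectivities to $\cH_A(\Gamma_{F_\fq}^0)$. With the local modules now projective, the orthogonality Remark~\ref{rem:map_g_q_tnsored_g_q_perp_zero_at_unramified} lets the cup product descend to $\widetilde{U}^\bullet_\fq$, and the resulting pairing becomes perfect by an explicit computation on the two-term complex $[V^{I_\fq} \xrightarrow{\Frob_\fq - 1} V^{I_\fq}]$; Proposition~\ref{prop:othogonal_complement} then packages this as the claim that \eqref{eq:cup_Z_induced_morphishm} is a quasi-isomorphism.

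For (iii), the plan is to compare two distinguished triangles. The Selmer complex $\boldR\Gamma(\overline{V}, \overline{\boldD})$ sits in the exact triangle
$$\boldR\Gamma(\overline{V}, \overline{\boldD}) \lra \boldR\Gamma_S(\overline{V}) \lra \bigoplus_{\fq \in S} \boldR\widetilde{\Gamma}(F_\fq, \overline{V}, \overline{\boldD})[1] \xrightarrow{+1}\,,$$
and its dual analogue has the same shape with $(\overline{V}^*(1), \overline{\boldD}^\perp)$. Applying $\boldR\Hom_{\cH_A(\Gamma_F^0)}(-, \cH_A(\Gamma_F^0))[-3]$ to the first triangle and invoking global duality (Theorem~\ref{thm:cup-product_C_c_cont_V_Cochain_V^*_H_A}) on the middle term, together with parts (i) and (ii) on the local summands, I would convert it into a triangle canonically identified with the dual one. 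The cup-product of Theorem~\ref{thm:duality_selComplex_Iw} realises a morphism from the dual triangle to this dualised triangle, two of whose components are now isomorphisms, so the five-lemma forces the third (which is precisely \eqref{eqn_2024_11_01_1736}) to be an isomorphism as well. The main obstacle I anticipate is the strict commutativity of the comparison squares between the global cup product and the local ones at the chain level, for which one follows the careful construction of \S3.1 of \cite{p-adicHodgetheorybenoisheights}; the passage from affinoid coefficients $A$ to $\cH_A(\Gamma_F^0)$ is then harmless thanks to Theorem~\ref{thm:inverselimit_MIttag_SelCOm_Over_H_A} and Lemma~\ref{lemma:inverseLimit_Hom_over_A_tensored_H_i_equal_Hom_over_H_A}.
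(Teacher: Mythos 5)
Your proposal is correct and follows essentially the same route as the paper: all three parts are adaptations of Benois's proof of \cite[Theorem 3.1.7]{p-adicHodgetheorybenoisheights}, with (i) resting on the $\cH_A$-version of local duality (Theorem~\ref{thm:cup-product-phi-gamma-over_H_A}) applied to $D^\dagger_{\rm rig}(\overline{V}_\fq)/\overline{\boldD}_\fq$, (ii) on the splitting/tame-duality argument for projectivity together with flatness of $\cH_A(\Gamma_{F_\fq}^0)$ over $A$, and (iii) on comparing the compact-support triangle for $(\overline{V}^*(1),\overline{\boldD}^\perp)$ with the $\cH_A$-dual of the defining triangle for $(\overline{V},\overline{\boldD})$, which is exactly the paper's commutative diagram plus the two-out-of-three argument. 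The only cosmetic difference is in (ii): you check the duality at the unramified primes by a direct computation on the two-term complexes, while the paper deduces it from the duality for the full local cohomology (Corollary~\ref{cor_Ind_V_otimes_Ind_dualV_pairing_H_A}) combined with Lemma~\ref{lemma:Hom_Bet_M_and_A_tensored_H_A}; both verifications are sound.
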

\begin{proof}
\label{proof:thm:duality_selComplex_Iw_iso}
We highlight the key ingredients in Benois' argument to prove \cite[Theorem 3.1.7]{p-adicHodgetheorybenoisheights}.
    \item[i)] We use Theorem~\ref{thm:cup-product-phi-gamma-over_H_A} in place of \cite[Theorem 2.4.3]{p-adicHodgetheorybenoisheights}, together with the discussion in \cite[p.66]{p-adicHodgetheorybenoisheights} and the fact that $\cH_A(\Gamma^0_{F_\q})$ is flat over $A$. 
   
\item[ii)] The first claim (on the projectivity) is immediate from \cite[Theorem 3.1.7(ii)]{p-adicHodgetheorybenoisheights}. The second claim that the \eqref{eq:cup_Z_induced_morphishm} is an isomorphism arguing as in the proof of \cite[Theorem 3.1.7(iii)]{p-adicHodgetheorybenoisheights} (the step where Benois proves Eqn. 53 in op. cit. is an isomorphism), and using Corollary~\ref{cor_Ind_V_otimes_Ind_dualV_pairing_H_A} together with Lemma~\ref{lemma:Hom_Bet_M_and_A_tensored_H_A}. 

\item[iii)] 
We have the following commutative diagram (generalizing Proposition 6.3.3 and Proposition 6.3.4 in \cite{nekovar06}):  
\[ \begin{tikzcd}
& \boldR\Gamma_{c,{\rm cont}}(G_{F,S}, \overline{V}^*(1)) \arrow{r}{} \arrow[swap]{d}{\text{Theorem~}\ref{thm:cup-product_C_c_cont_V_Cochain_V^*_H_A}} & \boldR\Gamma(\overline{V}^*(1),\overline{\boldD}^\perp) \arrow{r}{} \arrow{d}{\text{Theorem~} \ref{thm:duality_selComplex_Iw}} & \boldR\Gamma(F_\fq, \overline{\boldD}^\perp_\fq) \arrow{d}{\text{$(i)$ and $(ii)$}} \\%
& \boldR\Gamma(G_{F,S},\overline{V})^*[-3] \arrow{r}{} & \boldR\Gamma(\overline{V},\overline{\boldD})^*[-3] \arrow{r}{} &\boldR\widetilde{\Gamma}(F_\fq, \overline{V},\overline{D})^*[-3]
\end{tikzcd}
\]
The superscript ``$*$'' on the bottom row means application of $\boldR{\rm Hom}_{\cH_A(\ZZ_p)}(-, \cH_A(\ZZ_p))[-3]$ on the respective object. Since we have already proved that left-most and right-most vertical maps are isomorphisms, the middle vertical map in this diagram is an isomorphism as well, as required. 

\end{proof}

\subsection{Perfectness of Selmer complexes over $\cH_A(\Gamma_F^0)$}

\begin{proposition}
    \label{prop_Finiteness_SelCOm_Over_H_A_And_coho_in_deg3_anddeg0_is_0}
Suppose that the $A$-module $V^{I^w_\fq} /(t_\fq-1)V^{I^w_\fq}$ is projective for every $\fq\in S^{(p)}$.
     \item[i)] $\boldR\Gamma(\overline{V},\overline{\boldD}) \in \cD^{[0,3]}_{\rm perf}(\cH_A(\Gamma_F^0)).$
    
    \item[ii)] Suppose that we have  $(V^*_x)^{H_{F,S}}=0$ for all $x \in {\rm Max}(A)$, where $V^*_x=V^*\otimes_A A/ \frakm_x$ with $\frakm_x$ the maximal ideal of $A$ corresponding to $x$, and $H_{F,S}$ is as in Theorem~\ref{thm:Prop_Iw_SelComp}. Then $\boldR^3\Gamma(\overline{V},\overline{\boldD})=0\,.$

    \item[iii)] $\boldR^0\Gamma(\overline{V},\overline{\boldD})=0.$

\end{proposition}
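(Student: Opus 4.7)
The plan is to derive each part from the defining exact triangle
\[
{\mathbf S}(\overline V,\overline\boldD) \lra C^\bullet(G_{F,S},\overline V) \oplus U^\bullet(\overline V,\overline\boldD) \xrightarrow{\,\overline f - \overline g\,} K^\bullet(\overline V)\,,
\]
combined with the finite-type, perfectness, and duality statements already recorded in the paper.

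For \textbf{(i)}, I would verify that each of $C^\bullet(G_{F,S},\overline V)$, $K^\bullet_\fq(\overline V)$, and $U^\bullet_\fq(\overline V,\overline\boldD)$ is a perfect object of $\cD(\cH_A(\Gamma_F^0))$ in compatible amplitudes. Finiteness and perfectness of the global complex and of each $K^\bullet_\fq$ follow from Proposition~\ref{Prop_InverseLimit_cochain_complex_F_q_and_F_S_Over_H_A} (cf. \cite[Theorem 1.1(4)]{jayanalyticfamilies}). For $\fq \in S_p$, perfectness of the Fontaine--Herr complex $U^\bullet_\fq(\overline V,\overline\boldD)$ in amplitude $[0,2]$ follows from the $\cH_A$-analogue of Kedlaya--Pottharst--Xiao as recorded in Theorem~\ref{thm:cup-product-phi-gamma-over_H_A} (cf. also \cite{KPX2014}). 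For $\fq \in S^{(p)}$, $U^\bullet_\fq(\overline V,\overline\boldD)$ is quasi-isomorphic to the two-term complex
\[
[\, V^{I_\fq} \otimes_A \cH_A^\iota(\Gamma_{F_\fq}^0) \xrightarrow{\,{\rm Frob}_\fq - 1\,} V^{I_\fq} \otimes_A \cH_A^\iota(\Gamma_{F_\fq}^0)\,]\,,
\]
and perfectness reduces to the projectivity of $V^{I_\fq}$ over $A$; this projectivity follows from the hypothesis on $V^{I^w_\fq}/(t_\fq - 1)V^{I^w_\fq}$ by the argument in the proof of \cite[Theorem 3.1.7(ii)]{p-adicHodgetheorybenoisheights}, after which the flatness of $\cH_A(\Gamma_{F_\fq}^0)$ over $A$ finishes the argument.

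For \textbf{(iii)}, the long exact sequence attached to the defining triangle yields an injection $\boldR^0\Gamma(\overline V,\overline\boldD) \hookrightarrow H^0(G_{F,S},\overline V) \oplus \bigoplus_{\fq} H^0(U^\bullet_\fq(\overline V,\overline\boldD))$ landing in the kernel of $\alpha^0 = \overline f^0 - \overline g^0$. I would first show that $H^0(G_{F,S},\overline V) = 0$ by fixing a lift $\gamma \in G_{F,S}$ of a topological generator of $\Gamma_F^0$ and observing that $\gamma$ acts on the free $\cH_A(\Gamma_F^0)$-module $\overline V$ as the matrix $\gamma^{-1}\cdot \rho_V(\gamma)$, so being $\gamma$-fixed amounts to being annihilated by $\rho_V(\gamma) - \gamma\cdot I$. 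The determinant of this matrix, expressed as a polynomial in $\gamma - 1 \in \cH_A(\Gamma_F^0)$, is of degree ${\rm rank}_A V$ with unit leading coefficient, hence a nonzerodivisor in $\cH_A(\Gamma_F^0)$; the adjugate identity then forces $\overline V^\gamma = 0$ and a fortiori $\overline V^{G_{F,S}} = 0$. Combined with the easy injectivity of $\overline g^0_\fq\colon H^0(U^\bullet_\fq) \to H^0(K^\bullet_\fq)$ at each $\fq$ (using $\boldD_\fq \hookrightarrow D^\dagger_{\rm rig}(V_\fq)$ for $\fq \in S_p$, and the identity map on $\overline V^{G_{F_\fq}}$ for $\fq \in S^{(p)}$), this forces $\boldR^0\Gamma(\overline V,\overline\boldD) = 0$.

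For \textbf{(ii)}, the hypothesis from (i) guarantees that every local complex has vanishing cohomology in degrees $\geq 3$, so the long exact sequence supplies a short exact sequence
\[
0 \lra {\rm coker}(\alpha^2) \lra \boldR^3\Gamma(\overline V,\overline\boldD) \lra H^3(G_{F,S},\overline V) \lra 0\,.
\]
For the right-hand term, an Iwasawa--Tate duality over $\cH_A(\Gamma_F^0)$ (obtained by tensoring \cite[Theorem 8.5.6]{nekovar06} with $\cH_A(\Gamma_F^0)$, in the spirit of Theorem~\ref{thm:cup-product_C_c_cont_V_Cochain_V^*_H_A}) identifies it with the $\cH_A(\Gamma_F^0)$-dual of $V^*(1)^{H_{F,S}} \otimes_A \cH_A^\sharp(\Gamma_F^0)$. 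The hypothesis $(V^*_x)^{H_{F,S}} = 0$ for every $x \in {\rm Max}(A)$, combined with Nakayama's lemma applied to the finitely generated $A$-module $(V^*)^{H_{F,S}}$, forces $V^*(1)^{H_{F,S}} = 0$; hence $H^3(G_{F,S},\overline V) = 0$. For the left-hand term, the duality theorem for Selmer complexes (Theorem~\ref{thm:duality_selComplex_Iw_iso}(iii), whose hypotheses are met by (i)) identifies ${\rm coker}(\alpha^2)$ with a module controlled by $\boldR^0\Gamma(\overline V^*(1),\overline\boldD^\perp)$, which vanishes by (iii) applied to the dual triple. Hence $\boldR^3\Gamma(\overline V,\overline\boldD) = 0$. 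The main obstacle I foresee is the Iwasawa--Tate duality identification in (ii): while well-known in the $\cO_E$-setting of Nekov\'a\v{r} and Pottharst, its formulation over an affinoid base $A$ requires careful bookkeeping of the flatness of $\cH_A(\Gamma_F^0)$ over $\Lambda_{\cO_E}(\Gamma_F^0)$ (and over $A$), together with the interaction of Grothendieck duality with the inverse-limit description of $\cH_A(\Gamma_F^0)$.
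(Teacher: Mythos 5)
Your parts (i) and (iii) are essentially sound: (i) follows the same route as the paper (perfectness of the global complex, of the Herr complexes at $\fq\in S_p$ via Kedlaya--Pottharst--Xiao, and of the unramified complexes at $\fq\in S^{(p)}$ from the projectivity hypothesis via the argument of \cite[Theorem 3.1.7(ii)]{p-adicHodgetheorybenoisheights}), though note that the finiteness/perfectness of $C^\bullet_{\varphi,\gamma_\fq}(\overline{\boldD}_\fq)$ is really \cite[Theorem 4.4.6 and Proposition 2.3.6]{KPX2014}, not the duality statement of Theorem~\ref{thm:cup-product-phi-gamma-over_H_A}. For (iii) you are in fact more explicit than the paper, which only records the injectivity of the local $H^0$-maps; your adjugate argument for $H^0(G_{F,S},\overline{V})=0$ is a reasonable supplement (one should check the monic polynomial in $\gamma-1$ is a nonzerodivisor in $\cH_A(\Gamma_F^0)$, which is fine for $A$ reduced).

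Part (ii), however, has a genuine gap. You kill the term ${\rm coker}(\alpha^2)$ by appealing to the duality of Selmer complexes together with the vanishing of $\boldR^0\Gamma(\overline{V}^*(1),\overline{\boldD}^\perp)$, but duality in degree $0$ cannot detect this cokernel. Indeed, the duality exact sequence (cf. Theorem~\ref{thm:Prop_Iw_SelComp}(iv), taken with $i=0$ and $i=1$) shows that $\boldR^0$ of the dual complex only controls $\cD^0\boldR^3\Gamma(\overline{V},\overline{\boldD})={\rm Hom}_{\cH_A(\Gamma_F^0)}(\boldR^3\Gamma(\overline{V},\overline{\boldD}),\cH_A(\Gamma_F^0))$, which vanishes automatically whenever $\boldR^3\Gamma(\overline{V},\overline{\boldD})$ is torsion; the torsion part of $\boldR^3$ is seen by $\cD^1$, which feeds into $\boldR^1$ of the dual complex, a module whose vanishing is not available at this stage (it is the hard Euler-system input of the later sections). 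Since $\boldR^3$ is precisely expected to be torsion (at each specialization it is $(T^*(1)^{H_{F,S}})^*\otimes\cH_{E_x}(\Gamma_F^0)$ by Theorem~\ref{thm:Prop_Iw_SelComp}(i)), your argument proves nothing about it. Your Nakayama step is also not quite complete: the formation of $H_{F,S}$-invariants does not commute with reduction mod $\frakm_x$, so $(V^*_x)^{H_{F,S}}=0$ does not immediately give $(V^*)^{H_{F,S}}\otimes_A A/\frakm_x=0$ (this can be repaired using that $V^*$ is projective and $A$ reduced, but it needs an argument). The paper proceeds differently and avoids these issues: by the control theorem for Selmer complexes (\cite[Propositions 5.11 and 5.12]{factorization_alge_p_adic}) one has $\boldR^3\Gamma(\overline{V},\overline{\boldD})\otimes_{\cH_A(\Gamma_F^0)}\cH_A(\Gamma_F^0)/\fp_x\simeq\boldR^3\Gamma(\overline{V}_x,\overline{\boldD}_x)$, which vanishes for every $x\in{\rm Max}(A)$ by Theorem~\ref{thm:Prop_Iw_SelComp}(i) and the hypothesis $(V^*_x)^{H_{F,S}}=0$; one then concludes $\boldR^3\Gamma(\overline{V},\overline{\boldD})=0$ from the ``vanishing at all points'' lemma for coadmissible modules (\cite[Lemma 7.7]{factorization_alge_p_adic}). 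If you prefer a direct argument, you would instead have to identify ${\rm coker}(\alpha^2)$ with a module built from $H^0(G_{F,S},\overline{V}^*(1))$ via Poitou--Tate over $\cH_A(\Gamma_F^0)$ and prove that these invariants vanish integrally, which is exactly the content you tried to shortcut through the Selmer-complex duality.
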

\begin{proof}

    \item[i)] Thanks to our running assumptions, the complex  $C^\bullet_{\rm ur}(\overline{V}_\fq)$ is perfect for every $\fq \in S^{(p)}$. Also, for $\fq \in S_p$, the complex $C^\bullet_{\varphi,\gamma_\fq}(\overline{\boldD}_\fq) \in \cD^{[0,2]}_{\rm perf}(\cH_A(\Gamma_{F_\fq}^0))$ thanks to \cite[Theorem 4.4.6]{KPX2014} combined with Proposition 2.3.6 in op. cit. Recall that we have an exact triangle 
    $\boldR\Gamma(\overline{V},\overline{\boldD}) \lra  \boldR\Gamma(G_{F,S}, \overline{V}) \lra [\widetilde{U}^\bullet_\fq(\overline{V},\overline{\boldD})]$
    from the definition of $\boldR\Gamma(\overline{V}, \overline{\boldD})$. 
    The proof of this portion follows from the discussion of \cite[Theorem 1.11]{jayanalyticfamilies} and the definition of $\widetilde{U}^\bullet_\fq(\overline{V},\overline{\boldD})$.

    \item[ii)] Suppose $x \in {\rm Spm}(A)$ is an $E$-valued point and let $\frakm_x$ denote the corresponding maximal ideal.  Then by \cite[Propositions 5.11 and 5.12]{factorization_alge_p_adic}, we have 
    $\boldR^3\Gamma(\overline{V}, \overline{\boldD}) \otimes_{\cH_A(\Gamma_F^0)} \cH_A(\Gamma_F^0)/\fp_x \simeq \boldR^3\Gamma(\overline{V}_x, \overline{\boldD}_x)$
    where $V_x= V \otimes_A A/ \frakm_x$,\quad  $\boldD_x= \boldD \otimes_A A/\frakm_x$, and $\fp_x=\ker\left(\cH_A(\Gamma_F^0) \xrightarrow{{\rm sp}_{x,{\rm Iw}}} \cH_{E_x}(\Gamma_F^0)\right)$ induced by the specialization at $x \in {\rm Max}(A)$ where $E_x=A/\frakm_x.$ Thanks to our running assumptions, we have $\boldR^3\Gamma(\overline{V}_x, \overline{\boldD}_x)=0$ for all $x \in {\rm Spm}(A)$ by Theorem~\ref{thm:Prop_Iw_SelComp}. The claim that $\boldR^3\Gamma(\overline{V}, \overline{\boldD})=0$ follows by \cite[Lemma 7.7]{factorization_alge_p_adic}.
    \item[iii)]  Note that $C^\bullet_{\varphi,\gamma_\fq}(\overline{\boldD}_\fq)$ naturally injects into $C^\bullet_{\varphi,\gamma_\fq}(D^\dagger_{\rm rig}(\overline{V}_\fq))$. By \cite[Theorem 2.4.3]{p-adicHodgetheorybenoisheights} (see also \cite{jayanalyticfamilies}, Theorem 2.8), we also have $H^0(C^\bullet_{\varphi,\gamma_\fq}(D^\dagger_{\rm rig}(\overline{V}_\fq))) \simeq H^0(F_\fq, \overline{V})\,.$ 
    Hence the result follows from the definition of $\boldR\Gamma(\overline{V},\overline{\boldD})$.
 
\end{proof}

\begin{theorem}[Theorem 5.21 in \cite{factorization_alge_p_adic}]
\label{thm:deg_SelCOm_Over_H_Ais_1_2_facto}
Assume that the following conditions hold:
    \item[i)] $\boldR^3\Gamma(\overline{V},\overline{\boldD})=0.$
    \item[ii)] For each prime ideal $\fp$ of $\Lambda_A[1/p]:=\Lambda_{\cO}(\Gamma_F^\circ)[1/p] \widehat{\otimes}A,$ we have $(V \otimes \Lambda_A^\iota[1/p]/\fp)^{G_{F,S}}=\{0\}$.
    \item[iii)] For each $\fq \in S^{(p)}$, the $A$-module $H^1(I_\fq,V)$ is projective.

Then we have $\boldR\Gamma(\overline{V}, \overline{\boldD}) \in \cD^{[1,2]}_{\rm perf}(\cH_A(\Gamma_F^0)).$ 
\end{theorem}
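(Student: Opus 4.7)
The plan is to first establish perfect amplitude $[0,3]$ with cohomology concentrated in degrees $1$ and $2$, and then to sharpen the amplitude to $[1,2]$ by a fiberwise analysis driven by hypothesis (ii). Since a Tate twist preserves the underlying $A$-module structure, the isomorphism $V^{I^w_\fq}/(t_\fq-1)V^{I^w_\fq}\simeq H^1(I_\fq,V)(1)$ makes hypothesis (iii) equivalent to the projectivity assumption of Proposition~\ref{prop_Finiteness_SelCOm_Over_H_A_And_coho_in_deg3_anddeg0_is_0}. Applying parts (i) and (iii) of that proposition together with our hypothesis (i), we deduce $\boldR\Gamma(\overline{V},\overline{\boldD}) \in \cD^{[0,3]}_{\rm perf}(\cH_A(\Gamma_F^0))$ with $\boldR^0\Gamma(\overline{V},\overline{\boldD})=0=\boldR^3\Gamma(\overline{V},\overline{\boldD})$, so the cohomology is concentrated in the desired range.

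Next I would appeal to the standard criterion that, for a perfect complex $P^\bullet$ over a (suitably Noetherian) coefficient ring $R$, one has $P^\bullet \in \cD^{[a,b]}_{\rm perf}(R)$ if and only if $H^i(P^\bullet \otimes^\bbLL_R k(\fP))=0$ for every prime $\fP$ of $R$ and every $i\notin[a,b]$. Applied with $R=\cH_A(\Gamma_F^0)$ and $[a,b]=[1,2]$, this reduces the theorem to verifying the fiberwise vanishing of $H^0$ and $H^3$ of $\boldR\Gamma(\overline{V},\overline{\boldD}) \otimes^\bbLL_{\cH_A(\Gamma_F^0)} k(\fP)$ at every prime $\fP$.

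For the $H^0$ part, the defining triangle of the Selmer complex yields an injection
$$H^0\bigl(\boldR\Gamma(\overline{V},\overline{\boldD}) \otimes^\bbLL k(\fP)\bigr) \hookrightarrow H^0\bigl(G_{F,S},\overline{V}\otimes^\bbLL k(\fP)\bigr)\,.$$
Setting $\fp:=\fP\cap\Lambda_A[1/p]$ and using flatness of the induced map $\Lambda_A^\iota[1/p]/\fp \to k(\fP)$, hypothesis (ii) gives $(V\otimes \Lambda_A^\iota[1/p]/\fp)^{G_{F,S}}=0$, which propagates to vanishing of the target above. For $H^3$, I would run a base-change argument in the style of Theorem~\ref{thm:Prop_Iw_SelComp}(iii), extended from the punctual setting $A=E$ to the present affinoid setting, together with the identification $\boldR^3\Gamma \simeq (T^*(1)^{H_{F,S}})^*\otimes \cH$ in Theorem~\ref{thm:Prop_Iw_SelComp}(i); this reduces the vanishing of $H^3$ in the fiber at $\fP$ to the vanishing of an appropriate Galois invariant of the dual representation at $\fp$, which again follows from (ii).

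The hard part will be executing the last step in full generality: Theorem~\ref{thm:Prop_Iw_SelComp} is stated only over $\cH_E(\Gamma_F^0)$, and lifting the concentration and duality statements there to the full affinoid setting—at arbitrary primes of $\cH_A(\Gamma_F^0)$, rather than just maximal ideals coming from $E$-valued points of $\mathrm{Max}(A)$—requires a careful affinoid base-change formalism (built on Theorem~\ref{thm:inverselimit_MIttag_SelCOm_Over_H_A} and the Kedlaya--Pottharst--Xiao machinery). Hypothesis (ii), which is formulated uniformly over all primes of $\Lambda_A[1/p]$ rather than just the maximal ones, is tailored precisely to supply the needed Galois-cohomological input at every such prime; combined with the flatness of $\Lambda_A[1/p]\hookrightarrow \cH_A(\Gamma_F^0)$, it should close the gap.
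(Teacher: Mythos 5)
A preliminary remark: the paper itself gives no proof of this statement — it is imported verbatim as Theorem 5.21 of \cite{factorization_alge_p_adic} — so there is no internal argument to measure yours against. Judged on its own terms, your outline is the natural one and uses exactly the toolkit the paper deploys in its own variants of this argument (Proposition~\ref{prop_Finiteness_SelCOm_Over_H_A_And_coho_in_deg3_anddeg0_is_0} for amplitude $[0,3]$ and vanishing of $\boldR^0\Gamma$, control/base-change for Selmer complexes, and the Stacks Project fiberwise tor-amplitude criterion; compare the proofs of Theorems~\ref{thm:thm:H^1_zero_torsion_H^2_ranking_sel_ColemanFamily_1_vari} and~\ref{thm:H^1_zero_torsion_H^2_ranking_sel_ColemanFamily}). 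Your identification of hypothesis (iii) with the projectivity of $V^{I^w_\fq}/(t_\fq-1)V^{I^w_\fq}$ via the Tate-twist-insensitive isomorphism is also fine.

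There is, however, one step that would fail as written: your treatment of degree $3$. You propose to deduce the vanishing of $H^3$ of the derived fibers from ``the vanishing of an appropriate Galois invariant of the dual representation at $\fp$, which again follows from (ii)''. Hypothesis (ii) controls invariants of twists of $V$ only; it says nothing about $V^*(1)$, and no dual analogue of (ii) is assumed, so this reduction does not close. Fortunately the detour is unnecessary: once $\boldR\Gamma(\overline{V},\overline{\boldD})$ is perfect of amplitude $[0,3]$ and $\boldR^3\Gamma(\overline{V},\overline{\boldD})=0$ by hypothesis (i), any representative $[P^0\to P^1\to P^2\to P^3]$ by finite projectives has $P^2\to P^3$ surjective, hence split (as $P^3$ is projective), so the complex is already perfect of amplitude $[0,2]$ and its derived fibers have no $H^3$ for free. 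The genuine content is then your degree-$0$ step, and there your sketch is repairable but needs to be made precise: the injection of $H^0$ of the derived fiber at $\fP$ into $H^0\bigl(G_{F,S},\overline{V}\otimes^{\bbLL}k(\fP)\bigr)$ rests on the base-change/control formalism you acknowledge, and the propagation of (ii) from $\fp=\fP\cap\Lambda_A[1/p]$ to $k(\fP)$ is not a bare flatness statement — Galois invariants do not commute with arbitrary flat base change — but follows because invariants of finitely generated modules commute with localization over the Noetherian domain $\Lambda_A[1/p]/\fp$ and with field extensions of its fraction field. With the degree-$3$ step replaced by the splitting argument and these points spelled out, your strategy is a viable reconstruction of the cited proof.
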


\begin{remark}
    When the Galois representation $V$ admits an integral structure, condition (ii) in Theorem~\ref{thm:deg_SelCOm_Over_H_Ais_1_2_facto} can be considerably weakened. We discuss this in 
    \S\ref{sec:Triangulations_in_families_1_1} below (see especially Proposition~\ref{prop:Irr_V_0_with_item_Irr_res_F}, where the required input is the residual irreducibility condition \eqref{item_Irr_res_F} below) in the context of Rankin--Selberg convolutions. The reader will notice that our argument to prove Proposition~\ref{prop:Irr_V_0_with_item_Irr_res_F} is formal and can be easily generalized.
\end{remark}

\begin{proposition}[Corollary 5.32 in \cite{factorization_alge_p_adic}]
\label{prop:projective_dim_H^2_SelComp}
Suppose that we are in the situation of Theorem~\ref{thm:deg_SelCOm_Over_H_Ais_1_2_facto}. Assume that $\boldR^1\Gamma(\overline{V}, \overline{\boldD})=0$ and $\boldR^2\Gamma(\overline{V}, \overline{\boldD})$ is a torsion $\cH_A(\Gamma_F^0)$-module. Then $\boldR^2\Gamma(\overline{V}, \overline{\boldD})$ admits a projective resolution $P^\bullet=[P_1 \xrightarrow{} P_2]$ of dimension $1$ by finitely generated  projective $\cH_A(\Gamma_F^0)$-modules of same rank. 
\end{proposition}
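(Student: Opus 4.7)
The plan is to start by invoking Theorem~\ref{thm:deg_SelCOm_Over_H_Ais_1_2_facto}, which places $\boldR\Gamma(\overline{V}, \overline{\boldD})$ in $\cD^{[1,2]}_{\rm perf}(\cH_A(\Gamma_F^0))$. Using homological truncation adapted to the coadmissible Fr\'echet--Stein setting via the formalism of \cite[\S1C]{jayanalyticfamilies}, I would represent this object by a strict two-term complex $P^\bullet = [P_1 \xrightarrow{\,\phi\,} P_2]$ concentrated in cohomological degrees $1$ and $2$, where $P_1$ and $P_2$ are finitely generated projective $\cH_A(\Gamma_F^0)$-modules.

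Next, the vanishing hypothesis $\boldR^1\Gamma(\overline{V}, \overline{\boldD}) = H^1(P^\bullet) = \ker(\phi) = 0$ forces the differential $\phi$ to be injective, so that $P^\bullet$ becomes a length-one projective resolution of its cokernel, which is identified with $\boldR^2\Gamma(\overline{V}, \overline{\boldD})$. This secures the existence of the desired resolution and yields the first half of the assertion.

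For the equality of ranks, I would descend to each Noetherian affinoid level by setting $P_{m,i} := P_m \otimes_{\cH_A(\Gamma_F^0)} \cH_{A,i}(\Gamma_F^0)$ and $M_i := \boldR^2\Gamma(\overline{V}, \overline{\boldD}) \otimes_{\cH_A(\Gamma_F^0)} \cH_{A,i}(\Gamma_F^0)$. Since $\cH_{A,i}(\Gamma_F^0)$ is Noetherian and flat over $\cH_A(\Gamma_F^0)$, the sequence $0 \to P_{1,i} \to P_{2,i} \to M_i \to 0$ remains exact, with $P_{m,i}$ finitely generated projective and $M_i$ finitely generated torsion (using the coadmissible characteristic ideal formalism of \S\ref{chap:Characteristic_Ideal_and_Determinant} together with the torsion hypothesis on $\boldR^2\Gamma(\overline{V}, \overline{\boldD})$). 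Inverting nonzero-divisors at the minimal primes kills $M_i$ and turns $\phi_i$ into an isomorphism of locally free modules of the same rank, so $\rank P_{1,i} = \rank P_{2,i}$ for every $i$, and passing to the inverse limit recovers the equality $\rank P_1 = \rank P_2$.

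The principal obstacle will be the very first step: producing a strict two-term representative by finitely generated projectives from a perfect complex with cohomological amplitude $[1,2]$. Over a Noetherian ring this is a routine exercise in smart/stupid truncations, but over the non-Noetherian Fr\'echet--Stein algebra $\cH_A(\Gamma_F^0)$ one must combine the coadmissibility formalism (cf.\ Theorem~\ref{thm:inverselimit_MIttag_SelCOm_Over_H_A}) with Nakayama-type lifting arguments in order to produce compatible projective representatives at each affinoid level that assemble coherently under $\varprojlim_i$. Once this truncation step is in place, the remaining arguments reduce to formal diagram chases of the kind used in Proposition~\ref{prop:det_torsion_M_Over_H_A_char_Ideal_M}.
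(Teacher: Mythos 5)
Your proposal is correct and follows essentially the same route as the paper, whose proof simply invokes Theorem~\ref{thm:deg_SelCOm_Over_H_Ais_1_2_facto} and the argument of \cite[Corollary 5.34]{factorization_alge_p_adic}: take a two-term representative $[P_1\to P_2]$ in degrees $1,2$, use $\boldR^1\Gamma(\overline{V},\overline{\boldD})=0$ to get injectivity, and use torsionness of $\boldR^2\Gamma(\overline{V},\overline{\boldD})$ (after localizing at the generic point, or level by level over the Noetherian rings $\cH_{A,i}(\Gamma_F^0)$) to equate the ranks. The ``principal obstacle'' you flag is milder than you fear, since membership in $\cD^{[1,2]}_{\rm perf}(\cH_A(\Gamma_F^0))$ already means (in the convention of Nekov\'a\v{r} used throughout the paper) quasi-isomorphic to a complex of finitely generated projectives concentrated in degrees $1$ and $2$, so no separate strictification/truncation argument is needed.
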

\begin{proof}
\label{proof:prop:projective_dim_H^2_SelComp}
By Theorem~\ref{thm:deg_SelCOm_Over_H_Ais_1_2_facto}, we know that $\boldR\Gamma(\overline{V}, \overline{\boldD}) \in \cD^{[1,2]}_{\rm perf}(\cH_A(\Gamma_F^0)).$  Then the result follows arguing as in \cite[Corollary 5.34]{factorization_alge_p_adic}.
\end{proof}

\begin{remark}
    Theorem~\ref{thm:Prop_Iw_SelComp}(ii) tells us that if we assume that the local conditions determined by ${\boldD}_x$ are strict ordinary in the sense of \cite[\S3A-\S3B]{jayanalyticfamilies} for all $x \in {\rm Max}(A)$, then the vanishing $\boldR^1\Gamma(\overline{V}, \overline{\boldD})=0$ is equivalent to the requirement that $\boldR^2\Gamma(\overline{V}, \overline{\boldD})$ be torsion. Note that, for the main arithmetic applications (which we discuss in the next chapter), we will work with strict ordinary local conditions.
\end{remark}

\subsection{Functional equation for the Iwasawa theoretic Selmer complex over $A$}
\label{sec:Algebraic_Functional_Equation_of_Iwasawa_Selmer_Complex}
We are now in a position to state and prove the main result (Theorem~\ref{thm:char_ideals_for_selCom_and_dual_selCom_H_A}) of this chapter.

\begin{theorem}
\label{thm:det_selCOm_over_H_A_=_charIdeal_Cohomo_deg2}
Suppose that $A$ is a Tate algebra, and assume that we are in the situation of Proposition~\ref{prop:projective_dim_H^2_SelComp}. Then,  $\det(\boldR\Gamma(\overline{V}, \overline{\boldD}))={\rm char}_{\cH_A(\Gamma_F^0)}(\boldR^2\Gamma(\overline{V}, \overline{\boldD}))$.
\end{theorem}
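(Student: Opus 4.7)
The plan is to deduce the theorem from Proposition~\ref{prop:projective_dim_H^2_SelComp} combined with a mild extension of Proposition~\ref{prop:det_torsion_M_Over_H_A_char_Ideal_M} from free to projective two-term resolutions, applied to the module $M := \boldR^2\Gamma(\overline{V}, \overline{\boldD})$.

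First, I would use Proposition~\ref{prop:projective_dim_H^2_SelComp} together with the vanishing $\boldR^1\Gamma(\overline{V}, \overline{\boldD}) = 0$ to represent $\boldR\Gamma(\overline{V}, \overline{\boldD})$ by a two-term complex $[P_1 \xrightarrow{\phi} P_2]$ concentrated in degrees $1$ and $2$, where $P_1, P_2$ are finitely generated projective $\cH_A(\Gamma_F^0)$-modules of the same rank, $\phi$ is injective, and the cokernel of $\phi$ is isomorphic to $M$. The defining property of the determinant functor then gives
\begin{equation*}
\det \boldR\Gamma(\overline{V}, \overline{\boldD}) \,=\, \det(P_1)^{-1} \otimes \det(P_2),
\end{equation*}
and the theorem reduces to the claim that this ideal equals ${\rm char}_{\cH_A(\Gamma_F^0)}(M)$.

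The next step is to follow the outline of the proof of Proposition~\ref{prop:det_torsion_M_Over_H_A_char_Ideal_M}, descending to each level $i$ by setting $P_{j,i} := P_j \otimes_{\cH_A(\Gamma_F^0)} \cH_{A,i}(\Gamma_F^0)$ and $M_i := M \otimes_{\cH_A(\Gamma_F^0)} \cH_{A,i}(\Gamma_F^0)$. Flatness of the transition maps yields a two-term projective resolution of $M_i$ over $\cH_{A,i}(\Gamma_F^0)$, and the coadmissibility compatibility \eqref{eq:char_M_(i+1)_char_M_i_H_Ai} for characteristic ideals together with the analogous compatibility for the determinant (as in \eqref{eq:det_M_(i+1)_det_M_i_H_Ai}) reduces the claim to the level-$i$ assertion
\begin{equation*}
\det(P_{1,i})^{-1} \otimes \det(P_{2,i}) \,=\, {\rm char}_{\cH_{A,i}(\Gamma_F^0)}(M_i), \qquad \forall i \geq 1.
\end{equation*}
At each level, $\cH_{A,i}(\Gamma_F^0)$ is a Noetherian normal domain, so both sides are reflexive ideals and the equality can be tested after localizing at each height-$1$ prime $\fp$.

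At such a prime, the localization $\cH_{A,i}(\Gamma_F^0)_\fp$ is a DVR, and finitely generated projective modules over a DVR are automatically free of equal rank. Hence, after localization, one lands in the free-module situation treated in \cite[Proposition 4.44]{On_Artin_formalism_for_p-adic_Garrett--Rankin_Lfunctions}, which yields the equality at $\fp$; invoking \cite[Lemma 4.43]{On_Artin_formalism_for_p-adic_Garrett--Rankin_Lfunctions} then globalizes the equality over $\cH_{A,i}(\Gamma_F^0)$. The main (and essentially only) obstacle in this strategy is the projective-versus-free discrepancy between the hypothesis of Proposition~\ref{prop:projective_dim_H^2_SelComp} and that of Proposition~\ref{prop:det_torsion_M_Over_H_A_char_Ideal_M}; but this is handled cleanly by the DVR localization trick, which reduces the projective case to the free case already treated, without requiring any substantively new input beyond the tools assembled in \S\ref{chap:Characteristic_Ideal_and_Determinant}--\S\ref{chap:Selmer_Complexes}.
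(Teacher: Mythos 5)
Your proposal is correct and follows essentially the same route as the paper: reduce to the degree-two cohomology via Proposition~\ref{prop:projective_dim_H^2_SelComp} and the vanishing of $\boldR^1\Gamma(\overline{V},\overline{\boldD})$, represent the complex by a two-term resolution in degrees $1$ and $2$, and then identify the determinant with the characteristic ideal level by level over $\cH_{A,i}(\Gamma_F^0)$ by localizing at height-one primes and passing to the limit. The one place where you genuinely deviate is also the most valuable part of your write-up: the paper's own proof simply cites Proposition~\ref{prop:det_torsion_M_Over_H_A_char_Ideal_M} (stated for resolutions by \emph{free} modules) after writing $P_i\simeq \cH_A(\Gamma_F^0)^{\oplus r}$, tacitly treating the projective modules produced by Proposition~\ref{prop:projective_dim_H^2_SelComp} as free, whereas you keep them merely projective and re-run the argument of Proposition~\ref{prop:det_torsion_M_Over_H_A_char_Ideal_M}: base change to each $\cH_{A,i}(\Gamma_F^0)$, test the equality at height-one primes where the localization is a DVR (so projective becomes free), and then globalize and pass to the inverse limit exactly as in the paper. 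This costs nothing beyond what is already in \S\ref{chap:Characteristic_Ideal_and_Determinant}, and it cleanly closes the projective-versus-free mismatch that the paper's proof elides; one could alternatively observe that finitely generated projective modules over the affinoid algebras $\cH_{A,i}(\Gamma_F^0)$ (or, in the paper's implicit reading, over $\cH_A(\Gamma_F^0)$ itself) can be taken free, but your DVR localization makes the step explicit without needing such a result.
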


In other words, the determinant of the Iwasawa theoretic Selmer complex computes the characteristic ideal of an appropriately defined Iwasawa theoretic Selmer group.
\begin{proof}
\label{proof:thm:det_selCOm_over_H_A_=_charIdeal_Cohomo_deg2}
By Proposition~\ref{prop:projective_dim_H^2_SelComp}, we see that $\boldR^2\Gamma(\overline{V}, \overline{\boldD})$ has a projective resolution $P^\bullet=[P_1 \xrightarrow{} P_2]$ of dimension $1$ by finite $\cH_A(\Gamma_F^0)$-modules $P_i\simeq \cH_A(\Gamma_F^0)^{\oplus r}$ of the same rank. Under the running assumptions, we note that we automatically have
$\det\,\boldR\Gamma(\overline{V}, \overline{\boldD})\,=\,\det\,\boldR^2\Gamma(\overline{V}, \overline{\boldD})\,.$
Moreover, by Proposition~\ref{prop:det_torsion_M_Over_H_A_char_Ideal_M}, we have 
$\det\,\boldR^2\Gamma(\overline{V}, \overline{\boldD})\,=\,{\rm char}_{\cH_A(\Gamma_F^0)}\,\boldR^2\Gamma(\overline{V}, \overline{\boldD})\,,$
as required.
\end{proof}

\begin{theorem}
\label{thm:char_ideals_for_selCom_and_dual_selCom_H_A}
Suppose that the hypotheses of Theorem~\ref{thm:det_selCOm_over_H_A_=_charIdeal_Cohomo_deg2} are satisfied. Then, 
$${\rm char}_{\cH_A(\Gamma_F^0)}\,\boldR^2\Gamma(\overline{V}^*(1), \overline{\boldD}^\perp)\, = \,{\rm char}_{\cH_A(\Gamma_F^0)}\,\boldR^2\Gamma(\overline{V}, \overline{\boldD})^\iota\,.$$ 
\end{theorem}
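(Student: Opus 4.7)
The plan is to mirror the proof of Theorem~\ref{thm:Func_eq_hida} in the affinoid-coefficient setting, replacing the ordinary Greenberg Selmer complexes with the Selmer complexes over $\cH_A(\Gamma_F^0)$ and using the duality package developed in \S\ref{subsec_duality_over_H_A}. The three ingredients already assembled are: (a) the identification of characteristic ideals with determinants of Selmer complexes (Theorem~\ref{thm:det_selCOm_over_H_A_=_charIdeal_Cohomo_deg2}), (b) the existence of a length-one projective resolution of $\boldR^2\Gamma(\overline{V},\overline{\boldD})$ by free $\cH_A(\Gamma_F^0)$-modules of the same rank (Proposition~\ref{prop:projective_dim_H^2_SelComp}), and (c) the perfect duality
\[
\boldR\Gamma(\overline{V}^*(1),\overline{\boldD}^\perp)\;\simeq\;\boldR{\rm Hom}_{\cH_A(\Gamma_F^0)}(\boldR\Gamma(\overline{V},\overline{\boldD}),\cH_A(\Gamma_F^0))[-3]
\]
supplied by Theorem~\ref{thm:duality_selComplex_Iw_iso}(iii), whose applicability is guaranteed by the running hypotheses (the projectivity of $V^{I_\fq^w}/(t_\fq-1)V^{I_\fq^w}$ for $\fq\in S^{(p)}$ is part of the assumptions of Theorem~\ref{thm:deg_SelCOm_Over_H_Ais_1_2_facto} inherited from Theorem~\ref{thm:det_selCOm_over_H_A_=_charIdeal_Cohomo_deg2}).

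First, I would use (b) to represent $\boldR\Gamma(\overline{V},\overline{\boldD})$ by a two-term complex $[P_1\xrightarrow{\,\phi\,}P_2]$ concentrated in degrees $1$ and $2$, with $P_1,P_2$ finite free of the same rank. Since $\boldR^1\Gamma(\overline{V},\overline{\boldD})=0$ and $\boldR^2\Gamma(\overline{V},\overline{\boldD})$ is $\cH_A(\Gamma_F^0)$-torsion, the map $\phi$ is injective. Then Theorem~\ref{thm:det_selCOm_over_H_A_=_charIdeal_Cohomo_deg2} yields
\[
\det\phi \;=\;\det\boldR\Gamma(\overline{V},\overline{\boldD})\;=\;{\rm char}_{\cH_A(\Gamma_F^0)}\,\boldR^2\Gamma(\overline{V},\overline{\boldD})\,.
\]

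Next, I would dualise: applying $\boldR{\rm Hom}_{\cH_A(\Gamma_F^0)}(-,\cH_A(\Gamma_F^0))[-3]$ to $[P_1\xrightarrow{\phi}P_2]$ and invoking (c) produces a quasi-isomorphism
\[
\boldR\Gamma(\overline{V}^*(1),\overline{\boldD}^\perp)\;\simeq\;[P_2^{*}\xrightarrow{\,{}^t\phi\,}P_1^{*}]
\]
concentrated in degrees $1$ and $2$, where $P_i^*:={\rm Hom}_{\cH_A(\Gamma_F^0)}(P_i,\cH_A(\Gamma_F^0))$. By the same torsion-plus-vanishing argument as in the proof of Theorem~\ref{thm:Func_eq_hida} (noting that $\boldR^2\Gamma(\overline{V}^*(1),\overline{\boldD}^\perp)$ is torsion, so the cokernel of ${}^t\phi$ is torsion and hence ${\rm Hom}({\rm coker}({}^t\phi),\cH_A(\Gamma_F^0))=0$), the map ${}^t\phi$ is injective. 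Applying Theorem~\ref{thm:det_selCOm_over_H_A_=_charIdeal_Cohomo_deg2} once more to the dual data yields
\[
\det({}^t\phi)\;=\;{\rm char}_{\cH_A(\Gamma_F^0)}\,\boldR^2\Gamma(\overline{V}^*(1),\overline{\boldD}^\perp)\,.
\]

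The final step is to compare $\det(\phi)$ with $\det({}^t\phi)$. The $\iota$-twist appears here precisely because $\overline{V}=V\otimes\cH_A^{\iota}(\Gamma_F^0)$ while $\overline{V}^*(1)=V^*(1)\otimes\cH_A^{\sharp}(\Gamma_F^0)$; the Grothendieck duality pairing lands in the trivial-action copy of $\cH_A(\Gamma_F^0)$, so transposing a matrix representing $\phi$ with respect to compatible bases yields $\det({}^t\phi)=\det(\phi)^\iota$ (this is the same phenomenon exploited in the last line of the proof of Theorem~\ref{thm:Func_eq_hida}). Chaining the three equalities gives the asserted functional equation. The only delicate point is the bookkeeping of the $\iota$/$\sharp$-involutions in the duality isomorphism; once this is handled exactly as in \S\ref{sec:func_eq_ordinary}, the argument is essentially formal.
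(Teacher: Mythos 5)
Your proposal is correct and follows essentially the same route as the paper's own proof: represent $\boldR\Gamma(\overline{V},\overline{\boldD})$ by a two-term free resolution via Proposition~\ref{prop:projective_dim_H^2_SelComp}, identify determinant with characteristic ideal (Theorem~\ref{thm:det_selCOm_over_H_A_=_charIdeal_Cohomo_deg2}, resting on Proposition~\ref{prop:det_torsion_M_Over_H_A_char_Ideal_M}), apply the duality isomorphism of Theorem~\ref{thm:duality_selComplex_Iw_iso} to realise the dual Selmer complex as the transpose complex $[P_2^*\to P_1^*]$, and conclude via injectivity of the transpose and the $\iota$-twist. The only cosmetic difference is that the paper computes the dual side directly from this transpose resolution with Proposition~\ref{prop:det_torsion_M_Over_H_A_char_Ideal_M} rather than re-invoking Theorem~\ref{thm:det_selCOm_over_H_A_=_charIdeal_Cohomo_deg2}, which is what your argument amounts to anyway.
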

\begin{proof}
\label{proof:thm:char_ideals_for_selCom_and_dual_selCom_H_A}
Since we work in the setting of Theorem~\ref{thm:det_selCOm_over_H_A_=_charIdeal_Cohomo_deg2} (in particular, the hypothesis of Theorem~\ref{thm:duality_selComplex_Iw_iso}(ii) holds), we have \begin{equation}
\label{eqn_2024_07_18_1121}
\boldR\Gamma(\overline{V}^*(1),\overline{\boldD}^\perp) \xrightarrow{\,\,\sim\,\,} \boldR{\rm Hom}_{\cH_A(\Gamma_{F}^0)}(\boldR\Gamma(\overline{V},\overline{\boldD}),\cH_A(\Gamma_{F}^0))[-3]\,.
\end{equation}
By Proposition~\ref{prop:projective_dim_H^2_SelComp}, the complex $\boldR\Gamma(\overline{V},\overline{\boldD})$ can be represented by a complex of the form 
$$\left[P_1\simeq \cH_A(\Gamma_F^0)^{\oplus r}\xrightarrow{\,\,\,u\,\,\,}\cH_A(\Gamma_F^0)^{\oplus r}\simeq P_2\right]\,,$$ and that 
\begin{equation}
\label{eqn_2024_07_18_1120}
    {\rm char}_{\cH_A(\Gamma_F^0)}\,\boldR^2\Gamma(\overline{V}, \overline{\boldD})\,=\,\det\,\boldR\Gamma(\overline{V},\overline{\boldD})\,=\,\det(u)\,. 
\end{equation}
It follows from \eqref{eqn_2024_07_18_1121} that 
\begin{equation}
\label{eqn_2024_07_18_1127}
    \det\,\boldR\Gamma(\overline{V}^*(1),\overline{\boldD}^\perp)\,=\,\det(^{t}u)\,=\det(u)^{\iota}\,, 
\end{equation}
where $^{t}u$ denotes the transpose of the map $u$. More precisely, we have 
\begin{equation}
\label{eqn_2024_07_18_1154}
\boldR\Gamma(\overline{V}^*(1),\overline{\boldD}^\perp)\simeq \left[P_2^* \stackrel{\,\,^{t}u\,\,}{\lra} P_1^* \right] \quad \hbox{ concentrated in degrees 1 and 2}\,,
\end{equation}
where $P_i^*$ ($i=1,2$) are still finite $\cH_A(\Gamma_F^0)$-modules. Moreover, the map $^{t}u$ is injective because ${\rm coker}(u)\simeq \boldR^2\Gamma(\overline{V},\overline{\boldD})$ is torsion by assumption (so that ${\rm Hom}({\rm coker}(u), \cH_A(\Gamma_F^0))=\{0\}$). 
Combining \eqref{eqn_2024_07_18_1120}, \eqref{eqn_2024_07_18_1127}, \eqref{eqn_2024_07_18_1154}, and Proposition~\ref{prop:det_torsion_M_Over_H_A_char_Ideal_M}, we conclude that 
$$
{\rm char}_{\cH_A(\Gamma_F^0)}\,\boldR^2\Gamma(\overline{V}^*(1),\overline{\boldD}^\perp)\,=\, \det\,\boldR^2\Gamma(\overline{V}^*(1),\overline{\boldD}^\perp)\,=\,  \det\,\boldR\Gamma(\overline{V}^*(1),\overline{\boldD}^\perp)\,=\,{\rm char}_{\cH_A(\Gamma_F^0)}\,\boldR^2\Gamma(\overline{V}, \overline{\boldD})^{\iota}\,,
$$
as required.
\end{proof}

\part{Arithmetic applications in the non-ordinary scenario}
\label{part_arithmetic_nonord}

\section{Rankin--Selberg products of non-ordinary families}
\label{chap:5_Rankin-Selberg_product}
\subsection{Non-ordinary families}
\label{sec_4_4_2024_07_12_1610}

We let $\cW={\rm Hom_{cont}}(\Zp^\times,\Cp^\times)$ denote the weight space. We consider $\cW$ as a rigid analytic space: the space $\cW$ is the disjoint union of $p-1$ open balls of radius $1$. 

We identify $\ZZ$ as a subset of $\cW$ via $\ZZ\ni k \mapsto \nu_k \in \cW$, where $\nu_k(x)=x^k$.  More generally, a homomorphism $\Zp^\times \to \overline{\QQ}_p^\times$ of the form $x \mapsto x^n\alpha(x)$, where $n \in \ZZ$ and $\alpha$ is a Dirichlet character of $p$-power conductor, is called a locally algebraic character of $\Zp^\times$. We sometimes denote this character by $\kappa=n+\alpha$. For such $\kappa$, we put $w(\kappa):= n.$
\begin{defn}
\label{defn:Defn3.7_OchiaiIMC}
Let $k_0$ be an integer and $r$ be a natural number.
    \item[i)] Let $U=B(k_0, p^{-r})$ be a rigid analytic space  (a wide-open disc), whose $\QQ_p$ points are identified with the open disc centered at $\mathbb{N}\supset k_0 \in \cW$ with radius $p^{-r}$.
    \item[ii)]  Let $\Lambda_U$ denote the power bounded analytic functions on $U$, i.e. $$\Lambda_U:= \{f \in E[[T]]: \quad \hbox{ the Gauss norm } ||f|| \hbox{ of } f \hbox{ is }  \leq 1 \} \simeq \cO_E[[T]]\,.$$
 We write $\kappa_U$ for the universal character given as the composition $\ZZ^\times_p \hookrightarrow{} \Lambda(\ZZ^\times_p)^\times \to \Lambda^\times_U$ (where the second arrow is the restriction of functions).
        
    \item[iii)] A character  $\eta\in U$ is called arithmetic if it belongs to $\ZZ \cap U$. When this is the case, the integer $k$ such that $\eta=\nu_k$ is called the weight of $\eta$.
\end{defn}

\subsubsection{Coleman families}
Recall that given a cuspidal eigen-newform $f \in S_{k_f+2}(\Gamma_1(N))$, we denote by $f_{\alpha_f}\in S_{k_f+2}(\Gamma_1(N)\cap \Gamma_0(p))$ its $p$-stabilization. We henceforth assume that $k_f\geq 0$. As in \cite[\S4.5]{LZColeman}, let us set $B_U=\Lambda_U[1/p]$ and let $I_k:=\ker(\nu_k)$ for an integer $k \geq 0$.

\begin{defn}
\label{defn:Coleman_family}
A Coleman family $\f$ over $U$ (of tame level $N$) is a power series 
$$\f=\sum_{n\geq 1}a_n(\f)q^n \in \Lambda_U[[q]],$$ 
with coefficients $a_1(\f)=1$ and $a_p(\f)$ is invertible in $B_U$ such that, for all but finitely many classical weights $k \in U \cap \ZZ_{\geq 0}$, the series $\f_k=\sum_{n \geq 1}a_n(\f)(k) \in \cO_E[[q]]$ is the $q$-expansion of a $p$-stabilized normalised eigenform of weight $k+2$ and level $\Gamma_1(N) \cap \Gamma_0(p)$. Moreover, the slope $val_p(a_p(\f_k))$ is constant.
\end{defn}

Coleman families exist and are (in a certain sense) unique (cf. \cite{bellaiche2012}, \cite[Theorem 4.6.4]{LZColeman}):

\begin{theorem}
\label{thm:Theorem 4.6.4_LZColeman}
For $f$ as above, there exists a disc $U \ni k_{f}$ in $\cW$ and a unique Coleman family $\f$ such that ${\f}_{k_f}=f_{\alpha_f}$.
\end{theorem}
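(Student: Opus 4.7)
My plan is to deduce the existence and uniqueness of $\f$ from the geometry of the Coleman--Mazur eigencurve of tame level $N$. First, I would invoke the construction of the cuspidal eigencurve $\mathcal{E} = \mathcal{E}^{\mathrm{cusp}}(N) \to \cW$ (due to Coleman--Mazur, Buzzard, and others) as a reduced rigid analytic curve equipped with a locally finite flat weight map $\pi$, whose $\overline{\QQ}_p$-points parametrize systems of Hecke eigenvalues of finite-slope overconvergent cuspidal $p$-adic eigenforms of prescribed weight. Since $a_p(f_{\alpha_f}) = \alpha_f \neq 0$, the form $f_{\alpha_f}$ determines a unique closed point $x_f \in \mathcal{E}$ above $\nu_{k_f} \in \cW$.

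The heart of the argument is to show that $\pi$ is étale at $x_f$. Under the running hypothesis that $f$ is a cuspidal newform of level coprime to $p$ (tacit throughout the paper), the roots $\alpha_f, \beta_f$ of the Hecke polynomial of $f$ at $p$ satisfy $v_p(\alpha_f) + v_p(\beta_f) = k_f + 1$; choosing $\alpha_f$ to be the root of smaller valuation guarantees $v_p(\alpha_f) \leq (k_f+1)/2 < k_f + 1$, which is the non-critical slope condition. Coleman's small-slope classicality theorem then yields étale-ness of $\pi$ at $x_f$. I would next pick an affinoid disc $U = B(k_f, p^{-r}) \subset \cW$ small enough that $\pi$ restricts to an isomorphism $V \xrightarrow{\,\sim\,} U$ from an affinoid component $V \ni x_f$ of the slope-$\leq v_p(\alpha_f)$ locus $\pi^{-1}(U)_{\leq v_p(\alpha_f)}$, and transport the universal system of Hecke eigenvalues along the inverse section $U \xrightarrow{\,\sim\,} V$ to obtain analytic functions $a_n(\f) \in \cO(U) = B_U$ for each $n \geq 1$. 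After shrinking $U$ if necessary, these will be power-bounded, so that $a_n(\f) \in \Lambda_U$.

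I would then verify that the resulting $\f = \sum a_n(\f) q^n$ satisfies Definition~\ref{defn:Coleman_family}: the normalizations $a_1(\f) = 1$ and $a_p(\f) \in B_U^\times$ hold by construction; the $U_p$-slope $v_p(a_p(\f_k))$ is constant on $V$ because $V$ lies within a fixed slope-$\leq h$ affinoid of the eigencurve; and by Coleman's classicality theorem, for all but finitely many classical weights $k \in U \cap \ZZ_{\geq 0}$ the specialization $\f_k$ is a classical $p$-stabilized newform of weight $k+2$ and level $\Gamma_1(N) \cap \Gamma_0(p)$. The equality $\f_{k_f} = f_{\alpha_f}$ is forced by the construction of $x_f$. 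For uniqueness, any second Coleman family $\f'$ over $U$ with $\f'_{k_f} = f_{\alpha_f}$ defines a second section $U \to V$ of $\pi$ landing at $x_f$ over $\nu_{k_f}$; étale-ness of $\pi$ at $x_f$ forces the two sections to coincide on a neighborhood of $\nu_{k_f}$, and rigidity of rigid analytic functions propagates the equality $a_n(\f) = a_n(\f')$ to all of $U$.

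The hard part will be justifying the étale-ness of $\pi$ at $x_f$. Under the non-critical slope hypothesis (automatic in our setting by the slope calculation above), this is Coleman's classical theorem and is standard; the geometric input is essentially that the $U_p$-characteristic power series on a slope-$\leq h$ affinoid cuts out a finite flat cover of weight space, which is unramified at non-critical classical points. For the excluded critical-slope $p$-stabilization (i.e. $v_p(\alpha_f) = k_f + 1$), one would need the finer analysis of Bellaïche and Chenevier on the geometry of the eigencurve at such points, but this regime falls outside the statement of the theorem as formulated.
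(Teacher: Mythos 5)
The paper offers no proof of this statement: it is quoted as a known result, with the proof delegated to \cite{bellaiche2012} and \cite[Theorem 4.6.4]{LZColeman}. Your eigencurve sketch (point $x_f$ on the cuspidal eigencurve, \'etaleness of the weight map at $x_f$, a local section transporting the universal Hecke eigenvalues, uniqueness by rigidity) is indeed the standard argument underlying those references, so in outline you are on the right route. Two points, however, are genuine gaps relative to the statement as the paper intends and later uses it.

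First, the scope. You arrange noncritical slope by ``choosing $\alpha_f$ to be the root of smaller valuation'' and you declare the critical-slope stabilization $v_p(\alpha_f)=k_f+1$ to be outside the theorem. Neither move is warranted: the statement is made for ``its $p$-stabilization'' $f_{\alpha_f}$ with no choice of root imposed, and the surrounding framework shows the critical-slope case is meant to be included --- hypothesis \eqref{item_nobf} is exactly the ``noble'' condition needed for a critical-slope ordinary stabilization, and the citation of \cite{bellaiche2012} is there precisely because Bella\"{\i}che's analysis of the eigencurve at such points is the required input. As written, your argument proves a strictly weaker statement than the one the paper invokes in \S\ref{sec:Triangulation_Appli_BF}. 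Second, the \'etaleness step itself is too quick: Coleman's small-slope classicality theorem does not by itself give \'etaleness of $\pi$ at $x_f$; one also needs the $p$-regularity $\alpha_f\neq\beta_f$ (assumption \eqref{item_reg}) so that the relevant generalized eigenspace of overconvergent forms is one-dimensional, and your bound $v_p(\alpha_f)\le (k_f+1)/2$ allows the boundary case in which $\alpha_f=\beta_f$ is not excluded and \'etaleness is not known. Two smaller points worth a sentence each: constancy of the slope comes from shrinking $U$ so that $|a_p|$ is constant (membership in a slope-$\le h$ affinoid only bounds it), and your uniqueness argument tacitly assumes that an arbitrary competing family in the sense of Definition~\ref{defn:Coleman_family} defines a rigid-analytic section of the eigencurve over $U$, an interpolation step (from the Zariski-dense set of classical specializations) that needs justification.
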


\begin{theorem}[\cite{BB_PR_Volume}, \S6.3]
\label{thm:Theorem4.6.6_LZCol} 
Let $\f$ be the Coleman family passing through $f_{\alpha_f}$ over a sufficiently small wide open disc $U \ni k_{f}$. Then, there exist a free $B_U$-module $V_\f$ of rank $2$ with Galois action $G_\QQ$ such that, for each integer $0<k \in U,$ the modular form $\f_k$ is a classical eigenform, and we have isomorphisms of $E$-linear $G_{\QQ}$-representations $V_{\f}/I_k V_\f=V_E(\f_k)$.  
\end{theorem}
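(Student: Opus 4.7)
The plan is to construct $V_{\f}$ by interpolating the classical Deligne representations $V_E(\f_k)$ across the family. Since the Coleman family $\f$ is obtained from a point on the Coleman--Mazur eigencurve lying on the wide open disc $U$, the standard machinery of pseudorepresentations (or equivalently, Chenevier's determinants) applies. The three main steps are: (i) construct a continuous pseudocharacter/determinant of $G_{\QQ, S}$ (with $S$ consisting of primes dividing $Np\infty$) valued in $B_U$; (ii) upgrade it to an honest Galois representation on a free rank-$2$ module using the residual irreducibility of $\widetilde{\rho}_f$; (iii) verify the specialization property at classical weights.

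For step (i), I would observe that, by Theorem~\ref{thm:Theorem 4.6.4_LZColeman}, the Coleman family $\f$ gives rise to a ring homomorphism from the universal ordinary-slope Hecke algebra on $U$ to $B_U$, sending $T_\ell \mapsto a_\ell(\f)$ for $\ell \nmid Np$. Classical weights $k \in U \cap \ZZ_{\geq 0}$ form a Zariski dense subset of $U$, and at each such $k$ the Deligne representation $V_E(\f_k)$ provides a $2$-dimensional continuous trace $t_k: G_{\QQ, S} \to E$ sending $\mathrm{Frob}_\ell \mapsto a_\ell(\f_k)$. Since $B_U$ is a Noetherian domain and these traces are compatible with the interpolation map $B_U \twoheadrightarrow E$ at each classical $k$, a straightforward density-continuity argument (cf. the construction of Galois representations on the eigencurve in \cite{BB_PR_Volume}) yields a unique continuous determinant $D_{\f}: G_{\QQ, S} \to B_U$ of dimension $2$ interpolating the $t_k$.

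For step (ii), I would shrink $U$ (if necessary) so that the residual representation $\widetilde{\rho}_{\f}$ attached to $D_{\f}$ is absolutely irreducible; this is possible because $\widetilde{\rho}_f$ is assumed irreducible and irreducibility is an open condition on the family. Then by Nyssen--Rouquier (or Chenevier's determinant formalism), any continuous determinant over a Noetherian local ring whose residual pseudocharacter comes from an absolutely irreducible representation lifts uniquely to an honest representation on a free module of rank $2$. This yields the desired $V_{\f}$ as a free $B_U$-module of rank $2$ carrying a continuous action of $G_{\QQ, S}$.

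For step (iii), the isomorphism $V_{\f}/I_k V_{\f} \simeq V_E(\f_k)$ at each classical $k$ follows from two observations: both sides are $2$-dimensional continuous $E$-representations of $G_{\QQ, S}$ with the same trace of Frobenius at every prime $\ell \notin S$ (by construction of $D_{\f}$), and both are residually absolutely irreducible; hence by the Brauer--Nesbitt theorem together with Chebotarev density, their semisimplifications coincide, and absolute irreducibility upgrades this to an honest isomorphism. The main obstacle is the upgrade in step (ii), which hinges on controlling residual irreducibility uniformly on $U$; this is ultimately handled by the openness of the absolutely irreducible locus on the eigencurve, combined with our standing hypothesis on $\widetilde{\rho}_f$. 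A secondary (but routine) technical point is verifying the continuity of $D_{\f}$ with respect to the natural topology on $B_U$, which one obtains by realizing $D_{\f}$ as a pullback of the universal determinant on the eigencurve.
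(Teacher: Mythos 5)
This statement is quoted in the paper from the literature (\cite{BB_PR_Volume}, \S 6.3; see also \cite{LZColeman}) and is not proved there, so there is no internal argument to compare against; judged on its own terms, your reconstruction is the standard pseudocharacter route and is essentially sound. Under the paper's standing hypothesis that $\widetilde{\rho}_f$ is absolutely irreducible, gluing the traces $\mathrm{Frob}_\ell \mapsto a_\ell(\f)$ into a continuous $2$-dimensional determinant over $\Lambda_U \simeq \cO_E[[T]]$, lifting it to a free rank-$2$ module by Nyssen--Rouquier/Chenevier, inverting $p$, and identifying the specializations via Chebotarev, Brauer--Nesbitt and absolute irreducibility is a legitimate proof of the Galois-theoretic content. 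Two remarks. First, the cited sources construct $V_\f$ geometrically (overconvergent \'etale cohomology of modular curves in the style of Andreatta--Iovita--Stevens, as in \cite{LZColeman}), which yields a canonical module together with integral lattices and the triangulation data used later (Theorem~\ref{thm:Triangulation_Of_Phi_gamma_module}); your determinant-plus-lifting argument recovers the same representation only up to isomorphism and only because residual irreducibility is assumed, so it buys brevity at the cost of the extra structure. A small inaccuracy in your step (ii): over the wide-open disc the residual representation does not vary at all, since $\Lambda_U$ is local with residue field $\texttt{k}_E$ and every specialization reduces to $\widetilde{\rho}_f$; invoking ``openness of the irreducible locus'' and shrinking $U$ is unnecessary (and not really the right justification).

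Second, there is one clause of the statement you do not address at all: that for every integer $0<k\in U$ the specialization $\f_k$ is a \emph{classical} eigenform (the definition of a Coleman family only guarantees this for all but finitely many classical weights). This is where ``sufficiently small $U$'' genuinely enters: the slope $v_p(a_p(\f_k))$ is constant on the family, any nonzero integer in a small enough disc about $k_f$ other than $k_f$ itself is $p$-adically forced to be large (congruent to $k_f$ modulo a high power of $p$), so Coleman's small-slope classicality criterion (slope $<k+1$) applies to them, while $\f_{k_f}=f_{\alpha_f}$ is classical by construction. Without this input your argument only produces a representation interpolating overconvergent eigenforms and does not justify identifying the fibers with Deligne representations $V_E(\f_k)$ at \emph{every} positive integer weight in $U$. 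Adding this classicality step (or explicitly restricting to the classical weights and citing Coleman for the rest) would close the gap.
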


Suppose that ${\rm Spm}(A)=X\subset U$ is an affinoid disc. We will also consider the restriction of $\f$ to $X$ and denote it by the same symbol. Moreover, we will consider the Galois representations $V_{\f}\otimes_{B_U}A$ and $V_{\f}^*\otimes_{B_U}A$ over $X$ (where $V_\f^*={\rm Hom}_{B_U}(V_\f,B_U)$ is the dual representation), and by slight abuse of language, we will continue to denote them by $V_{\f}$ and $V_{\f}^*$, respectively.

\subsubsection{Residual representation}
\label{sec:Resi_reps_Coleman}

As explained in \cite[Remark 1.4 and Remark A.1]{B_y_kboduk_2021} and \cite[Theorem 4.3]{OchiaiRay}, we may choose a $G_{\QQ}$-stable $\Lambda_U$-lattice $T_\f$ (resp. $\Lambda_U$-lattice $T^*_{\f}$) inside of $V_\f$ (resp. inside of $V^*_\f$). Let us denote by $\m_U\subset \LL_U$ the maximal ideal, and consider the residual representation
$$\widetilde{\rho}_{\f}\,:\, G_{\QQ} \to {\rm GL}(T_\f) \to {\rm GL}(T_\f/\m_U T_\f)\simeq {\rm GL}_2(\texttt{k}_E)\,.$$
We note that 
$\widetilde{\rho}_{\f}$ is isomorphic, thanks to Theorem~\ref{thm:Theorem4.6.6_LZCol}(4) to the residual representation $\widetilde{\rho}_f$ of $f$, and it is therefore absolutely irreducible by assumption. As a result, $\widetilde{\rho}_{\f}$ is independent of the choice of the lattice $T_\f$. The same discussion applies to the dual Galois representation.

We will also consider the Galois representations $T_{\f}\otimes_{\Lambda_U} A^\circ$ and $T_{\f}^*\otimes_{\Lambda_U} A^\circ$ where $A^\circ$ denotes the unit ball in the Banach ring $A=A^\circ[1/p]$. By a slight abuse of notation, we will continue to denote them by $T_{\f}$ and $T_{\f}^*$, respectively.

\subsubsection{Triangulations}
\label{sec:Triangulations}

Let $A$ denote a reduced affinoid algebra\footnote{For our eventual goals, it suffices to work with a Tate algebra, which is of course reduced.} and $\alpha \in A^\times.$ 
\begin{defn}
\label{defn:Def6.2.1_LZCol}
We denote by $\cR_A(\alpha^{-1}):=\cR_{A}\cdot e$ the free $(\varphi,\Gamma)$-module of rank one over $\cR_{A}$ with basis vector $e$ such that $\varphi(e)=\alpha^{-1}e$ and $\gamma(e)=e$ for $\gamma \in \Gamma=\Gamma_{\QQ}^0$. 
\end{defn}
\begin{defn}
\label{defn:triangulations}
Let $D$ be a $(\varphi,\Gamma)$-module over $\cR_{A}$, which is free of rank $2.$  A (saturated) triangulation of $D$ is a short exact sequence 
$$0 \lra \mathcal{F}^+D \lra D \lra \mathcal{F}^-D \lra 0\,,$$ 
where $\mathcal{F}^+D$ and $\mathcal{F}^-D$ are free $\cR_{A}$-modules of rank $1$. 
\end{defn}

\begin{theorem}
[Liu~\cite{liu-CMH}, see also Theorem 6.3.2 in \cite{LZColeman}]
\label{thm:Triangulation_Of_Phi_gamma_module}
Suppose that \eqref{item_nobf} holds. Then there exists an affinoid disc ${\rm Spm}(A)=X \subset  \cW$ containing $k_f$ such that the $(\varphi,\Gamma)$-module $D^\dagger_{\rm rig}(V^*_{\f})$ has a canonical triangulation with $\mathcal{F}^-D^\dagger_{\rm rig}(V^*_{\f}) \simeq\cR_A(a_p(\f)^{-1})$ and $\mathcal{F}^+D^\dagger_{\rm rig}(V^*_{\f})(-1-\bbchi_\f) \simeq \cR_A(a_p(\f) \varepsilon_{\f}(p)^{-1})$. Here, $\varepsilon_{\f}$ is the tame nebentype character of $\f$ and $\bbchi_\f$ is the restriction of the universal cyclotomic character to $X$.

\end{theorem}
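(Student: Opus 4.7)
The plan is to deduce this result from Liu's general theorem on the existence of triangulations in families of $(\varphi,\Gamma)$-modules over reduced affinoid algebras, applied to the Coleman family $\f$ passing through $f_{\alpha_f}$. At the classical point $\kappa = k_f \in U$, the specialization yields the $p$-adic Galois representation $V_E(f)^*$ of an ordinary or finite-slope $p$-stabilised eigenform. The refinement $\alpha_f$ (the unit root, or more generally the non-critical $U_p$-eigenvalue chosen in the construction of $\f$) provides a distinguished triangulation of $D^\dagger_{\rm rig}(V_E(f)^*)$ whose quotient line $\mathcal{F}^-$ is isomorphic to $\cR_E(\alpha_f^{-1})$, by the standard analysis of trianguline representations attached to finite-slope modular forms (this is where \eqref{item_nobf} enters crucially: it guarantees that the refinement is non-critical, i.e.\ the point lies in the smooth locus of the eigencurve and the triangulation is étale-non-degenerate, so that Liu's spreading theorem applies).

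Next, I would invoke Liu's theorem (as formalized in Kedlaya--Pottharst--Xiao, and recorded as Theorem~6.3.2 in \cite{LZColeman}) to spread this pointwise triangulation to a triangulation over some affinoid disc $X = {\rm Spm}(A) \ni k_f$, after possibly shrinking $U$. The output is a short exact sequence
\[
0 \lra \mathcal{F}^+D^\dagger_{\rm rig}(V^*_{\f}) \lra D^\dagger_{\rm rig}(V^*_{\f}) \lra \mathcal{F}^-D^\dagger_{\rm rig}(V^*_{\f}) \lra 0
\]
of $(\varphi,\Gamma)$-modules over $\cR_A$, in which the rank-one quotient is given by a character $\delta^-\colon \QQ_p^\times \to A^\times$ that interpolates the unit character $z \mapsto \alpha_f^{-1}$ at $\kappa = k_f$. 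Since the $U_p$-eigenvalue of the family is precisely $a_p(\f) \in A^\times$, the uniqueness of the canonical triangulation (again under \eqref{item_nobf}) forces $\mathcal{F}^-D^\dagger_{\rm rig}(V^*_{\f}) \simeq \cR_A(a_p(\f)^{-1})$ as desired.

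The identification of $\mathcal{F}^+$ then follows from a determinant calculation: the determinant of $V_\f^*$ is computed using the tame nebentype $\varepsilon_\f$ and the universal weight character, giving $\det D^\dagger_{\rm rig}(V_\f^*) \simeq \cR_A(\varepsilon_\f^{-1} \chi_{\rm cyc}^{-1-\bbchi_\f})$ (up to identification of characters of $\QQ_p^\times$ with characters of $\Gamma$). Twisting out the quotient character $a_p(\f)^{-1}$ and adjusting by $1+\bbchi_\f$ yields the stated isomorphism $\mathcal{F}^+D^\dagger_{\rm rig}(V^*_{\f})(-1-\bbchi_\f) \simeq \cR_A(a_p(\f)\varepsilon_\f(p)^{-1})$.

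The main obstacle in carrying this out is the verification that \eqref{item_nobf} genuinely implies non-criticality of the refinement along the full family (not just at the central point), so that Liu's spreading theorem applies uniformly on a neighbourhood of $k_f$; this is a standard but delicate density/rigidity argument using that critical points form a proper Zariski-closed subset of the eigencurve. A second, more bookkeeping-style obstacle is the precise matching of the two normalisation conventions (character of $\QQ_p^\times$ versus character of $\Gamma$, and cyclotomic twists), which must be carefully tracked to land on the explicit formula involving $a_p(\f)\varepsilon_\f(p)^{-1}$ rather than some unspecified unit.
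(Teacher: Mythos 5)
The paper gives no proof of this statement at all---it is quoted directly from Liu (see also Theorem~6.3.2 of Loeffler--Zerbes)---and your outline is exactly the argument underlying those references: use \eqref{item_nobf} (nobility/non-criticality of the refinement at $k_f$) to apply Liu's triangulation-in-families theorem on a small affinoid $X\ni k_f$, identify $\mathcal{F}^-D^\dagger_{\rm rig}(V^*_{\f})$ via the $U_p$-eigenvalue $a_p(\f)$, and pin down $\mathcal{F}^+$ by the determinant of $V^*_{\f}$, so your approach coincides with the cited one. The only quibble is that your ``main obstacle'' is not actually an obstacle: Liu's spreading theorem requires non-criticality only at the point $k_f$ (together with Zariski-density of classical points, after shrinking the disc), which is precisely what \eqref{item_nobf} provides, so no uniform non-criticality along the whole family needs to be verified.
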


\subsection{Punctual Selmer complexes}
\label{sec:Triangulation_Appli_BF}
Let $f$, $g$ be cuspidal normalized newforms of weights $k_f+2> k_g+2\geq 0$, levels $N_f$, $N_g$, and nebentype characters $\varepsilon_f$, $\varepsilon_g$, respectively. Put $N:=\gcd(N_f,N_g)$. Until the end of this paper, we assume that \eqref{item_nobf} holds for both $f_{\alpha_f}$ and $g_{\alpha_g}$. We fix a finite extension $E/\QQ_p$ containing the Hecke fields of $f_{\alpha_f}$ and $g_{\alpha_g}$. Let us put $V=V_E(f \otimes g)^*(-j)$ for an integer $k_g+1 \leq j \leq k_f$. 

Let $S$ denote the finite set of places of $\QQ$ containing $\infty$ and all primes dividing $pN$, and recall that $\QQ_S$ is the maximal extension of $\QQ$ unramified outside $S \cup S_\infty$. Recall also that $H_{\QQ,S}:={\rm Gal}(\QQ_S/\QQ_\infty)$\,. Throughout this subsection, we work under the validity of Assumptions~\ref{hypo_ranking_sel_1} (in particular, $p\nmid N$).

\subsubsection{Punctual triangulation}
By the definition of $D^\dagger_{\rm rig}(V)$, we have $D^\dagger_{\rm rig}(V)=D^\dagger_{\rm rig}(V_E(f)^*) \otimes_{\cR_E} D^\dagger_{\rm rig}(V_E(g)^*)(-j)$. Let us consider the rank-one $(\varphi, \Gamma_{\QQ_p})$-module 
$D_f:= \mathcal{F}^+D^\dagger_{\rm rig}(V_E(f)^*)  \subset D^\dagger_{\rm rig}(V_E(f)^*)\,,$ 
and similarly define $D_g[-j]:=\mathcal{F}^+D^\dagger_{\rm rig}(V_E(g)^*(-j)) \subset D^\dagger_{\rm rig}(V_E(g)^*(-j))$. We define the $(\varphi, \Gamma_{\QQ_p})$-submodule $D \subset D^\dagger_{\rm rig}(V)$ of rank $2$ on setting  $D:= D_f \otimes  D^\dagger_{\rm rig}(V_E(g)^*(-j))\,.$

\subsubsection{Amplitude of punctual Selmer complexes}
\label{sec:Amplitude_of_punctual_Selmer_complexes}
The following lemma (due to Benois and Pottharst) explains the relationship between the cohomology of Selmer complexes and Bloch--Kato Selmer groups. 

We recall the Dieudonn\'e modules $D_{\rm st}(D)$ and $D_{\rm st}(D^\dagger_{\rm rig}(V))=D_{\rm st}(V)$ from \cite[\S2.2]{p-adicHodgetheorybenoisheights}. 
\begin{lemma}
\label{lemma_2024_04_02_1704}
With $D$ as above, the following hold under Assumptions~\ref{hypo_ranking_sel_1}:
     \item[i)] ${\rm Fil}^0D_{\rm st}(D)=0$ and ${\rm Fil}^0(D_{\rm st}(D^\dagger_{\rm rig}(V))/D_{\rm st}({D}))=D_{\rm st}(D^\dagger_{\rm rig}(V))/D_{\rm st}(D)$\,. 
     \item[ii)] $(D_{\rm st}(D^\dagger_{\rm rig}(V))/D_{\rm st}(D))^{\phi=1,N=0}=0$ and $D_{\rm st}(D)/(ND_{\rm st}(D)+(p\phi-1)D_{\rm st}(D))=0.$ 
     \item[iii)] $\boldR^1\Gamma(V,D)\simeq H^1_{\rm f}(\QQ,V)\,.$
\end{lemma}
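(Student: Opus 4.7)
The plan is to verify conditions (i) and (ii) by direct computation using the triangulation provided by Theorem~\ref{thm:Triangulation_Of_Phi_gamma_module} specialized to the classical point, and then to deduce (iii) from Benois's identification of Greenberg-type local conditions at $p$ with Bloch--Kato's.

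For part (i), I would first read off the Hodge--Tate weights from the triangulation, which identifies $\mathcal{F}^-D^\dagger_{\rm rig}(V_E(f)^*)\simeq\cR_E(\alpha_f^{-1})$ as an unramified rank-one $(\varphi,\Gamma)$-module, so that $\mathcal{F}^-V_E(f)^*$ has Hodge--Tate weight $0$ while $\mathcal{F}^+V_E(f)^*$ carries the nontrivial weight $k_f+1$. Tensoring with $V_E(g)^*(-j)$, whose Hodge--Tate weights are $\{-j,\,k_g+1-j\}$, gives that the Hodge--Tate weights of $D$ are $\{k_f+1-j,\,k_f+k_g+2-j\}$, while those of $D^\dagger_{\rm rig}(V)/D$ are $\{-j,\,k_g+1-j\}$. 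The range $k_g+1\le j\le k_f$ forces the former pair to be strictly positive and the latter pair to be non-positive, which yields both statements in (i).

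For part (ii), we note first that since $p\nmid N_fN_g$, the representations $V_E(f)^*$ and $V_E(g)^*$ are both crystalline at $p$, hence so are $D$ and $D^\dagger_{\rm rig}(V)/D$; in particular, the monodromy operator $N$ vanishes identically on all associated Dieudonn\'e modules, and (ii) reduces to two $\varphi$-eigenvalue conditions. Reading the eigenvalues off from Theorem~\ref{thm:Triangulation_Of_Phi_gamma_module} (using the product formula $\alpha_f\beta_f=p^{k_f+1}\varepsilon_f(p)$ to pass from $\mathcal{F}^-$ to $\mathcal{F}^+$), the $\varphi$-eigenvalues on $D_{\rm cris}(\mathcal{F}^-V_E(f)^*)$ and $D_{\rm cris}(\mathcal{F}^+V_E(f)^*)$ equal $\alpha_f^{-1}$ and $\beta_f^{-1}$ respectively; combined with the shift by $p^j$ induced by the Tate twist, the $\varphi$-eigenvalues on $D_{\rm cris}(D)$ are $p^j(\beta_f\alpha_g)^{-1}$ and $p^j(\beta_f\beta_g)^{-1}$, and those on $D_{\rm cris}(D^\dagger_{\rm rig}(V)/D)$ are $p^j(\alpha_f\alpha_g)^{-1}$ and $p^j(\alpha_f\beta_g)^{-1}$. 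Assumption~\eqref{item_NZ}, which excludes each of $\alpha_f\alpha_g,\alpha_f\beta_g,\beta_f\alpha_g,\beta_f\beta_g$ from $\{p^j,\,p^{j+1}\}$, then says precisely that $1$ is not a $\varphi$-eigenvalue on the quotient and $p^{-1}$ is not a $\varphi$-eigenvalue on $D_{\rm cris}(D)$, from which (ii) follows.

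Finally, part (iii) is a consequence of (i) and (ii) via Benois's general framework: under these conditions, the image of $\boldR^1\Gamma(\QQ_p,D)\to H^1(\QQ_p,V)$ coincides with the Bloch--Kato local condition $H^1_f(\QQ_p,V)$ (see the discussion in \cite[\S3.1]{p-adicHodgetheorybenoisheights}). At the remaining primes $\fq\in S\setminus\{p\}$, the complex $C^\bullet_{\rm ur}(V_\fq)$ used to define the Selmer complex already computes $H^1_{\rm ur}(\QQ_\fq,V)$, which coincides with $H^1_f(\QQ_\fq,V)$ by Bloch--Kato; assembling these local identifications yields the isomorphism $\boldR^1\Gamma(V,D)\simeq H^1_f(\QQ,V)$ in (iii). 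The only calculations requiring real care are the bookkeeping of Hodge--Tate weight normalizations in (i) and the explicit matching of the exceptional $\varphi$-eigenvalue locus with \eqref{item_NZ} in (ii); I do not foresee any serious conceptual obstacle.
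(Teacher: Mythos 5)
Your proposal is correct and follows essentially the same route as the paper: part (i) from the positivity/non-positivity of the Hodge--Tate weights of $D$ and $D^\dagger_{\rm rig}(V)/D$, part (ii) from crystallinity (so $N=0$) plus the $\varphi$-eigenvalue computation on the two pieces of the triangulation, which is exactly ruled out by \eqref{item_NZ}, and part (iii) by invoking the general comparison of the Selmer-complex $H^1$ with the Bloch--Kato Selmer group (the paper cites \cite[Theorem 4.1(4)]{pottharst}, \cite[Proposition 6(v)]{selComp_p-adicHodgetheory} and \cite[Proposition 3.7]{jayanalyticfamilies} for this step, the vanishing in (ii) being what kills the local $H^0$ error terms). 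The only difference is cosmetic: you read the eigenvalues directly off the triangulation, whereas the paper identifies $D_{\rm cris}(D^\dagger_{\rm rig}(V)/D)$ with $D_{\rm cris}(V)^{\alpha_f\,\circ}$ via \cite[Theorem 8.1.4]{LZColeman}; the resulting eigenvalue lists agree.
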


\begin{proof}
    Part (i) is clear since $V$ and $D$ are semistable, and the Hodge--Tate weights of $D$ are positive, whereas the Hodge--Tate weights of $D^\dagger_{\rm rig}(V)/D$ are non-positive. We remark that our sign convention for Hodge--Tate weights is that, there is a filtration jump at an integer $n$ if and only if $-n$ is a Hodge--Tate weight.

We next prove the first assertion in (ii). Note that 
$$(D_{\rm st}(D^\dagger_{\rm rig}(V))/D_{\rm st}(D))^{\phi=1,N=0} \simeq D_{\rm st}(D^\dagger_{\rm rig}(V)/D)^{\phi=1,N=0} \simeq D_{\rm cris}(D^\dagger_{\rm rig}(V)/D)^{\phi=1}.$$ 
By the proof of \cite[Theorem 8.1.4]{LZColeman}, we see that $D_{\rm cris}(D^\dagger_{\rm rig}(V)/D) \simeq D_{\rm cris}(V)^{\alpha_f\,\circ}$, where $D_{\rm cris}(V)^{\alpha_f\,\circ}$ is given as in \ref{defn:Definition_8.1.2_LZCol} (note that we use  \eqref{item_reg} in the definition of this module).
Hence, 
$D_{\rm cris}(D^\dagger_{\rm rig}(V)/D)^{\varphi=1} \simeq (D_{\rm cris}(V)^{\alpha_f\,\circ})^{\varphi=1}\,.$
It follows from \cite[Proposition 3.7]{jayanalyticfamilies} that we have 
$$H^0(\QQ_p, D^\dagger_{\rm rig}(V)/D)=D_{\rm cris}(D^\dagger_{\rm rig}(V)/D)^{\varphi=1}\,,$$
since $D$ is semi-stable, the Hodge--Tate weights of $D$ are positive, and the Hodge-Tate weights of $D^\dagger_{\rm rig}(V)/D$ are non-positive. 

Note that $\varphi$ has eigenvalues $\{p^j\alpha^{-1}_f\alpha^{-1}_g, p^j\alpha^{-1}_f\beta^{-1}_g\}$ on $D_{\rm cris}(D^\dagger_{\rm rig}(V)/D)$. By \eqref{item_NZ}, we see that $\alpha_f\alpha_g \neq p^j$ and $\alpha_f\beta_g \neq p^j$ and thus $p^j\alpha^{-1}_f\alpha^{-1}_g \neq 1\neq p^j\alpha^{-1}_f\beta^{-1}_g$. We have thus proved that 
$$(D_{\rm st}(D^\dagger_{\rm rig}(V))/D_{\rm st}(D))^{\phi=1,N=0} \simeq D_{\rm cris}(D^\dagger_{\rm rig}(V)/D)^{\phi=1} \simeq (D_{\rm cris}(V)^{\alpha_f\,\circ})^{\phi=1}=0,$$
as required.

We now check the second assertion in (ii). Note that $D$ is crystalline, and hence the monodromy operator $N$ annihilates $D_{\rm st}(D)$. As a result, the second claim in (2) can be rephrased as the requirement that $p\varphi -1$ is injective on $D_{\rm cris}(D)$. This property holds thanks to \eqref{item_NZ}.

Part (iii) follows from \cite[Theorem 4.1(4)]{pottharst}, \cite[Proposition 6(v)]{selComp_p-adicHodgetheory} and \cite[Proposition 3.7]{jayanalyticfamilies} combined with the previous claims in this lemma.
\end{proof}

The following is the main application of the Beilinson--Flach element Euler system to the determination of the amplitude of the Selmer complex $\boldR\Gamma(\overline{V}, \overline{D})$.  For more details on Eisenstein classes and Beilinson--Flach Euler system, see \cite[\S3.2-\S3.5]{LZColeman}, as well as \cite{BDR1,BDR2}. 

\begin{theorem} 
\label{thm:Vanish_H^1_torsion-ness_of_H^2_for_specialization}
Under the validity of  Assumptions~\ref{hypo_ranking_sel_1}, we have:
    \item[i)] $\boldR^3\Gamma(\overline{V^*(1)}, \overline{D^\perp})=0=\boldR^3\Gamma(\overline{V}, \overline{D}).$
   
    \item[ii)] Suppose that there exists an element $\tau \in {\rm Gal}(\overline{\QQ}/\QQ(\mu_{p\infty}))$ such that $V\big{/}(\tau-1)V$ is $1$-dimensional (cf. Proposition~\ref{prop:Prop_4.2.1_Loff_Ima} for a sufficient condition). If, in addition, $L(f, g, 1 + j) \neq 0$, then 
    \begin{equation}
    \label{eqn_2024_03_25_2}
        \boldR^1\Gamma({V}, {D})=0=\boldR^1\Gamma(\overline{V}, \overline{D})\,,
    \end{equation}
    and the $\cH_E(\Gamma^0_\QQ)$-module $\boldR^2\Gamma(\overline{V}, \overline{D})$ is torsion.
\end{theorem}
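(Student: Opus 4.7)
My plan for part~(i) is to invoke Theorem~\ref{thm:Prop_Iw_SelComp}(i), which identifies $\boldR^3\Gamma(\overline{V},\overline{D})\simeq(T^*(1)^{H_{\QQ,S}})^*\otimes_{\Lambda_{\cO}(\Gamma_\QQ^0)}\cH_E(\Gamma_\QQ^0)$ and, symmetrically, $\boldR^3\Gamma(\overline{V^*(1)},\overline{D^\perp})$ with the analogous object attached to $T^{H_{\QQ,S}}$. Both vanishings thus reduce to checking $V^{H_{\QQ,S}}=0=V^*(1)^{H_{\QQ,S}}$. Because $f,g$ are non-CM and not Galois-conjugate twists of one another (conditions~\eqref{item_nonCM} and~\eqref{item_conj}), the tensor product $V_E(f)\otimes V_E(g)$ is absolutely irreducible over $G_\QQ$, and the big-image hypothesis built into~\eqref{item_p_large} forces the restrictions of $V$ and of $V^*(1)$ to the open subgroup $H_{\QQ,S}$ to have no nonzero fixed vectors.

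For part~(ii), I would first establish the punctual vanishing $\boldR^1\Gamma(V,D)=0$. By Lemma~\ref{lemma_2024_04_02_1704}(iii) this group coincides with the Bloch--Kato Selmer group $H^1_{\rm f}(\QQ,V)$. The assumed existence of $\tau\in\Gal(\overline\QQ/\QQ(\mu_{p^\infty}))$ with $V/(\tau-1)V$ one-dimensional is precisely the Kolyvagin-type hypothesis needed to deploy the Beilinson--Flach Euler system of \cite{KLZRecLaw}. Its explicit reciprocity law, combined with the interpolation formula for the Rankin--Selberg $p$-adic $L$-function and the nonvanishing of the Euler factor at $p$ (guaranteed by~\eqref{item_NZ} at $s=1+j$), translates $L(f,g,1+j)\neq 0$ into nonvanishing of the Bloch--Kato image of the bottom Beilinson--Flach class. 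The Euler system machinery then kills $H^1_{\rm f}(\QQ,V)$, so $\boldR^1\Gamma(V,D)=0$.

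To obtain the Iwasawa-theoretic statements I would run the analogous argument over the cyclotomic tower using the three-variable Beilinson--Flach Euler system of \cite{KLZRecLaw}: the corresponding Euler system divisibility, together with the nontriviality of the big class traced from $L(f,g,1+j)\neq 0$ via specialisation of the cyclotomic variable to the critical character, forces $\boldR^2\Gamma(\overline V,\overline D)$ to be $\cH_E(\Gamma_\QQ^0)$-torsion. To deduce $\boldR^1\Gamma(\overline V,\overline D)=0$, I would feed the punctual vanishing into the control exact sequence of Theorem~\ref{thm:Prop_Iw_SelComp}(iii),
\begin{equation*}
0\lra \boldR^1\Gamma(\overline V,\overline D)_{\Gamma_\QQ}\lra \boldR^1\Gamma(V,D)\lra \boldR^2\Gamma(\overline V,\overline D)^{\Gamma_\QQ}\lra 0\,,
\end{equation*}
which yields $\boldR^1\Gamma(\overline V,\overline D)_{\Gamma_\QQ}=0$; a coadmissible Nakayama argument, applied branch-by-branch through the idempotents $e_\eta$ for $\eta\in\widehat\Delta_\QQ$ (in the spirit of Definition~\ref{defn:chara_over_units_Zp}), then concludes $\boldR^1\Gamma(\overline V,\overline D)=0$.

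The principal obstacle I anticipate is reconciling the triangulation-based Selmer complex $\boldR\Gamma(\overline V,\overline D)$ with the Greenberg/Panchishkin Selmer groups in which the KLZ Iwasawa-theoretic Euler system bounds are originally formulated; in the non-ordinary setting this bridge is supplied by Lemma~\ref{lemma_2024_04_02_1704} together with Pottharst's framework~\cite{pottharst} that furnishes the duality and control properties recorded in Theorem~\ref{thm:Prop_Iw_SelComp}. A secondary but genuine technicality is the Nakayama step over the non-local Fr\'echet--Stein algebra $\cH_E(\Gamma_\QQ^0)$, which I would handle by decomposing along $\widehat\Delta_\QQ$ so that each factor behaves like a coefficient ring on a single quasi-Stein disc, where the standard coadmissible Nakayama lemma is available.
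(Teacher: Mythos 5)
Your treatment of part (i) and of the punctual vanishing in part (ii) matches the paper's: the paper also reduces (i) to $V^{H_{\QQ,S}}=0=V^*(1)^{H_{\QQ,S}}$ (via irreducibility and Theorem~\ref{thm:Prop_Iw_SelComp}(i)), and for (ii) it quotes the Beilinson--Flach Euler system bound (in the form of \cite[Corollary 8.3.2]{LZColeman}) together with Lemma~\ref{lemma_2024_04_02_1704}(iii) to get $\boldR^1\Gamma(V,D)=H^1_{\rm f}(\QQ,V)=0$, and then the control sequence of Theorem~\ref{thm:Prop_Iw_SelComp}(iii) to get $\boldR^1\Gamma(\overline{V},\overline{D})_{\Gamma^0_\QQ}=0$. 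The genuine gap is your final step: a ``coadmissible Nakayama argument'' to pass from vanishing of the $\Gamma^0_\QQ$-coinvariants to vanishing of $\boldR^1\Gamma(\overline{V},\overline{D})$. No such Nakayama lemma is available over $\cH_E(\Gamma^0_\QQ)$: this ring is a non-local Fr\'echet--Stein algebra, the augmentation ideal $(\gamma-1)$ lies in no radical, and a nonzero torsion coadmissible module can perfectly well have zero coinvariants (e.g.\ $\cH_E(\Gamma^0_\QQ)/(f)$ for any $f$ not vanishing at the trivial character, since then $(f,\gamma-1)$ is the unit ideal). Decomposing along $\widehat{\Delta}_\QQ$ does not help, as each branch is still of this type.

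The paper closes this gap differently, and this is the missing idea in your proposal: it first shows that $\boldR^1\Gamma(\overline{V},\overline{D})$ is \emph{free}. Since part (i) gives $\boldR^3\Gamma(\overline{V^*(1)},\overline{D^\perp})=0$, the duality exact sequence of Theorem~\ref{thm:Prop_Iw_SelComp}(iv) (taken at $i=3$) identifies the torsion submodule $\cD^1\boldR^1\Gamma(\overline{V},\overline{D})$ with a submodule of $\boldR^3\Gamma(\overline{V^*(1)},\overline{D^\perp})=0$, so by Pottharst's structure theorem for coadmissible modules $\boldR^1\Gamma(\overline{V},\overline{D})\simeq \cH_E(\Gamma^0_\QQ)^{\oplus n}$; then vanishing of the coinvariants forces $E^{\oplus n}=0$, i.e.\ $n=0$. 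Note also that once this vanishing is in hand, the torsionness of $\boldR^2\Gamma(\overline{V},\overline{D})$ follows immediately from the rank equality of Theorem~\ref{thm:Prop_Iw_SelComp}(ii); your alternative route via an Iwasawa-theoretic divisibility for the three-variable Beilinson--Flach classes is not needed and would import bounds on the triangulation-based Selmer group over $\cH_E(\Gamma^0_\QQ)$ that are not established in this setting.
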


\begin{proof}
    \item[i)] Since $V$ is irreducible as a $G_{\QQ}$-module and ${\rm dim}_E(V) > 1$, then (as explained in the proof of \cite{factorization_alge_p_adic}, Corollary 5.18), we infer that $V^{H_{\QQ,S}}=\{0\}=V^*(1)^{H_{\QQ,S}}$. This portion then follows from Theorem~\ref{thm:Prop_Iw_SelComp}(1).

    \item[ii)]  As the hypotheses of \cite[Corollary 8.3.2]{LZColeman} are satisfied for $V$, it follows\footnote{Here, we note that \cite[Corollary 8.3.2]{LZColeman} employs the Beilinson--Flach Euler system 
    $$_c\cB{\mathcal F}^{[k_f,k_g,j]}_{m,1} \in H^1\left(\QQ(\mu_m), V\right)$$ 
     given as in \cite[Definition 3.5.2]{LZColeman}, for $a=1$ and $k_g+1 \leq j \leq k_f$.} that 
    the Bloch–Kato Selmer group $H^1_{\rm f}(\QQ, V)$ is zero. By Lemma \ref{lemma_2024_04_02_1704}, we have
$\boldR^1\Gamma(V, D)=H^1_{\rm f}(\QQ, V)$ under our running assumptions. Then by Theorem \ref{thm:Prop_Iw_SelComp}(3), we see that \begin{equation}
\label{eqn_2024_03_26}
    \boldR^1\Gamma(\overline{V}, \overline{D})_{\Gamma^0_{\QQ}}=0.
\end{equation}

As explained by \cite[Remark 5.31]{factorization_alge_p_adic}, we have 
$$\boldR\Gamma(\overline{V^*(1)}, \overline{D^\perp})\,, \quad \boldR\Gamma(\overline{V}, \overline{D}) \,\in \,\cD^{[1,2]}_{\rm perf}(\cH_E(\Gamma^0_\QQ))$$
since  $\boldR^3\Gamma(\overline{V^*(1)}, \overline{D^\perp})=0=\boldR^3\Gamma(\overline{V}, \overline{D})\,.$ Then, the exact sequence in Theorem~\ref{thm:Prop_Iw_SelComp}(iv) (applied with $i = 3$) gives an isomorphism between torsion submodule of $\boldR^1\Gamma(\overline{V}, \overline{D})$ and $\boldR^3\Gamma(\overline{V^*(1)}, \overline{D^\perp})=0.$ Thus, by \cite[Proposition 1.1]{pottharst} (the structure theorem
for coadmissible $\cH_E(\Gamma^0_{\QQ})$-modules), we see that $\boldR^1\Gamma(\overline{V}, \overline{D})$ is a finitely generated free $\cH_E(\Gamma^0_{\QQ})$-module, say of rank $n$. This fact, combined with \eqref{eqn_2024_03_26}, yields
$0=\boldR^1\Gamma(\overline{V}, \overline{D})_{\Gamma^0_\QQ} \simeq E^n\,,$
and we conclude that $\boldR^1\Gamma(\overline{V}, \overline{D})=0$, as required. Finally, by Theorem \ref{thm:Prop_Iw_SelComp}(ii) combined with \eqref{eqn_2024_03_25_2}, we infer that $\boldR^2\Gamma(\overline{V}, \overline{D})$ is torsion as an $\cH_E(\Gamma^0_\QQ)$-module.
\end{proof}

\subsubsection{Functional Equation} We are now ready to state and prove the functional equation for the algebraic Rankin--Selberg $p$-adic $L$-function associated with $f_{\alpha_f}\otimes g_{\alpha_g}$.

We recall that ${\rm char}_{\cH_{E}(\Gamma_{\QQ}^0)}$ denotes the characteristic ideal of an ${\cH_{E}(\Gamma_{\QQ}^0)}$-module given as in Proposition~\ref{prop:det_torsion_M_Over_H_E_char_Ideal_M}, and for any $\ZZ_p[[\Gamma_{\QQ}^0)]]$-module $M$, we set $M^\iota:=M\otimes_{\ZZ_p[[\Gamma_{\QQ}^0)]]}\ZZ_p[[\Gamma_{\QQ}^0)]]^\iota$ and $\ZZ_p[[\Gamma_{\QQ}^0)]]^{\iota}$ is the free $\ZZ_p[[\Gamma_{\QQ}^0)]]$-module of rank one on which $g\in \Gamma_{\QQ}^0$ acts via multiplication by $g^{-1}$.

\begin{theorem}[Punctual functional equation]
\label{thm:FuncEq_SelCom_Rankin_SelbergProducts_0_vari}
Under the validity of  Assumptions~\ref{hypo_ranking_sel_1}, we have
$${\rm char}_{\cH_{E}(\Gamma_{\QQ}^0)}\,\left(\boldR^2\Gamma(\overline{V^*}(1), \overline{D}^\perp)\right) = {\rm char}_{\cH_{E}(\Gamma_{\QQ}^0)}\,\left(\boldR^2\Gamma(\overline{V}, \overline{D})\right)^\iota\,.$$ 
\end{theorem}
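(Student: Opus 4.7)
The plan is to reduce this punctual functional equation to a direct application of the abstract machinery developed in Part~\ref{part_general_nonord}. More precisely, the strategy is to verify the hypotheses of Theorem~\ref{thm:det_selCOm_over_H_E_=_charIdeal_Cohomo_deg2} for the pair $(\overline{V},\overline{D})$ in our Rankin--Selberg setting, and then apply the functional equation in Corollary~\ref{cor_thm:det_selCOm_over_H_E_=_charIdeal_Cohomo_deg2}. Recall that these abstract results identify the determinant of the Iwasawa theoretic Selmer complex with the characteristic ideal of its degree-$2$ cohomology (Proposition~\ref{prop:det_torsion_M_Over_H_E_char_Ideal_M}), so that the duality isomorphism in Theorem~\ref{thm:Prop_Iw_SelComp}(iv) translates into the desired equality of characteristic ideals after taking the $\iota$-involution.

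The first step is to check that the cohomology of $\boldR\Gamma(\overline{V},\overline{D})$ is concentrated in degree $2$ and that $\boldR^2\Gamma(\overline{V},\overline{D})$ is $\cH_E(\Gamma_\QQ^0)$-torsion. Under Assumptions~\ref{hypo_ranking_sel_1}, together with the auxiliary non-vanishing $L(f,g,1+j) \neq 0$ and the element $\tau$ supplied via Proposition~\ref{prop:Prop_4.2.1_Loff_Ima}, Theorem~\ref{thm:Vanish_H^1_torsion-ness_of_H^2_for_specialization} gives $\boldR^1\Gamma(\overline{V},\overline{D})=0=\boldR^3\Gamma(\overline{V},\overline{D})$ and torsion-ness of $\boldR^2\Gamma(\overline{V},\overline{D})$. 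The vanishing $\boldR^0\Gamma(\overline{V},\overline{D})=0$ is automatic: it injects into $H^0(\QQ_p,D^\dagger_{\rm rig}(\overline{V}))=H^0(\QQ_p,\overline{V})$, which vanishes because $\overline{V}$ carries the cyclotomic deformation (equivalently, by the argument sketched in Proposition~\ref{prop_Finiteness_SelCOm_Over_H_A_And_coho_in_deg3_anddeg0_is_0}(iii) applied with $A=E$). The same reasoning, applied to the dual triple $(\overline{V^*}(1),\overline{D}^\perp)$ via Theorem~\ref{thm:Prop_Iw_SelComp}(iv), shows that its cohomology is likewise concentrated in degree $2$ and torsion there; alternatively, one can read this off from the long exact sequence in Theorem~\ref{thm:Prop_Iw_SelComp}(iv) with $i=3,1,0$ once we know the cohomology of $\boldR\Gamma(\overline{V},\overline{D})$.

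Once the above hypotheses are in place, Theorem~\ref{thm:det_selCOm_over_H_E_=_charIdeal_Cohomo_deg2} yields
\[
\det \boldR\Gamma(\overline{V},\overline{D}) = {\rm char}_{\cH_E(\Gamma_\QQ^0)} \boldR^2\Gamma(\overline{V},\overline{D}),
\]
and similarly for the dual triple. The Grothendieck-type duality $\cD(\boldR\Gamma(\overline{V},\overline{D}))^\iota \simeq \boldR\Gamma(\overline{V}^*(1),\overline{D}^\perp)[3]$ of Theorem~\ref{thm:Prop_Iw_SelComp}(iv) passes to determinants and introduces the $\iota$-involution, exactly as in the proof of Corollary~\ref{cor_thm:det_selCOm_over_H_E_=_charIdeal_Cohomo_deg2}. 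Concatenating these identifications gives the claimed equality
\[
{\rm char}_{\cH_E(\Gamma_\QQ^0)} \boldR^2\Gamma(\overline{V^*}(1),\overline{D}^\perp) = {\rm char}_{\cH_E(\Gamma_\QQ^0)} \boldR^2\Gamma(\overline{V},\overline{D})^\iota.
\]

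The real content of the argument is not in the formal manipulation of determinants at the end, but rather in the verification of the vanishing of $\boldR^1\Gamma(\overline{V},\overline{D})$, which is where the Beilinson--Flach Euler system enters via \cite[Corollary 8.3.2]{LZColeman} to force the triviality of $H^1_{\rm f}(\QQ,V)$ (under the non-vanishing of $L(f,g,1+j)$), then propagated to the Iwasawa setting via Theorem~\ref{thm:Prop_Iw_SelComp}(iii) and a Nakayama-type argument. This is the only non-formal input, and it is precisely what is recorded in Theorem~\ref{thm:Vanish_H^1_torsion-ness_of_H^2_for_specialization}(ii). With that result cited, the proof becomes a formal concatenation of the abstract results above.
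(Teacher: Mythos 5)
Your proof is correct, and it reaches the same conclusion as the paper, but it routes the formal part through a slightly different piece of the abstract machinery. The paper's own proof is a one-line reduction: it invokes Theorem~\ref{thm:char_ideals_for_selCom_and_dual_selCom_H_A} specialized to $A=E$, with the hypotheses supplied by Theorem~\ref{thm:Vanish_H^1_torsion-ness_of_H^2_for_specialization}; the duality input there is the Benois-style isomorphism of Theorem~\ref{thm:duality_selComplex_Iw_iso} over $\cH_A(\Gamma_F^0)$, whose Tamagawa-type projectivity conditions at $\fq\in S^{(p)}$ are automatic when $A=E$ is a field. You instead verify the concentration-in-degree-$2$ and torsionness hypotheses (again via Theorem~\ref{thm:Vanish_H^1_torsion-ness_of_H^2_for_specialization}, plus the easy vanishing of $\boldR^0$) and then apply Theorem~\ref{thm:det_selCOm_over_H_E_=_charIdeal_Cohomo_deg2} and Corollary~\ref{cor_thm:det_selCOm_over_H_E_=_charIdeal_Cohomo_deg2}, whose duality input is Pottharst's punctual duality, Theorem~\ref{thm:Prop_Iw_SelComp}(iv); this requires $V$ to be potentially semistable at $p$, which holds here since $V$ comes from modular forms. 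Both routes are legitimate and both are already present in the paper; yours buys nothing extra but is equally economical, and like the paper you correctly flag that the real content is the Euler-system input (the non-vanishing $L(f,g,1+j)\neq 0$ and the element $\tau$ from Proposition~\ref{prop:Prop_4.2.1_Loff_Ima}) feeding Theorem~\ref{thm:Vanish_H^1_torsion-ness_of_H^2_for_specialization}(ii), hypotheses which the theorem's statement leaves implicit in both your write-up and the paper's.
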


\begin{proof}
This is a special case of Theorem~\ref{thm:char_ideals_for_selCom_and_dual_selCom_H_A} with $A=E$. Note that we have verified in Theorem~\ref{thm:Vanish_H^1_torsion-ness_of_H^2_for_specialization} that all the assumptions required in Theorem~\ref{thm:char_ideals_for_selCom_and_dual_selCom_H_A} hold in this scenario.
\end{proof}

\subsection{Selmer complexes for families}
\label{subsec_6_3_2024_11_4_1257}
Throughout \S\ref{subsec_6_3_2024_11_4_1257}, we work under the validity of Assumptions~\ref{hypo_ranking_sel_1}.
\subsubsection{1-parameter families}  
\label{sec:Triangulations_in_families_1_1}
Suppose that $k_{f}\in X \subset U$ is a 1-dimensional standard affinoid over $E$ contained in a wide-open disc $U$.  Let $\f$ be a Coleman family over $U$ passing through $f_{\alpha_f}$. Let us consider the corresponding big Galois representation 
${\mathbf W}:=V^*_{\f}\otimes_E V_E(g)^*$ over $A$. Let us set $\boldD_{\mathbf W}:= \mathcal{F}^+D^\dagger_{\rm rig}(V^*_{\f}) \,\otimes_E\,  D^\dagger_{\rm rig}(V_E(g)^*)$, which is a $(\varphi,\Gamma_{\QQ_p})$-submodule of $D^\dagger_{\rm rig}({\mathbf W})$ of rank $2$. As usual, we let $\boldD_{\mathbf W}^\perp$ denote the dual triangulation at $p$ for ${\mathbf W}^*(1)$, which is given by 
$${\rm Hom}_{\cR_{A_1}}\left(D^\dagger_{\rm rig}(V^*_{\f})/\boldD_{\f}\,,\,A_1\right)\,\widehat{\otimes}_{E}\, D^\dagger_{\rm rig}(V_E(g)(1))\subset D^\dagger_{\rm rig}({\mathbf W}^*(1))\,.$$

Before we explain our main result in \S\ref{sec:Triangulations_in_families_1_1} (Theorem~\ref{thm:thm:H^1_zero_torsion_H^2_ranking_sel_ColemanFamily_1_vari}), we provide ring theoretic background. Our discussion is more general than we need in \S\ref{sec:Triangulations_in_families_1_1}, and it will be used also in \S\ref{sec:Triangulations_in_families_2}.
\begin{defn}
    Given a Noetherian ring $R$ and an ideal $I<R$, let us denote by $(R,I)\langle x_1,\cdots,x_n\rangle:=\varprojlim R/I^n[x_1,\cdots,x_n]$, the restricted power series ring over $A$. When $R$ is a local Noetherian ring with maximal ideal $\frakm$, we simply write $R\langle x_1,\cdots,x_n\rangle$ in place of  $(R,I)\langle x_1,\cdots,x_n\rangle$.
\end{defn}
Note that when $R=\cO_E$, we recover the Tate algebra $E\langle x_1,\cdots,x_n\rangle$ in $n$-variables:
$$E\langle x_1,\cdots,x_n\rangle=\cO_E\langle x_1,\cdots,x_n\rangle[1/p]\,.$$
In what follows, we will abbreviate $\LL_{\cO_E}(\ZZ_p)$ (or $\LL_{\cO_E}(\Gamma_?^0)$, where $?=\QQ$, $\QQ_p$) with $\LL_{\cO_E}$ when there is no danger of confusion on the choice of the underlying profinite group.

\begin{proposition}
    \label{prop_2024_05_21_1411}
    \item[i)] The ring $\LL_{\cO_E}\langle x_1,\cdots,x_n\rangle$ is regular of dimension of $n+2$.
    \item[ii)] We have an isomorphism 
    $$\LL_{\cO_E}\langle x_1,\cdots,x_n\rangle \simeq \LL_{\cO_E}(\ZZ_p)\,\widehat{\otimes}_{\cO_E}\,\cO_E\langle x_1,\cdots,x_n\rangle\,.$$
    of topological rings.
    \item[iii)] The ring 
    $$\LL_{\cO_E}\langle x_1,\cdots,x_n\rangle[1/p]\simeq (\LL_{\cO_E}(\ZZ_p)\,\widehat{\otimes}_{\cO_E}\,\cO_E\langle x_1,\cdots,x_n\rangle)[1/p]$$
    is regular of dimension $n+1$.
\end{proposition}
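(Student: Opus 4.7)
The plan is to first establish the isomorphism in (ii), from which (i) and (iii) will follow from standard commutative algebra.

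I will begin by fixing a topological generator of the relevant $\Gamma^0$, which identifies $\LL_{\cO_E}$ with $\cO_E[[T]]$, a complete regular local Noetherian ring of Krull dimension $2$ with maximal ideal $\frakm = (p, T)$. By definition, $\LL_{\cO_E}\langle x_1, \ldots, x_n\rangle = \varprojlim_k (\LL_{\cO_E}/\frakm^k)[x_1, \ldots, x_n]$. To verify (ii), I interpret the completed tensor product as
$$\LL_{\cO_E} \,\widehat{\otimes}_{\cO_E}\, \cO_E\langle x_1, \ldots, x_n\rangle := \varprojlim_k \bigl((\LL_{\cO_E}/\frakm^k) \otimes_{\cO_E} \cO_E\langle x_1, \ldots, x_n\rangle\bigr).$$
The key observation is that $\LL_{\cO_E}/\frakm^k$ is annihilated by $p^k$ (as $p^k \in \frakm^k$), and the standard identification $\cO_E\langle x_1, \ldots, x_n\rangle/p^k \cong (\cO_E/p^k)[x_1, \ldots, x_n]$ -- immediate from the definition of the Tate algebra, since $v_p(a_I) \to \infty$ forces all but finitely many coefficients to lie in $p^k\cO_E$ -- reduces each term of the inverse system to $(\LL_{\cO_E}/\frakm^k)[x_1, \ldots, x_n]$. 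Passing to the limit recovers $\LL_{\cO_E}\langle x_1, \ldots, x_n\rangle$. A useful reformulation, obtained by exchanging the order of summation, is the explicit isomorphism
$$\LL_{\cO_E}\langle x_1, \ldots, x_n\rangle \;\cong\; \cO_E\langle x_1, \ldots, x_n\rangle[[T]],$$
which I will exploit in what follows.

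For part (i), I combine the identification above with standard structure theorems. Recall first that $\cO_E\langle x_1, \ldots, x_n\rangle$ is a regular Noetherian domain of Krull dimension $n + 1$: at a maximal ideal $\frakM$ containing $p$, the $\frakM$-adic completion is $\cO_{E'}[[x_1 - a_1, \ldots, x_n - a_n]]$ for some $a_i$ in a finite unramified extension $\cO_{E'}/\cO_E$, which is regular local of dimension $n + 1$; at a maximal ideal not containing $p$, the localization is a localization of the Tate algebra $E\langle x_1, \ldots, x_n\rangle$, which is regular Noetherian of dimension $n$. I then invoke the classical fact (e.g.\ Matsumura, \emph{Commutative Ring Theory}, Theorem 19.5) that $R[[T]]$ is regular Noetherian with $\dim R[[T]] = \dim R + 1$ whenever $R$ is regular Noetherian. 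Applied to $R = \cO_E\langle x_1, \ldots, x_n\rangle$, this yields (i).

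For (iii), I invert $p$. The ring $\cO_E\langle x_1, \ldots, x_n\rangle[[T]]$ is an integral domain (as a formal power series ring over a domain), and reduction modulo $p$ yields $k[x_1, \ldots, x_n][[T]]$, a domain of Krull dimension $n + 1$. Since the ambient ring is regular -- hence catenary -- of dimension $n + 2$, the prime $(p)$ has height exactly $1$. Inverting $p$ is a localization, which preserves regularity, and removes precisely the height-$1$ prime $(p)$ together with all primes containing it, so the resulting ring has dimension $(n + 2) - 1 = n + 1$. The principal delicate point in the whole argument is the identification of the completed tensor product in (ii); once (ii) is secured, (i) and (iii) are essentially formal consequences of the regularity of Tate algebras, the behavior of $R[[T]]$ under regularity, and the interaction between localization and Krull dimension for catenary regular rings.
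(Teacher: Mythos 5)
Your parts (i) and (ii) are correct, and they take a different (more hands-on) route than the paper: you verify the completed tensor product identity directly and deduce $\LL_{\cO_E}\langle x_1,\ldots,x_n\rangle\simeq \cO_E\langle x_1,\ldots,x_n\rangle[[T]]$, then get regularity and dimension $n+2$ from the classical facts about $R[[T]]$, whereas the paper simply cites Salmon (Th\'eor\`eme 3) for (i) and the Stacks Project for (ii). The genuine gap is in (iii). From ``$(p)$ has height $1$'' and ``inverting $p$ removes $(p)$ and all primes above it'' you conclude that the dimension drops by exactly one; this inference is invalid in general: in $\ZZ[x]$ the prime $(p)$ has height one, yet $\ZZ[x][1/p]$ still has dimension $2$. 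What is actually needed --- and this is precisely the point the paper's proof supplies via Salmon's Lemme 2 --- is that $p$ lies in the Jacobson radical of $\LL_{\cO_E}\langle x_1,\ldots,x_n\rangle$, i.e.\ that \emph{every} maximal ideal contains $p$. Granting that, any prime $\fq$ with $p\notin\fq$ is strictly contained in a maximal ideal of height at most $n+2$, so $\operatorname{ht}\fq\le n+1$, which bounds the dimension of the localization. Your identification makes this easy to repair: every maximal ideal of $S[[T]]$ contains $T$ and contracts to a maximal ideal of $S=\cO_E\langle x_1,\ldots,x_n\rangle$, and every maximal ideal of $S$ contains $p$ because $S$ is $p$-adically complete (so $1+pa$ is a unit for all $a\in S$); but as written this step is absent, and without it the upper bound $\dim\le n+1$ is not established. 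For the lower bound one should also exhibit a chain of primes avoiding $p$, e.g.\ $0\subset(T)\subset(T,x_1)\subset\cdots\subset(T,x_1,\ldots,x_n)$, whose heights persist after inverting $p$.

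A smaller slip in (iii): $\cO_E\langle x_1,\ldots,x_n\rangle[[T]]/(p)\simeq(\cO_E/p)[x_1,\ldots,x_n][[T]]$, which is a domain only when $E/\QQ_p$ is unramified; for ramified $E$ the ideal $(p)$ is not even prime, so the assertion that $(p)$ has height one is unjustified as stated (replace $p$ by a uniformizer $\varpi_E$ if you want a height-one prime, noting that inverting $p$ and inverting $\varpi_E$ give the same ring). This slip is harmless once the argument is corrected as above, since the height of $(p)$ is not what controls the dimension after localization; the Jacobson-radical property is.
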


\begin{proof}
    The first assertion is \cite[Th\'eor\`eme 3]{Salmon}, whereas the second follows from \cite[\href{https://stacks.math.columbia.edu/tag/0AL0}{Remark 0AL0}]{stacks-project}. To prove the final claim, we first note that $\LL_{\cO_E}\langle x_1,\cdots,x_n\rangle[1/p]$ is regular, being the localization of a ring. 
    
    It remains to prove that its dimension equals $n+1$. We note that the radical of $\LL_{\cO_E}\langle x_1,\cdots,x_n\rangle$ contains the radical of $\LL_{\cO_E}$ thanks to \cite[Lemme 2]{Salmon}. As a result, all maximal ideals of $\LL_{\cO_E}\langle x_1,\cdots,x_n\rangle$ contain $p$. This shows that the heights of prime ideals in $\LL_{\cO_E}\langle x_1,\cdots,x_n\rangle[1/p]$ can be at most $n+1$, and concludes the proof of our proposition.
\end{proof}

\begin{remark}
    \label{remark_2024_5_21_1457}
    We remark that we have the following tautological ring isomorphism:
   \begin{align}
   \label{align_remark_2024_5_21_1457}
       \left(\LL_{\cO_E}(\ZZ_p)\otimes_{\cO_E}\cO_E\langle x_1,\cdots,x_n\rangle\right)[1/p]\simeq \LL_{E}(\ZZ_p) \otimes_E E\langle x_1,\cdots,x_n\rangle\,.
   \end{align}
   In what follows, as in \cite{jayanalyticfamilies}, the completed tensor product $ \LL_{E} \widehat{\otimes}_E E\langle x_1,\cdots,x_n\rangle$ shall mean $$\left(\LL_{\cO_E}(\ZZ_p)\widehat{\otimes}_{\cO_E}\cO_E\langle x_1,\cdots,x_n\rangle\right)[1/p]=\LL_{\cO_E}(\ZZ_p)\langle x_1,\cdots,x_n\rangle[1/p]\,,$$ 
   where the completed tensor is calculated with respect to the $\frakm_{\LL_{\cO_E}}$-adic topology on the first factor, and $p$-adic topology on the second factor. 
\end{remark}

We will use Proposition~\ref{prop_2024_05_21_1411} in the proof of Lemma~\ref{prop:Irr_V_0_with_item_Irr_res_F} below with the following objects: Let $A^\circ= \cO_E\langle X \rangle$ denote the unit ball inside the Banach space $A=E\langle X\rangle$
(with respect to the Gauss norm), so that $A^\circ[1/p]=A$. As explained in \cite[Theorem 4.3]{OchiaiRay}, there exists a free $A^\circ$-module ${\mathbf W}^\circ$ of rank $2$ which is endowed with a continuous action of $G_\QQ$ such that 
${\mathbf W}^\circ\otimes_{A^\circ}A\simeq {\mathbf W}\,.$
We shall work with the continuous\footnote{Hand-in-hand with Remark~\ref{remark_2024_5_21_1457}, we note that the Galois action on $\LL_{\cO_E}^\iota$ is continuous when the ring $\LL_{\cO_E}$ is endowed with the $\frakm$-adic topology, but not so for the $p$-adic topology.} Galois representation
${\mathbf W}\,\widehat{\otimes}_E\, \LL_E^\iota:=({\mathbf W}^\circ \,\widehat{\otimes}_{\cO_E}\,\LL_{\cO_E}^\iota)[1/p]$
(cf. Remark~\ref{align_remark_2024_5_21_1457}), which is a free $\LL_{\cO_E}\langle X\rangle[1/p]$-module of rank $2$.

Our method to prove Theorem~\ref{thm:thm:H^1_zero_torsion_H^2_ranking_sel_ColemanFamily_1_vari} relies heavily on the approach developed in K\"u\c{c}\"uk's thesis \cite{factorization_alge_p_adic}; see especially Theorem~7.8 in op. cit.  This required a rather stringent hypothesis (recorded as item (c) following Remark 7.2 in  \cite{factorization_alge_p_adic}; see also \eqref{eqn_2024_05_21_1521} below), which will be relaxed in K\"u\c{c}\"uk's forthcoming addendum. We give an overview of this improvement in the present subsection\footnote{We thank F. K\"u\c{c}\"uk for sharing with us the details of this forthcoming addendum.}.

\begin{proposition}
\label{prop:Irr_V_0_with_item_Irr_res_F}
We have 
    \begin{equation}
    \label{eqn_2024_05_21_1521}
        H^0(H_{\QQ,S},({\mathbf W}\,\widehat\otimes_{E}\, \Lambda_{E}^\iota)/\fp)=0= H^0(H_{\QQ,S},({\mathbf W}^*(1)\,\widehat\otimes_{E}\, \Lambda_{E}^\iota)/\fp)
        \end{equation}
    for each prime ideal $\fp$ in $\Lambda_{E}\,\widehat{\otimes}_E\, E\langle X \rangle=\Lambda_{\cO}\langle X\rangle[1/p]$.
\end{proposition}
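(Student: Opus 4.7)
I would prove this by contradiction: descend to the integral lattice $\mathcal{T}:=\mathbf{W}^\circ\,\widehat{\otimes}_{\cO_E}\,\Lambda_{\cO_E}^\iota$ over $R:=\Lambda_{\cO_E}\langle X\rangle$, and show that a hypothetical non-zero $H_{\QQ,S}$-invariant in the quotient by $\fp$ would, after reduction modulo an appropriately chosen maximal ideal of $R$, produce a non-zero $H_{\QQ,S}$-invariant in (a scalar extension of) the residual representation $\widetilde\rho_f^\vee\otimes\widetilde\rho_g^\vee$. The residual irreducibility hypothesis \eqref{item_Irr_res_F}, together with \eqref{item_p_large} and \eqref{item_conj}, would then provide a contradiction via Schur's lemma. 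The second vanishing in the proposition is established by applying the identical argument to $\mathbf{W}^*(1)$ in place of $\mathbf{W}$.

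\textbf{Reduction and rescaling.} Let $\fP:=\fp\cap R$, a prime of $R$ not containing $p$, so that $(\mathbf{W}\,\widehat\otimes_E\,\Lambda_E^\iota)/\fp\simeq(\mathcal{T}/\fP\mathcal{T})[1/p]$; the latter is the base change to $R[1/p]/\fp$ of the free rank-$n$ $R/\fP$-module $\mathcal{T}/\fP\mathcal{T}$. Suppose $v\neq 0$ lies in the $H_{\QQ,S}$-invariants of this module; after clearing denominators we may take $v\in\mathcal{T}/\fP\mathcal{T}$. Since the elements $p$ and $\gamma_\QQ-1$ of $R$ commute with the Galois action (the first is central and the second acts on the $\Lambda^\iota$-factor, which commutes with the cyclotomic twist), the quotients $v/p$ and $v/(\gamma_\QQ-1)$, whenever they exist, are still $H_{\QQ,S}$-invariants. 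By Proposition~\ref{prop_2024_05_21_1411}, $(p,\gamma_\QQ-1)\subseteq{\rm Jac}(R)$, so by Krull's intersection theorem $\bigcap_n(p,\gamma_\QQ-1)^n(\mathcal{T}/\fP\mathcal{T})=0$. Iterating the division procedure, we may therefore assume $v\notin(p,\gamma_\QQ-1)\mathcal{T}/\fP\mathcal{T}$.

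\textbf{Choice of maximal ideal and identification.} By Proposition~\ref{prop_2024_05_21_1411}, $R/(p,\gamma_\QQ-1)\simeq\texttt{k}[X]$, which is Jacobson. The image of $v$ in $\mathcal{T}/(\fP,p,\gamma_\QQ-1)\mathcal{T}\simeq(\texttt{k}[X]/\bar\fP)^{\oplus n}$ is non-zero (where $\bar\fP$ denotes the image of $\fP$ in $\texttt{k}[X]$), so by prime avoidance in the Jacobson ring $\texttt{k}[X]/\bar\fP$ we may pick a maximal ideal $\m\supseteq(\fP,p,\gamma_\QQ-1)$ of $R$ such that the residue $\bar v\in(R/\m)^{\oplus n}=\mathbb{F}^{\oplus n}$ is non-zero, where $\mathbb{F}:=R/\m$ is a finite extension of $\texttt{k}$. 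Since $\gamma_\QQ-1\in\m$, the $G_\QQ$-action on $\mathcal{T}/\m\mathcal{T}$ factors through $\rho_{\mathbf{W}^\circ}$, and since the residual representation of the Coleman family $\f$ is independent of the specialisation (and equals $\widetilde\rho_f$), we obtain an isomorphism of $G_\QQ$-modules
\[
\mathcal{T}/\m\mathcal{T}\;\simeq\;(\widetilde\rho_f^\vee\otimes\widetilde\rho_g^\vee)\otimes_{\texttt{k}}\mathbb{F}.
\]
Under \eqref{item_p_large} (via Theorem~\ref{thm:Irr_THEOREM 3.2.2_Loff_IMA_OF}) the restrictions $\widetilde\rho_f|_{H_{\QQ,S}}$ and $\widetilde\rho_g|_{H_{\QQ,S}}$ are absolutely irreducible, and \eqref{item_conj} prevents them from being $H_{\QQ,S}$-twists of one another; Schur's lemma then yields $(\widetilde\rho_f^\vee\otimes\widetilde\rho_g^\vee)^{H_{\QQ,S}}=0$, contradicting the non-vanishing of $\bar v$.

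\textbf{Principal obstacle.} The main technical difficulty is that $R=\Lambda_{\cO_E}\langle X\rangle$ is \emph{not} Jacobson: its Jacobson radical $(p,\gamma_\QQ-1)$ strictly contains the zero ideal, so one cannot directly find a maximal ideal avoiding a prescribed non-zero element of $R/\fP$. The workaround --- exploiting Krull's theorem to rescale $v$ by powers of $p$ and $\gamma_\QQ-1$ so that its image survives reduction modulo the Jacobson radical, and only then applying prime avoidance inside $\texttt{k}[X]$ --- is precisely what allows the argument to accommodate arbitrary primes $\fp$ (including maximal ones) without appealing to a generic classical specialisation. I expect this step to be the delicate part of the write-up, since one must verify carefully that each iterated division preserves the invariance of $v$, and that the rescaled element remains inside the lattice $\mathcal{T}/\fP\mathcal{T}$.
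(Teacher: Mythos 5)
Your argument has a genuine gap exactly at the step you flag as delicate: the normalization ensuring $v\notin(p,\gamma_\QQ-1)(\mathcal{T}/\fP\mathcal{T})$. The ideal $I=(p,\gamma_\QQ-1)$ is not principal, so $v\in I\cdot(\mathcal{T}/\fP\mathcal{T})$ only means $v=pa+(\gamma_\QQ-1)b$; it does not imply that $v/p$ or $v/(\gamma_\QQ-1)$ exists in the module, and the ``iterated division'' can stall immediately. Krull's intersection theorem does give $\bigcap_nI^n(\mathcal{T}/\fP\mathcal{T})=0$, but to exploit it you would need the invariant module $N:=H^0(H_{\QQ,S},\mathcal{T}/\fP\mathcal{T})$ to satisfy $N\subseteq IM\Rightarrow N\subseteq IN$; that implication is exactly the unique-division argument which works for a principal ideal acting on a torsion-free module and fails for a two-generated ideal. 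Moreover, the conclusion you are after is not a module-theoretic formality: since every maximal ideal of $R=\Lambda_{\cO_E}\langle X\rangle$ contains $(p,\gamma_\QQ-1)$ (this is how Proposition~\ref{prop_2024_05_21_1411} computes the dimension), a nonzero submodule such as the one generated by $p\,e_1+(\gamma_\QQ-1)e_2$ inside a free module has zero image modulo \emph{every} maximal ideal. So without an argument special to invariants --- which is precisely what breaks --- the reduction to the residual representation is unjustified, and the subsequent prime-avoidance/Schur step has nothing to act on. (A secondary point: the Schur-lemma conclusion must be about $H_{\QQ,S}$-invariants rather than $G_\QQ$-invariants; the paper delegates this to Corollary 5.18 of \cite{factorization_alge_p_adic}.)

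The paper circumvents the non-principality problem by a case analysis on the height of $\fp$ in the two-dimensional regular ring $\Lambda_{\cO_E}\langle X\rangle[1/p]$. When $\fp$ is maximal, the quotient is a vector space over a finite extension of $\QQ_p$ carrying a Galois-stable lattice over the one-dimensional local ring $\Lambda_{\cO_E}\langle X\rangle/\fp^\circ$, and one reduces modulo the single element $\varpi_E$: there the standard rescaling/division trick is valid because $(\varpi_E)$ is principal, and residual irreducibility \eqref{item_Irr_res_F} finishes this case. For height-one primes (and for $\fp=(0)$) the paper does not pass to the residual representation at all: writing $\fp=(\wp)\subset\frakm=(\wp',\wp)$ with $\{\wp',\wp\}$ a regular sequence, left exactness of $H^0$ applied to $0\to V_\fp\xrightarrow{\,\wp'\,}V_\fp\to V_\frakm\to0$ together with the already-proved maximal case shows that $H^0(H_{\QQ,S},V_\fp)$ is $\wp'$-divisible inside a torsion-free module, hence zero. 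If you want to keep a uniform lattice-theoretic argument, you would have to replace the simultaneous reduction modulo $(p,\gamma_\QQ-1)$ by this kind of one-element-at-a-time d\'evissage.
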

\begin{proof}
\label{proof:prop:Irr_V_0_with_item_Irr_res_F}

We shall prove only the vanishing on the left since the argument to prove the one on the right is entirely identical. 

Let us first assume that $\mathfrak{p}$ is a maximal ideal. The discussion in the proof of Proposition~\ref{prop_2024_05_21_1411}(iii) shows that $\mathfrak{p}$ corresponds to a unique height-2 prime $\mathfrak{p}^\circ$ of $\LL_{\cO_E}\langle X\rangle$ that does not contain $p$, and $\widetilde{\mathfrak{p}}:=(\varpi_E)+\mathfrak{p}^\circ$ is a height-3 (maximal) ideal of $\LL_{\cO_E}\langle X\rangle$. Moreover,
$$T_\mathfrak{p}:=({\mathbf W}^\circ\,\widehat{\otimes}_{\cO_E}\,\LL_{\cO_E}^\iota)/\mathfrak{p}^\circ \subset ({\mathbf W}\,\widehat{\otimes}_E\,\LL_{E}^\iota)/\mathfrak{p}=:V_\mathfrak{p}$$
is a Galois-stable $\cO_\fp:=\LL_{\cO_E}\langle X\rangle/\mathfrak{p}^\circ$-lattice in the $F_\mathfrak{p}:=\LL_{\cO_E}\langle X\rangle/\mathfrak{p}$-vector space $V_\mathfrak{p}$, and 
$$\overline{T}_\mathfrak{p}:=({\mathbf W}^\circ\,\widehat{\otimes}_{\cO_E}\,\LL_{\cO_E}^\iota)/\widetilde{\mathfrak{p}}=T_\mathfrak{p}/(\varpi_E)\simeq \texttt{k}_E^{\oplus 2}$$
is the residual representation, on which the Galois group acts via $\widetilde{\rho}_{\f}^*\otimes \widetilde{\rho}_g^*$, where the former factor is given as in \S\ref{sec:Resi_reps_Coleman} (and the latter is the residual representation attached to $g$). We are set to prove that $ H^0(H_{\QQ,S},V_\fp)=0$. Since the sequence
$$0\lra H^0(H_{\QQ,S},T_\fp)\lra H^0(H_{\QQ,S},V_\fp)\lra H^0(H_{\QQ,S},\overline{T}_\fp)$$
is exact, it suffices to prove that $H^0(H_{\QQ,S},\overline{T}_\fp)=0$ (since $T_\fp$ has no non-zero $p$-divisible elements, and any submodule of $V_\fp$ is $p$-divisible).

We remark that under our running assumption \eqref{item_p_large}, the following hypothesis is also valid:
\begin{enumerate}
    \item[(\mylabel{item_Irr_res_F}{$\,{\textbf{Irr}}\,$})] The residual representation $\widetilde{\rho}_{V^*_\f}\otimes \widetilde{\rho}_g$ is an absolutely irreducible $G_{\QQ}$-representation.
\end{enumerate}

Relying on \eqref{item_Irr_res_F}, we may argue exactly as in the proof of \cite[Corollary 5.18]{factorization_alge_p_adic} to see conclude the vanishing  $H^0(H_{\QQ,S},\overline{T}_\fp)=0$, and our proposition is proved for maximal ideals $\fp$.

Suppose now that $\fp$ is a height-1 prime ideal of the 2-dimensional regular ring $\LL_{\cO_E}\langle X\rangle[1/p]$. Then $\fp=(\wp)$ is necessarily a principle ideal, and it is contained in a maximal ideal $\frakm=(\wp',\wp)$ where $\{\wp',\wp\}$ is a regular sequence. As before, let us set $V_\mathfrak{p}:=({\mathbf W}\,\widehat{\otimes}_E\,\LL_{E}^\iota)/\mathfrak{p}$ and similarly, $V_\mathfrak{m}:=({\mathbf W}\,\widehat{\otimes}_E\,\LL_{E}^\iota)/\mathfrak{m}$. We would like to prove that $H^0(H_{\QQ,S},V_\mathfrak{p})=0$.

Let us consider 
$$0\lra V_\mathfrak{p} \xrightarrow{\wp'} V_\mathfrak{p}\lra V_\mathfrak{m}\lra 0$$
which is exact thanks to the fact that $\{\wp',\wp\}$ is a regular sequence. On passing to $H_{\QQ,S}$-invariants, we deduce that the sequence 
$$0\lra H^0(H_{\QQ,S},V_\mathfrak{p}) \xrightarrow{\wp'} H^0(H_{\QQ,S},V_\mathfrak{p})\lra H^0(H_{\QQ,S},V_\mathfrak{m})=0\,.$$
is exact, where the right-most vanishing follows from our discussion concerning the case of maximal ideals. This shows that $H^0(H_{\QQ,S},V_\mathfrak{p})$ is $\wp'$-divisible, and in turn also that it vanishes. 

Our proposition is now proved when $\fp$ is either a maximal or a height-1 prime. The only remaining case is when $\fp=(0)$ is the unique height-0 prime, and our proposition can be proved in this case by mimicking the argument employed in the previous part.
\end{proof}
We now prove the first main result of \S\ref{sec:Triangulations_in_families_1_1}, where we closely follow the proof of \cite[Theorem 7.8]{factorization_alge_p_adic}.

\begin{theorem}
\label{thm:thm:H^1_zero_torsion_H^2_ranking_sel_ColemanFamily_1_vari}
Under the validity of Assumptions~\ref{hypo_ranking_sel_1}, we have:
    \item[i)] $\boldR^3\Gamma(\overline {\mathbf W}, \overline \boldD_{\mathbf W})=0=\boldR^3\Gamma(\overline{{\mathbf W}^*(1)}, \overline{\boldD_{\mathbf W}}^\perp).$
    \item[ii)] $\boldR\Gamma(\overline{\mathbf W},\overline{\boldD}_{\mathbf W})$, $\boldR\Gamma(\overline{{\mathbf W}^*}(1), \overline{\boldD_{\mathbf W}}^\perp) \in \cD^{[1,2]}_{\rm perf}(\cH_{A}(\Gamma_{\QQ}^0)).$
    \item[iii)] Suppose in addition that the big image hypothesis in the statement of Theorem~\ref{thm:Vanish_H^1_torsion-ness_of_H^2_for_specialization}(ii) holds for some classical specialization of $\f$. Then, $\boldR^1\Gamma(\overline {\mathbf W}, \overline \boldD_{\mathbf W})=0=\boldR^1\Gamma(\overline{{\mathbf W}^*}(1), \overline{\boldD_{\mathbf W}}^\perp).$ 
\end{theorem}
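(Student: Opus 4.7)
The plan is to verify the three parts in turn, reducing each to the structural results of Part~\ref{part_general_nonord} combined with the punctual statement Theorem~\ref{thm:Vanish_H^1_torsion-ness_of_H^2_for_specialization}.

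For part~(i), I would apply Proposition~\ref{prop_Finiteness_SelCOm_Over_H_A_And_coho_in_deg3_anddeg0_is_0}(ii). The hypothesis to verify is that $(\mathbf{W}^*_x(1))^{H_{\QQ,S}}=0=(\mathbf{W}_x)^{H_{\QQ,S}}$ for every $x\in{\rm Max}(A)$. The residual Galois representation attached to $\mathbf{W}$ is independent of $x$ by \S\ref{sec:Resi_reps_Coleman}, and is absolutely irreducible under \eqref{item_p_large} combined with Proposition~\ref{prop:Prop_4.2.1_Loff_Ima}. Since $H_{\QQ,S}$ is normal in $G_{\QQ,S}$ with abelian quotient $\Gamma_\QQ$, any nonzero $H_{\QQ,S}$-invariant vector in $\mathbf{W}_x$ would lift residually (via an integral model) to a nonzero $H_{\QQ,S}$-invariant in the residual representation; by normality such a subspace would be $G_{\QQ}$-stable and factor through $\Gamma_\QQ$, forcing the residual representation to split as a sum of characters, contradicting absolute irreducibility in dimension~$4$. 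The same argument handles $\mathbf{W}^*(1)$.

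For part~(ii), I would invoke Theorem~\ref{thm:deg_SelCOm_Over_H_Ais_1_2_facto}. Its first hypothesis is supplied by part~(i); its second, the vanishing of $(\mathbf{W}\otimes \Lambda_A^\iota[1/p]/\fp)^{G_{\QQ,S}}$ for every prime $\fp$ of $\Lambda_A[1/p]$, is exactly the content of Proposition~\ref{prop:Irr_V_0_with_item_Irr_res_F} (whose proof carries over verbatim, the only input being the residual irreducibility used above). The third hypothesis, projectivity of $H^1(I_\fq,\mathbf{W})$ over $A$ for each $\fq\in S^{(p)}$, is immediate at unramified $\fq$; at ramified $\fq$ it calls for a Tamagawa-type analysis in Coleman families analogous to Proposition~\ref{prop:prop4.37_on_artin_Iawasawa} of the ordinary setting, obtained by studying the block-unipotent shape of the $I_\fq$-action (governed by a single distinguished element of $A$) and extracting the unit-ness of that element from the vanishing of a Tamagawa factor at one classical member of the family.

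For part~(iii), I would specialize at the classical weight $\kappa\in X\cap \ZZ$ for which the big-image and $L$-value hypotheses of Theorem~\ref{thm:Vanish_H^1_torsion-ness_of_H^2_for_specialization}(ii) are satisfied. Part~(ii) provides a representative $[P_1\xrightarrow{u}P_2]$ of $\boldR\Gamma(\overline{\mathbf{W}},\overline{\boldD}_{\mathbf{W}})$ by finitely generated free $\cH_A(\Gamma^0_\QQ)$-modules in degrees $1$ and $2$. Base-change along $\cH_A(\Gamma^0_\QQ)\twoheadrightarrow \cH_E(\Gamma^0_\QQ)$ at $\kappa$ identifies the specialized complex with the punctual Selmer complex $\boldR\Gamma(\overline{\mathbf{W}_\kappa},\overline{\boldD}_{\mathbf{W}_\kappa})$, whose $\boldR^1$ vanishes and whose $\boldR^2$ is torsion, by the punctual theorem. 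Comparing Euler characteristics at $\kappa$ forces ${\rm rank}\,P_1={\rm rank}\,P_2$, and the specialization of $\det(u)\in\cH_A(\Gamma^0_\QQ)$ at $\kappa$ equals $\det(u\otimes\cH_E(\Gamma^0_\QQ))\ne 0$, since $u\otimes\cH_E(\Gamma^0_\QQ)$ is injective (its kernel being the vanishing punctual $\boldR^1$). Hence $\det(u)$ is a nonzero element of the domain $\cH_A(\Gamma^0_\QQ)$, so $u$ is injective and $\boldR^1\Gamma(\overline{\mathbf{W}},\overline{\boldD}_{\mathbf{W}})=\ker(u)=0$. The dual vanishing follows from the same specialization argument applied to $(\mathbf{W}^*(1),\overline{\boldD}_{\mathbf{W}}^\perp)$, with duality (Theorem~\ref{thm:duality_selComplex_Iw_iso}) used to transport the relevant hypotheses.

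The principal obstacle will lie in part~(ii): establishing projectivity of $H^1(I_\fq,\mathbf{W})$ at ramified primes $\fq\nmid p$ in a Coleman family. In the ordinary case (Proposition~\ref{prop:prop4.37_on_artin_Iawasawa}), Wiles' explicit shape of $\rho_{\mathbf{f}}|_{G_{\QQ_p}}$ and Ohta's lattice produce a tractable matrix description of the inertia action; in a general Coleman family such integral normal forms are unavailable, and one must instead exploit the $(\varphi,\Gamma)$-module formalism over the reduced affinoid $A$ and propagate Tamagawa-type vanishing from a single classical specialization --- a step that lies at the technical heart of the non-ordinary functional equation.
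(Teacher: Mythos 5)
Your parts (i) and (iii) are essentially sound. Part (i) is in substance the paper's argument (specialize at classical points, use the punctual vanishing from Theorem~\ref{thm:Vanish_H^1_torsion-ness_of_H^2_for_specialization}(i), and conclude by the Nakayama-type Lemma 7.7 of \cite{factorization_alge_p_adic} — whether you phrase this via Proposition~\ref{prop_Finiteness_SelCOm_Over_H_A_And_coho_in_deg3_anddeg0_is_0}(ii) or via the control theorem directly), and your part (iii) — representing the complex by $[P_1\xrightarrow{u}P_2]$, specializing at one good classical weight, and deducing $\det(u)\neq 0$, hence $\ker(u)=\boldR^1\Gamma(\overline{\mathbf W},\overline{\boldD}_{\mathbf W})=0$ — is a legitimate and somewhat more compact variant of the paper's route, which instead combines the coinvariants exact sequence of Theorem~\ref{thm:Prop_Iw_SelComp}(iii) with freeness of $\boldR^1\Gamma(\overline{\mathbf W},\overline{\boldD}_{\mathbf W})\otimes_A A/\fp$ at \emph{every} classical point and then applies Lemma 7.7 again. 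Your explicit inclusion of the $L$-value non-vanishing at the chosen specialization matches what the paper's own proof implicitly uses, and your treatment of the dual complex via Theorem~\ref{thm:Prop_Iw_SelComp}(iv) is fine.

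The genuine gap is in part (ii), exactly at the point you flag as the ``principal obstacle''. The theorem carries no Tamagawa-type hypothesis: it is stated under Assumptions~\ref{hypo_ranking_sel_1} alone, and the paper obtains the projectivity of $H^1(I_\fq,{\mathbf W})$ for $\fq\in S^{(p)}$ \emph{unconditionally} in this 1-parameter setting, by citing Lemma 5.25 of \cite{factorization_alge_p_adic}; with $A=E\langle X\rangle$ a characteristic-zero principal ideal domain and the second factor a fixed two-dimensional $E$-representation, no Tamagawa input is required, in contrast to the integral ordinary setting of Propositions~\ref{prop:Corollary 4.26_vari_on_artin} and \ref{prop:prop4.37_on_artin_Iawasawa}, where freeness of $H^1(I_v,T_2)$ over $\cO$-coefficient rings genuinely encodes Tamagawa factors. (The Tamagawa-type condition \eqref{item_TamBigV} enters this paper only in the 2-parameter case, Theorem~\ref{thm:H^1_zero_torsion_H^2_ranking_sel_ColemanFamily}.) Your proposal both leaves this local step unexecuted — the ``block-unipotent shape'' analysis with unit-ness extracted from a Tamagawa factor at one classical member is a plan rather than an argument, and, as you yourself note, the integral normal forms it would require are unavailable for Coleman families — and, worse, would import a hypothesis (vanishing of a Tamagawa factor at some classical specialization) that is absent from the statement, so at best it yields a conditional version of (ii). Note also that this gap leaks into your route for (i): Proposition~\ref{prop_Finiteness_SelCOm_Over_H_A_And_coho_in_deg3_anddeg0_is_0} carries the projectivity of $V^{I^w_\fq}/(t_\fq-1)V^{I^w_\fq}$ as a standing hypothesis, whereas the paper's proof of (i) sidesteps it by using only the control theorem and Lemma 7.7 of \cite{factorization_alge_p_adic}.
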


\begin{proof}
    \item[i)] Let us consider an arbitrary classical specialization $\f_k$ of $\f$, say of weight $k$, and let $\fp_k \in {\rm Spm}(A)$ denote the corresponding maximal ideal. It follows from the control theorem for Selmer complexes (cf. (CTRL-2) in \cite{factorization_alge_p_adic}), we have 
    $$\boldR^3\Gamma(\overline {\mathbf W}, \overline \boldD_{\mathbf W}) \otimes_{A} A/\fp_{k} \simeq \boldR^3\Gamma(\overline{{\mathbf W}/\fp_{k}{\mathbf W}}, \overline{\boldD_{\mathbf W}/\fp_{k}\boldD_{\mathbf W}})=\{0\}\,,$$
   where the vanishing follows from Theorem~\ref{thm:Vanish_H^1_torsion-ness_of_H^2_for_specialization}(i). By \cite[Lemma 7.7]{factorization_alge_p_adic}, we conclude that $\boldR^3\Gamma(\overline{{\mathbf W}}, \overline{\boldD_{\mathbf W}})=0$. The proof that $\boldR^3\Gamma(\overline{\boldV^*_1}(1), \overline{\boldD_{\mathbf W}}^\perp)=0$ is similar.
    
    \item[ii)] We infer from \cite[Lemma 5.25]{factorization_alge_p_adic} that $H^1(I_\fq, {\mathbf W})$ is a projective $A$-module for each $\fq \in \Sigma_p$. Our second assertion now follows from \cite[Theorem 5.21]{factorization_alge_p_adic} combined with Part (i).
    
    \item[iii)]    We have seen in the proof of Theorem~\ref{thm:Vanish_H^1_torsion-ness_of_H^2_for_specialization}(ii) that $\boldR^1\Gamma({\mathbf W}/\fp_{k}{\mathbf W}, \boldD_{\mathbf W}/\fp_{k}\boldD_{\mathbf W})=\{0\}$. It then follows from the control theorem (CTRL-3) in \cite[Proposition 5.11]{factorization_alge_p_adic} that $\boldR^1\Gamma({\mathbf W}, \boldD_{\mathbf W}) \otimes_{A} A/\fp_{k}=\{0\}$ as well. Observe that the complex $\boldR\Gamma({\mathbf W},\boldD_{\mathbf W})$ of $A$-modules is perfect with amplitude $[1,2]$, thanks to  Part (ii) combined with (CTRL-1) in \cite[Proposition 5.11]{factorization_alge_p_adic}  and  \cite[\href{https://stacks.math.columbia.edu/tag/066W}{Lemma 066W}]{stacks-project}. Moreover, since $A$ is a PID, it follows that
     $$\boldR\Gamma({\mathbf W},\boldD_{\mathbf W}) \simeq [A^r \longrightarrow A^r]\,\quad \hbox{ in degrees 1 and 2}\,.$$
   This shows $\boldR^1\Gamma({\mathbf W},\boldD_{\mathbf W})$ is also free of some finite rank $r_0$. On the other hand,  we have seen above that $\boldR^1\Gamma({\mathbf W}, \boldD_{\mathbf W}) \otimes_{A} A/\fp_{k}=0$. This shows that $r_0$ must be zero, so that $\boldR^1\Gamma({\mathbf W},\boldD_{\mathbf W})=0.$

By Theorem~\ref{thm:Prop_Iw_SelComp}(iii), we have 
    \begin{equation}
    \label{eq:gamma_0_co-invari_H^1_overH_E_2}
    \boldR^1\Gamma(\overline {\mathbf W}, \overline \boldD_{\mathbf W})_{\Gamma_{\QQ}^0} \subset \boldR^1\Gamma({\mathbf W},\boldD_{\mathbf W})=0.
    \end{equation}
    Let us take an arbitrary classical point $\p \in X (E_\p)$, where $E_\p$ is a finite extension of $\QQ_p$. By (CTRL-2) in \cite[Proposition 5.11]{factorization_alge_p_adic}, we have
    $$\boldR^1\Gamma(\overline {\mathbf W}, \overline \boldD_{\mathbf W}) \otimes_{A} A/\fp \hookrightarrow \boldR^1\Gamma(\overline{{\mathbf W}/\fp {\mathbf W}}, \overline{\boldD_{\mathbf W}/\fp\boldD_{\mathbf W}}).$$ 
    We note that the $\cH_{E_{k}}(\Gamma_{\QQ}^0)$-module $\boldR^1\Gamma(\overline{{\mathbf W}/\fp {\mathbf W}}, \overline{\boldD_{\mathbf W}/\fp\boldD_{\mathbf W}})$ is free of finite rank thanks to the structure theorem for co-admissible $\cH_{E_{\p}}(\Gamma_{\QQ_p}^0)$-modules (cf. \cite{pottharst}, Proposition 1.1) combined with the fact that 
    $\boldR^3\Gamma(\overline{{\mathbf W}^*(1)/\fp {\mathbf W}^*(1)}, \overline{\boldD_{\mathbf W}^\perp/\fp\boldD_{\mathbf W}^\perp})=0$
    (where this vanishing follows from the control theorem and Part (ii)). 
    Hence, $\boldR^1\Gamma(\overline {\mathbf W}, \overline \boldD_{\mathbf W}) \otimes_{A} A/\fp$ is also a free $\cH_{E_{\p}}(\Gamma_{\QQ}^0)$-module with some finite rank $r_1 \geq 0$. Therefore, the module
    \begin{equation}
    \label{eq:co-inavriant_2}
    \left(\boldR^1\Gamma(\overline {\mathbf W}, \overline \boldD_{\mathbf W}) \otimes_{A} A/\fp\right)_{\Gamma_{\QQ}^0} \simeq \boldR^1\Gamma(\overline {\mathbf W}, \overline \boldD_{\mathbf W})_{\Gamma_{\QQ}^0} \otimes_{A} A_1/\fp
    \end{equation}
is an $E_\p$-vector space of dimension $r_1$. But, by \eqref{eq:gamma_0_co-invari_H^1_overH_E_2}, we have $\boldR^1\Gamma(\overline {\mathbf W}, \overline \boldD_{\mathbf W})_{\Gamma_{\QQ}^0}=0$, hence $r_1=0.$  We conclude that the rank of the free $\cH_{E_{\p}}(\Gamma_{\QQ}^0)$-module $\boldR^1\Gamma(\overline {\mathbf W}, \overline \boldD_{\mathbf W}) \otimes_{A} A/\fp$ equals $0$ for every $\fp$ as above. We apply once again \cite[Lemma 7.7]{factorization_alge_p_adic} to see that $\boldR^1\Gamma(\overline {\mathbf W}, \overline \boldD_{\mathbf W})=0$, as required.

 The proof that $\boldR^1\Gamma(\overline{{\mathbf W}^*(1)}, \overline{\boldD_{\mathbf W}^\perp})=0$ is similar, and the proof of our theorem is now complete.
\end{proof}

\begin{theorem}[Functional equation for $1$-parameter families]
\label{thm:FuncEq_SelCom_Rankin_SelbergProducts_1_vari}
In the setting of Theorem~\ref{thm:thm:H^1_zero_torsion_H^2_ranking_sel_ColemanFamily_1_vari}(iii), we have
$${\rm char}_{\cH_{A}(\Gamma_{\QQ}^0)}\,\left(\boldR^2\Gamma(\overline{{\mathbf W}^*}(1), \overline{\boldD_{\mathbf W}}^\perp)\right) = {\rm char}_{\cH_{A}(\Gamma_{\QQ}^0)}\,\left(\boldR^2\Gamma(\overline {\mathbf W}, \overline \boldD_{\mathbf W})\right)^\iota\,.$$ 
where ${\rm char}_{\cH_{A}(\Gamma_{\QQ}^0)}$ denotes the cylindrical characteristic ideal of an ${\cH_{A}(\Gamma_{\QQ}^0)}$-module given as in Definition~\ref{defn:char_Ideal_coadmissible_H_A_module}.
\end{theorem}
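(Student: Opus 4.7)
The plan is to deduce this functional equation as a direct application of the abstract result Theorem~\ref{thm:char_ideals_for_selCom_and_dual_selCom_H_A} to the triple $(A, {\mathbf W}, \boldD_{{\mathbf W}})$, where $A = E\langle X\rangle$ is a Tate algebra of dimension one. What needs checking are the running hypotheses of Theorem~\ref{thm:det_selCOm_over_H_A_=_charIdeal_Cohomo_deg2}, which in turn reduce to verifying the three conditions of Theorem~\ref{thm:deg_SelCOm_Over_H_Ais_1_2_facto} together with the vanishing of $\boldR^1\Gamma(\overline{{\mathbf W}}, \overline{\boldD}_{{\mathbf W}})$ and the torsion property of $\boldR^2\Gamma(\overline{{\mathbf W}}, \overline{\boldD}_{{\mathbf W}})$ over $\cH_{A}(\Gamma_{\QQ}^0)$.

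First I would invoke Theorem~\ref{thm:thm:H^1_zero_torsion_H^2_ranking_sel_ColemanFamily_1_vari}(i) to obtain $\boldR^3\Gamma(\overline{{\mathbf W}}, \overline{\boldD}_{{\mathbf W}})=0$, Proposition~\ref{prop:Irr_V_0_with_item_Irr_res_F} for the vanishing $H^0(H_{\QQ,S}, ({\mathbf W}\,\widehat{\otimes}\,\Lambda_E^\iota)/\fp)=0$ at every prime ideal $\fp$ of $\Lambda_A[1/p]$, and \cite[Lemma 5.25]{factorization_alge_p_adic} (which also powers the proof of Theorem~\ref{thm:thm:H^1_zero_torsion_H^2_ranking_sel_ColemanFamily_1_vari}(ii)) for the projectivity of $H^1(I_\fq, {\mathbf W})$ at each $\fq \in S^{(p)}$. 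These three inputs trigger Theorem~\ref{thm:deg_SelCOm_Over_H_Ais_1_2_facto} and place $\boldR\Gamma(\overline{{\mathbf W}},\overline{\boldD}_{{\mathbf W}})$ in $\cD^{[1,2]}_{\rm perf}(\cH_{A}(\Gamma_{\QQ}^0))$. The vanishing $\boldR^1\Gamma(\overline{{\mathbf W}}, \overline{\boldD}_{{\mathbf W}})=0$ is then exactly Theorem~\ref{thm:thm:H^1_zero_torsion_H^2_ranking_sel_ColemanFamily_1_vari}(iii).

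The one remaining point, which I expect to be the main (yet fairly mild) obstacle, is to show that $\boldR^2\Gamma(\overline{{\mathbf W}}, \overline{\boldD}_{{\mathbf W}})$ is a torsion $\cH_{A}(\Gamma_{\QQ}^0)$-module. The preceding verifications guarantee that the Selmer complex admits a quasi-isomorphic representative $[P_1 \xrightarrow{\,\phi\,} P_2]$ placed in degrees $1$ and $2$, with $P_1, P_2$ finitely generated projective and $\phi$ injective; hence torsion-ness of $\mathrm{coker}(\phi)$ is equivalent to $P_1$ and $P_2$ having the same rank over $\cH_{A}(\Gamma_{\QQ}^0)$. My plan is to compare ranks after specializing at the classical maximal ideal $\fp_k \subset A$ corresponding to the crystalline specialization $\f_k$ supplied by the running big image hypothesis. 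The control theorem \cite[Proposition 5.11]{factorization_alge_p_adic} identifies this specialization with the punctual Selmer complex $\boldR\Gamma(\overline{V}, \overline{D})$, where Theorem~\ref{thm:Vanish_H^1_torsion-ness_of_H^2_for_specialization}(ii) already delivers $\boldR^1=0$ and torsion $\boldR^2$ over $\cH_E(\Gamma_{\QQ}^0)$. This forces the specialized ranks of $P_1$ and $P_2$ to agree, and therefore the same equality of ranks holds over $\cH_{A}(\Gamma_{\QQ}^0)$.

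With all hypotheses verified, Theorem~\ref{thm:char_ideals_for_selCom_and_dual_selCom_H_A} applies and yields the claimed equality of cylindrical characteristic ideals. Conceptually, the real work lies not in this concluding assembly but in the earlier perfectness and duality inputs: the Tamagawa-type projectivity invoked via \cite[Lemma 5.25]{factorization_alge_p_adic} is what converts Benois's abstract duality Theorem~\ref{thm:duality_selComplex_Iw_iso}(iii) into a genuine equality of characteristic ideals, after which the functional equation follows from a formal transpose computation on the two-term representative of $\boldR\Gamma(\overline{{\mathbf W}}, \overline{\boldD}_{{\mathbf W}})$, exactly mirroring the punctual argument of Theorem~\ref{thm:FuncEq_SelCom_Rankin_SelbergProducts_0_vari}.
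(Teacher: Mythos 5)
Your proposal is correct and follows essentially the same route as the paper: the theorem is deduced as a special case of Theorem~\ref{thm:char_ideals_for_selCom_and_dual_selCom_H_A}, with the hypotheses supplied by Proposition~\ref{prop:Irr_V_0_with_item_Irr_res_F}, the projectivity of $H^1(I_\fq,{\mathbf W})$, and Theorem~\ref{thm:thm:H^1_zero_torsion_H^2_ranking_sel_ColemanFamily_1_vari}. Your explicit rank-comparison argument (specializing the two-term representative at a classical point and invoking the control theorem together with Theorem~\ref{thm:Vanish_H^1_torsion-ness_of_H^2_for_specialization}(ii)) to establish that $\boldR^2\Gamma(\overline{{\mathbf W}},\overline{\boldD}_{\mathbf W})$ is torsion is a detail the paper leaves implicit, and it is a valid way to complete the verification.
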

\begin{proof}
\label{proof:thm:FuncEq_SelCom_Rankin_SelbergProducts_1_vari}
This is a special case of Theorem~\ref{thm:char_ideals_for_selCom_and_dual_selCom_H_A}. Note that we have verified in \S\ref{sec:Triangulations_in_families_1_1} (see especially Proposition~\ref{prop:Irr_V_0_with_item_Irr_res_F} and the proof of Theorem~\ref{thm:thm:H^1_zero_torsion_H^2_ranking_sel_ColemanFamily_1_vari}) that all the assumptions required in Theorem~\ref{thm:char_ideals_for_selCom_and_dual_selCom_H_A} hold in this scenario.

\end{proof}

\subsubsection{2-parameter families}
\label{sec:Triangulations_in_families_2}
Let us now consider the case $A=\cO(X_1) \,\widehat{\otimes}_E\,\cO(X_2)\simeq E\langle X_1,X_2\rangle$\,, where $k_{f}\in X_1 \subset U_1$ and $k_{g}\in X_2 \subset U_2$ are standard affinoids over $E$ contained in the wide-open disc $U_1$ and $U_2$ respectively. Let $\f$ and $\g$ be Coleman families over $X_1$ and $X_2$, passing through the eigenforms $f_{\alpha_f}$ and $g_{\alpha_g}$, respectively. Let us consider the corresponding big Galois representation 
$\boldV:=V^*_{\f}\, \widehat{\otimes}_{E}\, V^*_{\g}$ over $A$. Let us put $\boldD:= \mathcal{F}^+D^\dagger_{\rm rig}(V^*_{\f}) \,\widehat{\otimes}\,  D^\dagger_{\rm rig}(V^*_{\g})$, which is a $(\varphi,\Gamma_{\QQ_p})$-submodule of $D^\dagger_{\rm rig}(\boldV)$ of rank $2$. As usual, we define $\boldD^\perp$ to denote the dual triangulation at $p$ for $\boldV^*(1)$, which is given by 
$${\rm Hom}_{\cR_{\cO(X_1)}}\left(D^\dagger_{\rm rig}(V^*_{\f})/\mathcal{F}^+D^\dagger_{\rm rig}(V^*_{\f})\,,\,\cO(X_1)\right)\,\widehat{\otimes}_{E}\, D^\dagger_{\rm rig}(V_{\g}(1))\subset D^\dagger_{\rm rig}(\boldV^*(1))\,.$$

\begin{lemma}
\label{lemma:krulldim_2_variable_Cole}
The Noetherian ring $A\widehat{\otimes}_E \LL_E$ is regular of Krull dimension $3$.
\end{lemma}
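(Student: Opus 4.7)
The plan is to reduce the statement to a direct application of Proposition~\ref{prop_2024_05_21_1411}(iii) with $n = 2$, using the identification furnished by Remark~\ref{remark_2024_5_21_1457}. First I would unfold the definition of the completed tensor product $A\,\widehat{\otimes}_E\,\LL_E$: by the convention adopted in Remark~\ref{remark_2024_5_21_1457}, this is precisely the ring $\LL_{\cO_E}\langle X_1, X_2\rangle[1/p]$, where the completed tensor over $\cO_E$ is taken with respect to the $\frakm_{\LL_{\cO_E}}$-adic topology on the first factor and the $p$-adic topology on the second factor. This identifies the ring in question with the one treated in Proposition~\ref{prop_2024_05_21_1411}(iii).

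Next I would invoke Proposition~\ref{prop_2024_05_21_1411}(iii) applied with $n=2$: this immediately gives that
$$\LL_{\cO_E}\langle X_1, X_2\rangle[1/p] \simeq \left(\LL_{\cO_E}(\ZZ_p)\,\widehat{\otimes}_{\cO_E}\,\cO_E\langle X_1, X_2\rangle\right)[1/p]$$
is regular of Krull dimension $n + 1 = 3$. Noetherianity is automatic since the ring is a localization of $\LL_{\cO_E}\langle X_1, X_2\rangle$, which is Noetherian by Proposition~\ref{prop_2024_05_21_1411}(i).

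There is essentially no obstacle here: the statement is a bookkeeping consequence of the preparatory material already set up in \S\ref{sec:Triangulations_in_families_1_1}, and the only thing to verify carefully is that the topological conventions used in defining $A\,\widehat{\otimes}_E\,\LL_E$ match those of Remark~\ref{remark_2024_5_21_1457} — namely, that one really does take the $\frakm$-adic (rather than $p$-adic) topology on $\LL_{\cO_E}$ before completing, and inverts $p$ afterwards. Once this is confirmed, the lemma follows at once.
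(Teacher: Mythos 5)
Your proposal is correct and matches the paper's argument: the paper proves this lemma by observing that it is exactly the $n=2$ case of Proposition~\ref{prop_2024_05_21_1411}(iii), with the identification of $A\,\widehat{\otimes}_E\,\LL_E$ with $\LL_{\cO_E}\langle X_1,X_2\rangle[1/p]$ understood via the convention of Remark~\ref{remark_2024_5_21_1457}. You have simply written out the bookkeeping that the paper leaves implicit.
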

\begin{proof}
This is a special case of  Proposition~\ref{prop_2024_05_21_1411}(iii).
\end{proof}

\begin{lemma}
\label{lemma:Irr_V_with_item_Irr_res_F_G}
We have 
    \begin{equation}
    \label{eqn_2024_05_21_1521_bis}
        H^0(H_{\QQ,S},(\boldV\,\widehat\otimes_{E}\, \Lambda_{E}^\iota)/\fp)=0= H^0(H_{\QQ,S},(\boldV^*(1)\,\widehat\otimes_{E}\, \Lambda_{E}^\iota)/\fp)
        \end{equation}
    for each prime ideal $\fp$ of $A\widehat{\otimes}_E \LL_E$.
\end{lemma}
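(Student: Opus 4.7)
The plan is to mimic the proof of Proposition~\ref{prop:Irr_V_0_with_item_Irr_res_F} by handling first the maximal ideals of the regular ring $R := A \widehat{\otimes}_E \Lambda_E \simeq \Lambda_{\cO_E}\langle X_1, X_2\rangle[1/p]$ of Krull dimension $3$ (cf. Lemma~\ref{lemma:krulldim_2_variable_Cole}), and then reducing primes of lower heights via a devissage argument. Since the proof for the second vanishing in \eqref{eqn_2024_05_21_1521_bis} is entirely parallel, I would only carry out the first.

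First I would treat the case when $\fp = \fm$ is maximal. As in the proof of Proposition~\ref{prop:Irr_V_0_with_item_Irr_res_F}, the maximal ideal $\fm$ corresponds to a unique height-$3$ prime $\fm^\circ$ of $\Lambda_{\cO_E}\langle X_1, X_2\rangle$ not containing $p$, and $\widetilde{\fm} := (\varpi_E) + \fm^\circ$ is a height-$4$ maximal ideal. Setting $\boldV^\circ := \boldV_{\f}^\circ \,\widehat{\otimes}_{\cO_E}\, \boldV_{\g}^\circ$ (where $\boldV_{\f}^\circ$, $\boldV_{\g}^\circ$ are the Galois-stable lattices furnished via \cite[Theorem 4.3]{OchiaiRay} as in \S\ref{sec:Triangulations_in_families_1_1}), the module
$$T_\fm := (\boldV^\circ \,\widehat{\otimes}_{\cO_E}\, \Lambda_{\cO_E}^\iota)/\fm^\circ$$
is a Galois-stable lattice in $V_\fm := (\boldV \,\widehat{\otimes}_E\, \Lambda_E^\iota)/\fm$, and the residual representation $\overline{T}_\fm := T_\fm/(\varpi_E)$ is isomorphic to $\widetilde{\rho}_{\f}^* \otimes \widetilde{\rho}_{\g}^*$, which coincides with $\widetilde{\rho}_{f}^* \otimes \widetilde{\rho}_{g}^*$ by \S\ref{sec:Resi_reps_Coleman}. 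The injectivity of $H^0(H_{\QQ,S}, T_\fm) \hookrightarrow H^0(H_{\QQ,S}, V_\fm)$ together with the $p$-divisibility of $H^0(H_{\QQ,S}, V_\fm)$ reduces the vanishing of the latter to that of $H^0(H_{\QQ,S}, \overline{T}_\fm)$, which follows from the absolute irreducibility of $\widetilde{\rho}_f^* \otimes \widetilde{\rho}_g^*$ as a $G_\QQ$-representation (this is guaranteed by \eqref{item_p_large}, arguing exactly as in \cite[Corollary 5.18]{factorization_alge_p_adic}).

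Next, I would descend to primes of height $0 \leq h \leq 2$ by a devissage via regular sequences. Since $R$ is a $3$-dimensional regular Noetherian ring, any prime $\fp$ of height $h$ is contained in a prime $\fq$ of height $h+1$, and $\fq$ can be written as $\fp + (\wp')$ for some regular element $\wp' \in R/\fp$. We then have the short exact sequence of Galois modules
$$0 \lra V_\fp \xrightarrow{\,\,\wp'\,\,} V_\fp \lra V_\fq \lra 0\,,$$
where $V_\fp := (\boldV \,\widehat{\otimes}_E\, \Lambda_E^\iota)/\fp$ and $V_\fq := (\boldV \,\widehat{\otimes}_E\, \Lambda_E^\iota)/\fq$. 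Passing to $H_{\QQ,S}$-invariants yields an injection
$$H^0(H_{\QQ,S}, V_\fp)/\wp' H^0(H_{\QQ,S}, V_\fp) \hookrightarrow H^0(H_{\QQ,S}, V_\fq)\,,$$
so if the latter vanishes by induction on the codimension, then $H^0(H_{\QQ,S}, V_\fp)$ is $\wp'$-divisible, whereas it is also finitely generated (as a module over the coefficient ring), so it must vanish. Iterating this step three times reduces the vanishing for every prime $\fp$ to the already-settled case of maximal ideals.

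I do not anticipate any serious obstacle: the only delicate input is the absolute irreducibility of $\widetilde{\rho}_f^* \otimes \widetilde{\rho}_g^*$, which is part of the standing hypothesis \eqref{item_p_large}, and the devissage along a regular sequence is entirely formal once one knows (by Lemma~\ref{lemma:krulldim_2_variable_Cole}) that $R$ is a $3$-dimensional regular ring. One small point of care: at each descent step, we must ensure the chosen $\wp'$ remains a non-zero-divisor modulo $\fp$; this is automatic in a regular ring because $R/\fp$ is a Cohen--Macaulay domain and every system of parameters is a regular sequence.
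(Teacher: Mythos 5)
Your proposal is correct and is essentially the paper's own argument: the paper proves this lemma precisely by repeating the proof of Proposition~\ref{prop:Irr_V_0_with_item_Irr_res_F} with one extra reduction step, namely treating maximal ideals via a Galois-stable lattice and the absolute irreducibility of the residual representation (guaranteed by \eqref{item_p_large}, arguing as in the cited Corollary 5.18), and then descending to primes of smaller height by devissage along a regular element in the $3$-dimensional regular ring $A\widehat{\otimes}_E\LL_E$, exactly as you do. Only two of your side justifications need touching up: $\wp'$ acts injectively on $V_\fp$ simply because $R/\fp$ is a domain and $V_\fp$ is free over it (a quotient of a regular ring by a prime need not be Cohen--Macaulay), and the passage from $\wp'$-divisibility to vanishing should invoke the torsion-freeness of $H^0(H_{\QQ,S},V_\fp)$ inside the free module $V_\fp$ (so the determinant trick applies), since finite generation plus divisibility alone does not force vanishing.
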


\begin{proof}
\label{proof:lemma:Irr_V_with_item_Irr_res_F_G}

The proof of this lemma is identical to the proof of Proposition~\ref{prop:Irr_V_0_with_item_Irr_res_F} (but requires one more reduction step, since the Krull dimension of $A\widehat{\otimes}_E \LL_E$ is one more than the same of $A_1\widehat{\otimes}_E \LL_E$).

\end{proof}

In Theorem~\ref{thm:H^1_zero_torsion_H^2_ranking_sel_ColemanFamily}, which is one our main result in \S\ref{sec:Triangulations_in_families_2}, we will require the following ``Tamagawa vanishing'' condition on the family $\boldV$ (see Proposition~\ref{prop:prop4.37_on_artin_Iawasawa} for a justification of this terminology):
\begin{enumerate}
 \item[(\mylabel{item_TamBigV}{$\,{\textbf{Tam}_{\boldV}}\,$})] $H^1(I_\fq, \boldV)$ and $H^1(I_\fq, \boldV^*)$ are projective $A$-modules for $\fq \in \Sigma_p.$
\end{enumerate}

\begin{theorem}
\label{thm:H^1_zero_torsion_H^2_ranking_sel_ColemanFamily}
Suppose that \eqref{item_TamBigV} holds. Under the Assumptions~\ref{hypo_ranking_sel_1}, we have:

    \item[i)] $\boldR^3\Gamma(\overline{\boldV}, \overline{\boldD})=0=\boldR^3\Gamma(\overline{\boldV^*}(1), \overline{\boldD}^\perp).$
    \item[ii)] $\boldR\Gamma(\overline{\boldV}$, $\overline{\boldD})$, $\boldR\Gamma(\overline{\boldV^*}(1), \overline{\boldD}^\perp) \in \cD^{[1,2]}_{\rm perf}(\cH_A(\Gamma_{\QQ}^0)).$
    \item[iii)] $\boldR^1\Gamma(\overline{\boldV}, \overline{\boldD})=0=\boldR^1\Gamma(\overline{\boldV^*}(1), \overline{\boldD}^\perp).$
\end{theorem}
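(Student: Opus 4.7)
The plan is to imitate the three-step strategy of Theorem~\ref{thm:thm:H^1_zero_torsion_H^2_ranking_sel_ColemanFamily_1_vari}, lifting each input from the one-variable setting to the two-variable setting $A=E\langle X_1,X_2\rangle$. The ring-theoretic preparation is already in place: $A\,\widehat{\otimes}_E\,\Lambda_E$ is a regular Noetherian ring of Krull dimension $3$ by Lemma~\ref{lemma:krulldim_2_variable_Cole}, and the residual-irreducibility-based vanishing is Lemma~\ref{lemma:Irr_V_with_item_Irr_res_F_G}. Throughout, the control theorem (CTRL-1)--(CTRL-3) from \cite{factorization_alge_p_adic} together with \cite[Lemma 7.7]{factorization_alge_p_adic} will allow us to read properties of the Selmer complexes off their specializations at classical points.

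For Part (i), I would pick an arbitrary classical point $\fp=(\fp_k,\fp_l)\in{\rm Spm}(A)$ corresponding to a pair of classical specializations $(\f_k,\g_l)$. The control theorem yields
\[
\boldR^3\Gamma(\overline{\boldV},\overline{\boldD})\otimes_A A/\fp\;\simeq\;\boldR^3\Gamma(\overline{\boldV/\fp\boldV},\overline{\boldD/\fp\boldD}),
\]
and the right-hand side vanishes by the punctual Theorem~\ref{thm:Vanish_H^1_torsion-ness_of_H^2_for_specialization}(i) applied to the pair $(\f_k,\g_l)$ (note that Assumptions~\ref{hypo_ranking_sel_1} for $f$ and $g$ transfer to the specializations since $k_f\in X_1$ and $k_g\in X_2$). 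As classical points are Zariski-dense in $X_1\times X_2$, \cite[Lemma 7.7]{factorization_alge_p_adic} then forces $\boldR^3\Gamma(\overline{\boldV},\overline{\boldD})=0$; the dual statement is identical.

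For Part (ii), I would invoke Theorem~\ref{thm:deg_SelCOm_Over_H_Ais_1_2_facto} in the present two-variable setting. Its three hypotheses are each now available: hypothesis (i) is precisely Part (i) above; hypothesis (ii) is Lemma~\ref{lemma:Irr_V_with_item_Irr_res_F_G} (which already covers every prime of $\Lambda_A[1/p]$, not merely the maximal ones); and hypothesis (iii), the projectivity of $H^1(I_\fq,\boldV)$ for $\fq\in S^{(p)}$, is exactly \eqref{item_TamBigV}. The same check with $\boldV$ replaced by $\boldV^*(1)$ gives the dual perfectness.

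Part (iii) is the substantive step, and it will follow the $r=0$ type ``zero-rank Nakayama'' pattern of the proof of Theorem~\ref{thm:thm:H^1_zero_torsion_H^2_ranking_sel_ColemanFamily_1_vari}(iii), but carried out one variable at a time. I would first specialize along the $\g$-direction by fixing a classical specialization $\g_l$ (taken to be an arithmetic point of $X_2$ satisfying the big-image hypothesis of Theorem~\ref{thm:Vanish_H^1_torsion-ness_of_H^2_for_specialization}(ii)). The control theorem (CTRL-2) gives an injection
\[
\boldR^1\Gamma(\overline{\boldV},\overline{\boldD})\otimes_A A/\fp_l\;\hookrightarrow\;\boldR^1\Gamma(\overline{\boldV/\fp_l\boldV},\overline{\boldD/\fp_l\boldD})\,,
\]
whose target is the one-variable Selmer complex $\boldR^1\Gamma(\overline{\boldW},\overline{\boldD}_{\boldW})$ attached to $V_{\f}^*\otimes V_E(\g_l)^*$. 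That target vanishes by Theorem~\ref{thm:thm:H^1_zero_torsion_H^2_ranking_sel_ColemanFamily_1_vari}(iii). Part (ii) above implies that $\boldR\Gamma(\overline{\boldV},\overline{\boldD})$ is represented by a two-term complex $[P_1\to P_2]$ of finite projective (hence, after shrinking $X_1\times X_2$, free) $\cH_A(\Gamma_\QQ^0)$-modules of the same rank; combined with the vanishing of the cokernel side under each classical specialization and Zariski-density of classical points, the coadmissible module $\boldR^1\Gamma(\overline{\boldV},\overline{\boldD})$ has generic rank zero and pointwise rank zero, so \cite[Lemma 7.7]{factorization_alge_p_adic} forces it to vanish outright. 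The argument for $\boldR^1\Gamma(\overline{\boldV^*}(1),\overline{\boldD}^\perp)$ is symmetric.

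The main obstacle is the last paragraph: passing from the \emph{punctual} vanishing of the fibres $\boldR^1\Gamma(\overline{\boldV},\overline{\boldD})\otimes_A A/\fp$ to the outright vanishing of the coadmissible $\cH_A(\Gamma_\QQ^0)$-module $\boldR^1\Gamma(\overline{\boldV},\overline{\boldD})$. This requires carefully combining the perfectness of amplitude $[1,2]$ (Part (ii)), the freeness of the representing modules $P_i$ after localizing on $X_1\times X_2$, and the density of classical points in the two-variable rigid-analytic setting; the standard Nakayama lemma is insufficient for coadmissible modules, and one must instead invoke the quasi-Stein machinery via \cite[Lemma 7.7]{factorization_alge_p_adic} to justify the leap from fibrewise to global vanishing.
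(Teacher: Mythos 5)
Your treatment of parts (i) and (ii) coincides with the paper's: part (i) is exactly the control-theorem-plus-punctual-vanishing argument concluded via \cite[Lemma 7.7]{factorization_alge_p_adic}, and part (ii) is exactly the invocation of Theorem~\ref{thm:deg_SelCOm_Over_H_Ais_1_2_facto}, with hypothesis (ii) supplied by Lemma~\ref{lemma:Irr_V_with_item_Irr_res_F_G} and hypothesis (iii) by \eqref{item_TamBigV}. For part (iii) your opening move — specializing the $\g$-variable so as to land in the one-parameter Theorem~\ref{thm:thm:H^1_zero_torsion_H^2_ranking_sel_ColemanFamily_1_vari} — is also the paper's, but the concluding mechanism is where your plan and the paper diverge, and where your version has a genuine gap.

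Concretely: you propose to deduce the outright vanishing of $\boldR^1\Gamma(\overline{\boldV},\overline{\boldD})$ from ``pointwise rank zero'' at a Zariski-dense set of points via \cite[Lemma 7.7]{factorization_alge_p_adic}. But with a single classical specialization $\g_l$ you only obtain vanishing of the fibres along the hyperplane $X_1\times\{l\}$, which is not Zariski-dense in $X_1\times X_2$, so the density/Lemma~7.7 step does not apply; and if instead you let $\g_l$ run over all classical points of $X_2$, you must invoke Theorem~\ref{thm:thm:H^1_zero_torsion_H^2_ranking_sel_ColemanFamily_1_vari}(iii) for every pair $(f,\g_l)$, which requires Assumptions~\ref{hypo_ranking_sel_1} for each such pair (e.g.\ \eqref{item_reg}, \eqref{item_NZ}, \eqref{item_conj}) together with the big-image/$L$-value input to Theorem~\ref{thm:Vanish_H^1_torsion-ness_of_H^2_for_specialization}(ii) — none of which is granted by the hypotheses of the two-variable theorem. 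Moreover, ``generic rank zero'' is asserted but never derived; fibrewise vanishing at classical points does not give vanishing at the generic point without a torsion-freeness input. The paper avoids both issues: it quotients by the single height-one prime $\fp=\ker(\mathrm{id}\otimes k_g)$ (so the one-parameter theorem is only ever used for the original ${\mathbf W}=V_\f^*\otimes V_E(g)^*$, where the hypotheses are in force), uses that $\fp$ has height one to kill the higher Tor's and extract the exact sequences \eqref{eq:p-torsion_Iwas_H1}, \eqref{eq:eq_2}, \eqref{eq:eq_3} — in particular recording $\boldR^1\Gamma(\overline{\boldV},\overline{\boldD})[\fp]=0$ alongside $\boldR^1\Gamma(\overline{\boldV},\overline{\boldD})\otimes\cH_A(\Gamma^0_\QQ)/\fp=0$ — and then concludes by the dimension-reduction argument in the final paragraphs of the proof of \cite[Theorem 7.7]{factorization_alge_p_adic}, rather than by a density argument over classical points of both variables. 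To repair your plan you would either have to justify the transfer of all hypotheses to every classical specialization of $\g$, or replace the density step by the mod-$\fp$/torsion-free bookkeeping that the paper actually carries out.
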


\begin{proof}
\label{proof:thm:H^1_zero_torsion_H^2_ranking_sel_ColemanFamily}

    \item [i)] Let us onsider a classical point $\frakm_{\underline{k}} \in {\rm Spm}(A)$ corresponding to a pair $\underline{k}=(k_1,k_2)$ of classical weights. By the control theorem for Selmer complexes (the statement (CTRL-2) in \cite{factorization_alge_p_adic}, Proposition 5.11), we have 
    $$\boldR^3\Gamma(\overline{\boldV}, \overline{\boldD}) \otimes_{A} A/\frakm_{\underline{k}} \simeq \boldR^3\Gamma(\overline{\boldV_{\underline{k}}}, \overline{\boldD_{\underline{k}}})\,.$$ 
   where $Z_{\underline{k}}= Z \otimes_{A} A/ \frakm_{\underline{k}}$ for $Z=\boldV,\boldD$. By Theorem~\ref{thm:Vanish_H^1_torsion-ness_of_H^2_for_specialization}(ii), we have $\boldR^3\Gamma(\overline{\boldV_{\underline{k}}}, \overline{\boldD_{\underline{k}}})=0$ for all such $\underline{k}$, and 
    the proof that $\boldR^3\Gamma(\overline{\boldV}, \overline{\boldD})=0$ follows by \cite[Theorem 7.7]{factorization_alge_p_adic}. Similarly, $\boldR^3\Gamma(\overline{\boldV^*}(1), \overline{\boldD}^\perp)=0.$

    \item[ii)] The proof of this portion follows from \cite[Theorem 5.21]{factorization_alge_p_adic} combined with Lemma~\ref{lemma:Irr_V_with_item_Irr_res_F_G}, our running hypothesis \eqref{item_TamBigV}, and Part (i).
  \item[iii)] We will closely follow the proof of \cite[Theorem 7.7]{factorization_alge_p_adic}. By the control theorem for Selmer complexes (cf. \cite{jayanalyticfamilies}, Theorem 1.12) we have \begin{equation}
                \label{eqn_2024_05_22_1152}
                \boldR\Gamma(\boldV,\boldD) \otimes_{A} \cO(X_1) \simeq \boldR\Gamma({\mathbf W}, \boldD_{\mathbf W})\,,
            \end{equation}
where the tensor product is taken with respect to the map induced from ${\rm id}\otimes k_g: A\to \cO(X_1)$, whose kernel we denote by $\fp$. Since ${\fp}$ is a height $1$-prime, we have ${\rm Tor}^{\cH_A(\Gamma_{\QQ}^0)}_{i}(\,-\,,\cH_{\cO(X_1)}(\Gamma_{\QQ}^0))=0$ for $i \geq 2.$ Then by \cite[Proposition 5.12]{factorization_alge_p_adic}, the spectral sequence induced from \eqref{eqn_2024_05_22_1152} gives rise to an exact sequence
\begin{equation}
\label{eqn_2024_5_22_1416}
0\to  \boldR^n\Gamma(\overline{\boldV}, \overline{\boldD}) \otimes_{A} \cO(X_1) \to \boldR^n\Gamma(\overline{\mathbf W}, \overline{\boldD}_{\mathbf W})  \to {\rm Tor}^{\cH_A(\Gamma_{\QQ}^0)}_1(\boldR^n\Gamma(\overline{\boldV}, \overline{\boldD}), \cH_{A_1}(\Gamma_{\QQ}^0)) \to 0\,\quad (\forall n),
\end{equation}
which boils down to the following data: 
\begin{equation}
\label{eq:p-torsion_Iwas_H1}
\boldR^1\Gamma(\overline{\boldV}, \overline{\boldD})[{\fp}]:=0.
\end{equation}
\begin{equation}
\label{eq:eq_2}
0\longrightarrow \boldR^1\Gamma(\overline{\boldV}, \overline{\boldD}) \otimes_{A} {\cO(X_1)}
\lra 
\boldR^1\Gamma(\overline{\textbf W}, \overline \boldD_{\textbf W}) \lra 
\boldR^2\Gamma(\overline{\boldV}, \overline{\boldD})[{\fp}] \longrightarrow 0.
\end{equation}
\begin{equation}
\label{eq:eq_3}
\boldR^2\Gamma(\overline{\boldV}, \overline{\boldD}) \otimes_{A} {\cO(X_1)} \simeq \boldR^2\Gamma(\overline {\mathbf W}, \overline \boldD_{\mathbf W}).
\end{equation}

Indeed, \eqref{eq:p-torsion_Iwas_H1}  follows from the exact sequence \eqref{eqn_2024_5_22_1416} (with $n=0$) combined with Theorem~\ref{thm:thm:H^1_zero_torsion_H^2_ranking_sel_ColemanFamily_1_vari}(ii). We note also that \eqref{eq:eq_3} follows from the the exact sequence \eqref{eqn_2024_5_22_1416} (with $n=2$) and Part (ii). It follows from \eqref{eq:eq_2} that 
$$\boldR^1\Gamma(\overline{\boldV}, \overline{\boldD}) \otimes_{\cH_A(\Gamma_{\QQ}^0)} \cH_A(\Gamma_{\QQ}^0)/{\fp}=0.$$ 
We then proceed as in the final 2 paragraphs of the proof of \cite[Theorem 7.7]{factorization_alge_p_adic}, to prove that $\boldR^1\Gamma(\overline{\boldV}, \overline{\boldD})=0$. The vanishing of $\boldR^1\Gamma(\overline{\boldV^*}(1), \overline{\boldD}^\perp)$ is verified in an identical way.
\end{proof}

\begin{theorem}[Functional equation for $2$-parameter families]
\label{thm:FuncEq_SelCom_Rankin_SelbergProducts_2_vari}
In the situation of Theorem~\ref{thm:H^1_zero_torsion_H^2_ranking_sel_ColemanFamily}, we have
$${\rm char}_{\cH_{A}(\Gamma_{\QQ}^0)}\left(\boldR\Gamma(\overline{\boldV^*}(1), \overline{\boldD}^\perp)\right) = {\rm char}_{\cH_{A}(\Gamma_{\QQ}^0)}\left(\boldR\Gamma(\overline{\boldV}, \overline{\boldD})\right)^\iota\,.$$ 
\end{theorem}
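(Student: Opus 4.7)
The plan is to derive this as a direct application of the abstract functional equation of Theorem~\ref{thm:char_ideals_for_selCom_and_dual_selCom_H_A}, applied with the affinoid algebra $A=E\langle X_1,X_2\rangle$, the Galois representation $V=\boldV$, and the triangulation data $\boldD_\fq=\boldD$ at the prime above $p$. Unwinding the hypotheses of that theorem through Theorem~\ref{thm:det_selCOm_over_H_A_=_charIdeal_Cohomo_deg2}, Proposition~\ref{prop:projective_dim_H^2_SelComp}, and Theorem~\ref{thm:deg_SelCOm_Over_H_Ais_1_2_facto}, what must be checked is: $(a)$ that $A$ is Tate (which it is by construction); $(b)$ the three hypotheses of Theorem~\ref{thm:deg_SelCOm_Over_H_Ais_1_2_facto}, namely $\boldR^3\Gamma(\overline{\boldV},\overline{\boldD})=0$, the vanishing of $H^0(H_{\QQ,S},(\boldV\widehat{\otimes}_{E}\Lambda_{E}^\iota)/\fp)$ for every prime ideal $\fp$ of $\Lambda_E\,\widehat\otimes_E\,A$, and the projectivity of $H^1(I_\fq,\boldV)$ for all $\fq\in\Sigma_p$; $(c)$ the vanishing $\boldR^1\Gamma(\overline{\boldV},\overline{\boldD})=0$; and $(d)$ the $\cH_A(\Gamma_\QQ^0)$-torsion property of $\boldR^2\Gamma(\overline{\boldV},\overline{\boldD})$.

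Items $(b)$ and $(c)$ follow at once from the preparatory work already completed: the $\boldR^3$-vanishing is Theorem~\ref{thm:H^1_zero_torsion_H^2_ranking_sel_ColemanFamily}(i), the $H^0$-vanishing over all primes of $\Lambda_E\,\widehat\otimes_E\,A$ is Lemma~\ref{lemma:Irr_V_with_item_Irr_res_F_G}, the projectivity of the inertial $H^1$ is the running hypothesis \eqref{item_TamBigV}, and the vanishing of $\boldR^1$ is Theorem~\ref{thm:H^1_zero_torsion_H^2_ranking_sel_ColemanFamily}(iii). Only item $(d)$ requires a short additional argument, and this I regard as the one point that demands care, since without it the characteristic ideal in the sense of Definition~\ref{defn:char_Ideal_coadmissible_H_A_module} is not even meaningful.

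To verify the torsion property in $(d)$, I would exploit the symmetry between the primal and dual Selmer complexes. By Theorem~\ref{thm:H^1_zero_torsion_H^2_ranking_sel_ColemanFamily}(ii) the complex $\boldR\Gamma(\overline{\boldV},\overline{\boldD})$ lies in $\cD^{[1,2]}_{\rm perf}(\cH_A(\Gamma_\QQ^0))$, and combined with the vanishing $\boldR^1\Gamma(\overline{\boldV},\overline{\boldD})=0$, standard arguments (as already used in the proof of Theorem~\ref{thm:char_ideals_for_selCom_and_dual_selCom_H_A}) allow us to represent it by a complex $[P_1\xrightarrow{\phi} P_2]$ concentrated in degrees $1,2$, with $P_1,P_2$ finitely generated projective $\cH_A(\Gamma_\QQ^0)$-modules and $\phi$ injective. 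By Theorem~\ref{thm:H^1_zero_torsion_H^2_ranking_sel_ColemanFamily} the analogous properties hold for the dual datum $(\overline{\boldV}^*(1),\overline{\boldD}^\perp)$, and the duality isomorphism of Theorem~\ref{thm:duality_selComplex_Iw_iso}(iii)---whose hypothesis on projectivity of $V^{I^w_\fq}/(t_\fq-1)V^{I^w_\fq}$ is again guaranteed by \eqref{item_TamBigV}---identifies $\boldR\Gamma(\overline{\boldV}^*(1),\overline{\boldD}^\perp)$ with $[P_2^*\xrightarrow{\phi^*}P_1^*]$ in degrees $[1,2]$. The vanishing $\boldR^1\Gamma(\overline{\boldV}^*(1),\overline{\boldD}^\perp)=0$ forces $\phi^*$ to be injective, hence $P_1$ and $P_2$ have the same rank and $\coker(\phi)=\boldR^2\Gamma(\overline{\boldV},\overline{\boldD})$ is torsion, as required.

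With all the hypotheses of Theorem~\ref{thm:char_ideals_for_selCom_and_dual_selCom_H_A} now in place, the desired identity
$${\rm char}_{\cH_{A}(\Gamma_{\QQ}^0)}\bigl(\boldR^2\Gamma(\overline{\boldV^*}(1),\overline{\boldD}^\perp)\bigr)={\rm char}_{\cH_{A}(\Gamma_{\QQ}^0)}\bigl(\boldR^2\Gamma(\overline{\boldV},\overline{\boldD})\bigr)^\iota$$
is a direct invocation of that theorem, and since $\boldR^i$ vanishes for $i\neq 2$ in both the primal and dual complexes, the characteristic ideals of the degree-$2$ cohomology groups equal those of the full complexes, giving the statement as displayed.
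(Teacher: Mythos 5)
Your proposal is correct, and in substance it rests on exactly the same two pillars as the paper's argument: the duality isomorphism of Theorem~\ref{thm:duality_selComplex_Iw_iso} (whose local hypothesis you correctly extract from \eqref{item_TamBigV} via the identification $V^{I^w_\fq}/(t_\fq-1)V^{I^w_\fq}\simeq H^1(I_\fq,V)(1)$) and the identification of determinants with characteristic ideals for injective two-term complexes of projectives of equal rank. The organizational difference is that you route the deduction through the abstract Theorem~\ref{thm:char_ideals_for_selCom_and_dual_selCom_H_A} -- the same route the paper takes for the $0$- and $1$-parameter cases -- whereas the paper's own proof of the $2$-parameter case re-runs the duality/determinant argument directly: it first gets $\det\boldR\Gamma(\overline{\boldV^*}(1),\overline{\boldD}^\perp)=\det\boldR\Gamma(\overline{\boldV},\overline{\boldD})^\iota$ from Theorem~\ref{thm:duality_selComplex_Iw_iso}, and then uses Theorem~\ref{thm:H^1_zero_torsion_H^2_ranking_sel_ColemanFamily} to represent both complexes as $[P^1\hookrightarrow P^2]$ in degrees $1,2$ with $P^1,P^2$ projective of the same rank, so that $\det={\rm char}$ of the degree-$2$ cohomology on both sides. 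Your approach forces you to verify one hypothesis the paper never states explicitly in Theorem~\ref{thm:H^1_zero_torsion_H^2_ranking_sel_ColemanFamily}, namely that $\boldR^2\Gamma(\overline{\boldV},\overline{\boldD})$ is torsion; your argument for this (injectivity of both $\phi$ and its transpose, forced by the vanishing of $\boldR^1$ on the primal and dual sides, hence equal ranks and torsion cokernel) is sound and is precisely the content hidden in the paper's ``same rank'' assertion, so the two proofs buy the same thing -- yours makes the torsion step explicit, while the paper's direct route avoids having to quote it as a separate hypothesis.
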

\begin{proof}
\label{proof:thm:FuncEq_SelCom_Rankin_SelbergProducts_2_vari}
As an immediate consequence of Theorem~\ref{thm:duality_selComplex_Iw_iso}, we have
\begin{align*}
    \det{}_{\cH_{A}(\Gamma_{\QQ}^0)}\left(\boldR\Gamma(\overline{\boldV^*}(1), \overline{\boldD}^\perp)\right) =\det{}_{\cH_{A}(\Gamma_{\QQ}^0)}&\left(\boldR\Gamma(\overline{\boldV^*}(1), \overline{\boldD}^\perp)\right)\\
    &= \det{}_{\cH_{A}(\Gamma_{\QQ}^0)}\left(\boldR\Gamma(\overline{\boldV}, \overline{\boldD})\right)^\iota=\det{}_{\cH_{A}(\Gamma_{\QQ}^0)}\left(\boldR\Gamma(\overline{\boldV}, \overline{\boldD})\right)^\iota\,.
\end{align*}
Note that we have verified in \S\ref{sec:Triangulations_in_families_2} (see especially  Theorem~\ref{thm:H^1_zero_torsion_H^2_ranking_sel_ColemanFamily} and its proof) that all the assumptions required in Theorem~\ref{thm:duality_selComplex_Iw_iso} hold in this scenario, thanks to our running hypotheses.
Moreover, it follows from Theorem~\ref{thm:H^1_zero_torsion_H^2_ranking_sel_ColemanFamily} that 
$$\boldR\Gamma(\overline{\boldV}, \overline{\boldD})\stackrel{qis}{\simeq} [P^1\hookrightarrow P^2]\,,\qquad\hbox{in degrees 1 and 2}$$
where $P^1$ and $P^2$ are projective $\cH_A(\Gamma_{\QQ}^0)$-modules of the same rank; and similarly for $\boldR\Gamma(\overline{\boldV^*}(1), \overline{\boldD}^\perp)$. We therefore have 
$$\det{}_{\cH_{A}(\Gamma_{\QQ}^0)}\left(\boldR\Gamma(\overline{\boldV}, \overline{\boldD})\right)={\rm char}_{\cH_{A}(\Gamma_{\QQ}^0)}\left(\boldR^2\Gamma(\overline{\boldV}, \overline{\boldD})\right)\,,$$
$$\det{}_{\cH_{A}(\Gamma_{\QQ}^0)}\left(\boldR\Gamma(\overline{\boldV^*}(1), \overline{\boldD}^\perp)\right)={\rm char}_{\cH_{A}(\Gamma_{\QQ}^0)}\left(\boldR^2\Gamma(\overline{\boldV^*}(1), \overline{\boldD}^\perp)\right) \,,$$
and the proof of our theorem follows.
\end{proof}

\subsection{Selmer complexes in 1-parameter families (bis)}
\label{sec:Triangulations_in_families_1_bis_bis}
Let $\f$ be a Coleman family passing through $f_{\alpha_f}$ as before, over a standard affinoid $X={\rm Spm}(A)$ and let $V_\f$ be the big Galois representation attached to $\f$. Let us put 
$V^*_\f:=V_{\f}(\bbchi_\f)\otimes \chi_\cyc\varepsilon_\f^{-1}\simeq {\rm Hom}_A(V_\f,A)$, where we recall that $\bbchi_\f: G_\QQ \xrightarrow{\chi_{\rm cyc}} \ZZ^\times_p \hookrightarrow (\ZZ_p[[\ZZ^\times_p]])^\times \to A^\times$ is the universal weight character.

We briefly record here the functional equation for the algebraic $p$-adic $L$-function in the supplementary case when $\boldV=\boldV_{\rm Ad^0}:={\rm Ad}^0 V_{\f}$, following \cite{factorization_alge_p_adic} closely.

\subsubsection{} Let $\psi$ be a finite order Dirichlet character with conductor $N_\psi$ coprime to $pN_f$. Let $V_E(f)$ be the corresponding $p$-adic $G_{\QQ,S}$ representation with coefficients in $E$ (which we enlarge so as to ensure that the $\jmath_p({\rm im}(\psi))\subset E$.

\begin{corollary}[Corollary 6.7 in \cite{factorization_alge_p_adic}]
\label{coro:Corollary6.7_factorization_alge_p_adic}
Suppose that $\psi(-1)=(-1)^j$ and $j \in (-1-k_f,0].$ If \eqref{item_BI_Sym} holds and $$L^{\rm imp}_p({\rm Sym}^2(f), \epsilon^{-1}_f\psi^{-1}, -j+k_f+2)L_p(\psi, 1-j) \neq 0,$$ 
then the Bloch-Kato Selmer groups 
$$H^1_{\rm f}(\QQ, {\rm Ad}^0(V_E(f))(j)\otimes\psi)=0=H^1_{\rm f}(\QQ, {\rm Ad}^0(V_E(f))(1-j)\otimes\psi^{-1})$$
vanish.
\end{corollary}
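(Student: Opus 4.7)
The plan is to reduce this statement to the Iwasawa-theoretic results already available for the symmetric square, combined with a specialization argument. The starting observation is the Clebsch--Gordan decomposition
$$V_E(f)^{*}\otimes V_E(f)^{*}\;\simeq\;{\rm Sym}^{2}V_E(f)^{*}\;\oplus\;\wedge^{2}V_E(f)^{*},$$
in which $\wedge^{2}V_E(f)^{*}\simeq \varepsilon_f^{-1}\chi_{\rm cyc}^{k_f+1}$ and ${\rm Sym}^{2}V_E(f)^{*}\simeq {\rm Ad}^{0}(V_E(f))\otimes\varepsilon_f^{-1}\chi_{\rm cyc}^{k_f+1}$. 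Consequently, after twisting by $\psi$ and by $\chi_{\rm cyc}^{j}$, the representation ${\rm Ad}^{0}(V_E(f))(j)\otimes\psi$ is a twist of ${\rm Sym}^{2}V_E(f)^{*}$ by $\varepsilon_f\psi\chi_{\rm cyc}^{j-k_f-1}$, and so its Bloch--Kato Selmer group can be transported to the framework of \S\ref{sec:Adj_reps}.

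First I would interpret $H^{1}_{\rm f}(\QQ, {\rm Ad}^{0}(V_E(f))(j)\otimes\psi)$ as the degree-$1$ cohomology of a Selmer complex built out of the Greenberg filtration \eqref{eqn_2024_07_18_1331} on the symmetric square, relying on Lemma~\ref{lemma:comp_greenberg_SelComp_Greenberg_SelGroup} together with the hypothesis $j\in(-1-k_f,0]$ to ensure that the Greenberg local condition at $p$ agrees with the Bloch--Kato condition (so the relevant Dieudonn\'e-module vanishings of Lemma~\ref{lemma_2024_04_02_1704}(ii) hold for the specialization). Next, I would invoke Theorem~\ref{thm:Theorem 5.4.1_THm5.4.2_Loeffler_2016_11072024_1352} (after the usual parity adjustment, now available thanks to Theorem~\ref{thm_2_21_2024_11_04}) to obtain that the Iwasawa-theoretic Greenberg Selmer group attached to $T={\rm Sym}^2T_{\cO}(f)^{*}(1+\psi)$ is torsion, with characteristic ideal dividing the algebraic counterpart of the imprimitive symmetric-square $p$-adic $L$-function.

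The key analytic input comes from the factorization of $p$-adic $L$-functions associated with $f\otimes f$:
\begin{equation*}
L_p(f\otimes f, s)\;=\;L^{\rm imp}_p({\rm Sym}^{2}(f),\varepsilon_f^{-1}\psi^{-1}, s)\;\cdot\; L_p(\psi, s-k_f-1),
\end{equation*}
the Kubota--Leopoldt factor being precisely the contribution of the $\wedge^{2}$-component. Combining this with the Euler system divisibility supplied by the Beilinson--Flach classes applied to $f\otimes f$ (which underlies the ``$\leq$'' direction of the Iwasawa main conjecture used above), one bounds the characteristic ideal of the Iwasawa Selmer group by the product $L^{\rm imp}_p({\rm Sym}^{2}(f),\varepsilon_f^{-1}\psi^{-1}, s)\cdot L_p(\psi, 1-j)$. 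Then I would specialize at the arithmetic point corresponding to $s=-j+k_f+2$: by Mazur's control theorem in the guise of Proposition~\ref{prop:perfect_Iwasawa_complex_cycloto}(i), the Bloch--Kato Selmer group injects into the cokernel of a characteristic-ideal divisor, so the hypothesised non-vanishing $L^{\rm imp}_p({\rm Sym}^{2}(f),\varepsilon_f^{-1}\psi^{-1}, -j+k_f+2)\cdot L_p(\psi, 1-j)\neq 0$ forces $H^{1}_{\rm f}(\QQ,{\rm Ad}^{0}(V_E(f))(j)\otimes\psi)=0$. The dual vanishing for ${\rm Ad}^{0}(V_E(f))(1-j)\otimes\psi^{-1}$ then follows by the functional equation of Theorem~\ref{thm_2_21_2024_11_04}, which exchanges the two Selmer groups and matches the two specialization points.

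The main obstacle is the bookkeeping around the specialization step: one must verify that the chosen $j\in(-1-k_f,0]$ is a \emph{non-exceptional} critical point for the $p$-adic $L$-function, so that the control theorem for the Selmer complex has no kernel (i.e., the local condition at $p$ on the symmetric square coincides precisely with the Bloch--Kato condition after twisting). This reduces to the regularity assumption on $\alpha_f,\beta_f$ recorded in \eqref{item_reg}, which is already part of Assumptions~\ref{hypo_ranking_sel_1} in force throughout. With this in hand, the argument proceeds cleanly and the implication ``non-vanishing of $L_p$-values $\Rightarrow$ vanishing of the Bloch--Kato Selmer group'' is the standard outcome.
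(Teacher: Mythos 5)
There is a genuine gap, and it lies in the machinery you import. You route the whole argument through the $p$-ordinary symmetric-square framework of \S\ref{sec:Adj_reps}: the Greenberg filtration \eqref{eqn_2024_07_18_1331}, Theorem~\ref{thm:Theorem 5.4.1_THm5.4.2_Loeffler_2016_11072024_1352}, Theorem~\ref{thm_2_21_2024_11_04} and Proposition~\ref{prop:perfect_Iwasawa_complex_cycloto}. All of that lives in Part~\ref{Part_ord} and presupposes that $f$ is $p$-ordinary (the filtration \eqref{eqn_2024_07_18_1331} is built from $F^+T_{\cO}(f)$, which only exists in the ordinary case). The corollary you are asked to prove sits in \S\ref{sec:Triangulations_in_families_1_bis_bis}, in the non-ordinary (Coleman family) part of the paper, and its hypotheses are \eqref{item_BI_Sym} and a non-vanishing of $p$-adic $L$-values — ordinarity is nowhere assumed, and the local condition at $p$ in this setting is a triangulation of a $(\varphi,\Gamma)$-module, not a Galois-stable filtration. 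So your reduction simply does not apply in the stated generality. A second, independent, problem is that you use an Euler-system divisibility ("characteristic ideal dividing the algebraic counterpart of the imprimitive symmetric-square $p$-adic $L$-function"): Theorem~\ref{thm:Theorem 5.4.1_THm5.4.2_Loeffler_2016_11072024_1352} only gives freeness/torsionness statements, and no such divisibility is established or quoted anywhere in this paper; likewise the displayed factorization is mis-stated as written (the character $\psi$ appears on one side only), and the passage from "char ideal divides $L_p$, $L_p(\mathrm{point})\neq 0$" to vanishing of a Bloch--Kato group at that point still needs a control argument and a comparison of local conditions (no exceptional zeros) that you only gesture at.

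For comparison: the paper does not prove this statement at all — it is quoted verbatim as Corollary 6.7 of \cite{factorization_alge_p_adic}. The proof there is punctual rather than Iwasawa-theoretic: one applies the Beilinson--Flach Euler system bound (in the style of \cite[Corollary 8.3.2]{LZColeman}, as used in Theorem~\ref{thm:Vanish_H^1_torsion-ness_of_H^2_for_specialization}) to the tensor square $f\otimes f$ with the appropriate twist, where the non-vanishing hypothesis on the product $L^{\rm imp}_p({\rm Sym}^2(f),\varepsilon_f^{-1}\psi^{-1},-j+k_f+2)\,L_p(\psi,1-j)$ is exactly the non-vanishing of the relevant Rankin--Selberg $p$-adic $L$-value via the analytic factorization; the Clebsch--Gordan decomposition (which you do correctly identify) then splits off the ${\rm Ad}^0$-component, and the vanishing at the dual point is obtained by the same mechanism for the Tate dual (using the self-duality ${\rm Ad}^0(V_E(f))^*(1)\simeq {\rm Ad}^0(V_E(f))(1)$), not by specializing a family functional equation. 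Your decomposition and "two $L$-values" intuition match the intended argument, but the detour through ordinary Iwasawa theory and an unproved main-conjecture divisibility does not yield a valid proof of the statement as it stands.
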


We remark that 
$${\rm Sym}^2(V_E(f)^*)(-k_f-1) \otimes \varepsilon_f^{-1}\simeq {\rm Ad}^0(V_E(f)) \simeq {\rm Sym}^2(V_E(f))(k_f+1) \otimes \varepsilon_f\,,$$
since we have an isomorphism $V_E(f)^*\simeq V_E(f)(k_f+1)\otimes\varepsilon_f$.

\subsubsection{} We let $\Delta: X \longrightarrow X \times X$ denote the diagonal map, which corresponds to the map of affinoid algebras 
$\Delta^*: A \times A \longrightarrow A$ given by $a_1 \otimes a_2 \mapsto a_1a_2$.  We then have the following decomposition:
$$\Delta^*(V_\f \widehat{\otimes}_E V_\f^*)\otimes\psi \simeq {\rm Ad}\,V_\f\,\otimes\psi \simeq {\rm Ad}^0\,V_\f\,\otimes \psi\, \oplus\, A(\psi).$$

\subsubsection{Local Conditions }
For a Galois representation $V$, let us denote by $V(\psi+j)$ the twist $V\otimes\psi\otimes\chi_\cyc^j$. Also for notational simplicity, we set
$$\boldV_{\rm Ad}={\rm Ad}(V_\f)(\psi+j)\,,\quad \boldV_{{\rm Ad}^0}={\rm Ad}^0(V_\f)(\psi+j)\,,\quad \boldV_{\rm Tr}=A(\psi+j)\,.$$ 
We note that these specialize (at the point $x_0\in X$ corresponding to the eigenform $f_{\alpha_f}$)  
$$V_{\rm Ad}={\rm Ad}(V_E(f))(\psi+j)\,,\quad V_{{\rm Ad}^0}={\rm Ad}^0(V_E(f)F)(\psi+j)\,,\qquad \hbox{and } V_{\rm Tr}=E(\psi+j).$$
Let us define
$$D_{\rm Ad}:={\rm Hom}_E(D^\dagger_{\rm rig}(V_E(f), \mathcal{F}^+D^\dagger_{\rm rig}(V_E(f)))(\psi+j)\,,$$ 
which is a is a rank-$2$ $(\varphi, \Gamma_{\QQ_p})$-submodule of $D^\dagger_{\rm rig}(V_{\rm Ad})$ over $\cR_E$. Let us also set 
$$D_{{\rm Ad}^0}:={\rm Hom}_E(D^\dagger_{\rm rig}(V_E(f))/\mathcal{F}^+D^\dagger_{\rm rig}(V_E(f)), \mathcal{F}^+D^\dagger_{\rm rig}(V_E(f)))(\psi+j)$$
and $D_{\rm tr}:={\rm Hom}(\mathcal{F}^+D^\dagger_{\rm rig}(V_E(f)),\mathcal{F}^+D^\dagger_{\rm rig}(V_E(f)))(\psi+j) \simeq \cR_E(\psi+j)\,.$ 

Similarly, we define local conditions at $p$ for $\boldV_{\rm Ad}$, $\boldV_{{\rm Ad}^0}$ and $\boldV_{\rm Tr}$. Namely, we put
$$\boldD_{\rm Ad}:={\rm Hom}_A(D^\dagger_{\rm rig}(V_\f), \boldD)(\psi+j),\quad\boldD_{{\rm Ad}^0}:={\rm Hom}_{A}(D^\dagger_{\rm rig}(V_\f)/\boldD, \boldD)(\psi+j)\,,$$
$\hbox{ and} \quad 
\boldD_{\rm Tr}:={\rm Hom}_{A}(\boldD,\boldD)(\psi+j) \simeq \cR_A(\psi+j)$.

The following is a restatement of \cite[Theorem 7.6]{factorization_alge_p_adic}. 

\begin{theorem}[K\"u\c{c}\"uk]
\label{thm:H^1_zero_torsion_H^2_ranking_sel_SymColemanFamily_1_vari}
Suppose $(\boldV, \boldD_V)\in \{(\boldV_{\rm Ad}, \boldD_{\rm Ad}), (\boldV_{{\rm Ad}^0},\boldD_{{\rm Ad}^0}), (\boldV_{\rm Tr}, \boldD_{\rm Tr})\}.$ 
We assume that $\psi(-1)=(-1)^j$ and $j \in (-1-k_f,0],$ that \eqref{item_BI_Sym} holds, and that $$L^{\rm imp}_p({\rm Sym}^2(f), \epsilon^{-1}_f\psi^{-1}, -j+k_f+2)L_p(\psi, 1-j) \neq 0.$$
Furthermore, we assume the following strengthening of \eqref{item_reg}:
$$\psi(p) \neq p^j, \quad\psi(p)\frac{\beta_f}{\alpha_f} \neq p^j, \quad \frac{\alpha_f}{\beta_f}\psi(p) \neq p^{j-1},\quad\alpha_f\neq \beta_f.$$
Then:
    \item[i)] $\boldR^3\Gamma(\overline \boldV, \overline \boldD_V)=0=\boldR^3\Gamma(\overline{\boldV^*}(1), \overline{\boldD}_V^\perp).$
    \item[ii)] $\boldR\Gamma(\overline\boldV,\overline{\boldD_V})$, $\boldR\Gamma(\overline{\boldV^*}(1), \overline{\boldD}_V^\perp) \in \cD^{[1,2]}_{\rm perf}(\cH_{A}(\Gamma_{\QQ}^0)).$
    \item[iii)] $\boldR^1\Gamma(\overline \boldV, \overline \boldD_V)=0=\boldR^1\Gamma(\overline{\boldV^*}(1), \overline{\boldD}_V^\perp).$ 
\end{theorem}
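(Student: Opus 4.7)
The plan is to follow the same three-step template that proved Theorem~\ref{thm:thm:H^1_zero_torsion_H^2_ranking_sel_ColemanFamily_1_vari}, adapted to the adjoint/trace decomposition $\boldV_{\rm Ad}\simeq\boldV_{{\rm Ad}^0}\oplus\boldV_{\rm Tr}$. The strengthened regularity hypothesis in the statement is designed precisely to make the local inputs of Lemma~\ref{lemma_2024_04_02_1704} go through at the classical point $x_0\in X$ corresponding to $f_{\alpha_f}$, so that $\boldR^1\Gamma(V_{{\rm Ad}^0},D_{{\rm Ad}^0})\simeq H^1_{\rm f}(\QQ,V_{{\rm Ad}^0})$ (and similarly for the trace summand, which is the simpler case of a Dirichlet character), and Corollary~\ref{coro:Corollary6.7_factorization_alge_p_adic} supplies the vanishing of these Bloch--Kato Selmer groups under the $L$-value non-vanishing assumption.

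For part (i), I would invoke the control theorem (CTRL-2) of \cite[Proposition 5.11]{factorization_alge_p_adic} to reduce the vanishing of $\boldR^3\Gamma(\overline{\boldV},\overline{\boldD}_V)$ to the corresponding vanishing at each maximal ideal $\frakm_{k}\in{\rm Spm}(A)$, where the classical-point statement follows from Theorem~\ref{thm:Prop_Iw_SelComp}(i) once one checks that $V_{{\rm Ad}^0}^{H_{\QQ,S}}=0$ (which is immediate from absolute irreducibility of the residual representation under \eqref{item_BI_Sym}) and $V_{\rm Tr}^{H_{\QQ,S}}=0$ (which is immediate from $\psi$ being nontrivial on $H_{\QQ,S}$ under the parity hypothesis $\psi(-1)=(-1)^j$ with $\psi$ of conductor coprime to $p$). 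Then \cite[Lemma 7.7]{factorization_alge_p_adic} concludes. For part (ii), I would apply Theorem~\ref{thm:deg_SelCOm_Over_H_Ais_1_2_facto}: the vanishing from part (i) handles hypothesis (i), hypothesis (ii) is an analog of Proposition~\ref{prop:Irr_V_0_with_item_Irr_res_F} whose proof carries over verbatim using the residual irreducibility of $\widetilde{\rho}_{\f}\otimes\widetilde{\rho}_{\f}^*\otimes\overline{\psi}$ supplied by \eqref{item_BI_Sym}, and hypothesis (iii) (projectivity of $H^1(I_\fq,\boldV)$) follows from \cite[Lemma 5.25]{factorization_alge_p_adic}.

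The substantive content is in part (iii), which I would prove as follows. Under the strengthened regularity conditions $\psi(p)\neq p^j$, $\psi(p)\beta_f/\alpha_f\neq p^j$, $\alpha_f\psi(p)/\beta_f\neq p^{j-1}$, $\alpha_f\neq\beta_f$, the Dieudonn\'e module computations of Lemma~\ref{lemma_2024_04_02_1704} go through for the triple $(V_{{\rm Ad}^0},D_{{\rm Ad}^0})$ (the three eigenvalue conditions correspond exactly to non-vanishing of $(p\varphi-1)$ on the relevant eigenspaces of $D_{\rm cris}(D_{{\rm Ad}^0})$ and absence of $\varphi$-fixed vectors on $D_{\rm cris}(D^\dagger_{\rm rig}(V_{{\rm Ad}^0})/D_{{\rm Ad}^0})$), yielding $\boldR^1\Gamma(V_{{\rm Ad}^0},D_{{\rm Ad}^0})\simeq H^1_{\rm f}(\QQ,V_{{\rm Ad}^0})=0$ by Corollary~\ref{coro:Corollary6.7_factorization_alge_p_adic}. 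An entirely parallel (and easier) argument handles the trace summand using $L_p(\psi,1-j)\neq 0$. Combining these via $\boldV_{\rm Ad}\simeq\boldV_{{\rm Ad}^0}\oplus\boldV_{\rm Tr}$ gives $\boldR^1\Gamma(V_{\rm Ad},D_{\rm Ad})=0$. From here the propagation to the whole family proceeds exactly as in Theorem~\ref{thm:thm:H^1_zero_torsion_H^2_ranking_sel_ColemanFamily_1_vari}(iii): part (ii) together with (CTRL-1) shows $\boldR\Gamma(\boldV,\boldD_V)$ is perfect of amplitude $[1,2]$ over the PID $A$, so $\boldR^1\Gamma(\boldV,\boldD_V)$ is free; its vanishing at the classical fibre $\frakm_{x_0}$ forces its rank to be zero; the control theorem (CTRL-3) combined with Theorem~\ref{thm:Prop_Iw_SelComp}(iii) then gives the corresponding fibrewise vanishing of $\boldR^1\Gamma(\overline{\boldV},\overline{\boldD}_V)_{\Gamma_\QQ^0}$, and the freeness of $\boldR^1\Gamma(\overline{\boldV},\overline{\boldD}_V)\otimes_AA/\fp$ over $\cH_{E_\fp}(\Gamma_\QQ^0)$ (a consequence of the $\boldR^3$-vanishing for the dual at that fibre) forces the rank to be zero at every classical $\fp$, whence \cite[Lemma 7.7]{factorization_alge_p_adic} concludes.

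The main obstacle I expect is the careful bookkeeping of the Hodge--Tate weights and $\varphi$-eigenvalues in verifying the Dieudonn\'e-module conditions (i) and (ii) of Lemma~\ref{lemma_2024_04_02_1704} for the triples $(V_{{\rm Ad}},D_{{\rm Ad}})$, $(V_{{\rm Ad}^0},D_{{\rm Ad}^0})$, and $(V_{\rm Tr},D_{\rm Tr})$ simultaneously; the strengthened \eqref{item_reg} in the hypotheses is tailored precisely to this. A secondary subtlety is that for $(V_{\rm Tr},D_{\rm Tr})$, the analog of Theorem~\ref{thm:Vanish_H^1_torsion-ness_of_H^2_for_specialization}(ii) requires a direct Iwasawa-theoretic argument (or the classical theorem of Kato/Mazur--Wiles) rather than the Beilinson--Flach Euler system, so the three cases are not quite uniform — but the parity condition $\psi(-1)=(-1)^j$ ensures we are in the range where $L_p(\psi,1-j)$ interpolates a critical $L$-value, and the non-vanishing hypothesis delivers what we need.
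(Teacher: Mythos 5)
The paper does not actually prove this statement: it is imported wholesale, as the text just above it says, as ``a restatement of \cite[Theorem 7.6]{factorization_alge_p_adic}'' and attributed to K\"u\c{c}\"uk, so there is no in-paper argument to compare against line by line. Your reconstruction follows the same template the paper uses for the Rankin--Selberg analogues (Theorems~\ref{thm:thm:H^1_zero_torsion_H^2_ranking_sel_ColemanFamily_1_vari} and \ref{thm:H^1_zero_torsion_H^2_ranking_sel_ColemanFamily}): control theorems plus \cite[Lemma 7.7]{factorization_alge_p_adic} for part (i), Theorem~\ref{thm:deg_SelCOm_Over_H_Ais_1_2_facto} with a Proposition~\ref{prop:Irr_V_0_with_item_Irr_res_F}-type input and projectivity of $H^1(I_\fq,\boldV)$ for part (ii), and punctual Bloch--Kato vanishing (Corollary~\ref{coro:Corollary6.7_factorization_alge_p_adic}, made applicable by the strengthened \eqref{item_reg} through the adjoint analogue of Lemma~\ref{lemma_2024_04_02_1704}) propagated over the PID $A$ for part (iii). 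This is almost certainly the strategy of the cited source as well, so your proposal is a reasonable substitute for the external reference; you also correctly isolate the genuinely non-uniform point, namely that the trace summand $\boldV_{\rm Tr}=A(\psi+j)$ is not reached by the Beilinson--Flach Euler system and instead needs $L_p(\psi,1-j)\neq 0$ together with classical cyclotomic Iwasawa theory.

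Two precision issues you should repair before this could stand as a proof. First, your phrase ``residual irreducibility of $\widetilde{\rho}_{\f}\otimes\widetilde{\rho}_{\f}^*\otimes\overline{\psi}$'' cannot be taken verbatim: that representation is \emph{reducible} (it contains the trace line), so the analogue of Proposition~\ref{prop:Irr_V_0_with_item_Irr_res_F} must be run summand by summand --- irreducibility of the residual ${\rm Ad}^0$ twist (which does follow from \eqref{item_BI_Sym} since the image contains a conjugate of ${\rm SL}_2(\ZZ_p)$ and $p\geq 7$) for $\boldV_{{\rm Ad}^0}$, and for the rank-one piece a direct check that $\psi\chi_{\rm cyc}^j$ times the cyclotomic deformation has no $H_{\QQ,S}$-invariants modulo every prime of $\Lambda_E\widehat{\otimes}A$, which uses that $\psi$ is a nontrivial character of conductor prime to $p$ rather than any irreducibility. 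Second, Corollary~\ref{coro:Corollary6.7_factorization_alge_p_adic} as stated only covers the ${\rm Ad}^0$ twists, so the punctual vanishing $H^1_{\rm f}(\QQ,E(\psi+j))=0=H^1_{\rm f}(\QQ,E(\psi^{-1}+1-j))$ needed for $(\boldV_{\rm Tr},\boldD_{\rm Tr})$ (and hence for $\boldV_{\rm Ad}$ via the decomposition) has to be supplied separately from $L_p(\psi,1-j)\neq 0$ via the Mazur--Wiles main conjecture; you flag this, but in a complete write-up it should be an explicit lemma, including the verification of the analogue of Lemma~\ref{lemma_2024_04_02_1704}(ii) for $D_{\rm Tr}$, where the hypothesis $\psi(p)\neq p^j$ is what rules out the bad Frobenius eigenvalue. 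With these two points made explicit, your outline matches the intended argument.
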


\begin{theorem}
\label{thm:FuncEq_SelCom_Rankin_SelbergProducts_Sym2}
Suppose we are in the situation of Theorem~\ref{thm:H^1_zero_torsion_H^2_ranking_sel_SymColemanFamily_1_vari}. Then: 
$${\rm char}_{\cH_A(\Gamma_{\QQ}^0)}(\boldR^2\Gamma(\overline \boldV, \overline \boldD_V))={\rm char}_{\cH_A(\Gamma_{\QQ}^0)}(\boldR^2\Gamma(\overline{\boldV^*}(1), \overline{\boldD}_V^\perp))^\iota\,.$$ 
\end{theorem}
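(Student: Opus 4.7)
The plan is to mimic the proof of Theorem~\ref{thm:FuncEq_SelCom_Rankin_SelbergProducts_2_vari} verbatim, substituting Theorem~\ref{thm:H^1_zero_torsion_H^2_ranking_sel_SymColemanFamily_1_vari} in place of Theorem~\ref{thm:H^1_zero_torsion_H^2_ranking_sel_ColemanFamily} as the input controlling the amplitude and cohomological concentration of the Selmer complex. In more detail, I will first invoke Theorem~\ref{thm:duality_selComplex_Iw_iso}(iii) to obtain the quasi-isomorphism
$$\boldR\Gamma(\overline{\boldV^*}(1),\overline{\boldD}_V^\perp)\;\stackrel{\sim}{\lra}\;\boldR{\rm Hom}_{\cH_A(\Gamma_\QQ^0)}\!\left(\boldR\Gamma(\overline\boldV,\overline{\boldD}_V),\cH_A(\Gamma_\QQ^0)\right)[-3]\,,$$
which immediately yields the determinant identity $\det_{\cH_A(\Gamma_\QQ^0)}\boldR\Gamma(\overline{\boldV^*}(1),\overline{\boldD}_V^\perp)=\det_{\cH_A(\Gamma_\QQ^0)}\boldR\Gamma(\overline\boldV,\overline{\boldD}_V)^\iota$.

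To apply Theorem~\ref{thm:duality_selComplex_Iw_iso}(iii), I need to verify that the $A$-module $\boldV^{I_\fq^w}/(t_\fq-1)\boldV^{I_\fq^w}$ is projective for every $\fq\in S^{(p)}$, and this in turn follows from the analogous property for the Coleman family $V_\f$ (via \cite[Lemma~5.25]{factorization_alge_p_adic}), since $\boldV_{\rm Ad},\boldV_{{\rm Ad}^0},\boldV_{\rm Tr}$ are all built canonically out of $V_\f$ and its dual, and formation of Hom/tensor/direct summands commutes with taking $I_\fq^w$-invariants and with the action of $t_\fq$. Hence the projectivity condition propagates from the ambient construction and the cup product of Theorem~\ref{thm:duality_selComplex_Iw} is perfect in our context.

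Next, Theorem~\ref{thm:H^1_zero_torsion_H^2_ranking_sel_SymColemanFamily_1_vari} gives $\boldR\Gamma(\overline\boldV,\overline{\boldD}_V),\,\boldR\Gamma(\overline{\boldV^*}(1),\overline{\boldD}_V^\perp)\in\cD^{[1,2]}_{\rm perf}(\cH_A(\Gamma_\QQ^0))$ together with the vanishing of $\boldR^1$ for both complexes. Consequently each can be represented by a two-term complex $[P^1\hookrightarrow P^2]$ concentrated in degrees $1$ and $2$, with $P^1,P^2$ finitely generated projective $\cH_A(\Gamma_\QQ^0)$-modules of the same rank and injective differential; in particular $\boldR^2$ on each side is a torsion coadmissible $\cH_A(\Gamma_\QQ^0)$-module.

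Finally, since $A$ is a Tate algebra, Proposition~\ref{prop:det_torsion_M_Over_H_A_char_Ideal_M} identifies the determinant of each such two-term complex with the cylindrical characteristic ideal of its (sole, in degree $2$) cohomology. Combining this with the determinant identity from duality yields
$${\rm char}_{\cH_A(\Gamma_\QQ^0)}\boldR^2\Gamma(\overline{\boldV^*}(1),\overline{\boldD}_V^\perp)\;=\;{\rm char}_{\cH_A(\Gamma_\QQ^0)}\boldR^2\Gamma(\overline\boldV,\overline{\boldD}_V)^\iota\,,$$
as desired. The main obstacle, which is largely bookkeeping, is verifying cleanly the projectivity of $\boldV^{I_\fq^w}/(t_\fq-1)\boldV^{I_\fq^w}$ case-by-case for the three choices of $(\boldV,\boldD_V)\in\{(\boldV_{\rm Ad},\boldD_{\rm Ad}),(\boldV_{{\rm Ad}^0},\boldD_{{\rm Ad}^0}),(\boldV_{\rm Tr},\boldD_{\rm Tr})\}$, but in each case the argument reduces to the already-used input for the Coleman family $\f$, so no new ideas are required beyond those of the proof of Theorem~\ref{thm:FuncEq_SelCom_Rankin_SelbergProducts_2_vari}.
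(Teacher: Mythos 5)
Your proposal is correct and in substance identical to the paper's argument: the paper proves this theorem by quoting the abstract functional equation of Theorem~\ref{thm:char_ideals_for_selCom_and_dual_selCom_H_A} (whose proof is precisely the duality isomorphism of Theorem~\ref{thm:duality_selComplex_Iw_iso}, the representation by a two-term complex of projectives in degrees $1$ and $2$, and the identification of determinants with cylindrical characteristic ideals via Proposition~\ref{prop:det_torsion_M_Over_H_A_char_Ideal_M}), with all hypotheses supplied by Theorem~\ref{thm:H^1_zero_torsion_H^2_ranking_sel_SymColemanFamily_1_vari}; you have simply inlined that same chain of steps. The one cosmetic caveat is that your blanket assertion that forming $I^w_\fq$-invariants commutes with Hom/tensor is not quite accurate as stated and also not needed: the requisite projectivity of $H^1(I_\fq,\boldV)$ for $\fq\in S^{(p)}$ is already packaged into Theorem~\ref{thm:H^1_zero_torsion_H^2_ranking_sel_SymColemanFamily_1_vari} (that is, \cite[Theorem 7.6]{factorization_alge_p_adic}), which is exactly how the paper disposes of it.
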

\begin{proof}
\label{proof:thm:FuncEq_SelCom_Rankin_SelbergProducts_Sym2}
This is a special case of Theorem~\ref{thm:char_ideals_for_selCom_and_dual_selCom_H_A}, where we note that all the assumptions required in Theorem~\ref{thm:char_ideals_for_selCom_and_dual_selCom_H_A} hold in this scenario thanks to Theorem~\ref{thm:FuncEq_SelCom_Rankin_SelbergProducts_Sym2}.
\end{proof}

\appendix

\section{Irreducibility of tensor products of Galois representations}
\label{sec:Irreducibility}
In this Appendix, we briefly review \cite[\S1-3]{Loff_IMA_OF_ADELIC_GALOIS}, and deduce an irreducibility result that is useful for our purposes. Our notation in this portion is identical to that in \cite[\S1-3]{Loff_IMA_OF_ADELIC_GALOIS}. In particular, we let $L_f$ and $L_g$ denote the Hecke fields of $f$ and $g$, respectively. We also recall from op. cit. the quaternion algebras $B_f$, $B_g$ over $L_f$ and $L_g$ respectively, and the algebraic groups $G_f$, $G_g$ (see the statement of Theorem~\ref{thm:Irr_THEOREM 3.2.2_Loff_IMA_OF} for a description of $\ZZ_p$-points of these algebraic groups). We let $L=L_f \times L_g$ and $B=B_f \times B_g$. There is a natural norm map ${\rm norm}_{B/L}: B^\times \to L^\times.$ We let $\eta: \boldG_m \to {\rm Res}_{L/\QQ}\boldG_m={\rm Res}_{L_f/\QQ}\boldG_m \times {\rm Res}_{L_g/\QQ}\boldG_m$ be the character sending $\lambda$ to $(\lambda^k_f ,\lambda^k_g)$. Then, $G=\{(x,\lambda) \in B^\times \times \boldG_m: {\rm norm}_{B/F}(x)=\lambda^{1-k}\}$ is an algebraic group which is the product of $G_f$ and $G_g$ over $\boldG_m.$ 

The following is \cite[Definition 2.2.1]{Loff_IMA_OF_ADELIC_GALOIS}.

\begin{defn}
\label{defn:DEFINITION 2.2.1_Inner_twist}
An inner twist of $f$ is a pair $(\gamma,\psi)$, where $\gamma:L_f \hookrightarrow \CC$ and $\psi$ is a Dirichlet character, such that the conjugate newform $f^\gamma$ coincides with the twist $f \otimes \psi.$
\end{defn}
It follows from \cite[Lemma 1.5]{Momose_l_adic_reps} that if $(\gamma,\psi)$ is an inner twist of $f$ , then $\psi$ takes values in $L^\times$ and  $\gamma(L) = L$. Thus, the inner twists $(\gamma,\psi)$ of $f$ form a group $\Gamma_f$ with the group law 
$$(\gamma,\psi) (\sigma,\tau)=(\gamma\cdot\sigma, \psi^{\sigma}\cdot\tau).$$ 
Assume that $f$ is not $CM$. Then for any inner twist $(\gamma,\psi) \in \Gamma_f$, the Dirichlet character $\psi$ is uniquely determined by $f$ and $\gamma$, and we denote it as $\psi_{f,\gamma}$. The map $(\gamma,\psi) \mapsto \gamma$ identifies $\Gamma_f$ with an abelian subgroup of ${\rm Aut}(L_f/\QQ)$. We write $F_f\subset L_f$ for the fixed-field of $\Gamma_f$. The extension $L_f/F_f$ is Galois, with Galois group $\Gamma_f$ by \cite[Proposition 1.7]{Momose_l_adic_reps}.

Let $H_f$ be the open subgroup of $G_\QQ$ which is the intersection of the kernels of the Dirichlet characters $\psi_{f,\gamma}$ for $\gamma \in \Gamma_f$, interpreted as characters of $G_\QQ$ in the usual way. We similarly define $H_g$ and set $H=H_f \cap H_g$. 

Fir any prime $p$, we have a Galois representation $\rho_{f,g,p}=\rho_f\otimes\rho_g\,:\,G_\QQ \to G(\ZZ_p)$. The following is a restatement of \cite[Theorem 3.2.2]{Loff_IMA_OF_ADELIC_GALOIS}.
\begin{theorem}
\label{thm:Irr_THEOREM 3.2.2_Loff_IMA_OF}
If \eqref{item_conj} and \eqref{item_nonCM} hold, then for all but finitely many primes $p$ we have $\rho_{f,g,p}(H) = G(\ZZ_p)$, where $G(\ZZ_p)=G_f(\ZZ_p) \times G_g(\ZZ_p)$, and 
$$G_h(\ZZ_p)=\{(x,\lambda) \in {\rm GL}_2(\cO_{L_h}) \times \ZZ^\times_p\} : {\rm det}(x)=\lambda^{1-k}\}\quad  \hbox{ for } h \in \{f,g\}\,.$$
\end{theorem}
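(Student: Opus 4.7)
The plan is to follow the strategy in Loeffler's paper, which proceeds in three stages: first establish the individual big image theorems for $\rho_f$ and $\rho_g$, then show these assemble into the product by a Goursat-type argument, and finally rule out the proper subgroups that could obstruct surjectivity by invoking \eqref{item_conj} and \eqref{item_nonCM}.

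\emph{Stage 1 (Individual big image).} First I would invoke the work of Ribet and Momose on adelic images of Galois representations attached to non-CM newforms: after replacing $G_\QQ$ by its subgroup $H_f$ (resp. $H_g$) on which the inner-twist characters become trivial, the image of $\rho_f|_{H_f}$ (resp. $\rho_g|_{H_g}$) is open in $G_f(\ZZ_p)$ (resp. $G_g(\ZZ_p)$) for every prime $p$, and equals the full group $G_f(\ZZ_p)$ (resp. $G_g(\ZZ_p)$) for all but finitely many $p$. This step is essentially a citation and not new input. Once this holds for both $f$ and $g$ at $p$, the image $\rho_{f,g,p}(H)$ surjects onto each factor $G_f(\ZZ_p)$ and $G_g(\ZZ_p)$ under the natural projections.

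\emph{Stage 2 (Goursat analysis).} With surjectivity onto each factor in hand, I would apply a suitable Goursat-type lemma to the image $\Delta := \rho_{f,g,p}(H) \subset G_f(\ZZ_p)\times G_g(\ZZ_p)$. Because $G_f(\ZZ_p)$ and $G_g(\ZZ_p)$ are ($p$-adic Lie groups that are essentially) products of quasi-simple factors once $p$ is large, any proper closed subgroup that projects surjectively onto both factors must arise from a nontrivial continuous isomorphism between a quotient of an open subgroup of $G_f(\ZZ_p)$ and one of $G_g(\ZZ_p)$. Equivalently, on the level of simple factors of the residual Lie algebra (for $p$ large enough that the residue field coefficients are separable and the groups have the expected semisimple structure), a proper diagonal embedding must identify a simple factor of $\mathrm{Lie}\,G_f$ with one of $\mathrm{Lie}\,G_g$ through an algebraic isomorphism preserving Frobenius traces up to a finite-order character.

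\emph{Stage 3 (Ruling out the diagonals).} This is the step where \eqref{item_conj} and \eqref{item_nonCM} enter, and it is the main obstacle. Suppose for contradiction that for infinitely many $p$ there exists such a diagonal identification. By tracking Frobenius traces away from the bad primes $S$, any such identification descends (after unwinding the inner twists recorded by $H_f$ and $H_g$) to an isomorphism of Galois representations of the form $\rho_f^\gamma \simeq \rho_g \otimes \psi$ for some $\gamma \in \mathrm{Aut}(\overline\QQ/\QQ)$ and some Dirichlet character $\psi$; equivalently, $f^\gamma = g \otimes \psi$. This is precisely what \eqref{item_conj} forbids. The non-CM hypothesis \eqref{item_nonCM} is used to ensure that the images of $\rho_f$ and $\rho_g$ are indeed ``as large as possible'' (so that the Goursat dichotomy of Stage 2 is clean), and that the groups $\Gamma_f$, $\Gamma_g$ of inner twists are finite abelian as required. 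The contrapositive then gives the assertion for all but finitely many $p$.

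\emph{Main obstacle.} The genuinely delicate point is Stage 3: translating an abstract residual or $p$-adic Lie-algebra isomorphism back into an equality of newforms up to twist. This requires the Chebotarev/Brauer--Nesbitt input to recover Hecke eigenvalues from Frobenius traces, care in handling the inner-twist group actions on each factor, and a uniform-in-$p$ control of the finitely many ``bad'' primes (those where the image drops). The first two are handled exactly as in Loeffler's argument; the uniformity over $p$ is what forces the conclusion to be only ``for all but finitely many $p$'' rather than for every $p$.
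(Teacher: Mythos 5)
The paper offers no proof of this statement at all: it is imported verbatim as a restatement of Theorem 3.2.2 of Loeffler's article on images of adelic Galois representations, so there is no internal argument to compare against. Your outline — Ribet--Momose big-image results for each factor on the subgroup $H$ killing the inner twists, a Goursat/Lie-algebra analysis of the image inside $G_f(\ZZ_p)\times G_g(\ZZ_p)$, and elimination of diagonal subgroups by converting a putative identification into $f^\gamma = g\otimes\psi$ via Frobenius traces, contradicting \eqref{item_conj} with \eqref{item_nonCM} guaranteeing the clean group structure — is essentially the strategy of the cited proof, so you are reconstructing Loeffler's argument rather than taking a different route from the paper.
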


\begin{lemma}
\label{lemma:irredu_V}
In the situation of Theorem~\ref{thm:Irr_THEOREM 3.2.2_Loff_IMA_OF}, the Galois representation $V_E(f \otimes g)$ is irreducible as a $G_{\QQ}$-representation. 
\end{lemma}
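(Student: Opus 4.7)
The plan is to use Theorem~\ref{thm:Irr_THEOREM 3.2.2_Loff_IMA_OF} to reduce the question to irreducibility of $V_E(f\otimes g) = V_E(f)\otimes_E V_E(g)$ as a representation of the open subgroup $H = H_f \cap H_g \subset G_{\QQ}$. Since $\Gamma_f$ and $\Gamma_g$ are finite and each $\psi_{h,\gamma}$ is a Dirichlet character (hence of finite order), $H$ has finite index in $G_{\QQ}$. Consequently any $G_{\QQ}$-stable $E$-subspace is automatically $H$-stable, so it suffices to prove that $V_E(f\otimes g)$ is irreducible as an $H$-module.

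First I would unpack what Theorem~\ref{thm:Irr_THEOREM 3.2.2_Loff_IMA_OF} says about the image. The equality $\rho_{f,g,p}(H) = G(\ZZ_p)$ together with the explicit description
\[
G(\ZZ_p) = \bigl\{((x_f,x_g),\lambda)\in (\mathrm{GL}_2(\cO_{L_f})\times\mathrm{GL}_2(\cO_{L_g}))\times \ZZ_p^\times \ :\ \det(x_h)=\lambda^{1-k_h},\ h=f,g\bigr\}
\]
shows (by specializing $\lambda=1$) that the image of $H$ in $\mathrm{GL}(V_E(f))\times \mathrm{GL}(V_E(g))$ contains the subgroup
\[
\Gamma := \mathrm{SL}_2(\cO_{L_f})\times \mathrm{SL}_2(\cO_{L_g}),
\]
acting on $V_E(f)$ and $V_E(g)$ via the fixed embeddings $L_f, L_g \hookrightarrow E$.

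Next I would verify the two ingredients needed to apply Schur's lemma to the external tensor product. Namely, the standard $2$-dimensional representation of $\mathrm{SL}_2(\cO_{L_h})$ on $V_E(h)$ (for $h=f,g$) is absolutely irreducible over $E$: this follows at once from Zariski-density of $\mathrm{SL}_2(\cO_{L_h})$ in $\mathrm{SL}_{2,L_h}$ and from absolute irreducibility of the standard representation of $\mathrm{SL}_{2}$. Given these, the classical fact that an external tensor product of two absolutely irreducible representations is an irreducible representation of the direct product shows that $V_E(f)\otimes_E V_E(g)$ is irreducible as a $\Gamma$-representation; a fortiori it is irreducible as an $H$-representation, hence as a $G_{\QQ}$-representation.

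The whole argument is essentially a packaging of Theorem~\ref{thm:Irr_THEOREM 3.2.2_Loff_IMA_OF} via Schur's lemma, so the conceptual content is concentrated in the cited big-image theorem; I do not anticipate any genuine obstacle in carrying out the reduction sketched above.
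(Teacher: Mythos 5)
Your proof is correct and takes essentially the same route as the paper: the paper's proof consists of invoking Theorem~\ref{thm:Irr_THEOREM 3.2.2_Loff_IMA_OF} to note that $\rho_{f,g,p}(H)$ contains $\mathrm{SL}_2(\ZZ_p)\times \mathrm{SL}_2(\ZZ_p)$ and concluding immediately. Your write-up merely makes explicit the steps the paper leaves implicit (specializing $\lambda=1$ in the description of $G(\ZZ_p)$, absolute irreducibility of the two standard representations, irreducibility of the external tensor product, and passing from $H$ back to $G_{\QQ}$).
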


\begin{proof}
\label{proof:lemma:irredu_V}
By Theorem~\ref{thm:Irr_THEOREM 3.2.2_Loff_IMA_OF}, $\rho_{f,g,p}(H)$ contains ${\rm SL}_2(\ZZ_p)\times {\rm SL}_2(\ZZ_p)$. This concludes the proof of our lemma. 
\end{proof}

Proposition~\ref{prop:Prop_4.2.1_Loff_Ima} below is a restatement of \cite[Proposition 4.2.1]{Loff_IMA_OF_ADELIC_GALOIS}.  We assume $f$ and $g$ have weights $\geq 2$, both $f$ and $g$ are non-CM, and $f$ is not Galois-conjugate to any twist of $g$. We say $\p$ is a good prime if the prime $p$ of $\QQ$ below $\p$ is $ \geq 5$, $p$ is unramified in the quaternion algebra $B$ over $L_f \times L_g$ described above, $p \nmid N_fN_g$, and the conclusion
of Theorem \ref{thm:Irr_THEOREM 3.2.2_Loff_IMA_OF} holds for $p$.

\begin{proposition}
\label{prop:Prop_4.2.1_Loff_Ima}
Let $u \in (\ZZ/N\ZZ)^\times$ be such that $\varepsilon_f(u)\varepsilon_g(u) \neq 1$. Let $\p$ be a good prime, and suppose that $\psi_{f,\gamma} (u) = 1$ for all $\gamma$ in the decomposition group of $\p$ in $\Gamma_f$, and similarly for $\Gamma_g.$ Then there is an element $\tau \in {\rm Gal}(\overline{\QQ}/\QQ(\mu_{p^\infty}))$ such that $V_E(f \otimes g)^*/(\tau-1)V_E(f \otimes g)^*$ has dimension $1$ over $E$.
\end{proposition}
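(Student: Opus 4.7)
The representation $V=V_E(f\otimes g)^*$ has $E$-dimension $4$, so the target condition $\dim_E V/(\tau-1)V = 1$ amounts to producing $\tau \in G_{\QQ(\mu_{p^\infty})}$ whose action on $V$ is semisimple with the eigenvalue $1$ occurring exactly once. I would construct such a $\tau$ as a product $\sigma_u\cdot h$, where $\sigma_u$ realises the Dirichlet data $u$ via class field theory and $h\in H$ is chosen so as to prescribe the eigenvalues of $\rho_f^*(\tau)$ and $\rho_g^*(\tau)$ by means of the adelic big-image result (Theorem~\ref{thm:Irr_THEOREM 3.2.2_Loff_IMA_OF}).

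First, let $M$ be the least common multiple of $N_fN_g$ and of the conductors of all inner-twist characters $\psi_{f,\gamma}$ ($\gamma\in\Gamma_f$) and $\psi_{g,\gamma'}$ ($\gamma'\in\Gamma_g$). Applying Chebotarev in $\QQ(\mu_{Mp^\infty})/\QQ$, I would select $\sigma_u\in G_\QQ$ with $\chi_\cyc(\sigma_u)\equiv 1\pmod{p^\infty}$, with $\chi_\cyc(\sigma_u)\equiv u\pmod M$, and with $\sigma_u\in H=H_f\cap H_g$. The hypothesis that $\psi_{f,\gamma}(u)=1$ for every $\gamma$ in the decomposition subgroup of $\p$ (and the analogous condition for $g$) is precisely the compatibility required for these simultaneous congruences to be jointly satisfiable, relative to the embedding $L_f\hookrightarrow E$ cut out by $\p$. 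Since $\chi_\cyc(\sigma_u)=1$, the determinants are then pinned down: $\det\rho_f^*(\sigma_u)=\varepsilon_f^{-1}(u)$ and $\det\rho_g^*(\sigma_u)=\varepsilon_g^{-1}(u)$.

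Next, by Theorem~\ref{thm:Irr_THEOREM 3.2.2_Loff_IMA_OF}, $\rho_{f,g,p}(H)=G(\ZZ_p)$, whose fibre over the similitude factor $\lambda=1$ is the full product $\mathrm{SL}_2(\cO_{L_f,\p})\times\mathrm{SL}_2(\cO_{L_g,\p'})$. Hence for any pair $(s_f,s_g)$ in this product one can find $h\in H$ with $\rho_{f,g,p}(h)=(s_f,s_g)$, and $\tau:=\sigma_u\cdot h$ still lies in $G_{\QQ(\mu_{p^\infty})}$ with the same determinants as $\sigma_u$ but with freely prescribed traces on each tensor factor. Choose $\alpha\in E^\times$ and fix $(s_f,s_g)$ so that $\rho_f^*(\tau)$ has eigenvalues $\{\alpha,\,\varepsilon_f^{-1}(u)\alpha^{-1}\}$ while $\rho_g^*(\tau)$ has eigenvalues $\{\alpha^{-1},\,\alpha\varepsilon_g^{-1}(u)\}$. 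The four eigenvalues of $\tau$ on $V$ then read
$$1,\quad \alpha^2\varepsilon_g^{-1}(u),\quad \alpha^{-2}\varepsilon_f^{-1}(u),\quad \varepsilon_f^{-1}(u)\varepsilon_g^{-1}(u)\,.$$
For generic $\alpha$ (excluding the finitely many bad values $\alpha^2\in\{\varepsilon_g(u),\,\varepsilon_f^{-1}(u)\}$) the middle two eigenvalues are distinct from $1$, and the last one is nontrivial precisely by the hypothesis $\varepsilon_f(u)\varepsilon_g(u)\neq 1$. This yields $\dim_E V^{\tau=1}=1$, as required.

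\textbf{Main obstacle.} The delicate step is the Chebotarev construction in Step~1: checking that the local compatibility imposed on $u$ by the decomposition subgroups at $\p,\p'$ is exactly what permits one to simultaneously enforce $\sigma_u\in H$, $\sigma_u\in G_{\QQ(\mu_{p^\infty})}$, and $\sigma_u\equiv u\pmod M$. Once $\sigma_u$ is in hand, the remainder reduces to an eigenvalue count made possible by the adelic big-image theorem.
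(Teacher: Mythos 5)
The paper itself does not prove this statement: it is quoted as a restatement of Proposition 4.2.1 of \cite{Loff_IMA_OF_ADELIC_GALOIS}, so your attempt must stand on its own — and as written, Step 1 contains a genuine gap. You ask Chebotarev for $\sigma_u$ satisfying simultaneously $\chi_{\rm cyc}(\sigma_u)\equiv u\pmod M$, triviality on $\mu_{p^\infty}$, and $\sigma_u\in H$. But $H$ is the common kernel of \emph{all} inner-twist characters $\psi_{f,\gamma}$, $\gamma\in\Gamma_f$ (and $\psi_{g,\gamma'}$, $\gamma'\in\Gamma_g$), and each of these is a Dirichlet character whose conductor divides your modulus $M$; hence the congruence already forces $\psi_{f,\gamma}(\sigma_u)=\psi_{f,\gamma}(u)$ for every $\gamma$. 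The two requirements are therefore jointly satisfiable only if $\psi_{f,\gamma}(u)=1$ for \emph{every} $\gamma\in\Gamma_f$, whereas the Proposition assumes this only for $\gamma$ in the decomposition group $D_\p$ of $\p$. Your remark that the decomposition-group hypothesis is ``precisely the compatibility required'' is not correct: both conditions are conditions on the image of $\sigma_u$ in $(\ZZ/M\ZZ)^\times$, and no choice of embedding can reconcile them when $\psi_{f,\gamma}(u)\neq 1$ for some $\gamma\notin D_\p$, which the hypotheses allow.

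Dropping the requirement $\sigma_u\in H$ does not repair the argument, because then the decomposition-group hypothesis is never used, which signals that Step 2 overstates what the big-image theorem gives on a single component. In Loeffler's theorem the group is built from quaternion algebras over the inner-twist fields $F_f,F_g$ (the appendix's ${\rm GL}_2(\cO_{L_h})$ is a loose transcription — note the norm map written over $F$ there), so the image of $H\cap\Gal(\overline{\QQ}/\QQ(\mu_{p^\infty}))$ on the $(\p,\p')$-component is only ${\rm SL}_2(\cO_{F_f,\lambda})\times{\rm SL}_2(\cO_{F_g,\lambda'})$, where $\lambda,\lambda'$ are the primes of $F_f,F_g$ below $\p,\p'$ — not ${\rm SL}_2$ of the Hecke-field completions. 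This is exactly where the decomposition-group hypothesis enters in Loeffler's proof: from $\rho_{f,\p}^{\gamma}\simeq\rho_{f,\p}\otimes\psi_{f,\gamma}$ for $\gamma\in D_\p$ one gets $\gamma(\mathrm{tr}\,\rho_{f,\p}(\tau_0))=\psi_{f,\gamma}(u)\,\mathrm{tr}\,\rho_{f,\p}(\tau_0)$ and $\gamma(\det\rho_{f,\p}(\tau_0))=\psi_{f,\gamma}(u)^2\,\det\rho_{f,\p}(\tau_0)$, so the hypothesis $\psi_{f,\gamma}(u)=1$ on $D_\p$ forces the characteristic polynomial of $\rho_{f,\p}(\tau_0)$ to have coefficients in $F_{f,\lambda}$; only then can one prescribe the eigenvalues freely within the coset $\rho_{f,\p}(\tau_0)\cdot{\rm SL}_2(\cO_{F_f,\lambda})$ at fixed determinant $\varepsilon_f(u)^{-1}$, and similarly for $g$. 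Your final eigenvalue count ($1$, $\alpha^2\varepsilon_g(u)^{-1}$, $\alpha^{-2}\varepsilon_f(u)^{-1}$, $(\varepsilon_f\varepsilon_g)(u)^{-1}$, with the last nontrivial by hypothesis) is the right endgame, but the construction of $\tau$ and the identification of where the inner-twist condition is actually needed must be redone along these lines.
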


\bibliographystyle{alpha}
\bibliography{references}
\end{document}